\documentclass[reqno]{amsart}
\usepackage{graphicx} % Required for inserting images
\usepackage{subfiles}
\usepackage{enumitem}
\usepackage[T1]{fontenc}
\usepackage{euscript}
\usepackage{amssymb}
\usepackage{dsfont}
\usepackage{cleveref}
\newtheorem{theorem}{Theorem}[section]
\newcommand{\ds}[1]{\displaystyle #1}
\newtheorem{lemma}{Lemma}[section]
\newtheorem{proposition}{Proposition}[section]
\newtheorem{notation}{Notation}[section]
\theoremstyle{definition}
\newtheorem{definition}{Definition}[section]
\theoremstyle{remark}
\newtheorem{remark}{Remark}[section]

\usepackage{physics}
\newcommand{\R}{\mathbb{R}}
\newcommand{\eps}{\epsilon}
\newcommand{\N}{\mathbb{N}}

\newcommand{\T}{\mathbb{T}}
\newcommand{\di}{\displaystyle}

\usepackage{xcolor}
\usepackage{bbm}

\makeatletter
\def\Let@{\def\\{\notag\math@cr}}
\makeatother
\usepackage[margin=1in]{geometry}
\allowdisplaybreaks
\title[Inviscid Limit for 2D Navier-Stokes]{Uniqueness and Stability of planar shock wave to 2D compressible Euler equations in the vanishing viscosity limit of compressible Navier-Stokes equations}

\author{Geng Chen}
\address[Geng Chen]{\newline Department of Mathematics, \newline The University of Kansas, Lawrence, KS 66049, USA.}
\email{gengchen@ku.edu}

\author{Jeffrey Cheng}
\address[Jeffrey Cheng]{\newline Department of Mathematics, \newline The University of Texas at Austin, Austin, TX 78712, USA.}
\email{jeffrey.cheng@utexas.edu}

\author{Alexis F. Vasseur}
\address[Alexis F. Vasseur]{\newline Department of Mathematics, \newline The University of Texas at Austin, Austin, TX 78712, USA.}
\email{vasseur@math.utexas.edu}

\author{Yi Wang}
\address[Yi Wang]{\newline Institute of Applied Mathematics, AMSS, CAS, Beijing 100190, P. R. China \newline and School of Mathematical Sciences, University of Chinese Academy of Sciences,
Beijing 100049, P. R. China}
\email{wangyi@amss.ac.cn}

\date{\today}
\thanks{2010 \textit{Mathematics Subject Classification}. 76N10, 35L67, 35B35}
\thanks{\textit{Key words and phrases}. Vanishing viscosity limit, uniqueness selection criteria, planar shock wave}
\thanks{\textbf{Acknowledgement}: G. Chen was partially supported by NSF grants DMS 2306258 and 2625028. A. Vasseur was partially supported by NSF grants DMS 2306852 and 2219434. Y. Wang was partially supported by NSFC Grants No. 12421001 and 12288201 and CAS Project for Young Scientists in Basic Research, Grant No. YSBR-031.}

\begin{document}
\begin{abstract}    
%Forthe compressible Euler equationsis in one space dimension  including shock waves, one can select the unique solutionto by adding the entropy condition. However,
It is well known that the compressible Euler equations with  planar shock initial data may admit infinitely many entropy weak solutions, as shown by Chiodaroli, De Lellis and Kreml [Comm. Pure Appl. Math. (2015)]. In this paper, we introduce a suitable notion of global-in-time weak solutions to the two-dimensional (2D) compressible Navier-Stokes equations with density-dependent viscosities in $\R \times \T$. Within this solution framework, we establish the first vanishing viscosity limit selection criterion for the physically relevant planar shock solution to the 2D compressible Euler equations. More precisely, we prove the uniqueness and stability of  the 2D Euler planar shock solution within a class of inviscid limits of Navier-Stokes solutions emanating from near-planar initial data while the shock direction perturbation can be arbitrarily large. Starting from the relative $\kappa$-entropy for 2D compressible Navier-Stokes equations in $\T^2$ by Bresch, Noble and Vila [ESAIM Proc. Surveys (2017)], we first establish the relative $\frac12$-entropy between 2D weak solution of compressible Navier-Stokes equations in $\R \times \T$ and one-dimensional (1D)  strong solution for averaged initial data in $\R$. Then we achieve the desired vanishing viscosity limit selection criterion by successfully bridging the fundamental structural gap between 2D Eulerian formulation of the original Navier-Stokes system in the relative $\frac12$-entropy and 1D vanishing viscosity limit to a shock solution of Euler equations in Lagrangian coordinates by Kang and Vasseur [Invent. Math.  (2021)]. 
The framework developed herein can be applied to the three-dimensional limit selection criterion in $\R\times \T^2$, as well as to more complex planar Riemann solutions and interacting wave patterns.

\end{abstract}
\maketitle

\section{Introduction}
\renewcommand{\theequation}{\arabic{section}.\arabic{equation}}
\setcounter{equation}{0}
Consider the following 2D  barotropic compressible Navier-Stokes equations with density-dependent viscosities:
\begin{equation}\label{eq:2DNSintro}
\left\{
\begin{array}{ll}
    \rho_t +{\rm div}(\rho u)=0, \qquad\qquad\  \indent (t,x)=(t,x_1,x_2) \in \R^+ \times \R \times \T, \\[3mm]
     (\rho u)_t+{\rm div}(\rho u\otimes u)+\nabla p(\rho)={\rm div} (2 \nu\mu(\rho) \mathbb{D}u)+\nabla (\nu\lambda(\rho){\rm div} u),
\end{array}
\right.
\end{equation}
where $\rho=\rho(t,x)$ and $u=u(t,x)=(u_1, u_2)(t,x)$ denote the fluid density and velocity field, respectively. The deformation rate tensor is defined as $\mathbb{D}u:=\frac{\nabla u+\nabla u^T}{2}$ where $\nabla u^T$ denotes the transpose of the velocity gradient matrix $\nabla u$.  
The pressure is given by the barotropic equation of state $p(\rho)$, and the density-dependent shear and bulk viscosities take the form $2\nu \mu(\rho)$ and $\nu \lambda(\rho)$, with $\nu>0$ being a constant kinetic viscosity parameter. These viscosity functions satisfy the standard physical constraints
$$
 \mu(\rho)\geq 0,\quad  \mu(\rho)+\lambda(\rho)\geq 0,
$$
along with the Bresch–Desjardins relation
 \begin{equation}\label{BD}
 \lambda(\rho)=2(\mu^\prime(\rho)\rho-\mu(\rho)).
 \end{equation}
We adopt the polytropic pressure law $p(\rho)=b\rho^\gamma$, for $b>0$ and $\gamma>1$ being fluid constants. Without loss of generality, we normalize the coefficient $b=1$ in the sequel. 
The spatial domain under consideration is $\R\times\T$ with $\T:=\R/\mathbb{Z}$ denoting the one-dimensional unit flat torus. 
The initial density and momentum are prescribed as
$$
 (\rho(0,x), \rho u(0,x))=(\rho^0(x),m^0(x)),
$$
where $\rho^0(x)\geq 0$ on $\R\times \T$, and the initial momentum $m^0(x)$ satisfies the physical compatibility condition $m^0(x)=0$ almost everywhere on the vacuum set $\{x|\rho^0(x)=0\}.$

In the present work, we restrict our analysis to the specific regime defined by
\begin{equation}\label{sw}
\mu(\rho)=\rho,\quad \lambda(\rho)\equiv0, \quad \gamma\in (1,2],
\end{equation}
which gives the system
\begin{equation}\label{eq:2DNS}
\left\{
\begin{array}{ll}
    \rho_t +{\rm div}(\rho u)=0, \qquad  \indent (t,x)=(t,x_1,x_2) \in \R^+ \times \R \times \T, \\[3mm]
     (\rho u)_t+{\rm div}(\rho u\otimes u)+\nabla p(\rho)={\rm div} (2 \nu\rho \mathbb{D}u).
\end{array}
\right.
\end{equation}
However, the framework developed here naturally extends to the broader class of viscosities satisfying the Bresch–Desjardins relation \eqref{BD}. Notably, the classical shallow water system, corresponding to the case $\gamma=2$ in \eqref{sw}, is fully covered within the scope of our results.

\par For the compressible Navier-Stokes models with constant viscosities, that is, when both viscosities $\mu(\rho)$ and $\lambda(\rho)$ are constants, the global existence of multi-dimensional weak solutions can be traced back to the classical works by Lions \cite{MR1637634}, Jiang and Zhang \cite{MR1810944}, and Feireisl, Novotný and Petzeltová \cite{MR1867887}, while the well-posedness of small smooth solutions near a constant equilibrium state was established by Matsumura and Nishida \cite{MR0584357‌}.
However, from the Chapman–Enskog expansion of the Boltzmann equation, viscosity is intrinsically a temperature-dependent transport coefficient, which in turn reduces to a density-dependent function in the barotropic regime \cite{MR1148892}.  
It turns out that the viscosity is expected to degenerate near vacuum ($\rho=0$) as a power of the density, which corresponds to the model we have taken above where the viscosity degenerates linearly in the density near vacuum. 
\par We are interested in the limit $\nu \to 0+$ for the selection criteria of 2D physical planar shock. Formally, we recover the 2D compressible Euler equations:
\begin{equation}\label{2DE}
\left\{
\begin{array}{ll}
    \rho_t +{\rm div}(\rho u)=0,  %\hspace{3.5cm} \indent (t,x_1,x_2) \in \R^+ \times \R \times \T,
    \\[2mm]
     (\rho u)_t+{\rm div}(\rho u\otimes u)+\nabla p(\rho)=0.
\end{array}
\right.
\end{equation}
\par There is a large body of work on the idea of using the vanishing viscosity limit to establish existence and stability results for inviscid gases. In the 1D setting, the vanishing viscosity limit using the so-called ``artificial viscosity'' (when the viscosity does not depend on the density, and there is also an artificial diffusion on the continuity equation) has been used to construct entropy solutions to the isentropic Euler system (see \cite{MR719807}, \cite{MR1188982}). The same limit was shown to be valid and stable in the case when the limit solution is small-BV \cite{MR2150387}. The vanishing physical viscosity limit to the small-BV solution, i.e. the limit from Navier-Stokes to Euler equations, is a much harder problem, which was solved recently for the isentropic case in one space dimension, see \cite{KV-Inven}, \cite{NSVS}. Also see the recent work \cite{ACG} for a detailed description of shock formation in the vanishing viscosity limit, and the earlier result for the limit in the relative energy space \cite{MR2683391} using the method of compensated compactness.
\par In the multi-dimensional setting, the vanishing viscosity limit has been established for short times \cite{MR4412066}, \cite{MR3995050}, \cite{MR4704523}. The short-time limitation corresponds to the fact that the limiting Euler solution must be a strong solution, which exists only locally in time. There are very few results that hold after the time of shock formation; let us mention the works of Zumbrun et. al (\cite{MR2224659}, \cite{MR2012648}, \cite{MR2118476}) which construct smooth viscous shocks converging to curved inviscid shocks as the viscosity tends to zero. 
\par 

However, to the best of our knowledge, no existing results have established the vanishing physical viscosity as a uniqueness criterion for multi-dimensional planar shock waves, while simultaneously verifying both the validity and stability of the multi-dimensional vanishing viscosity limit in the regime where the solutions to system (1.1) are themselves weak and potentially discontinuous. The aim of the present paper is to provide some results in this direction, in the case when the limiting inviscid profile is a single planar shock.
It is important to study the inviscid limit to the compressible Euler equations because it is a natural option to select physical solution for Euler equations in multiple space dimensions. It is well known that the entropy BV solution of Euler equations in one space dimension is unique \cite{Bressan-book, Bressan-DeLelis, Chen-Faile-Krupa,Chen-Faile-Krupa2, Chen-Krupa-Vasseur, Chen-Vasseur}. However, the entropy condition is not strong enough to select a unique solution for Euler equations in multiple spatial dimensions, when there is an initial shock wave, even just a planar shock. In a seminal paper  \cite{CDK}, Chiodaroli, De Lellis and Kreml constructed infinitely many 2D solutions starting from the same planar initial shock, using the method of convex integration (see also \cite{MR2564474}). We refer readers to \cite{MR3744380}, \cite{MR4385531}, \cite{MR4164684}, for subsequent non-uniqueness results associated to more general Riemann data. See also results showing density of the set of wild initial data in various topologies \cite{MR4137806}, \cite{MR4772576}, \cite{MR4333736}, and \cite{MR4451484} for non-uniqueness in the class of continuous solutions. A fundamental open problem is how to define a selection criterion for the physical unique solution of Euler equation. In this paper, we provide such a selection criterion using the method of inviscid physical limit, to show the uniqueness of 2D solution for compressible Euler equations, including an initial planar shock. 
%This results shows that among the infinitely many solutions of \cite{CDK}, only one, the planar solution, comes from the zero viscosity limit of the Navier-Stokes equations.
See also the recent interesting works \cite{MR5082200, Feireisl}, in which refined selection criteria based on maximal energy dissipation--purely at the level of compressible Euler equations \eqref{2DE}--are given for dissipative solutions. 
\par Given two states $(\rho_-, u_{1,-},0)$ and $(\rho_+, u_{1,+}, 0)$ verifying the Rankine-Hugoniot conditions for the planar shock, i.e. the 1D jump condition, 
\begin{align}\label{eq:rhcond}
&\text{there exists $\sigma \in \R$ such that } \begin{cases}
(\rho_- u_{1,-}-\rho_+ u_{1,+})=\sigma(\rho_--\rho_+) \\
(\rho_-u_{1,-}^2+p(\rho_-)-\rho_+u_{1,+}^2-p(\rho_+))=\sigma(\rho_-u_{1,-}-\rho_+u_{1,+}) \\
u_{2,-}=u_{2,+}=0,
\end{cases} \\
&\text{and either $\rho_- < \rho_+$ and $u_{1,-}>u_{1,+}$ or $\rho_- > \rho_+$ and $u_{1,-} > u_{1,+}$ holds,} 
\end{align}
there exists an inviscid entropy shock $(\hat{\rho}, \hat{u})$ solving \eqref{2DE} given by the following equation:
\begin{equation}\label{shock}
(\hat{\rho}, \hat{u})(x_1)=\begin{cases}
    (\rho_-, u_{1,-}, 0) & x_1 < 0, \\
    (\rho_+, u_{1,+}, 0) & x_1 > 0.
\end{cases}
\end{equation}
For $\sigma$ verifying \eqref{eq:rhcond}, $(\hat{\rho}, \hat{u})(x_1-\sigma t)$ is an entropy solution to the compressible Euler equations \eqref{2DE}. In this paper, we are interested in the stability of $(\hat{\rho}, \hat{u})(x-\sigma t)$ in the vanishing viscosity limit from 2D Navier-Stokes equations \eqref{eq:2DNS} to 2D Euler equations \eqref{2DE}. 

\subsection{Main results}\label{sec:mainresults}
Our first main result establishes global-in-time existence of weak solutions to \eqref{eq:2DNS} which take different limits when $x_1=\pm \infty$. To handle this, let the functions $\tilde{\rho}(x_1)$ and $\tilde{u}(x_1)$ be two smooth monotone functions satisfying
\begin{align*}
&\tilde{\rho}(x_1)=\rho_\pm \text{ for } \pm x_1 \geq 1, \tilde{\rho}(x_1) > 0, \ \forall \ x_1 \in \R, \\
&\tilde{u}(x_1)=\begin{bmatrix}
\tilde{u_1}(x_1) \\
\tilde{u_2}(x_1) 
\end{bmatrix}=\begin{bmatrix}
u_{1,\pm} \\
u_{2,\pm}
\end{bmatrix} \text{ for } \pm x_1 \geq 1.
\end{align*}
Also define
\begin{align*}
\tilde{v}&=\tilde{u}+\nu \nabla \ln(\tilde{\rho}), \\
\tilde{w}&=\nu \nabla \ln(\tilde{\rho}).
\end{align*}
Further, for functions $(\rho, v, w), (r,V,W):\R^+\times\R \times \T \to (\R_{\geq 0} \times \R^2 \times \R^2)$, we may define the relative $\frac{1}{2}$-entropy on the channel $\R \times \T$:
\begin{align}\label{eq:halfrelentonchannel}
\mathcal{E}_{\frac{1}{2}}(\rho,v,w|r,V,W)=\int_{\R \times \T}\big(\rho\frac{|v-V|^2+|w-W|^2}{2}+F(\rho|r)\big)dx,
\end{align}
where
\begin{align*}
F(\rho)=\frac{\rho^\gamma}{\gamma-1},
\end{align*}
and for any function $f$, the relative quantity is
\begin{align}\label{eq:relativequantity}
f(a|b):=f(a)-f(b)-f'(b)(a-b).
\end{align}
Then, we have the following definition.
\begin{definition}\label{def:suitableinitial}
The initial data $(\rho_0, v_0, w_0):\R \times \T \to \R_{\geq 0} \times \R^2 \times \R^2$ is ``suitable'' if it verifies 
\begin{align*}
\sqrt{\rho_0}u_0&=\sqrt{\rho_0}v_0-\sqrt{\rho_0}w_0 \text{ in $L^2_{\text{loc}}(\R \times \T)$}, \\
\sqrt{\rho_0} w_0&=2\nu \nabla \sqrt{\rho_0}, \indent \rho_0 w_0=\nu \nabla \rho_0 \text{ in }\mathcal{D}'(\R \times \T),
\end{align*}
and there exists a constant $c_0 >0$ such that the following conditions hold. Let 
\begin{align}\label{eq:mean}
(\bar\rho_0, \bar u_0)(x_1)&:=\int_{\T}(\rho_0,u_0)(x_1,x_2)dx_2, \\
\bar w_0(x_1)&:=\nu \nabla \ln(\bar \rho_0(x_1)), \\
\bar v_0(x_1)&:=\bar u_0(x_1)+\bar w_0(x_1).
\end{align}
Then
\begin{align}
&\mathcal{E}_{\frac{1}{2}}(\rho_0,v_0, w_0|\tilde{\rho}, \tilde{v}, \tilde{w}) < \infty, \\
&\mathcal{E}_{\frac{1}{2}}(\rho_0,v_0, w_0|\bar\rho_0,\bar v_0, \bar{w}_0) <\infty, \\
&||(\bar\rho_0,\bar u_0)-(\tilde{\rho}, \tilde{u})||_{H^6(\R)} <\infty, \label{item:3} \\
&|\partial_{x_1}(\bar{u}_{0})_1| \leq \frac{1}{2\nu}(\bar{\rho}_0)^{\gamma-1}, \\
& \bar{\rho}_0(x_1) \geq c_0 > 0. 
\end{align}
\end{definition}
The first main theorem is the following.
\begin{theorem}\label{thm:intro1}
Fix any far field states $\rho_\pm, \begin{bmatrix}
u_{1,\pm} \\
u_{2,\pm}
\end{bmatrix}$. Let $(\rho_0, v_0, w_0)$  be suitable initial data in the sense of \Cref{def:suitableinitial}. Then, there exists a global-in-time weak solution $(\rho, v, w)$, in the sense of \Cref{def:defofweak}, to $\eqref{eq:2DNS}$ verifying, for any $T >0$,
\begin{align*}
\mathcal{E}_{\frac{1}{2}}(\rho,v,w|\tilde{\rho},\tilde{v},\tilde{w})(t) \leq C(T) \text{ for any } t \in [0,T].
\end{align*}
 Further, there exists a unique  planar strong solution $(\bar{\rho}, \bar{v}, \bar{w})(x_1)$ to $\eqref{eq:2DNS}$ with averaged initial data $(\overline{\rho}_0, \overline{v}_0, \overline{w}_0)$ defined in \eqref{eq:mean}.
Finally, these verify the estimate
\begin{align}\label{eq:weakstrongestimatechannel}
&\mathcal{E}_{\frac{1}{2}}(\rho, v, w|\bar{\rho}, \bar{v}, \bar{w})(t) \leq \\
&Ce^{\big(||\partial_{x_1} \bar{v}||_{L^\infty([0,t] \times \R)}+||\partial_{x_1}\bar{w}||_{L^\infty([0,t] \times \R )}+||\partial_{x_1x_1}\ln(\bar{\rho})||_{L^\infty([0,t] \times \R) }+||\partial_{x_1} \ln(\bar{\rho})||_{L^\infty([0,t] \times \R )}^2\big)t} \\
&\times \mathcal{E}_{\frac{1}{2}}(\rho_0, v_0, w_0|\bar{\rho}_0, \bar{v}_0, \bar{w}_0), \qquad \forall t\in [0,T].
\end{align}
\end{theorem}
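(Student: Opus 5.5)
The proof splits into three parts: global existence of 2D weak solutions on the channel, global well-posedness of the 1D planar strong solution, and a weak--strong relative $\frac12$-entropy estimate between the two.

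\textbf{Existence of weak solutions.} We follow the Bresch--Desjardins / Vasseur--Yu strategy, adapted from $\T^2$ (Bresch--Noble--Vila) to $\R\times\T$ with nontrivial far-field states. We work with the augmented system in $(\rho,v,w)$, where $v=u+\nu\nabla\ln\rho$ and $w=\nu\nabla\ln\rho$. For $\mu(\rho)=\rho$ this system satisfies the transport-type identities
\[
(\rho v)_t+\operatorname{div}(\rho v\otimes u)+\nabla p=\nu\operatorname{div}(\rho\nabla v)+\dots,\qquad (\rho w)_t+\operatorname{div}(\rho w\otimes u)=-\nu\operatorname{div}(\rho\nabla^T u).
\]
We then construct approximate solutions in the spirit of Vasseur--Yu and Li--Xin, adding drag and quantum/cold-pressure regularizations, on truncated domains $[-L,L]\times\T$. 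Uniform bounds come from the relative $\frac12$-entropy with respect to the smooth profile $(\tilde\rho,\tilde v,\tilde w)$. Because $\tilde\rho,\tilde u$ are smooth and constant outside $[-1,1]$, the commutator terms coming from the reference profile are supported in $|x_1|\le1$. They are controlled by $C\,\mathcal E_{\frac12}+C$, and Gronwall gives the bound $C(T)$. Passing to the limit uses the Mellet--Vasseur-type compactness ($\sqrt\rho u$ strongly in $L^2_{loc}$, $\nabla\sqrt\rho$ bounded), which yields the weak formulation of \Cref{def:defofweak}, together with the inequality for $\mathcal E_{\frac12}$ that is kept as part of the definition.

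\textbf{The planar strong solution.} For the averaged data $(\bar\rho_0,\bar u_0)$ we solve the 1D Navier--Stokes system with $\mu=\rho$. The second velocity component decouples into a parabolic equation. We pass to Lagrangian mass coordinates, where the system becomes the one treated by Kang--Vasseur (in the form $v$-$h$ with $h=u-\nu(\ln\rho)_x$). The hypotheses $H^6$-closeness to $(\tilde\rho,\tilde u)$, $\bar\rho_0\ge c_0$, and $|\partial_{x_1}(\bar u_0)_1|\le\frac{1}{2\nu}\bar\rho_0^{\gamma-1}$ are exactly what give a global $H^6$ solution with a uniform-in-time positive lower bound on $\bar\rho$. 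The lower bound comes from a maximum principle on the effective velocity and the Bresch--Desjardins entropy. Uniqueness follows by standard energy estimates. In particular, the $L^\infty$ quantities in the exponent of \eqref{eq:weakstrongestimatechannel} are finite on $[0,T]$.

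\textbf{Weak--strong estimate.} Here we use that $(\bar\rho,\bar v,\bar w)$, viewed as $x_2$-independent, is a smooth solution of the 2D augmented system, and we adapt the relative $\kappa$-entropy computation of Bresch--Noble--Vila with $\kappa=\frac12$. Testing the weak formulation against $(\bar v,\bar w)$ and $F'(\bar\rho)$ and using the entropy inequality of the weak solution gives
\[
\frac{d}{dt}\mathcal E_{\frac12}\le -\nu\!\int\!\rho\big(|\nabla(v-\bar v)|^2+\dots\big)+\int \rho\,(v-\bar v)\cdot\nabla\bar v\cdot(u-\bar u)+\dots+\int p(\rho|\bar\rho)\operatorname{div}\bar u .
\]
Each remainder is bounded by $\big(\|\partial_{x_1}\bar v\|_\infty+\|\partial_{x_1}\bar w\|_\infty+\|\partial_{x_1x_1}\ln\bar\rho\|_\infty+\|\partial_{x_1}\ln\bar\rho\|_\infty^2\big)\mathcal E_{\frac12}$. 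The cross terms involving $\nu\nabla\ln\bar\rho$ are absorbed using Young's inequality and the dissipation, which produces the squared term. Gronwall then gives \eqref{eq:weakstrongestimatechannel}.

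\textbf{Main obstacle.} I expect the hardest step to be justifying these manipulations for weak solutions on an unbounded channel. Test functions must be compactly supported, so we cut off in $x_1$ and must control the errors at infinity. I would handle this by using that $\bar\rho-\tilde\rho\in H^6$ and that $\mathcal E_{\frac12}(\cdot|\tilde\cdot)$ is finite, so that the cutoff errors vanish as $L\to\infty$. Alternatively, the relative entropy inequality with respect to smooth profiles can be built into \Cref{def:defofweak} at the approximate level and passed to the limit by lower semicontinuity.
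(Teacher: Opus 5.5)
Your outline has the right ingredients, but the two steps that carry the proof are either unspecified or rely on a tool that is not available.

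\textbf{Truncation.} You leave the problem on $[-L,L]\times\T$ without boundary conditions, and that is where the difficulty lies. The approximate existence theories you invoke are set on $\T^d$ (Vasseur--Yu, Lacroix-Violet--Vasseur) or on the whole space with a single far-field state (Li--Xin). The BD and $\frac12$-entropy identities also rely on integrations by parts with no boundary terms. A periodic truncation is incompatible with $\rho_-\neq\rho_+$ unless the reference profile is made to return, so your claim that all reference-profile errors are confined to $|x_1|\le 1$ fails for the truncated problems. The paper instead periodizes on $[-R,2R]\times\T$ with a reflected copy, flipping the sign of $u_1$ (the antisymmetry of the planar system, cf.\ \eqref{eq:reflectdef4}). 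It never uses $(\tilde\rho,\tilde v,\tilde w)$ as the reference for the 2D solution: it compares $(\rho_R,v_R,w_R)$ directly with a periodic planar strong solution, so no residual terms appear.

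\textbf{Compactness.} This is the more serious gap: the compactness you cite is not available. Strong $L^2_{loc}$ convergence of $\sqrt\rho u$ needs a Mellet--Vasseur bound on $\rho|u|^2\ln(1+|u|^2)$, which neither the energy nor the BD entropy gives. That functional is infinite on the channel when the far-field velocities do not vanish, and deriving it even on $\T^d$ was the main contribution of Vasseur--Yu. The paper bypasses it with solutions renormalized in the velocity:
\begin{itemize}
\item strong convergence of $\rho_R$ and $\rho_Ru_R$ gives $u_R\to u$ a.e.\ on $\{\rho>0\}$, hence $\rho_R^\alpha H(u_R)\to\rho^\alpha H(u)$ strongly for bounded $H$;
\item the defect measures $\Xi_\phi,\widetilde\Xi_\phi$ are bounded by $C\|\phi''\|_{L^\infty}$ uniformly in $R$, thanks to the weak--strong inequality;
\item the usual weak formulation is recovered by letting $\phi_n(y)\to y_i$.
\end{itemize}

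\textbf{Weak--strong estimate.} The missing compactness also blocks your primary route to \eqref{eq:weakstrongestimatechannel}. Deriving it for the limit solution from an entropy inequality relative to $(\tilde\rho,\tilde v,\tilde w)$ means passing to the limit in an integrated inequality. That inequality contains sign-indefinite quadratic terms such as $\int\rho\,((u-\tilde u)\cdot\nabla)\tilde v\cdot(\tilde v-v)\,dx$. These are neither lower semicontinuous nor convergent under mere weak convergence of $\sqrt\rho u$ and $\sqrt\rho v$.

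What works is your second option. Prove the estimate at the approximate level in its post-Gronwall form, whose right-hand side contains only the initial relative entropy and norms of the reference solution. Then pass to the limit by lower semicontinuity. But this requires a genuine strong solution on the approximating domain, and the full-line $(\bar\rho,\bar u)$ is not one. The paper supplies exactly this:
\begin{itemize}
\item periodic planar solutions $(\bar\rho_R,\bar u_R)$ with $R$-uniform $H^6$ and density bounds (\Cref{prop:1dexist+bds}, proved without Poincar\'e-type inequalities);
\item control of the periodized initial relative entropies by the original one, up to errors vanishing as $R\to\infty$ (\Cref{lem:newlem1});
\item convergence of the $L^\infty$ norms in the exponent to those of the full-line solution (\Cref{lem:appclem}, via an auxiliary glued solution and an $H^3$ stability estimate).
\end{itemize}
None of these appears in your plan, and without them the argument does not close.
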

\begin{remark}
We use here the notation $(\rho,v,w)$ corresponding to the augmented system introduced by Bresch, Desjardins and Zatorska \cite{MR3394617}. Formally, $v=u+\nu\nabla \ln(\rho)$ and $w=\nu \nabla \ln(\rho)$. However, the a priori estimates do not allow defining $\ln(\rho)$. Note that, considering the pair $(\rho,u)=(\rho,v-w)$, it is enforced in \Cref{def:defofweak} that $(\rho,u)$ verifies \eqref{eq:2DNS} in the sense of distributions. So our notion of weak solution in \Cref{def:defofweak} recovers the classical notion of weak solution.
\end{remark}
    \begin{remark}
   There are no smallness constraints on the initial data in \Cref{def:suitableinitial}.
\end{remark}
\begin{remark}
The condition $||(\bar\rho_0,\bar u_0)-(\tilde{\rho}, \tilde{u})||_{H^6(\R)} <\infty$ may be relaxed to $H^4(\R)$; we enforce $H^6(\R)$ to additionally obtain that $(\rho, v, w)$ verifies the relative $\frac{1}{2}$-entropy inequality \eqref{eq:weakstrongestimatechannel}.
\end{remark}
Global existence to the compressible Navier-Stokes equations with density-dependent viscosity, as in \eqref{eq:2DNS}, remained an open problem for many years after the corresponding problem with constant viscosity was solved. Global existence of weak solutions on the domain $\T^d$ was first shown by Vasseur-Yu \cite{MR3573976} and refined in \cite{MR3801754}, \cite{MR4404790}, based on the a priori estimates given by the Bresch-Desjardins (BD) entropy \cite{MR1989675}, \cite{MR1978317}). Global existence of weak solutions has also been shown on $\R^d$ (beginning with the %breakthrough 
work of Li-Xin \cite{lixin}), with constant fixed density and zero velocity at $|x|=\infty$ \cite{lixin}, \cite{MR4254549}. \Cref{thm:intro1} extends the constructions to the unbounded domain $\R \times \T$ with possibly two different limits for the density and velocity at $x_1=\pm \infty$, which is a suitable framework to consider shocks.
\par Our second main theorem is on stability and uniqueness of the planar shock \eqref{shock} in the vanishing viscosity limit of the solutions constructed in \Cref{thm:intro1}. To handle the stability and uniqueness of the entropy shock, we use the relative entropy associated to the entropy of the Euler system \eqref{2DE}. For any functions $\rho_1, u_1, \rho_2,u_2$, define
\begin{align*}
\eta(\rho_1, u_1|\rho_2, u_2):=\rho_1\frac{|u_1-u_2|^2}{2}+F(\rho_1|\rho_2).
\end{align*}
The second main theorem is the following.
\begin{theorem}\label{thm:intro2}
Let $\gamma \in (1,2]$. Then, there exists an integer $j > 0$, a constant $C>0$, and a function $C^*:\R^+ \to \R^+$, with $C^*(\nu) \to 0$ as $\nu \to 0+$, so that the following holds. For each $\nu > 0$, consider \eqref{eq:2DNS}. For a given constant state $(\rho_-, u_{1,-},0)$, there exists $\epsilon_0 > 0$ so that for any $\eps < \epsilon_0$ and any $(\rho_+, u_{1,+},0)$ satisfying \eqref{eq:rhcond} with $|\rho_+-\rho_-|=:\eps$, the following holds.
\par Let $E_0 > 0$, and let $(\rho^\nu_0, v^\nu_0, w^\nu_0)$ be a sequence of suitable initial data in the sense of \Cref{def:suitableinitial}, additionally verifying
\begin{align}\label{eq:nearplanar}
    &\limsup_{\nu \to 0} \int_{\R} \eta(\bar{\rho}^\nu_0,\bar{u}^\nu_0|\hat{\rho}, \hat{u})dx_1=E_0, \\
    &\lim_{\nu \to 0}\nu^2\int_{\R}\bar{\rho}^\nu_0|\ln(\bar{\rho}^\nu_0)_{x_1}|^2dx_1=0, \\
    & ||(\bar{\rho}^\nu_0,\bar{u}^\nu_0)-(\tilde{\rho}, \tilde{u})||_{H^6(\R)} \leq \frac{1}{\nu}, \\
    & \bar{\rho}_0^\nu \geq c_0^\nu \geq \frac{1}{\nu}, \\
    &\mathcal{E}_{\frac{1}{2}}(\rho^\nu_0, v^\nu_0, 
    w^\nu_0|\bar{\rho}^\nu_0, \bar{v}^\nu_0, \bar{w}^\nu_0) \leq C^*(\nu). \\
\end{align}
Let $(\rho^\nu, v^\nu, w^\nu)$ be a sequence of solutions to \eqref{eq:2DNS} constructed in \Cref{thm:intro1} with initial data $(\rho_0^\nu, v_0^\nu, w_0^\nu)$. Then, for a given $T>0$, there exists a constant $C>0$ and limits $\rho_\infty, u_\infty$ such that as $\nu \to 0+$ (up to a subsequence)
\begin{align}\label{eq:finalinviscidlimits}
    \sqrt{\rho^\nu}(v^\nu+w^\nu) \rightharpoonup \sqrt{\rho_\infty}u_\infty \text{ weakly in $L^2_{\text{loc}}([0,T] \times \R \times \T)$}, \\
    \sqrt{\rho^\nu}u^\nu \rightharpoonup \sqrt{\rho_\infty}u_\infty \text{ weakly in $H^{-1}_{\text{loc}}([0,T] \times \R \times \T)$}, \\
    \sqrt{\rho^\nu} \rightharpoonup \sqrt{\rho_\infty} \text{ weakly in $L^{2\gamma}_{\text{loc}}([0,T] \times \R \times \T)$}.
\end{align}
In addition, there exists a shift $X_\infty(t) \in L^\infty([0,T])$ so that for almost every $t \in [0,T]$
\begin{align}\label{eq:finalstabilityest}
    \int_{\R \times \T}\Big(\rho_\infty(t,\cdot) \frac{|u_\infty(t,\cdot)-\hat{u}(x-X_\infty(t))|^2}{2}+F(\rho_\infty(t,\cdot)|\hat{\rho}(x-X_\infty(t)))\Big)dx \leq C(E_0+\sqrt{E_0}).
\end{align}
Moreover, the shift $X_\infty(t)$ satisfies 
\begin{align}\label{eq:finalshiftest}
||X_\infty(\cdot)-\sigma \cdot||_{L^\infty([0,T])} \leq \frac{C}{|\rho_--\rho_+|^2}\Big(E_0+\sqrt{E_0}\Big).
\end{align}
Therefore, planar entropy shocks (with small amplitude) of the $2D$ isentropic Euler system \eqref{2DE} are unique and stable in the class of %near-planar 
weak inviscid limits of solutions to the $2D$ compressible Navier-Stokes system \eqref{eq:2DNS}.
\end{theorem}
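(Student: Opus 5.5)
The plan is to split the 2D problem into a 1D shock-stability problem for the averaged solution and a 2D–1D comparison estimate, then let $\nu\to0$.

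\textbf{Step 1 (compactness).} Uniform-in-$\nu$ bounds come from the relative $\frac12$-entropy against $(\tilde\rho,\tilde v,\tilde w)$ given by \Cref{thm:intro1}. These control $\sqrt{\rho^\nu}(v^\nu-\tilde v)$ and $\sqrt{\rho^\nu}(w^\nu-\tilde w)$ in $L^2_{\rm loc}$ and $F(\rho^\nu|\tilde\rho)$ in $L^1_{\rm loc}$. Consequently $\sqrt{\rho^\nu}$ is bounded in $L^{2\gamma}_{\rm loc}$ and $\sqrt{\rho^\nu}(v^\nu+w^\nu)$ in $L^2_{\rm loc}$, which gives the weak limits in \eqref{eq:finalinviscidlimits} along a subsequence.

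To identify the limit of $\sqrt{\rho^\nu}u^\nu=\sqrt{\rho^\nu}(v^\nu-w^\nu)$ with the same $\sqrt{\rho_\infty}u_\infty$, it suffices to show $\sqrt{\rho^\nu}w^\nu=2\nu\nabla\sqrt{\rho^\nu}\to0$ in $H^{-1}_{\rm loc}$. This holds because $\nu\sqrt{\rho^\nu}$ is $O(\nu)$ in $L^2_{\rm loc}$.

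\textbf{Step 2 (1D Lagrangian stability).} The averaged solution $(\bar\rho^\nu,\bar u^\nu)$ is a 1D strong solution of the Navier–Stokes system with viscosity $\nu\rho$. In Lagrangian mass coordinates this is the system treated by Kang and Vasseur \cite{KV-Inven}. The assumptions on the averaged data ($H^6$ regularity, the lower bound $c_0^\nu$, the smallness of $\nu^2\int\bar\rho_0|(\ln\bar\rho_0)_{x_1}|^2$, and relative entropy $\le E_0$) are exactly what their $a$-contraction with shifts requires.

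Their result gives a shift $X^\nu(t)$ such that the weighted relative entropy of $(\bar\rho^\nu,\bar u^\nu)$ with respect to the viscous shock shifted by $X^\nu$ is bounded by $C(E_0+o(1))$. It also bounds $|X^\nu(t)-\sigma t|$ through the dissipation term that controls $\dot X$.

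This estimate is stated in Lagrangian variables, so it has to be translated back to Eulerian $x_1$. I expect this translation to be the main obstacle. The change of variables $x_1\mapsto\int^{x_1}\bar\rho$ distorts the shift, and the relative entropy must be compared across coordinates using $\bar\rho$ bounded above and below near the shock states. The $\sqrt{E_0}$ term in \eqref{eq:finalstabilityest} should come from this comparison: displacement of mass coordinates is controlled only by the $L^2$-type norm of $\bar\rho-\hat\rho$. The $|\rho_--\rho_+|^{-2}$ factor in \eqref{eq:finalshiftest} comes from converting the shift's dissipation bound, via the Kang–Vasseur weight of size $\eps$, into an $L^\infty$ bound on $X$.

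\textbf{Step 3 (2D versus averaged).} Apply \eqref{eq:weakstrongestimatechannel}. The norms in the exponent are polynomial in $1/\nu$ for the regularity assumed, since the viscous shock has width $\nu/\eps$ and the data satisfy $H^6\le1/\nu$. Choose $C^*(\nu)$ decaying fast enough, for instance $C^*(\nu)=e^{-\nu^{-k}}$ with $k$ chosen large in terms of the integer $j$, so that $\mathcal E_{\frac12}(\rho^\nu,v^\nu,w^\nu|\bar\rho^\nu,\bar v^\nu,\bar w^\nu)(t)\to0$ uniformly on $[0,T]$. Since $|w-\bar w|$ enters $\mathcal E_{\frac12}$ with a positive sign, this yields $\int\rho^\nu|u^\nu-\bar u^\nu|^2+F(\rho^\nu|\bar\rho^\nu)\to0$.

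\textbf{Step 4 (conclusion).} Combine Steps 2 and 3 using the quasi-triangle inequality for $\eta$ on the region where the densities are bounded. Take $X_\infty$ as a weak-$*$ limit of $X^\nu$; the uniform bound from Step 2 makes it compact in $L^\infty$. Finally, use the weak lower semicontinuity of the convex functional $(\rho,m)\mapsto\int\big(|m-\rho\hat u|^2/(2\rho)+F(\rho|\hat\rho)\big)$ together with the pointwise convergence of the shifted $\hat u,\hat\rho$ along a subsequence to obtain \eqref{eq:finalstabilityest} and \eqref{eq:finalshiftest}.
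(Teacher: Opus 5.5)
Your architecture matches the paper's: the Kang--Vasseur $a$-contraction for the $x_2$-averaged solution in mass coordinates, the 2D--planar relative $\frac12$-entropy estimate made negligible by a super-exponentially small $C^*(\nu)$, then lower semicontinuity. The genuine gap is the shift, and it appears in two places.

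\textbf{The shift bound.} You want \eqref{eq:finalshiftest} from Kang--Vasseur's control of $\dot X$, transported to Eulerian variables. That route fails for two reasons.
\begin{itemize}
\item Their control is a uniform $W^{1,1}$ bound ($|\dot X|\lesssim\eps^{-2}(f+1)$ with $f\in L^1$). It gives compactness of the Lagrangian shifts, but no bound on $X(t)-\sigma t$ that vanishes with $E_0$. So it is not the source of the $|\rho_--\rho_+|^{-2}$ factor.
\item The transport fails. Here $X^\nu_E(t)=x_1(t,X^\nu_1(t))$, and $x_1(t,\cdot)$ involves time integrals of pointwise values of $\bar u^\nu_1$, which are bounded only by $\nu$-dependent constants. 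The Euler--Lagrange map degenerates as $\nu\to0$. You flag this as the main obstacle but give no way around it.
\end{itemize}
The missing idea is to transport \emph{only} the relative entropy. This is exact once the viscous profile is replaced by the piecewise-constant shock, since $dy=\bar\rho\,dx$ and $\bar\rho\,Q(1/\bar\rho\,|\,1/\hat\rho)=F(\bar\rho|\hat\rho)$. The shift is then recovered directly in Eulerian variables from mass conservation: test $\bar\rho_t+(\bar\rho\bar h_1)_{x_1}=2\nu\bar\rho_{x_1x_1}$ against a plateau cutoff $\varphi_M$.
\begin{itemize}
\item Under the a priori assumption $|X^\nu_E|\le M$, Rankine--Hugoniot makes two identities exact: $\int\varphi_M(\hat\rho^{X}-\hat\rho)\,dx_1=(\rho_--\rho_+)X$ and $\int_0^t\int\varphi_M'\hat\rho\hat u_1\,dx_1d\tau=(\rho_-u_{1,-}-\rho_+u_{1,+})t$.
\item Every other term is bounded by $\|F(\bar\rho|\hat\rho^X)\|_{L^1}$, by $M^{1/2}\|F(\bar\rho|\hat\rho^X)\|_{L^1}^{1/2}$, or by $\|\sqrt{\bar\rho}(\bar h_1-\hat u_1^X)\|_{L^2}$.
\item Taking $M$ comparable to $|\sigma|T+\eps^{-2}(E_0+\nu)$ closes the bootstrap.
\end{itemize}
This is where both $\sqrt{E_0}$ and $|\rho_--\rho_+|^{-2}$ come from. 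Your intuition that $\sqrt{E_0}$ reflects $L^2$-type control of $\bar\rho-\hat\rho$ is right, but it enters through the $M^{1/2}$ term, not through a coordinate comparison.

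\textbf{Passing to the limit in the shift.} The same degeneration breaks your Step 4. A uniform $L^\infty$ bound on $X^\nu_E$ gives only weak-$*$ convergence to $X_\infty$, not pointwise convergence. So $\hat u(x-X^\nu_E(t))\to\hat u(x-X_\infty(t))$ is unavailable, and no uniform $W^{1,1}$ bound on the Eulerian shifts is available either. The paper instead truncates, $\underline\rho^\nu=\overline\psi(\rho^\nu)$ and $\underline h^\nu=\overline\phi(h^\nu)$. It then shows (\Cref{lem:shiftchange}) that replacing $X^\nu_E$ by $X_\infty$ costs $C\|X^\nu_E-X_\infty\|_{L^\infty}\le C\eps^{-2}(E_0+\sqrt{E_0})$ asymptotically, because both lie that close to $\sigma t$. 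The tails are handled by \Cref{lem:relquantityests}. This loss is exactly the $\sqrt{E_0}$ in \eqref{eq:finalstabilityest}.

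\textbf{Smaller points.}
\begin{itemize}
\item Step 1 cannot draw $\nu$-uniform bounds from \Cref{thm:intro1}, whose $C(T)$ depends on $\nu$. Uniform local bounds come only from the combined estimate against the shifted shock, so Step 1 must come last.
\item Kang--Vasseur control $\bar h_1=\bar u_1+2\nu(\ln\bar\rho)_{x_1}$, not $\bar u_1$. The comparison in Step 4 must therefore be run in $h^\nu=v^\nu+w^\nu$.
\item Since $\rho^\nu$ may vanish and is unbounded, and $\bar\rho^\nu$ is bounded only by $\nu^{-j}$, the cross terms $(\rho^\nu-\bar\rho^\nu)(\bar h_1-\hat u_1^X)^2$ and $(\rho^\nu-\bar\rho^\nu)(F'(\bar\rho^\nu)-F'(\hat\rho^X))$ need more than a quasi-triangle inequality. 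They require $|\rho-\bar\rho|\le C(\sqrt{F(\rho|\bar\rho)}+F(\rho|\bar\rho)^{1/\gamma})$ and H\"older.
\item The $\nu^{-j}$ growth in Step 3 does not follow from the viscous-shock width, because $\bar\rho^\nu$ is not the profile. The paper proves it by energy estimates in mass coordinates, though any explicit bound would suffice to choose $C^*$.
\end{itemize}
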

Some comments are in order:
\begin{remark} More growth in $\nu$ of the quantity $||(\bar{\rho}_0^\nu,\bar{u}_0^\nu)-(\tilde{\rho}, \tilde{u})||_{H^6(\R)}$ may be allowed, at the price of faster decay of the function $C^*(\nu)$ to zero as $\nu \to 0+$. For simplicity, we do not address this here.
\end{remark}
    
   % \begin{remark}The smallness condition $|\rho_--\rho_+| < \eps_0$ is not needed in the proof of \Cref{thm:intro2}. It is used only in the proof of \cite[Theorem 1.2]{KV-Inven}, which we use to estimate the difference between a planar strong solution to \eqref{eq:2DNS} and a planar viscous shock.\end{remark}
  \begin{remark}We present the results for $d=2$ as our main example is the shallow water equations for $\gamma=2$. However, the same analysis gives similar results for $d \geq 3$ on $\R \times \T^{d-1}$ and the more general viscosity functions satisfying Bresch-Desjardins relation \eqref{BD}.
   % \item
\end{remark}
The rest of the paper is as follows. We explain the main ideas of the proofs of \Cref{thm:intro1} and \Cref{thm:intro2} in \Cref{sec:ideas}. In \Cref{sec:multid_existence_channel}, we prove \Cref{thm:intro1}. Finally, in \Cref{sec:vvlimit}, we prove \Cref{thm:intro2}.

\section{Ideas of the proof}\label{sec:ideas}
We describe in this section the methodology of our results. \newline

\underline{\textbf{Weak solutions to $2D$ Navier-Stokes in $\R \times \T$}.} The first objective is to construct weak solutions to \eqref{eq:2DNS} on the unbounded domain $\R \times \T$ for fixed $\nu >0$. In this step, $\nu > 0$ is fixed, so we will omit indexing the solution by $\nu$. This already presents significant challenges due to the density dependent viscosity. The natural conserved quantities are the energy and the BD-entropy. However, these are not strong enough to control many desired quantities; for example, $\ln(\rho)$ and $\nabla u$ are generally not defineable, and there is no way to rule out vacuum formation, even if it is not initially present. 
\par This issue was solved (on bounded domains) in \cite{MR3573976}, \cite{MR3801754}, where the authors pass into the limit $\eps_0, \eps_1, \eps_2 \to 0+$ for solutions of the following ``quantum Navier-Stokes equations with drag terms'' (see also \cite{lixin} for global existence of weak solutions in $\mathbb{R}^2$ via a different strategy):
\begin{equation}\label{eq:idea1}
\left\{
\begin{array}{ll}
    \di (\rho_R)_t +{\rm div}(\rho_R u_R)=0,\qquad    (t,x) \in \R^+ \times (\R/3R\mathbb{Z}) \times \T,\  R>0, \\[3mm]
    \di  (\rho_R u_R)_t+{\rm div}(\rho_R u_R\otimes u_R)+\nabla p(\rho_R)={\rm div} (2 \nu \rho_R\mathbb{D}u_R)\\[3mm]
   \di \hspace{5.5cm}  +2\eps_2 \rho_R \nabla\big(\frac{\Delta \sqrt{\rho_R}}{\sqrt{\rho_R}})-\eps_0u_R-\eps_1\rho_R|u_R|^2u_R, \\[3mm]
     (\rho_R(0,x), (\rho_R u_R)(0,x))=(\rho_{R,0}(x),m_{R,0}(x)).
\end{array}
\right.
\end{equation}
\par Our strategy is to pass into the limits $\eps_2,\eps_0, \eps_1 \to 0+$ and $R \to +\infty$ for the solutions to the above system constructed in \cite{MR3801754}. Note that on unbounded domains it is not natural to expect for the initial data to have finite energy and entropy, especially if we want to enforce that the solution has the same far field states as the planar shock $(\hat{\rho}, \hat{u})$. Instead, we impose that the initial data has finite \textit{relative} energy and entropy to its mean (in $x_2$). The first difficulty is to prove that the relative energy and entropy between the weak solution and its mean remains bounded \textit{uniformly} in the width $R$ of the spatial domain. To solve this, we use the ``relative $\kappa$-entropy'' structure first introduced in \cite{MR3734208} at the level of the Navier-Stokes equations ($\eps_2=\eps_0=\eps_1=0$). However, we rigorously justify the relative entropy structure by establishing it at the level $\eps_2, \eps_0, \eps_1 >0$, where all the computations may be justified, and then passing into the limit $\eps_2, \eps_0, \eps_1 \to 0+$. The estimate we obtain is roughly the following
\begin{align}\label{eq:idea2}
&\sup_{t \in [0,T]}\Big(\int_{(\R / 3R\mathbb{Z}) \times \T}\eta(\rho_R, u_R|\bar{\rho}_R, \bar{u}_R)dx_1dx_2\Big)(t) \\
&\leq Ce^{\big(\|\partial_{x_1} \bar{\rho}_R\|_{L^\infty([0,T];H^3)}+\|\partial_{x_1} \bar{u}_R\|_{L^\infty([0,T];H^2)}\big)T}  \int_{(\R / 3R\mathbb{Z}) \times \T}\eta(\rho_{R,0}, u_{R,0}|\bar{\rho}_{R,0}, \bar{u}_{R,0})dx_1dx_2,
\end{align}
where $(\rho_R, u_R)$ is the weak solution to \eqref{eq:idea1} with $\eps_2, \eps_0, \eps_1=0$ and
\begin{align*}
(\bar{\rho}_{R,0}, \bar{u}_{R,0})(x_1):=\int_{\T}(\rho_{R,0}(x_1, x_2), u_{R,0}(x_1,x_2))dx_2,
\end{align*}
and $(\bar{\rho}_R, \bar{u}_R)$ solves the following reduction of \eqref{eq:idea1} with $\eps_2,\eps_0,\eps_1=0$ for solutions depending only on $x_1$
\begin{equation}\label{eq:idea3}
\left\{
\begin{array}{ll}
    (\bar \rho_R)_t +(\bar \rho_R \bar u_{R,1})_{x_1}=0,  \indent (t,x_1) \in \R^+ \times (\R/3R\mathbb{Z}), \\[2mm]
     (\bar\rho_R \bar u_{R,1})_t+(\bar\rho_R \bar u_{R,1}^2)_{x_1}+p(\bar\rho_R)_{x_1}=(2 \nu \bar\rho_R (\bar u_{R,1})_{x_1})_{x_1}, \\[2mm]
     (\bar\rho_R \bar u_{R,2})_t+(\bar\rho_R \bar u_{R,1}\bar u_{R,2})_{x_1}=(\nu \bar\rho_R (\bar u_{R,2})_{x_1})_{x_1},\\[2mm]
     (\bar\rho_R(0,x_1), (\bar\rho_R \bar u_R)(0,x_1))=(\bar{\rho}_{R,0}(x_1), \bar{m}_{R,0}(x_1)).
\end{array}
\right.
\end{equation}
The next step is to establish uniform-in-$R$ bounds on the derivatives of the planar strong solution $(\bar{\rho}_R, \bar{u}_R)$ to \eqref{eq:idea3}. To do so we revisit the regularity theory for the $1$-d Navier-Stokes equations (see e.g. \cite{MR2368905}, \cite{MR4049919}) and prove that one may have uniform derivative bounds invariant in the size of the spatial domain. Note that \eqref{eq:idea3} is not exactly the $1$-d Navier-Stokes equations due to the presence of the equation for $\bar{u}_{R,2}$, so additional considerations must be made. Combined with \eqref{eq:idea2}, this provides uniform-in-$R$ bounds on some norms of the weak solutions.
\par With the bounds in hand, we are ready to pass into the limit $R \to +\infty$. In order to pass into the limit in the momentum equation (more specifically the term ${\rm div}(\rho_R u_R\otimes u_R)$), it is introduced in \cite{MR3801754} the concept of solutions which are renormalized in the velocity variable. This was used already to pass into the limit $\eps_2, \eps_0, \eps_1 \to 0+$ in \eqref{eq:idea1}. Here, we use the renormalization of the velocity variable again to pass into the limit in $R \to +\infty$ and obtain a weak solution $(\rho^\nu, u^\nu)$ to \eqref{eq:2DNS}, \eqref{sw}. \newline 

\underline{\textbf{The vanishing viscosity limit}.} Next, we want to take the limit $\nu \to 0+$ in the solutions $(\rho^\nu, u^\nu)$ constructed in the previous step. We can retain $\eqref{eq:idea2}$ in the limit $R \to +\infty$ and already have the estimate
\begin{align}\label{eq:idea4}
&\sup_{t \in [0,T]}\Big(\int_{\R \times \T}\eta(\rho^\nu, u^\nu|\bar{\rho}^\nu, \bar{u}^\nu)dx_1dx_2\Big)(t) \\
&\leq Ce^{\big(||\partial_{x_1} \bar{\rho}^\nu||_{L^\infty([0,T];H^3(\R))}+||\partial_{x_1} \bar{u}^\nu||_{L^\infty([0,T];H^2(\R))}\big)T} \int_{\R \times \T}\eta(\rho^\nu_0, u^\nu_0|\bar{\rho}^\nu_0, \bar{u}^\nu_0)dx_1dx_2,
\end{align}
where again $(\bar{\rho}^\nu, \bar{u}^\nu)$ is the strong solution to \eqref{eq:2DNS} depending only on $x_1$ generated by the mean of the initial data in $x_2$. The next goal is to obtain the estimate on the relative entropy between the planar strong solution and the inviscid shock
\begin{align}\label{eq:idea5}
\sup_{t \in [0,T]}\Big(\int_{\R \times \T}\eta\big(\bar{\rho}^\nu, \bar{u}^\nu|\hat{\rho}(x_1-X^\nu_E(\cdot)), \hat{u}(x_1-X^\nu_E(\cdot))\big)dx_1dx_2\Big)(t) \leq C(E_0+\nu),
\end{align}
for some appropriate shift function $X^\nu_E(t)$ (recall $E_0$ from \Cref{thm:intro2}). This can be obtained by swapping to the Lagrangian coordinates under the Euler-Lagrange change of variable
\begin{align*}
y_1(t,x_1)= \int_{(0,0)}^{(t,x_1)}\overline{\rho}^\nu dx_1- \overline{\rho}^\nu \overline{u}^\nu dt,
\end{align*}
using the $a$-contraction estimate \cite[Theorem 1.2]{KV-Inven}, which shows a relative entropy estimate (uniformly-in-$\nu$) between the Lagrangian solution and the Lagrangian shock, and then swapping back to the Eulerian coordinates. This gives \eqref{eq:idea5}. Note, however, that the Euler-Lagrange change of variables itself depends on $\nu$. Since all the $L^\infty$ bounds on $\bar{\rho}^\nu$ and $\bar{u}^\nu$ grow in $\nu$, the change of variables itself becomes degenerated as $\nu \to 0+$. This causes loss of compactness on the shift functions $X^\nu_E(t)$, which has to be very delicately recovered. In the end, however, we can only obtain uniform bounds on the shifts in $L^\infty$ compared to the uniform bounds in $W^{1,1}$ obtained in \cite{KV-Inven}. Actually, we expect this loss due to the degeneration of the change of variables. 
\par Then, we expect that \eqref{eq:idea4} and \eqref{eq:idea5} may be combined to show
\begin{align}\label{eq:idea6}
&\sup_{t \in [0,T]}\Big(\int_{\R \times \T}\eta\big(\rho^\nu, u^\nu|\hat{\rho}(x_1-X^\nu_E(\cdot)), \hat{u}(x_1-X^\nu_E(\cdot))\big)dx_1dx_2\Big)(t) \\
&\leq C(E_0+\nu)+Ce^{\big(\|\partial_{x_1} \bar{\rho}^\nu\|_{L^\infty([0,T];H^3(\R))}+\|\partial_{x_1} \bar{u}^\nu\|_{L^\infty([0,T];H^2(\R))}\big)T} \int_{\R \times \T}\eta(\rho^\nu_0, u^\nu_0|\bar{\rho}^\nu_0, \bar{u}^\nu_0)dx_1dx_2.
\end{align}
However, the relative entropy is not exactly a norm, so some error terms arise when estimating the difference. Since we are working with genuinely weak solutions to the 2D Navier-Stokes equation \eqref{eq:2DNS}, these error terms present major difficulties. In particular, we have to deal with the fact that $\rho^\nu$ may attain vacuum and there are no uniform $L^\infty$ bounds on $(\rho^\nu, u^\nu)$. Nevertheless, detailed analysis near the vacuum regions allows for \eqref{eq:idea6} to be obtained.
\par Finally, we are ready to take the vanishing viscosity limit. The idea is to enforce that $\di\int_{\R \times \T}\eta(\rho^\nu_0, u^\nu_0|\bar{\rho}^\nu_0, \bar{u}^\nu_0)dx_1dx_2$ is sufficiently small so that the term
\begin{align*}
Ce^{\big(\|\partial_{x_1}\bar{\rho}^\nu\|_{L^\infty([0,T];H^3(\R))}+\|\partial_{x_1}\bar{u}^\nu\|_{L^\infty([0,T];H^2(\R))}\big)t} \int_{\R \times \T}\eta(\rho^\nu_0, u^\nu_0|\bar{\rho}^\nu_0, \bar{u}^\nu_0)dx_1dx_2,
\end{align*}
converges to zero as $\nu \to 0+$. Clearly, this will depend on how fast the Sobolev norms of the planar strong solution $(\bar{\rho}^\nu, \bar{u}^\nu)$ grow asymptotically as $\nu \to 0+$. %\red{Via previous analyses, one would expect these high order norms to grow at a rate in $\nu$ like $\exp(\exp(\exp(\frac{1}{\nu^j})))$ for some integer $j > 0$. This would force the constant $C^*(\nu)$ in \Cref{thm:intro2} to converge to zero as $\nu \to 0$ at a rate $\exp(\exp(\exp(\exp(-\nu^{-j}))))$ to beat the growth term.} 
In this last step, we revisit the analysis of the $1$-d Navier-Stokes equations and prove the optimal bound
\begin{align*}
||\partial_{x_1} \bar{\rho}^\nu||_{L^\infty([0,T];H^3(\R))}+||\partial_{x_1}\bar{u}^\nu||_{L^\infty([0,T];H^2(\R))} \leq \frac{C}{\nu^j},
\end{align*}
for some integer $j >0$. Thus, enforcing 
\begin{align*}
C^*(\nu) \leq \exp(-C\nu^{-j}),
\end{align*}
gives 
\begin{align*}
\sup_{t \in [0,T]}\Big(\int_{\R \times \T}\eta\big(\rho^\nu, u^\nu|\hat{\rho}(x_1-X^\nu_E(\cdot)), \hat{u}(x_1-X^\nu_E(\cdot))\big)dx_1dx_2\Big)(t) \text{ is uniformly bounded in $\nu$},
\end{align*}
which allows to take the vanishing viscosity limit $\nu \to 0+$ via standard techniques. 

\section{Weak solutions to 2D Navier-Stokes equations in $\R \times \T$}\label{sec:multid_existence_channel}
\setcounter{equation}{0}
Fix $\nu > 0$. In this section, under suitable conditions on the initial data (\Cref{def:suitableinitial}), we will define a notion of weak solution to \eqref{eq:2DNS} on the channel $\R \times \T$ that takes possibly two different limits at $x_1=\pm \infty$. Let $\T_R:=[-R,2R]_{\text{periodic}} \times \T \sim \R/3R\mathbb{Z} \times \T$, where we have identified $R/3R\mathbb{Z}$ with $[-R,2R]_{\text{periodic}}$. The strategy is to consider the renormalized weak solutions on $\T_R$ constructed in \cite{MR3801754} and pass into the limit $R \to \infty$ on the partial domain $[-M, M] \times \T \subset [-R,R/2]\times \T$. First, we revisit the construction of renormalized weak solutions to the Navier-Stokes equations on $\T_R$. 
\begin{notation}
In this section, we will work with a fixed constant $\nu > 0$, so we will omit the dependence of any functions on $\nu$ for brevity. However, we will carefully track the dependence of any constants on $\nu$, as this will be important when we take the limit $\nu \to 0+$ in forthcoming sections.
\end{notation}
\subsection{Renormalized weak solutions to 2D Navier-Stokes equations on $\T_R$} 
In this section, we review and refine the construction of renormalized weak solutions for the following system:
\begin{equation}\label{eq:2DNS_T_R}
\left\{
\begin{array}{ll}
    \rho_t +{\rm div}(\rho u)=0, \qquad\qquad\  \indent (t,x) \in \R^+ \times \T_R,\\[3mm]
     (\rho u)_t+{\rm div}(\rho u\otimes u)+\nabla p(\rho)={\rm div} (2 \nu \sqrt{\rho} \mathcal{S}(u)), \\[3mm]
     (\rho(0,x), (\rho u)(0,x))=(\rho_0(x),m_0(x)),
\end{array}
\right.
\end{equation}
where $\sqrt{\rho}\mathcal{S}(u)$ is formally equal to $\rho \mathbb{D}u$. More specifically, it will be shown that $\mathcal{S}(u)$ is the symmetric part of the matrix $\mathcal{T}(u)$ verifying
\begin{align}\label{eq:T(u)def}
\sqrt{\rho}\mathcal{T}(u)=\nabla(\rho u)-2\sqrt{\rho}u \otimes \nabla\sqrt{\rho}, \text{ in $\mathcal{D}'$}.  
\end{align}
The energy and BD entropy inequalities provide control on the following quantity
\begin{align*}
\mathcal{E}_0(\rho, u)=\int_{\T_R}\big(\rho \frac{|u|^2}{2}+4\nu^2|\nabla(\sqrt{\rho})|^2+F(\rho)\big)dx,
\end{align*}
where 
\begin{align}\label{eq:enthalpy}
F(\rho)=\frac{\rho^\gamma}{\gamma-1},  
\end{align}
with associated dissipation
\begin{align*}
\mathcal{D}_0(\rho, u)=\int_{\T_R}\big(\nu|\nabla \rho^{\frac{\gamma}{2}}|^2+\nu|\mathcal{T}(u)|^2\big)dx.
\end{align*}
For technical reasons, we will also work with the so-called quantum Navier-Stokes equations with drag forces, which constitute the following system
\begin{equation}\label{eq:2DQNS_T_R}
\left\{
\begin{array}{ll}
    \rho_t +{\rm div}(\rho u)=0, \qquad \qquad\quad  \indent (t,x) \in \R^+ \times \T_R,\\[3mm]
     (\rho u)_t+{\rm div}(\rho u\otimes u)+\nabla p(\rho)={\rm div} (2 \nu \sqrt{\rho}\mathcal{S}(u)+\frac{\eps_2}{\nu}\sqrt{\rho}\mathcal{T}(w))-\eps_0u-\eps_1\rho|u|^2u, \\[3mm]
     (\rho(0,x), (\rho u)(0,x))=(\rho_0(x),m_0(x)),
\end{array}
\right.
\end{equation}
for $\eps_0, \eps_1, \eps_2 \geq 0$, where formally 
\begin{align*}
{\rm div}(\frac{\eps_2}{\nu}\sqrt{\rho}\mathcal{T}(w))=2\eps_2 \rho \nabla \big(\frac{\Delta \sqrt{\rho}}{\sqrt{\rho}}\big).
\end{align*}
More specifically, it is enforced that 
\begin{align}\label{eq:S_epsdef}
\frac{1}{\nu}\sqrt{\rho}\mathcal{T}(w)=2\big(\sqrt{\rho}\big(\nabla^2\sqrt{\rho}-4\big(\nabla \rho^{\frac{1}{4}} \otimes \nabla \rho^{\frac{1}{4}})\big)\big)=2\nabla(\sqrt{\rho}\nabla \sqrt{\rho})-4\nabla \sqrt{\rho} \otimes \nabla \sqrt{\rho}.
\end{align}
At the point, the notation $\mathcal{T}(w)$ for the quantum term may appear unmotivated, as no quantity $w$ has been defined. We anticipate formally defining $w=\nu \nabla \ln(\rho)$, and $\mathcal{T}(w)$ is formally $\sqrt{\rho}\nabla w$. In the end, however, there is not enough a priori estimates to define $\ln(\rho)$, and $w$ must be implicitly defined through \eqref{eq:vandwrelations}, and $\mathcal{T}(w)$ implicitly defined through \eqref{eq:S_epsdef}.
\par The energy and BD-entropy for the system \eqref{eq:2DQNS_T_R} provide control on
\begin{align*}
\mathcal{E}_{\eps}(\rho, u)=\int_{\T_R}\big(\rho \frac{|u|^2}{2}+(2\eps_2+4\nu^2)|\nabla(\sqrt{\rho})|^2+F(\rho)+\eps_0\nu(\rho-\ln(\rho))\big)dx,
\end{align*}
with associated dissipation
\begin{align*}
\mathcal{D}_{\eps}(\rho, u)=\int_{\T_R}\Big(\nu|\nabla \rho^{\frac{\gamma}{2}}|^2+\nu \eps_2(|\nabla \rho^{\frac{1}{4}}|^4+|\nabla^2\sqrt{\rho}|^2)+\nu |\mathcal{T}(u)|^2+\eps_0|u|^2+\eps_1\rho |u|^4\Big)dx.
\end{align*}

These provide the following a priori estimates.
\begin{align}\label{eq:aprioriestimatesforquantum}
&\sqrt{\rho} \in L^\infty(\R^+;L^{2\gamma}(\T_R)), \indent \nabla \sqrt{\rho} \in L^\infty(\R^+;L^{2}(\T_R)), \indent \nabla \rho^{\frac{\gamma}{2}} \in L^2(\R^+;L^{2}(\T_R)), \notag \\
&\sqrt{\rho}u \in L^\infty(\R^+;L^{2}(\T_R)), \indent \mathcal{T}(u) \in L^2(\R^+;L^{2}(\T_R)), \indent \sqrt{\kappa}\nabla^2\sqrt{\rho} \in L^2(\R^+;L^{2}(\T_R)), \notag \\
&\kappa^{\frac{1}{4}}\nabla \rho^{\frac{1}{4}}\in L^4(\R^+;L^4(\T_R)), \indent \eps_1^{\frac{1}{4}}\rho^{\frac{1}{4}}u \in L^4(\R^+;L^4(\T_R)), \notag \\
& \eps_0^{\frac{1}{2}}u \in L^2(\R^+;L^{2}(\T_R)), \indent \eps_0\ln(\rho) \in L^\infty(\R^+;L^{1}(\T_R)).
\end{align}

Now, we give the various definitions of weak solutions that we will consider. The following definitions will be valid for all ranges of parameters $\eps_0,\eps_2 \geq 0$, $0 < \nu < \frac{4}{3}$, $0 \leq \eps_1 < \frac{2}{3}$.
\begin{definition}
We say that $(\rho, u)$ is a weak solution to $\eqref{eq:2DQNS_T_R}$ if it verifies the a priori estimates \eqref{eq:aprioriestimatesforquantum}, and for any function $\psi \in C^\infty_c(\R^+ \times \T_R)$:
\begin{align}\label{eq:uequation}
&\int_0^\infty \int_{\T_R}(\rho \psi_t+\rho u \cdot \nabla \psi) dx dt =0, \\
&\int_0^\infty \int_{\T_R}\Big[\rho u \psi_t+\big(\rho u \otimes u-(2\nu \sqrt{\rho}\mathcal{S}(u)+\frac{\eps_2}{\nu}\sqrt{\rho}\mathcal{T}(w))\big)\cdot \nabla \psi\\
&\qquad\qquad\quad  +(-2\rho^{\frac{\gamma}{2}}\nabla (\rho)^{\frac{\gamma}{2}}-\eps_0u-\eps_1\rho|u|^2u)\psi\Big]dx dt=0,
\end{align}
where $\mathcal{T}(w)$ verifies \eqref{eq:S_epsdef} and $\mathcal{S}(u)$ is the symmetric part of $\mathcal{T}(u)$ which verifies $\eqref{eq:T(u)def}$, and for any $\overline{\psi} \in C^\infty_c(\T_R)$
\begin{align*}
&\lim_{t \to 0} \int_{\T_R}\rho(t,x)\overline{\psi}(x)dx=\int_{\T_R}\rho_0(x)\overline{\psi}(x)dx, \\
&\lim_{t \to 0} \int_{\T_R}\rho(t,x)u(t,x)\overline{\psi}(x)dx=\int_{\T_R}m_0(x)\overline{\psi}(x)dx.
\end{align*}
\end{definition}

\begin{definition}
We say that $(\rho, u)$ is a renormalized weak solution to \eqref{eq:2DQNS_T_R} if it verifies the a priori estimates \eqref{eq:aprioriestimatesforquantum}, and for any function $\phi \in W^{2,\infty}(\R^2)$ compactly supported, there exist two Radon measures $\Xi_{\phi}, \widetilde\Xi_{\phi} \in \mathcal{M}(\R^+ \times \T_R)$, with
\begin{align*}
||(\Xi_{\phi}, \widetilde\Xi_{\phi})||_{\mathcal{M}([0,T] \times \T_R)} \leq C||\phi''||_{L^\infty(\R^2)},
\end{align*}
for any $T>0$, where the constant $C$ depends only on the solution $(\rho, u)$ and $T$, and for any function $\psi \in C^\infty_c(\R^+ \times \T_R)$:
\begin{align*}
&\int_0^\infty \int_{\T_R}(\rho \psi_t+\rho u \cdot \nabla \psi) dx dt =0, \\
&\int_0^\infty \int_{\T_R}\Big[\rho \phi(u) \psi_t+\big(\rho \phi(u) u-(2\nu \sqrt{\rho}\mathcal{S}(u)+\frac{\eps_2}{\nu}\sqrt{\rho}\mathcal{T}(w))\phi'(u)\big)\cdot \nabla \psi\\
&\qquad\qquad\quad  (-2\rho^{\frac{\gamma}{2}}\nabla (\rho)^{\frac{\gamma}{2}}-\eps_0u-\eps_1\rho|u|^2u)\phi'(u)\psi\Big]dx dt=\left\langle \Xi_{\phi}, \psi\right\rangle,
\end{align*}
where $\sqrt{\rho}\mathcal{T}(w)$ verifies \eqref{eq:S_epsdef} and $\mathcal{S}(u)$ is the symmetric part of $\mathcal{T}(u)$ which verifies
$$
\nu\sqrt{\rho}\phi^\prime(u)[\mathcal{T}(u)]_{jk}:=\nu\partial_j\big(\rho \phi^\prime(u) u_{k}\big)-2\nu\sqrt{\rho}u_{k}\phi^\prime(u)\partial_j\sqrt{\rho}+\widetilde\Xi _{\phi},
$$
and for any $\overline{\psi} \in C^\infty_c(\T_R)$
\begin{align}\label{eq:uinitdata}
&\lim_{t \to 0} \int_{\T_R}\rho(t,x)\overline{\psi}(x)dx=\int_{\T_R}\rho_0(x)\overline{\psi}(x)dx, \\
&\lim_{t \to 0} \int_{\T_R}\rho(t,x)u(t,x)\overline{\psi}(x)dx=\int_{\T_R}m_0(x)\overline{\psi}(x)dx.
\end{align}
\end{definition}

We note that a renormalized weak solution is actually a weak solution in the usual sense \cite[Section 4]{MR3801754}. Finally, we can put even more structure on a (renormalized) weak solution to obtain a ``$\frac{1}{2}$-entropy'' (renormalized) weak solution. For this, we need to define the auxiliary velocities $v$ and $w$, which are, formally, $v=u+\nu \nabla \ln(\rho)$ and $w=v-u$. However, we really need to define $w$ through \Cref{eq:vandwrelations} for $v$ and $w$ to be defined at the limit $\eps_0=\eps_1=\eps_2=0$.
\begin{definition}
A (renormalized) weak solution to \eqref{eq:2DNS_T_R} is called {\it $\frac{1}{2}$-entropic} if there additionally exist two functions $v,w$ verifying
\begin{align}\label{eq:vandwrelations}
\sqrt{\rho}v, \sqrt{\rho}w \in L^\infty(\R^+;L^2(\T_R)), &\indent \sqrt{\rho}u=\sqrt{\rho}v-\sqrt{\rho}w \text{ in $L^\infty([\R^+;L^2(\T_R))$}, \\
\sqrt{\rho_0}u_0=\sqrt{\rho_0}v_0-\sqrt{\rho_0}w_0 \text{ in $L^2(\T_R)$}, &\indent \sqrt{\rho} w=2\nu \nabla \sqrt{\rho}, \indent \rho w=\nu \nabla \rho \text{ in }\mathcal{D}'
\end{align}
such that $(\rho, v, w)$ verify, in the sense of distributions
\begin{equation}\label{eq:vandweqn}
\left\{
\begin{array}{ll}
    \rho_t +{\rm div}(\rho u)=0, \qquad \qquad \indent (t,x) \in \R^+ \times \T_R,\\[3mm]
     (\rho v)_t+{\rm div}(\rho v\otimes u)+\nabla p(\rho)-\nu{\rm div}(\sqrt{\rho}\mathcal{T}(v))+(\nu-\frac{\eps_2}{\nu}){\rm div}(\sqrt{\rho}\mathcal{T}(w))\\[3mm]
\hspace{8cm}+\eps_0u+\eps_1\rho|u|^2u=0, \\[3mm]
     (\rho w)_t+{\rm div}(\rho w\otimes u)+\nu{\rm div}(\sqrt{\rho}\mathcal{T}(u)^t)=0, \\[3mm]
     (\rho(0,x), (\rho v)(0,x), (\rho w)(0,x))=(\rho_0(x),\rho_0v_0(x), \rho_0w_0(x)).
\end{array}
\right.
\end{equation}
More specifically, for any function $\psi \in C^\infty_c(\R^+ \times \T_R)$:
\begin{align}\label{eq:vandweqnintegrated}
&\int_0^\infty \int_{\T_R}(\rho \psi_t+\rho u \cdot \nabla \psi) dx dt =0, \\
&\int_0^\infty \int_{\T_R}\Big[\rho v \psi_t+\big(\rho v \otimes u-(\nu \sqrt{\rho}\mathcal{T}(v)-(\nu-\frac{\eps_2}{\nu}) \sqrt{\rho}\mathcal{T}(w))\big)\cdot \nabla \psi\\
&\qquad\qquad\quad  (-2\rho^{\frac{\gamma}{2}}\nabla (\rho)^{\frac{\gamma}{2}}-\eps_0u-\eps_1\rho|u|^2u)\psi\Big]dx dt=0, \\
&\int_0^\infty \int_{\T_R}\Big[\rho w \psi_t+\big(\rho w \otimes u+\nu \sqrt{\rho}\mathcal{T}(u)^t\cdot \nabla \psi\Big]dxdt=0,
\end{align}
where $\mathcal{T}(v), \mathcal{T}(w)$ are defined by 
\begin{align}\label{eq:vandwdiffusions}
\sqrt{\rho}\mathcal{T}(\theta)=\nabla(\rho \theta)-\rho \theta \otimes \frac{w}{\nu}, \text{ in $\mathcal{D'}$ for $\theta=u,v,w$},
\end{align}
and for any $\overline{\psi} \in C^\infty_c(\T_R)$
\begin{align}\label{eq:vandwinitdata}
&\lim_{t \to 0} \int_{\T_R}\rho(t,x)\overline{\psi}(x)dx=\int_{\T_R}\rho_0(x)\overline{\psi}(x)dx, \\
&\lim_{t \to 0} \int_{\T_R}\rho(t,x)v(t,x)\overline{\psi}(x)dx=\int_{\T_R}\rho_0(x)v_0\overline{\psi}(x)dx, \\
&\lim_{t \to 0} \int_{\T_R}\rho(t,x)w(t,x)\overline{\psi}(x)dx=\int_{\T_R}\rho_0(x)w_0\overline{\psi}(x)dx,
\end{align} 
and also, it obeys the energy estimate for any $t>0$
\begin{align}\label{eq:1/2entropyestimate}
\mathcal{E}_{\frac{1}{2},\eps}(\rho,v,w)(t)+\int_0^t\mathcal{D}_{\frac{1}{2},\eps}(\rho,v,w)(\tau)d \tau \leq \mathcal{E}_{\frac{1}{2},\eps}(\rho,v,w)(0),
\end{align}
where 
\begin{align}\label{eq:halfentropy}
\mathcal{E}_{\frac{1}{2},\eps}(\rho,v,w)&=\int_{\T_R}\big(\rho(\frac{|v|^2+|w|^2}{2})+F(\rho)+\eps_0\nu(\rho-\ln(\rho))+2\eps_2|\nabla \sqrt{\rho}|^2\big)dx, \\
\mathcal{D}_{\frac{1}{2},\eps}(\rho,v,w)&=(\nu-\zeta)\int_{\T_R}|\mathcal{T}(u)|^2dx+\eps_0\int_{\T_R}|u|^2dx+(\eps_1-\zeta)\int_{\T_R}\rho|u|^4dx \\
&+\frac{\nu}{2}\int_{\T_R}\frac{8}{\gamma}|\nabla \rho^{\frac{\gamma}{2}}|^2dx+\frac{\eps_2 \nu}{2}\int_{\T_R}\rho|\nabla^2\ln(\rho)|^2dx,
\end{align}
where 
\begin{align}\label{eq:zetadef}
\zeta=\frac{3\nu \eps_1}{4}.   
\end{align}
\end{definition}
A couple of remarks are in order:
\begin{remark}\label{rem:rem1}
In the case $\eps_2=0$, there is not enough a priori estimates to define $\mathcal{T}(v), \mathcal{T}(w)$ sensibly. In this case, the term $-\nu{\rm div}(\sqrt{\rho}\mathcal{T}(v))+(\nu-\frac{\eps_2}{\nu}){\rm div}(\sqrt{\rho}\mathcal{T}(w))$ in \eqref{eq:vandweqn} should be understood as $-\nu{\rm div}(\sqrt{\rho}\mathcal{T}(u))$.
\end{remark}
\begin{remark}The existence of $\frac{1}{2}$-entropic renormalized weak solutions for finite energy and entropy initial data is shown via a small modification of \cite[Theorem 2.1]{MR3801754}. The inequality \eqref{eq:1/2entropyestimate} may be proven at the level of the Faedo-Galerkin approximation for positive $\eps_0, \eps_1, \eps_2$ and passed into the limit to obtain it for $\eps_0, \eps_1, \eps_2=0$. For convenience, we will show this in \Cref{sec:halfentropyestimatedetails}. 
\end{remark}
\begin{remark}If $(\rho, u)$ verifies \eqref{eq:uequation} and $w$ verifies $\eqref{eq:vandwrelations}$, it is straightforward to see that $(\rho, v,w)$ verify \eqref{eq:vandweqnintegrated}. Indeed, differentiating the continuity equation gives the equation on $\rho w$, and the $\rho v$ equation is obtained by linearity. 
\end{remark}
We also define the tensors $\mathcal{S}(\theta), \mathcal{A}(\theta)$ for $\theta=u,v,w$ as the symmetric and anti-symmetric parts respectively of $\mathcal{T}(\theta)$. 
\par Given two $\frac{1}{2}$-entropy weak solutions $(\rho, v, w)$ and $(r,V,W)$, we may define the relative $\frac{1}{2}$-entropy:
\begin{align}\label{eq:relativehalfentropy}
\mathcal{E}_{\frac{1}{2},\eps}(\rho,v,w|r,V,W)&=\int_{\T_R}\big(\rho\frac{|v-V|^2+|w-W|^2}{2}+F(\rho|r)+\eps_0\nu G(\rho|r) \\
&+2\eps_2|\nabla\sqrt{\rho}-\nabla \sqrt{r}|^2\big)dx,
\end{align}
where 
\begin{align*}
G(x)=x-\ln(x).
\end{align*} 
The quantity $\mathcal{E}_{\frac{1}{2},0}(\rho,v,w)$ and relative quantity $\mathcal{E}_{\frac{1}{2},0}(\rho,v,w|r,V,W)$ are defined in the obvious way (omit the terms with $\eps_0, \eps_2$ in \eqref{eq:halfentropy}, \eqref{eq:relativehalfentropy}):
\begin{align}\label{eq:halfentropy0}
\mathcal{E}_{\frac{1}{2},0}(\rho,v,w)&=\int_{\T_R}\big(\rho\frac{|v|^2+|w|^2}{2}+F(\rho)\big)dx, \\
\mathcal{E}_{\frac{1}{2},0}(\rho,v,w|r,V,W)&=\int_{\T_R}\big(\rho\frac{|v-V|^2+|w-W|^2}{2}+F(\rho|r)\big)dx.
\end{align}
\par The following proposition shows existence of a $\frac{1}{2}$-entropy renormalized weak solution to \eqref{eq:2DQNS_T_R} for initial data with finite energy and entropy which additionally obeys a \textit{relative entropy inequality}. In what follows, by a strong solution to $\eqref{eq:2DNS_T_R}$, we mean a pair of functions $(r,U)$ verifying $r \in C^2(\R^+ \times \T_R), r > 0$,  $U \in C^2(\R^+ \times \T_R)$ which verify \eqref{eq:2DNS_T_R} pointwise, and we may define the quantities $V=U+\nu\nabla \ln(r)$, $W=\nu \nabla \ln(r)$. 
\begin{proposition}\label{prop:1/2entropyrenormalizedexistenceT_Rw/q+drag}
Fix $\eps_0, \eps_1, \eps_2 > 0$. Let $\mathcal{E}_{\eps}(\rho_0, u_0) < \infty$. Then, there exists a $\frac{1}{2}$-entropic renormalized weak solution to \eqref{eq:2DQNS_T_R}. Further, if $(r,V,W)$ is a strong solution to the Navier-Stokes equations \eqref{eq:2DNS_T_R} (without quantum and drag terms), then for any $t>0$, the following relative entropy inequality is verified:
\begin{align}\label{eq:weakstronginequality}
&\mathcal{E}_{\frac{1}{2},0}(\rho,v,w|r,V,W)(t)+\int_0^t\int_{\T_R}\Big[(\nu-\zeta)|\mathcal{T}(u)-\sqrt{\rho}\nabla U|^2\\[3mm]
&\hspace{5cm}\quad+\nu|\frac{2}{\sqrt{\gamma}} \nabla \rho^{\frac{\gamma}{2}}-\sqrt{\rho p'(\rho)}\nabla \ln(r)|^2\Big]dxd\tau \\
&\leq Ce^{\big(||\nabla V||_{L^\infty([0,t] \times \T_R)}+||\nabla W||_{L^\infty([0,t] \times \T_R)}+||\Delta \ln(r)||_{L^\infty([0,t] \times \T_R)}+||\nabla \ln(r)||_{L^\infty([0,t] \times \T_R)}^2\big)t} \\
& \times \Big(\mathcal{E}_{\frac{1}{2},\eps}(\rho,v,w|r,V,W)(0)+E(t)\Big),
\end{align}
where $E(t)$ is given by
\begin{align*}
E(t)=&\int_0^t\int_{\T_R}\big(|\frac{\eps_2}{\nu}\sqrt{\rho}\mathcal{S}(w):\nabla V|+|\eps_0u\cdot V|+|\eps_1\rho|u|^2u\cdot V| \\
&+|2\zeta \sqrt{\rho}\mathcal{A}(v):\mathbb{A}(V)|+|2\zeta \mathcal{S}(u):\mathbb{D}(U)|\big)dxd \tau \\
&+\eps_0\nu\int_{\T_R}|G(r_0)-G'(r_0)(\rho_0-r_0)|dx+2\eps_2 \int_{\T_R}|\nabla \sqrt{r_0}|^2dx \\
&+4\eps_2 \int_{\T_R}|\nabla\sqrt{\rho_0}\cdot \nabla \sqrt{r_0}|dx.
\end{align*}
Further, for any compact subset $K \subset \T_R$, we have an estimate
\begin{align}\label{eq:renormalizationmeasureestimate}
&(\nu-\zeta)||\Xi_\phi||_{\mathcal{M}([0,t] \times K)} \leq C||\phi''||_{L^\infty}\Bigg( (\nu-\zeta)\int_0^t\int_K \rho|\nabla (U)|^2dxd\tau \\
&+ Ce^{\big(||\nabla V||_{L^\infty([0,t] \times \T_R)}+||\nabla W||_{L^\infty([0,t] \times \T_R)}+||\Delta \ln(r)||_{L^\infty([0,t] \times \T_R)}+||\nabla \ln(r)||_{L^\infty([0,t] \times \T_R)}^2\big)t} \\
& \times \Big(\mathcal{E}_{\frac{1}{2},\eps}(\rho,v,w|r,V,W)(0)+E(t)\Big) \\
&+||\mathcal{T}(u)||_{L^2([0,t] \times \T_R)}||\frac{\eps_2}{\nu}\mathcal{T}(w)||_{L^2([0,t] \times \T_R)}\Bigg).
\end{align}
The constant $C$ does not depend on $R$.
\end{proposition}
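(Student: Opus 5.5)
Existence will follow the construction of \cite[Theorem 2.1]{MR3801754}, carried out on $\T_R$ with $\eps_0,\eps_1,\eps_2>0$ fixed. I will use the Faedo--Galerkin scheme with the regularizing terms that allow a smooth approximate density bounded away from zero, then pass to the limit in the Galerkin dimension using the bounds \eqref{eq:aprioriestimatesforquantum}. At the Galerkin level, $\ln\rho$ is well defined, so I can set $w=\nu\nabla\ln\rho$ and $v=u+w$ directly. Relations \eqref{eq:vandwrelations} and system \eqref{eq:vandweqn} then hold classically: the $\rho w$ equation comes from applying $\nu\nabla$ to the continuity equation, and the $\rho v$ equation follows by linearity. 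Testing the $v$-equation with $v$ and the $w$-equation with $w$ gives \eqref{eq:1/2entropyestimate}. The only non-sign-definite contributions come from the cubic drag. These are absorbed using $\zeta=\frac{3\nu\eps_1}{4}$ through Young's inequality, which is where the factors $(\nu-\zeta)$ and $(\eps_1-\zeta)$ arise. Compactness in the limit (strong convergence of $\sqrt\rho$, of $\sqrt\rho u$ via the $\eps_1$ drag and Mellet--Vasseur type arguments, and weak lower semicontinuity of the dissipation) then yields a $\frac12$-entropic renormalized weak solution. The renormalized formulation and the measures $\Xi_\phi,\widetilde\Xi_\phi$ are inherited from \cite{MR3801754}.

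For the relative entropy inequality, I will again compute at the Galerkin level and pass to the limit. I write
\[
\mathcal{E}_{\frac12,\eps}(\rho,v,w|r,V,W)=\mathcal{E}_{\frac12,\eps}(\rho,v,w)-\int\rho v\cdot V-\int\rho w\cdot W+\tfrac12\int\rho(|V|^2+|W|^2)-\int F'(r)(\rho-r)-\int F(r)+\dots
\]
The first term is controlled by \eqref{eq:1/2entropyestimate}. The cross terms are differentiated in time using the weak formulation \eqref{eq:vandweqnintegrated} with test functions $V$, $W$, $F'(r)$ (admissible since $r,U\in C^2$, after a density argument in time). The strong solution $(r,V,W)$ satisfies the same augmented system with $\eps=0$, so the equations for $V$ and $W$ are used to cancel the principal terms. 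Collecting everything should produce the dissipation
\[
(\nu-\zeta)|\mathcal{T}(u)-\sqrt\rho\nabla U|^2+\nu\Bigl|\tfrac{2}{\sqrt\gamma}\nabla\rho^{\gamma/2}-\sqrt{\rho p'(\rho)}\nabla\ln r\Bigr|^2,
\]
the $\eps$-error terms gathered in $E(t)$, and remainders of the form
\[
\int\rho(v-V)\otimes(u-U):\nabla V,\qquad \int\rho(w-W)\otimes(u-U):\nabla W,\qquad \int p(\rho|r)\,\mathrm{div}\,U .
\]
Writing $u-U=(v-V)-(w-W)$, each remainder is bounded by $(\|\nabla V\|_\infty+\|\nabla W\|_\infty)\,\mathcal{E}_{\frac12,0}$. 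The terms involving $\nabla\ln r$ and $\Delta\ln r$ come from the pressure and the cross diffusion $\rho\,\mathcal{T}(w)\cdot\nabla\ln r$. These are bounded after absorbing half of the dissipation by Young's inequality, which produces the $\|\nabla\ln r\|_\infty^2$ factor. The $\eps_0\nu G(\rho|r)$ and $2\eps_2|\nabla\sqrt\rho-\nabla\sqrt r|^2$ parts of the relative entropy are handled by dropping their nonnegative contributions at time $t$ and paying the initial cross terms listed in $E(t)$. Gronwall's lemma then gives \eqref{eq:weakstronginequality}. All constants are pointwise and never involve the size of $\T_R$, so $C$ is independent of $R$.

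For \eqref{eq:renormalizationmeasureestimate}, I will use the explicit form of $\Xi_\phi$ from \cite{MR3801754}. It is a weak limit of quantities bounded by $\|\phi''\|_\infty\,|\mathcal{T}(u)|^2$ plus $\|\phi''\|_\infty\,|\mathcal{T}(u)|\,|\frac{\eps_2}{\nu}\mathcal{T}(w)|$. I will split $|\mathcal{T}(u)|^2\le 2|\mathcal{T}(u)-\sqrt\rho\nabla U|^2+2\rho|\nabla U|^2$ on $K$. The first piece is bounded by the dissipation already controlled in \eqref{eq:weakstronginequality}, and the mixed term is handled by Cauchy--Schwarz. The main obstacle is the relative entropy computation itself: identifying the exact cancellations between the $v$- and $w$-equations so that the cross diffusion terms $\sqrt\rho\mathcal{T}(w)$ pair into the perfect square with the pressure. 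A further difficulty is doing this at the Galerkin level, where the drag and quantum terms must be tracked so that they end up only in $E(t)$.
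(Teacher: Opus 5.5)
Your outline follows the paper's proof in structure. Existence is imported from \cite{MR3490496} and \cite{MR3801754}. The relative entropy comes from the $\frac12$-entropy inequality minus the weak $\rho v$, $\rho w$ and mass equations tested against $V$, $W$, $\frac{|V|^2+|W|^2}{2}$, $F'(r)$; the strong equations for $rV$, $rW$ tested against $\rho(V-v)/r$, $\rho(W-w)/r$ are then inserted, and Gronwall closes the argument. The measure bound is read off from $\Xi_\phi=\phi''(u)\mathcal{T}(u)\big(2\nu\mathcal{S}(u)+\frac{\eps_2}{\nu}\mathcal{T}(w)\big)$ with exactly your splitting.

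The one genuinely different step is the leftover term $I_2=\nu\int(p'(\rho)-p'(r))\sqrt{\rho}\,\nabla\ln r\cdot(2\nabla\sqrt{\rho}-\sqrt{\rho}\nabla\ln r)$. The paper does not use Young here. It uses the pointwise identity $\sqrt{\rho}\,\nabla\ln r\,(p'(\rho)-p'(r))\cdot(2\nabla\sqrt{\rho}-\sqrt{\rho}\nabla\ln r)=\nabla p(\rho|r)\cdot\nabla\ln r-\gamma(\gamma-1)F(\rho|r)|\nabla\ln r|^2$ and integrates by parts. This is the only place $\|\Delta\ln r\|_\infty$ enters, and it works for every $\gamma>1$. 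Your absorption of half of the square instead needs $\rho(p'(\rho)-p'(r))^2/p'(\rho)\le C_\gamma F(\rho|r)$. Setting $\rho=sr$, this ratio depends only on $s$ and behaves like $s^{2-\gamma}$ at vacuum, so the bound holds exactly when $\gamma\le 2$. That is acceptable under the standing assumption $\gamma\in(1,2]$, but you must state and check it. In your route $\Delta\ln r$ then plays no role, so your explanation of where it comes from does not fit your own argument.

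Two further corrections. First, the cancellation you list as the main obstacle is not a pairing of $\sqrt{\rho}\mathcal{T}(w)$ with the pressure. The pressure square comes from $\nabla p(\rho)$ tested against $w$ and $W$, and $\nabla p(r)$ tested against $\rho(W-w)/r$, using $\rho w=\nu\nabla\rho$ and $W=\nu\nabla\ln r$. The diffusion mismatches become quadratic in $(v-V,w-W)$ through $\big(\frac{\nabla\sqrt{\rho}}{r}+\nabla\frac{\sqrt{\rho}}{r}\big)\sqrt{\rho}=\frac{\rho}{\nu r}(w-W)$, a consequence of $\sqrt{\rho}w=2\nu\nabla\sqrt{\rho}$. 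This identity is what gives the $(\|\nabla V\|_\infty+\|\nabla W\|_\infty)\,\mathcal{E}_{\frac12,0}$ bounds, and you should make it explicit.

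Second, do the relative-entropy computation, as the paper does, on the $\eps$-level weak solution, which is what your use of \eqref{eq:vandweqnintegrated} already amounts to. Do not do it at the Galerkin level. There the momentum equation holds only against the finite-dimensional test space, so $V$ is not admissible. Moreover, the absolute-value error terms in $E(t)$ are only lower semicontinuous under weak convergence, which is the wrong direction for passing an upper bound to the limit.
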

By passing into the limit $\eps_0, \eps_1, \eps_2 \to 0+$, we obtain the following corollary:
\begin{proposition}\label{prop:1/2entropyrenormalizedexistenceT_R}
Let $\eps_0, \eps_1, \eps_2=0$. Let $\mathcal{E}_0(\rho_0, u_0) < \infty$. Then, there exists a $\frac{1}{2}$-entropic renormalized weak solution to \eqref{eq:2DNS_T_R}. Further, if $(r,V,W)$ is a strong solution to the Navier-Stokes equations \eqref{eq:2DNS_T_R}, then the following relative entropy inequality is verified:
\begin{align}\label{eq:weakstrongestimateonTR}
&\mathcal{E}_{\frac{1}{2},0}(\rho,v,w|r,V,W)(t)+\nu \int_0^t\int_{\T_R}\big(|\mathcal{T}(u)-\sqrt{\rho}\nabla U|^2+|\frac{2}{\sqrt{\gamma}}\nabla \rho^{\frac{\gamma}{2}}-\sqrt{\rho p'(\rho)}\nabla \ln(r)|^2\big)dxd\tau \\
&\leq Ce^{\big(||\nabla V||_{L^\infty([0,t] \times \T_R)}+||\nabla W||_{L^\infty([0,t] \times \T_R)}+||\Delta \ln(r))||_{L^\infty([0,t] \times \T_R)}+||\nabla \ln(r)||_{L^\infty([0,t] \times \T_R)}^2\big)t} \\
&\times \Big(\mathcal{E}_{\frac{1}{2},0}(\rho,v,w|r,V,W)(0)\Big).
\end{align}
Further, for any compact subset $K \subset \T_R$, we have an estimate
\begin{align*}
&\nu||\Xi_\phi||_{\mathcal{M}([0,t] \times K)} \leq C||\phi''||_{L^\infty}\Bigg( \nu \int_0^t\int_K \rho|\nabla (U)|^2dxd\tau \\
&+ Ce^{\big(||\nabla V||_{L^\infty([0,t] \times \T_R)}+||\nabla W||_{L^\infty([0,t] \times \T_R)}+||\Delta \ln(r)||_{L^\infty([0,t] \times \T_R)}+||\nabla \ln(r)||_{L^\infty([0,t] \times \T_R)}^2\big)t} \\
&\times \mathcal{E}_{\frac{1}{2},0}(\rho,v,w|r,V,W)(0)\Bigg).
\end{align*}
The constant $C$ does not depend on $R$.
\end{proposition}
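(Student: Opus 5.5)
The plan is to obtain \Cref{prop:1/2entropyrenormalizedexistenceT_R} as a limit of the solutions furnished by \Cref{prop:1/2entropyrenormalizedexistenceT_Rw/q+drag}. Fix a sequence $\eps=(\eps_0,\eps_1,\eps_2)\to 0$. We must ensure that $\mathcal{E}_\eps(\rho_{0},u_{0})<\infty$ when only $\mathcal{E}_0(\rho_0,u_0)<\infty$. To this end we regularize the initial data as in \cite{MR3801754}: take $\rho_{0,\eps}=\rho_0+\delta(\eps)$ mollified, and $u_{0,\eps}$ chosen accordingly. This keeps the $\eps_0\nu\int(\rho-\ln\rho)$ term finite and vanishing in the limit, and gives $\mathcal{E}_{\frac12,\eps}(\rho_{0,\eps},v_{0,\eps},w_{0,\eps}|r_0,V_0,W_0)\to \mathcal{E}_{\frac12,0}(\rho_0,v_0,w_0|r_0,V_0,W_0)$. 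Here $\delta(\eps)$ is tuned so that $\eps_0\ln\delta\to0$. For each $\eps$ we get a $\frac12$-entropic renormalized weak solution $(\rho_\eps,u_\eps,v_\eps,w_\eps)$ satisfying \eqref{eq:weakstronginequality} and \eqref{eq:renormalizationmeasureestimate} with an $R$-independent constant.

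\textbf{Step 1 (compactness).} The bounds \eqref{eq:aprioriestimatesforquantum} and \eqref{eq:1/2entropyestimate} are uniform in $\eps$ (the $\nu$-parts of the energy and dissipation do not depend on $\eps$). By the Vasseur--Yu compactness argument of \cite{MR3801754} we then obtain $\sqrt{\rho_\eps}\to\sqrt\rho$ strongly in $L^2_{\rm loc}$ and $\rho_\eps u_\eps\to\rho u$ strongly in $L^1_{\rm loc}$. In addition $\mathcal{T}(u_\eps)\rightharpoonup\mathcal{T}(u)$ weakly in $L^2$, and $\sqrt{\rho_\eps}v_\eps$, $\sqrt{\rho_\eps}w_\eps$ converge weakly in $L^2$. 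Passing to the limit in the renormalized formulation uses that $\phi$ is bounded. Specifically, $\rho_\eps\phi(u_\eps)u_\eps$ converges strongly, and the measures $\Xi_\phi,\widetilde\Xi_\phi$ converge weakly-$*$ thanks to their uniform mass bounds. The drag and quantum terms vanish: $\eps_0 u_\eps\to0$ since $\eps_0^{1/2}u_\eps$ is bounded in $L^2$, and $\eps_1\rho|u|^2u\to0$ by the $L^4$ bound on $\eps_1^{1/4}\rho^{1/4}u$. For the quantum term, $\frac{\eps_2}{\nu}\sqrt{\rho}\mathcal{T}(w)=\frac{\eps_2}{\nu}\big(2\nabla(\sqrt\rho\nabla\sqrt\rho)-4\nabla\sqrt\rho\otimes\nabla\sqrt\rho\big)$ is $O(\eps_2)$ in $\mathcal{D}'$ because $\nabla\sqrt\rho\in L^\infty L^2$. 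The relations \eqref{eq:vandwrelations}, such as $\sqrt\rho w=2\nu\nabla\sqrt\rho$, are linear in the weakly converging quantities once $\sqrt{\rho_\eps}$ converges strongly, so they pass to the limit. By \Cref{rem:rem1}, the limit system \eqref{eq:vandweqn} is then satisfied.

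\textbf{Step 2 (relative entropy inequality).} Fix the strong solution $(r,V,W)$, which does not depend on $\eps$. First we show $E(t)\to0$. The terms $\int|\eps_0u\cdot V|\le \eps_0^{1/2}\|\eps_0^{1/2}u\|_{L^2}\|V\|_{L^2}$ and $\int|\eps_1\rho|u|^2u\cdot V|\le \eps_1^{1/4}\|\eps_1^{1/4}\rho^{1/4}u\|_{L^4}^3\|\rho^{1/4}V\|_{L^4}$ both tend to $0$. The remaining pieces are handled as follows. The $\zeta$-terms are $O(\eps_1)$ times $L^2$-bounded quantities. The $\eps_2\sqrt\rho\mathcal{S}(w)$ term is bounded by $\eps_2^{1/2}\|\sqrt{\eps_2}\sqrt\rho\nabla^2\ln\rho\|_{L^2}\|\nabla V\|_{L^2}$, using the dissipation in \eqref{eq:1/2entropyestimate}. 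The initial correction terms carry explicit factors $\eps_0,\eps_2$ and vanish by the choice of regularization. Since $\T_R$ is compact and $R$ is fixed, all $L^p(\T_R)$ norms of $V,\nabla V,r_0$ are finite.

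Next we handle the left-hand side. The term $\rho_\eps|v_\eps-V|^2$ is a convex function of $(\rho_\eps,\rho_\eps v_\eps)$, namely $|m-\rho V|^2/\rho$, and $F(\rho|r)$ is convex in $\rho$. Hence, by weak lower semicontinuity and the strong convergence of $\rho_\eps$, the left-hand side at time $t$ passes to the limit inequality for a.e. $t$, and then for all $t$ by weak continuity. The dissipation terms $|\mathcal{T}(u_\eps)-\sqrt{\rho_\eps}\nabla U|^2$ and $|\frac{2}{\sqrt\gamma}\nabla\rho_\eps^{\gamma/2}-\sqrt{\rho_\eps p'(\rho_\eps)}\nabla\ln r|^2$ are weakly lower semicontinuous, because each is the square of a weakly convergent sequence shifted by a strongly convergent one. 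The coefficient $\nu-\zeta$ tends to $\nu$. The measure estimate follows identically, and its last term vanishes because $\|\frac{\eps_2}{\nu}\mathcal{T}(w)\|_{L^2}\lesssim\eps_2^{1/2}$.

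The main obstacle is the initial data. We need data with finite $\mathcal{E}_\eps$ whose relative $\frac12$-entropy converges, including the $w$-part $\rho|w-W|^2$, which requires $\nabla\sqrt{\rho_{0,\eps}}\to\nabla\sqrt{\rho_0}$ strongly in $L^2$. We also need the $\eps_0\nu G(\rho_0|r_0)$ term to vanish despite $\ln\rho$ near vacuum. I would take $\sqrt{\rho_{0,\eps}}=\sqrt{\rho_0}*\chi_\eps+\delta$ with $\eps_0|\ln\delta|\to0$, and set $\rho_{0,\eps}u_{0,\eps}$ equal to a truncated mollification of $m_0$. The attained initial values then converge by weak continuity in time.
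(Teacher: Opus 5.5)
Your proposal is correct and follows the paper's route: regularize the data, apply \Cref{prop:1/2entropyrenormalizedexistenceT_Rw/q+drag} for positive $(\eps_0,\eps_1,\eps_2)$, and pass to the limit with the compactness of \cite{MR3801754}. The paper compresses into a citation of \cite[Theorem 2.1, (4)]{MR3801754} the steps you spell out: the vanishing of $E(t)$ and of the initial corrections, and the lower semicontinuity of the relative entropy and of the dissipation.

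One small correction: in the renormalized momentum equation the quantum term is multiplied by $\phi'(u_\eps)$, so its vanishing should be argued from the bound $\|\frac{\eps_2}{\nu}\mathcal{T}(w_\eps)\|_{L^2}\lesssim\eps_2^{1/2}$, which you already use for the measure estimate, rather than from its smallness in $\mathcal{D}'$.
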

\Cref{prop:1/2entropyrenormalizedexistenceT_R} is a simple corollary of \Cref{prop:1/2entropyrenormalizedexistenceT_Rw/q+drag} and \cite[Theorem 2.1]{MR3801754}. Indeed, for any initial data $(\rho_0, u_0)$ with $\mathcal{E}_0(\rho_0, u_0) < \infty$, take positive sequences $\eps_{0,n}, \eps_{1,n}, \eps_{2,n}$ all converging to $0$. A slight regularization of the initial data gives a sequence of initial data $(\rho_0^{\eps_n}, u_0^{\eps_n})$ such that $\mathcal{E}_{\eps_n}(\rho_0, u_0)$ is uniformly bounded and converges to $\mathcal{E}_0$. \Cref{prop:1/2entropyrenormalizedexistenceT_Rw/q+drag} gives existence of $\frac{1}{2}$-entropic renormalized weak solutions verifying the relative entropy inequalities for $\eps_{0,n}, \eps_{1,n}, \eps_{2,n} >0$ with initial data $(\rho_0^{\eps_n}, u_0^{\eps_n})$. Finally, \cite[Theorem 2.1, (4)]{MR3801754} gives at the limit a $\frac{1}{2}$-entropic renormalized weak solution to \eqref{eq:2DNS_T_R} verifying the relative entropy inequalities. 

\subsubsection{Proof of \Cref{prop:1/2entropyrenormalizedexistenceT_Rw/q+drag}}
Let $\eps_0, \eps_1, \eps_2 >0$. Let $(\rho_0, u_0)$ be such that $\mathcal{E}_{\eps}(\rho_0, u_0) < \infty$. Let $\sqrt{\rho_0}w_0=2\nu \nabla \sqrt{\rho_0}$. \cite[Theorem 1]{MR3490496} shows existence of a weak solution to $\eqref{eq:2DQNS_T_R}$. A small modification of their arguments can be done to show that the inequality \eqref{eq:1/2entropyestimate} holds at the level of the Faedo-Galerkin approximation and the a priori estimates \eqref{eq:aprioriestimatesforquantum} allow to pass it into the limit (see \Cref{sec:halfentropyestimatedetails}). The a priori estimates also allow to pass into the limit of the Faedo-Galerkin in \eqref{eq:vandwdiffusions} and \eqref{eq:vandwrelations}. It is straightforward to obtain the equation \eqref{eq:vandweqnintegrated} from \eqref{eq:uequation} and \eqref{eq:vandwrelations} (see \Cref{rem:rem1}). The attainment of the initial data \eqref{eq:vandwinitdata} follows from \eqref{eq:vandwrelations} and \eqref{eq:uinitdata}. Finally, \cite[Theorem 2.1, (3)]{MR3801754} shows that this is actually a $\frac{1}{2}$-entropy renormalized weak solution.

\par It remains to prove the relative entropy inequality \eqref{eq:weakstronginequality}. The argument mostly follows the basic strategy in \cite{MR3734208}. However, we choose to do the computation at the level of the system \eqref{eq:2DQNS_T_R} to ensure that the weak solution has enough regularity to justify all the computations. This gives rise to the error terms $E(t)$. However, these will all vanish at the limit $\eps_0, \eps_1, \eps_2 \to 0$. See the papers \cite{MR4198204}, \cite{MR4801843}, \cite{MR3961293} for similar analyses in the situation when $\eps_2 > \nu$. However, this is not the regime we are interested in here.  
\newline \textbf{The relative entropy estimate.} Let us first estimate
\begin{align}\label{eq:relent1}
&\mathcal{E}_{\frac{1}{2},0}(\rho,v,w|r,V,W)(t)-\mathcal{E}_{\frac{1}{2},\eps}(\rho,v,w|r,V,W)(0) \\
&=\Big[\frac{1}{2}\int_{\T_R}\rho(|v|^2+|w|^2)+F(\rho)\Big](t)\\
&\quad -\Big[\frac{1}{2}\int_{\T_R}\rho(|v|^2+|w|^2)+F(\rho)+\frac{\eps_0 \nu}{2}G(\rho)+2\eps_2|\nabla \sqrt{\rho}|^2\Big](0) \\
&\quad +\Big[\frac{1}{2}\int_{\T_R}\rho(|V|^2+|W|^2)-\int_{\T_R}(F(r)+F'(r)(\rho-r))-\int_{\T_R}\rho(w\cdot W+v\cdot V)\Big](t) \\
&\quad -\Big[\frac{1}{2}\int_{\T_R}\rho(|V|^2+|W|^2)-\int_{\T_R}(F(r)+F'(r)(\rho-r))-\int_{\T_R}\rho(w\cdot W+v\cdot V)\Big](0) \\
&\quad +\Big[\frac{\eps_0 \nu}{2}\int_{\T_R}(G(r)-G'(r)(\rho-r))+2\eps_2 \int_{\T_R}(|\nabla \sqrt{r}|^2+2 \nabla \sqrt{\rho}\cdot \sqrt{r})\big](0).
\end{align}
Next, we compute
\begin{align}\label{eq:relent2}
&(\nu-\zeta)\int_0^t\int_{\T_R}|\mathcal{A}(v)-\sqrt{\rho}\mathbb{A}(V)|^2+(\nu-\zeta)\int_0^t\int_{\T_R}|\mathcal{S}(U)-\sqrt{\rho}\mathbb{D}(U)|^2 \\
&+\nu \int_0^t\int_{\T_R}(p'(\rho)2\nabla \sqrt{\rho}-p'(r)\nabla \ln(r)\sqrt{\rho})(2 \nabla \sqrt{\rho}-\sqrt{\rho}\nabla \ln(r)) \\
&=(\nu-\zeta)\int_0^t\int_{\T_R}|\mathcal{T}(u)|^2+\frac{\nu}{2}\int_{\T_R}\frac{8}{\gamma}|\nabla \rho^{\frac{\gamma}{2}}|^2 \\
&+(\zeta-\nu)\int_0^t\int_{\T_R}\sqrt{\rho}\mathbb{A}(V):(\sqrt{\rho}\mathbb{A}(V)-\mathcal{A}(v))+(\zeta-\nu)\int_0^t\int_{\T_R}\sqrt{\rho}\mathbb{D}(U):(\sqrt{\rho}\mathbb{D}(V)-\mathcal{S}(u)) \\
&+\nu\int_0^t\int_{\T_R}p'(r)\nabla \ln(r)\sqrt{\rho}(\sqrt{\rho}\nabla \ln(r)-2\nabla \sqrt{\rho}) \\
&-\Big[(\nu-\zeta)\int_0^t\int_{\T_R}\sqrt{\rho}\mathcal{A}(v):\mathbb{A}(V)+\int_0^t\int_{\T_R}\sqrt{\rho}\mathcal{S}(U):\mathbb{D}(U)\\
&\quad +\nu\int_0^t\int_{\T_R}2\nabla \sqrt{\rho}\sqrt{\rho}p'(\rho)\nabla \ln(r)\Big].
\end{align}
Now, testing the equations for $\rho v$ and $\rho w$ (\eqref{eq:vandweqn}) by $V$ and $W$ respectively, we obtain
\begin{align}\label{eq:relent3}
&\int_{\T_R}\rho(v \cdot V+w\cdot W)(t)-\int_{\T_R}\rho(v \cdot V+w\cdot W)(t)(0) \\
&-\int_0^t\int_{\T_R}\partial_tV\cdot \rho v+\partial_tW\cdot \rho w-\int_0^t\int_{\T_R}\rho u\cdot (\nabla V \cdot v+\nabla W \cdot w) \\
&-\int_0^t\int_{\T_R}p(\rho){\rm div}(U)+\int_0^t\int_{\T_R}p'(\rho)\nabla \rho \cdot W \\
&+\nu \int_0^t\int_{\T_R}\sqrt{\rho}\mathcal{S}(u):\mathbb{D}(U)+\nu \int_0^t\int_{\T_R}\sqrt{\rho}\mathcal{A}(v):\mathbb{A}(U)\\
&+\nu \int_0^t\int_{\T_R}\sqrt{\rho}(\mathcal{A}(v):\mathbb{A}(W)-\mathcal{A}(w):\mathbb{A}(V)) \\
&+\frac{\eps_2}{\nu}\int_0^t\int_{\T_R}\sqrt{\rho}\mathcal{S}(w):\nabla V+\eps_0\int_0^t\int_{\T_R}u\cdot V+\eps_1\int_0^t\int_{\T_R}\rho |u|^2u \cdot V=0.
\end{align}
Then, testing the mass equation by $\frac{|V|^2+|W|^2}{2}$ gives
\begin{align}\label{eq:relent4}
&\frac{1}{2}\int_{\T_R}\rho(|V|^2+|W|^2)(t)-\frac{1}{2}\int_{\T_R}\rho(|V|^2+|W|^2)(0) \\
&=\int_0^t\int_{\T_R}\rho(V\cdot \partial_tV+W\cdot \partial_tW)-\int_0^t\int_{\T_R}\rho u\cdot(\nabla V \cdot v+\nabla W\cdot w).
\end{align}
Using $\partial_t(F(r)-F'(r)r)=-\partial_tF'(r)r$:
\begin{align}\label{eq:relent5}
-\int_{\T_R}(F(r)-F'(r)r)(t)+\int_{\T_R}(F(r)-F'(r)r)(0)=\int_0^t\int_{\T_R}r\partial_tF'(r).
\end{align}
Next, multiplying the mass equation by $F'(r)$ and integrating in space and time
\begin{align}\label{eq:relent6}
-\int_{\T_R}F'(r)\rho(t)+\int_{\T_R}F'(r)\rho(0)=-\int_0^t\int_{\T_R}\rho \partial_tF'(r)-\int_0^t\int_{\T_R}\rho u \cdot \nabla F'(r). 
\end{align}
Finally, we have the identity
\begin{align}\label{eq:relent7}
0=\int_0^t\int_{\T_R}p(r){\rm div}(U)+rU\cdot\nabla F'(r).
\end{align}
Using \eqref{eq:relent1}-\eqref{eq:relent7} gives the relative entropy inequality:
\begin{align}\label{eq:baserelentinequality}
&\mathcal{E}_{\frac{1}{2},0}(\rho,v,w|r,V,W)(t)-\mathcal{E}_{\frac{1}{2},\eps}(\rho,v,w|r,V,W)(0) \\
&+(\nu-\zeta)\int_0^t\int_{\T_R}|\mathcal{A}(v)-\sqrt{\rho}\mathbb{A}(V)|^2+(\nu-\zeta)\int_0^t\int_{\T_R}|\mathcal{S}(U)-\sqrt{\rho}\mathbb{D}(U)|^2 \\
&+\nu \int_0^t\int_{\T_R}(p'(\rho)2\nabla \sqrt{\rho}-p'(r)\nabla \ln(r)\sqrt{\rho})(2 \nabla \sqrt{\rho}-\sqrt{\rho}\nabla \ln(r)) \\
&\leq \int_0^t\int_{\T_R}\rho(\partial_tW\cdot (W-w)+\partial_tV \cdot(V-v))+\int_0^t\int_{\T_R}\rho u \cdot(\nabla W\cdot (W-w)+\nabla V\cdot(V-v)) \\
&+\int_0^t\int_{\T_R}\partial_tF'(r)(r-\rho)-\int_0^t\int_{\T_R}\nabla F'(r)\cdot (\rho u-rU) \\
&+\int_0^t\int_{\T_R}(p(r)-p(\rho)){\rm div}(U)-\int_0^t\int_{\T_R}p'(\rho)\nabla \rho \cdot(\nu \nabla \ln(r)-W) \\
&+(\nu-\zeta)\int_0^t\int_{\T_R}\sqrt{\rho}\mathbb{A}(V):(\sqrt{\rho}\mathbb{A}(V)-\mathcal{A}(v))+(\nu-\zeta)\int_0^t\int_{\T_R}\sqrt{\rho}\mathbb{D}(U):(\sqrt{\rho}\mathbb{D}(U)-\mathcal{S}(u)) \\
&+\nu \int_0^t\int_{\T_R}\sqrt{\rho}(\mathbb{A}(W):(\mathcal{A}(v)-\sqrt{\rho}\mathbb{A}(V))-(\mathcal{A}(w)-\sqrt{\rho}\mathbb{A}(W)):\mathbb{A}(V)) \\
&+\nu\int_0^t\int_{\T_R}p'(r)\nabla \ln(r)\sqrt{\rho}(\sqrt{\rho}\nabla \ln(r)-2\nabla \sqrt{\rho}) \\
&+\zeta \int_0^t\int_{\T_R}\sqrt{\rho}\mathcal{S}(u):\mathbb{D}(U)+\zeta \int_0^t\int_{\T_R}\sqrt{\rho}\mathcal{A}(v):\mathbb{A}(U) \\
&+\frac{\eps_2}{\nu}\int_0^t\int_{\T_R}\sqrt{\rho}\mathcal{S}(w):\nabla V+\eps_0\int_0^t\int_{\T_R}u\cdot V+\eps_1\int_0^t\int_{\T_R}\rho |u|^2u \cdot V \\
&+\big[\frac{\eps_0 \nu}{2}\int_{\T_R}(G(r)-G'(r)(\rho-r))+2\eps_2 \int_{\T_R}(|\nabla \sqrt{r}|^2+2 \nabla \sqrt{\rho}\cdot \sqrt{r})\big](0).
\end{align}
Now, we can use the fact that $(r,V,W)$ is a strong solution to \eqref{eq:vandweqn} (with $\eps_0=\eps_1=\eps_2=0$) to further simplify \eqref{eq:baserelentinequality}. Testing the equations for $r V$ and $rW$ by $\frac{\rho (V-v)}{r}$ and $\frac{\rho (W-w)}{r}$ respectively and summing the result, we obtain 
\begin{align}\label{eq:relent8}
&\int_0^t\int_{\T_R}\rho(\partial_tV \cdot (V-v)+\partial_tW\cdot(W-w)) \\
&+\nu \int_{0}^t\int_{\T_R}\sqrt{\rho}\mathbb{D}(U):(\sqrt{\rho}\mathbb{D}(U)-\mathcal{S}(u))+\nu \int_{0}^t\int_{\T_R}\sqrt{\rho}\mathbb{A}(U):(\sqrt{\rho}\mathbb{A}(U)-\mathcal{A}(u))\\
&+\nu \int_0^t\int_{\T_R}\sqrt{\rho}(\mathbb{A}(W):(\mathcal{A}(v)-\sqrt{\rho}\mathbb{A}(V))-(\mathcal{A}(w)-\sqrt{\rho}\mathbb{A}(W)):\mathbb{A}(V)) \\
&=-\int_0^t\int_{\T_R}\rho U \cdot(\nabla V \cdot(V-v)+\nabla W\cdot(W-w)) \\
&-\int_0^t\int_{\T_R}\frac{\rho}{r}\nabla p(r)\cdot (V-v)-\nu \int_0^t\int_{\T_R}r\mathbb{D}(V):\big(\frac{\nabla \sqrt{\rho}}{r}+\nabla(\frac{\sqrt{\rho}}{r})\big)\sqrt{\rho}(V-v) \\
&-\nu \int_0^t\int_{\T_R}r\mathbb{A}(V):\big(\frac{\nabla \sqrt{\rho}}{r}+\nabla(\frac{\sqrt{\rho}}{r})\big)\sqrt{\rho}(V-v)\\
&+\nu \int_0^t\int_{\T_R}r\nabla W:\big(\frac{\nabla \sqrt{\rho}}{r}+\nabla(\frac{\sqrt{\rho}}{r})\big)\sqrt{\rho}(V-v) \\
&-\nu \int_0^t\int_{\T_R}r\nabla W:\big(\frac{\nabla \sqrt{\rho}}{r}+\nabla(\frac{\sqrt{\rho}}{r})\big)\sqrt{\rho}(W-w)\\
&+\nu \int_0^t\int_{\T_R}r(\nabla V)^t:\big(\frac{\nabla \sqrt{\rho}}{r}+\nabla(\frac{\sqrt{\rho}}{r})\big)\sqrt{\rho}(W-w).
\end{align}
We will use this alongside the other following simplifications to simplify \eqref{eq:baserelentinequality}. We have
\begin{align}\label{eq:relent9}
-\int_0^t\int_{\T_R}\frac{\rho}{r}\nabla p(r)\cdot (V-v)=-\int_0^t\int_{\T_R}\rho \nabla F'(r)\cdot U-\int_0^t\int_{\T_R}\frac{\rho}{r}p'(r)\nabla r \cdot (W-w).
\end{align}
Further, we have the identities
\begin{align}\label{eq:relent10}
p'(\rho)\nabla \rho\cdot(\nu \nabla \ln(r)-W)=p'(r)\nabla \ln(r)\sqrt{\rho}(\sqrt{\rho}\nabla \ln(r)-2\nabla \sqrt{\rho})-\frac{\rho}{r}p'(r)\nabla r \cdot (W-w)=0,
\end{align}
which is true when $W=\nu \nabla \ln(r)$ and $\rho w=\nu \nabla \rho$. Consider also the identity
\begin{align}\label{eq:relent11}
(\partial_tF'(r)+U\cdot \nabla F'(r))(r-\rho)=-{\rm div}(U)(r-\rho)p'(r).
\end{align}
Plugging \eqref{eq:relent8}-\eqref{eq:relent11} into \eqref{eq:baserelentinequality}, we obtain
\begin{align}\label{eq:relent12}
&\mathcal{E}_{\frac{1}{2},0}(\rho,v,w|r,V,W)(t)-\mathcal{E}_{\frac{1}{2},\eps}(\rho,v,w|r,V,W)(0) \\
&+(\nu-\zeta)\int_0^t\int_{\T_R}|\mathcal{A}(v)-\sqrt{\rho}\mathbb{A}(V)|^2+(\nu-\zeta)\int_0^t\int_{\T_R}|\mathcal{S}(U)-\sqrt{\rho}\mathbb{D}(U)|^2 \\
&+\nu \int_0^t\int_{\T_R}(p'(\rho)2\nabla \sqrt{\rho}-p'(r)\nabla \ln(r)\sqrt{\rho})(2 \nabla \sqrt{\rho}-\sqrt{\rho}\nabla \ln(r)) \\
&\leq \int_0^t\int_{\T_R}\rho (u-U) \cdot(\nabla W\cdot (W-w)+\nabla V\cdot(V-v)) \\
&+\int_0^t\int_{\T_R}(p(r)-p(\rho)+p'(r)(\rho-r)){\rm div}(U) \\
&-\nu \int_0^t\int_{\T_R}r\mathbb{D}(V):\big(\frac{\nabla \sqrt{\rho}}{r}+\nabla(\frac{\sqrt{\rho}}{r})\big)\sqrt{\rho}(V-v)\\
&-\nu \int_0^t\int_{\T_R}r\mathbb{A}(V):\big(\frac{\nabla \sqrt{\rho}}{r}+\nabla(\frac{\sqrt{\rho}}{r})\big)\sqrt{\rho}(V-v) \\
&+\nu \int_0^t\int_{\T_R}r\nabla W:\big(\frac{\nabla \sqrt{\rho}}{r}+\nabla(\frac{\sqrt{\rho}}{r})\big)\sqrt{\rho}(V-v)\\
&-\nu \int_0^t\int_{\T_R}r\nabla W:\big(\frac{\nabla \sqrt{\rho}}{r}+\nabla(\frac{\sqrt{\rho}}{r})\big)\sqrt{\rho}(W-w) \\
&+\nu \int_0^t\int_{\T_R}r(\nabla V)^t:\big(\frac{\nabla \sqrt{\rho}}{r}+\nabla(\frac{\sqrt{\rho}}{r})\big)\sqrt{\rho}(W-w) \\
&+2\zeta \int_0^t\int_{\T_R}\sqrt{\rho}\mathcal{S}(u):\mathbb{D}(U)+2\zeta \int_0^t\int_{\T_R}\sqrt{\rho}\mathcal{A}(v):\mathbb{A}(U) \\
&+\frac{\eps_2}{\nu}\int_0^t\int_{\T_R}\sqrt{\rho}\mathcal{S}(w):\nabla V+\eps_0\int_0^t\int_{\T_R}u\cdot V+\eps_1\int_0^t\int_{\T_R}\rho |u|^2u \cdot V \\
&+\big[\frac{\eps_0 \nu}{2}\int_{\T_R}(G(r)-G'(r)(\rho-r))+2\eps_2 \int_{\T_R}(|\nabla \sqrt{r}|^2+2 \nabla \sqrt{\rho}\cdot \sqrt{r})\big](0).
\end{align}
Let us simplify some of the terms on the right-hand side. Firstly, note
\begin{align}\label{eq:relent13}
&|\int_{\T_R}\rho (u-U) \cdot(\nabla W\cdot (W-w)+\nabla V\cdot(V-v)) \\
&+\int_{\T_R}(p(r)-p(\rho)+p'(r)(\rho-r)){\rm div}(U)| \\
&\leq c(||\nabla V||_{L^\infty([0,t] \times \T_R)}+||\nabla W||_{L^\infty([0,t] \times \T_R)})\mathcal{E}_{\frac{1}{2},0}(\rho,v,w|r,V,W)(t).
\end{align}
Further, using $\sqrt{\rho}w=2\nu\nabla \sqrt{\rho}$, we have
\begin{align*}
\big(\frac{\nabla \sqrt{\rho}}{r}+\nabla(\frac{\sqrt{\rho}}{r})\big)\sqrt{\rho}=\frac{1}{\nu}\frac{\rho}{r}(w-W).
\end{align*}
This allows to control
\begin{align}\label{eq:relent14}
&|-\nu \int_{\T_R}r\mathbb{D}(V):\big(\frac{\nabla \sqrt{\rho}}{r}+\nabla(\frac{\sqrt{\rho}}{r})\big)\sqrt{\rho}(V-v)-\nu \int_{\T_R}r\mathbb{A}(V):\big(\frac{\nabla \sqrt{\rho}}{r}+ \\
&\nabla(\frac{\sqrt{\rho}}{r})\big)\sqrt{\rho}(V-v)+\nu \int_{\T_R}r\nabla W:\big(\frac{\nabla \sqrt{\rho}}{r}+\nabla(\frac{\sqrt{\rho}}{r})\big)\sqrt{\rho}(V-v)-\nu \int_{\T_R}r\nabla W:\big(\frac{\nabla \sqrt{\rho}}{r} \\
&+\nabla(\frac{\sqrt{\rho}}{r})\big)\sqrt{\rho}(W-w)+\nu \int_{\T_R}r(\nabla V)^t:\big(\frac{\nabla \sqrt{\rho}}{r}+\nabla(\frac{\sqrt{\rho}}{r})\big)\sqrt{\rho}(W-w)| \\
&\leq C(||\nabla V||_{L^\infty([0,t] \times \T_R)}+||\nabla W||_{L^\infty([0,t] \times \T_R)})\mathcal{E}_{\frac{1}{2},\eps}(\rho,v,w|r,V,W)(t).
\end{align}
It remains to control the term on the left-hand side
\begin{align*}
I=\nu \int_0^t\int_{\T_R}(p'(\rho)2\nabla \sqrt{\rho}-p'(r)\nabla \ln(r)\sqrt{\rho})(2 \nabla \sqrt{\rho}-\sqrt{\rho}\nabla \ln(r)).
\end{align*}
By adding and subtracting $p'(\rho)\sqrt{\rho}\nabla \ln(r)$, note that we can write it as 
\begin{align*}
I&=\int_0^t\int_{\T_R}|\frac{2}{\sqrt{\gamma}} \nabla \rho^{\frac{\gamma}{2}}-\sqrt{\rho}p'(\rho)\nabla \ln(r)|^2dxd\tau \\
&+\int_0^t\int_{\T_R}\sqrt{\rho}\nabla \ln(r)(p'(\rho)-p'(r))(2\nabla \sqrt{\rho}-\sqrt{\rho}\nabla \ln(r))dxd\tau=I_1+I_2.
\end{align*}
To control the term $I_2$, we use the identity
\begin{align*}
\sqrt{\rho}\nabla \ln(r)(p'(\rho)-p'(r))(2\nabla \sqrt{\rho}-\sqrt{\rho}\nabla \ln(r))&=\nabla \big(p(\rho)-p(r)-p'(r)(\rho-r)\big)\cdot \nabla \ln(r) \\
&-\big(\rho(p'(\rho)-p'(r))-p''(r)(\rho-r)r\big)|\nabla \ln(r)|^2.
\end{align*}
So
\begin{align}\label{eq:relent15}
I_2 \leq \int_0^t\int_{\T_R}|p(\rho|r)||\Delta \ln(r)|+|\big(\rho(p'(\rho)-p'(r))-p''(r)(\rho-r)r\big)||\nabla \ln(r)|^2.
\end{align}
Finally, we note 
\begin{align*}
\rho(p'(\rho)-p'(r))-p''(r)(\rho-r)r=\gamma (\gamma-1)F(\rho|r).
\end{align*}
So, plugging \eqref{eq:relent13}-\eqref{eq:relent15} into \eqref{eq:relent12}, we obtain \eqref{eq:weakstronginequality} via Gronwall's inequality. Finally, the estimate \eqref{eq:renormalizationmeasureestimate} follows straightforwardly from \eqref{eq:weakstronginequality} and the representation of the renormalization measure (see \cite[page 15]{MR3801754})
\begin{align*}
\Xi_\phi=\phi''(u)\mathcal{T}(u)(2\nu \mathcal{S}(u)+\frac{\eps_2}{\nu}\mathcal{T}(w)),
\end{align*}
where we note that $\frac{\eps_2}{\nu}\mathcal{T}(w)=2\sqrt{\kappa}\mathbb{S}_\kappa$ in the terminology of \cite{MR3801754}.

\subsection{%Definition and 
Existence of weak solutions to 2D Navier-Stokes equations on $\R \times \T$}
\subsubsection{Periodic initial data and solutions}\label{sec:periodicsols} 
\par Now, we define the suitable initial data on $\R \times \T$. Recall from \Cref{sec:mainresults} that we let $\tilde{\rho}(x_1)$ and $\tilde{u}(x_1)$ be two smooth monotone functions satisfying
\begin{align*}
&\tilde{\rho}(x_1)=\rho_\pm \text{ for } \pm x_1 \geq 1, \tilde{\rho}(x_1) > 0, \ \forall \ x_1 \in \R, \\
&\tilde{u}(x_1)=\begin{bmatrix}
\tilde{u_1}(x_1) \\
\tilde{u_2}(x_1) 
\end{bmatrix}=u_\pm \text{ for } \pm x_1 \geq 1.
\end{align*}
We have also defined
\begin{align*}
\tilde{v}&=\tilde{u}+\nu \nabla \ln(\tilde{\rho}), \\
\tilde{w}&=\nu \nabla \ln(\tilde{\rho}).
\end{align*}
%Define $\tilde{S}(x_1):=(\tilde{\rho}(x_1), \tilde{u}(x_1))$. Without confusions, we may also refer to $\tilde{S}$ as a function from $\R \times \T$ to $\R^2$ that is constant in the $y$-direction.
Further recall that for functions $(\rho, v, w), (r,V,W):\R \times \T \to (\R_{\geq 0} \times \R^2 \times \R^2)$, we have defined (see \eqref{eq:halfrelentonchannel}) the relative $\frac{1}{2}$-entropy on the channel $\R \times \T$:
\begin{align*}
\mathcal{E}_{\frac{1}{2}}(\rho,v,w|r,V,W)=\int_{\R \times \T}\big(\rho(\frac{|v-V|^2+|w-W|^2}{2})+F(\rho|r)\big)dx.
\end{align*}
Note that compared to the definition of $\mathcal{E}_{\frac{1}{2},0}(\rho,v,w|r,V,W)$ (\eqref{eq:halfentropy0}) we have just changed the domain of integration to have \eqref{eq:halfrelentonchannel}. 
Now, let us define the precise notion of a weak solution to \eqref{eq:2DNS} which verifies the correct far field at $\pm \infty$.
\begin{definition}\label{def:defofweak}
We say that $(\rho, v, w)$ is a \textit{$\frac{1}{2}$-entropy} weak solution to $\eqref{eq:2DNS}$ if for any $T > 0$, it verifies the a priori estimate 
\begin{align}\label{eq:correctff}
\mathcal{E}_{\frac{1}{2}}(\rho,v,w|\tilde{\rho},\tilde{v},\tilde{w})(t) \leq C(T), \indent \text{for all } 0 \leq t \leq T,
\end{align}
and for any function $\psi \in C^\infty_c(\R^+ \times \R \times \T)$:
\begin{align}\label{eq:uequationonchannel}
&\int_0^\infty \int_{\R \times \T}(\rho \psi_t+\rho u \cdot \nabla \psi) dx dt =0, \\
&\int_0^\infty \int_{\R \times \T}\Big[\rho u \psi_t+\big(\rho u \otimes u-(2\nu \sqrt{\rho}\mathcal{S}(u)\big)\cdot \nabla \psi\\
&\qquad\qquad\quad  +(-2\rho^{\frac{\gamma}{2}}\nabla (\rho)^{\frac{\gamma}{2}})\psi\Big]dx dt=0,
\end{align}
where $\mathcal{S}(u)$ is the symmetric part of $\mathcal{T}(u)$ which verifies $\eqref{eq:T(u)def}$, and for any $\overline{\psi} \in C^\infty_c(\R \times \T)$
\begin{align}\label{eq:uequationinitdata}
&\lim_{t \to 0} \int_{\R \times \T}\rho(t,x)\overline{\psi}(x)dx=\int_{\R \times \T}\rho_0(x)\overline{\psi}(x)dx, \\
&\lim_{t \to 0} \int_{\R \times \T}\rho(t,x)u(t,x)\overline{\psi}(x)dx=\int_{\R \times \T}m_0(x)\overline{\psi}(x)dx,
\end{align}
and further, 
\begin{align}\label{eq:vandwrelationsonchannel}
\sqrt{\rho}u&=\sqrt{\rho}v-\sqrt{\rho}w \text{ in $L^2_{\text{loc}}(\R^+ \times \R \times \T)$}, \\
\sqrt{\rho_0}u_0&=\sqrt{\rho_0}v_0-\sqrt{\rho_0}w_0 \text{ in $L^2_{\text{loc}}(\R \times \T)$}, \\
\sqrt{\rho} w&=2\nu \nabla \sqrt{\rho}, \indent \rho w=\nu \nabla \rho \text{ in }\mathcal{D}',
\end{align}
and further, for any function $\psi \in C^\infty_c(\R^+ \times \R \times \T)$, $(\rho, v, w)$ verifies the equations for the augmented system in the sense of distributions:
\begin{align}\label{eq:vandweqnintegratedchannel}
&\int_0^\infty \int_{\R \times \T}(\rho \psi_t+\rho u \cdot \nabla \psi) dx dt =0, \\
&\int_0^\infty \int_{\R \times \T}\Big[\rho v \psi_t+\big(\rho v \otimes u-(\nu \sqrt{\rho}\mathcal{T}(u))\big)\cdot \nabla \psi\\
&\qquad\qquad\quad  (-2\rho^{\frac{\gamma}{2}}\nabla (\rho)^{\frac{\gamma}{2}})\psi\Big]dx dt=0, \\
&\int_0^\infty \int_{\R \times \T}\Big[\rho w \psi_t+\big(\rho w \otimes u+\nu \sqrt{\rho}\mathcal{T}(u)^t\cdot \nabla \psi\Big]dxdt=0,
\end{align}
and for any $\overline{\psi} \in C^\infty_c(\R \times \T)$
\begin{align}\label{eq:vandwinitdatachannel}
&\lim_{t \to 0} \int_{\R \times \T}\rho(t,x)\overline{\psi}(x)dx=\int_{\R \times \T}\rho_0(x)\overline{\psi}(x)dx, \\
&\lim_{t \to 0} \int_{\R \times \T}\rho(t,x)v(t,x)\overline{\psi}(x)dx=\int_{\R \times \T}\rho_0(x)v_0(x) \overline{\psi}(x)dx, \\
&\lim_{t \to 0} \int_{\R \times \T}\rho(t,x)w(t,x)\overline{\psi}(x)dx=\int_{\R \times \T}\rho_0(x)w_0(x) \overline{\psi}(x)dx.
\end{align} 
\end{definition}
\subsection{Proof of \Cref{thm:intro1}}
\par Given suitable initial data $(\rho_0, v_0, w_0)$, the strategy to construct the solution to \eqref{eq:2DNS} is as follows. First, we cutoff and periodize the functions $(\tilde{\rho}, \tilde{v}, \tilde{w})$, $(\bar \rho_0, \bar v_0, \bar w_0)$, and $(\rho_0, v_0, w_0)$ to be defined on $\T_R$. Then, there exists a $\frac{1}{2}$-entropy renormalized weak solution $(\rho_R, v_R, w_R)(t,x)$ to \eqref{eq:2DNS_T_R} (by \Cref{prop:1/2entropyrenormalizedexistenceT_R}) and a strong planar solution $(\bar\rho_R, \bar v_R, \bar w_R)(t,x_1)$ to \eqref{eq:2DNS_T_R} (by \Cref{prop:1dexist+bds}). For fixed $T>0$, we prove uniform-in-$R$ bounds on $||(\bar\rho_R, \bar u_R)_{x_1}||_{L^\infty([0,T];H^2(\T_R))}$ and $\inf_{(t,x) \in [0,T] \times \T_R}\overline{\rho}_R(t,x)$. Then, by \Cref{prop:1/2entropyrenormalizedexistenceT_R} and \Cref{def:suitableinitial}, $\mathcal{E}_{\frac{1}{2}}((\rho_R,v_R,w_R)|(\bar\rho_R,\bar v_R, \bar w_R))(t)$ is uniformly bounded in $R$. Then, for $T$ finite, in \Cref{sec:Rlimit}, we show that this implies we may take the limit $R \to \infty$ to obtain a weak solution to \eqref{eq:2DNS}.
\begin{notation}
In this section, we again denote by $C$ a constant which may change line to line, and we emphasize that it does not depend on $R$.
\end{notation}
\par By the definition of suitable initial data (\Cref{def:suitableinitial}), there exist constants $\delta_1, \delta_2, \delta_3 > 0$ so that
\begin{align}\label{eq:initialbds}
&\mathcal{E}_{\frac{1}{2}}(\rho_0,v_0, w_0|\tilde{\rho}, \tilde{v}, \tilde{w}) \leq \delta_1, \\ 
&\mathcal{E}_{\frac{1}{2}}(\rho_0,v_0, w_0|\bar\rho_0,\bar v_0, \bar{w}_0) \leq \delta_2, \\
&||(\bar\rho_0,\bar u_0)-(\tilde{\rho}, \tilde{u})||_{H^6(\R)} \leq \delta_3, \\
& \bar{\rho_0}(x_1) \geq c_0 > 0.
\end{align}

\subsubsection{Periodic reference function}\label{sec:new1}
First, we define the periodic reference function. Consider the interval $[-R,2R]$. Then, define
\begin{align}\label{eq:reflectdef3}
(\tilde{\rho}_R, \tilde{u}_{R,2})(x_1):=
\begin{cases}
(\tilde{\rho}, \tilde{u}_{R,2})(x_1), & x_1 \in [-R, \frac{R}{3}], \\[1mm]
(\rho_+, u_{2,+}) & x_1 \in [\frac{R}{3}, \frac{2R}{3}], \\[1mm]
(\tilde{\rho}, \tilde{u}_{R,2})(R-x_1) & x_1 \in [\frac{2R}{3}, \frac{4R}{3}], \\[1mm]
(\rho_-, u_{2,-}) & x_1 \in [\frac{4R}{3}, 2R].
\end{cases}
\end{align}
Note that these are smooth as the reference functions $\tilde{\rho}, \tilde{u}_{R,2}$ are constant outside of $\{x_1 \in [-1,1]\}$. Some more care needs to taken to define the periodized velocity component $\tilde{u}_{R,1}$. This is because the equation \eqref{eq:2DNS_T_R} verifies an anti-symmetry property in the first component of the velocity for planar solutions; formally, if $(\rho(t, x_1), u_1(t, x_1), u_2(t, x_1))$ solves \eqref{eq:2DNS_T_R}, then $(\rho(t, -x_1), -u_1(t,-x_1), u_2(t,-x_1))$ remains a solution to the equation. This will be essential in the proof of \Cref{lem:appclem}. Let $\theta$ be a smooth function verifying
\begin{align}\label{eq:thetaprops}
0 \leq \theta \leq 1, \ \theta(s)=0 \text{ for $s \leq 0$,} \ \theta(s)=1 \text{ for $s \geq 1$}.
\end{align}
We define
\begin{align}\label{eq:reflectdef4}
\tilde{u}_{R,1}(x_1):=
\begin{cases}
\tilde{u}_{1}(x_1), & x_1 \in [-R, \frac{R}{3}], \\[1mm]
u_{1,+}(1-2\theta(\frac{3x_1}{R}-1)) & x_1 \in [\frac{R}{3}, \frac{2R}{3}], \\[1mm]
-\tilde{u}_{1}(R-x_1) & x_1 \in [\frac{2R}{3}, \frac{4R}{3}], \\[1mm]
u_{1,-}(2\theta(\frac{3x_1}{R}-4)-1) & x_1 \in [\frac{4R}{3}, \frac{5R}{3}], \\[1mm]
u_{1,-} & x \in [\frac{5R}{3}, 2R].
\end{cases}
\end{align}
Then, we define $(\tilde{\rho}_R, \tilde{u}_R)$ on $\T_R$ by periodicity. 

\subsubsection{Periodic planar strong solution}\label{sec:new2}
Next, we define the initial data for the periodic strong solution. Let $\chi$ be an even, smooth bump function verifying
\begin{align}\label{eq:chiprops}
0 \leq \chi \leq 1, \ \chi(s)=1 \text{ for $|s| \leq \frac{1}{4}$,} \ \chi(s)=0 \text{ for $|s| \geq \frac{1}{3}$,} \ |\chi'(s)|^2 \leq C\chi(s)(1-\chi(s)).
\end{align}
Let us also define
\begin{align}\label{eq:chiRdef}
\chi_R(x_1)=\chi(\frac{x_1}{R}).
\end{align}
Then, define the periodic planar initial data on $[-R, 2R]$:
\begin{align}\label{eq:reflectdef2}
&(\bar{\rho}_R)_0(x_1)=\tilde{\rho}_R(x_1)+\chi_R(x_1)(\bar{\rho}_0(x_1)-\tilde{\rho}(x_1))+\chi_R(R-x_1)(\bar{\rho}_0(R-x_1)-\tilde{\rho}(R-x_1)), \\
&(\bar{u}_{R,1})_{0}(x_1)=\tilde{u}_{R,1}(x_1)+\chi_R(x_1)((\bar{u}_1)_0(x_1)-\tilde{u}_1(x_1))-\chi_R(R-x_1)((\bar{u}_1)_0(R-x_1)-\tilde{u}_1(R-x_1)), \\
&(\bar{u}_{R,2})_{0}(x_1)=\tilde{u}_{R,2}(x_1)+\chi_R(x_1)((\bar{u}_2)_0(x_1)-\tilde{u}_2(x_1))+\chi_R(R-x_1)((\bar{u}_2)_0(R-x_1)-\tilde{u}_2(R-x_1)), \\
\end{align}
and extend periodically as above. Note that these fields verify the representation
\begin{align}\label{eq:piecewiserep}
&(\bar{\rho}_R)_0, (\bar{u}_{R,1})_{0}, (\bar{u}_{R,1})_{0})(x_1)= \\
&\indent \begin{cases}
((\bar{\rho}_0, (\bar{u}_{1})_0, (\bar{u}_{2})_0)(x_1) & |x_1| \leq \frac{R}{4}, \\
((\bar{\rho}_0, -(\bar{u}_{1})_0, (\bar{u}_{2})_0)(R-x_1) & |x_1-R| \leq \frac{R}{4}, \\
\chi_R(x_1)(\bar{\rho}_0, (\bar{u}_{1})_0, (\bar{u}_{2})_0)(x_1)+(1-\chi_R(x_1))(\tilde{\rho},\tilde{u}_{1}, \tilde{u}_2)(x_1) & x_1 \in [-\frac{R}{3}, -\frac{R}{4}] \cup [\frac{R}{4}, \frac{R}{3}], \\
\chi_R(R-x_1)(\bar{\rho}_0, -(\bar{u}_{1})_0, (\bar{u}_{2})_0)(R-x_1)+(1-\chi_R(R-x_1))(\tilde{\rho},-\tilde{u}_{1}, \tilde{u}_2)(R-x_1) & x_1 \in [\frac{2R}{3}, \frac{3R}{R}] \cup [\frac{5R}{4}, \frac{4R}{3}], \\
(\tilde{\rho}_R, \tilde{u}_{R,1}, \tilde{u}_{R,2})(x_1) & \text{otherwise}.
\end{cases}
\end{align}
Note that the planar periodic initial data is either equal to the (reflected) original periodic initial data—where the $u_1$ component is also negated--, exactly equal to the periodic reference fields, or a convex combination of the original data and the original reference function. Due to this, it is clear that
\begin{align*}
||((\bar{\rho}_R)_0, (\bar{u}_R)_0)-(\tilde{\rho}_R, \tilde{u}_R)||_{H^6(\T_R)} \leq C\delta_3.
\end{align*}
Further, it may be proved that
\begin{align}\label{eq:reflectedslopebd}
|\partial_{x_1}(\bar{u}_{R,1})_0)| \leq \frac{1}{2\nu}(\bar{\rho}_R)_0^{\gamma-1},
\end{align}
for $R$ sufficiently large. Indeed, \eqref{eq:reflectedslopebd} holds by definition when $|x_1| \leq \frac{R}{4}$ or $|x_1-R| \leq \frac{R}{4}$. When $(\bar{u}_{R,1})_0=\tilde{u}_{R,1}$, note that from the definition \eqref{eq:reflectdef4} on these regions it holds that
\begin{align*}
\partial_{x_1}(\bar{u}_{R,1})_0=\partial_{x_1}\tilde{u}_{R,1} \leq \frac{C}{R},
\end{align*}
which, combined with $(\bar{\rho}_R)_0 \geq c > 0$, grants \eqref{eq:reflectedslopebd} for $R$ sufficiently large.
Finally, we consider the regions where $(\bar{u}_{R,1})_0$ is a convex combination of the original data and reference function. For simplicity, let us consider only the region $[\frac{R}{4}, \frac{R}{3}]$. In this region, we have
\begin{align*}
\partial_{x_1}(\bar{u}_{R,1})_0=\chi_R'(x_1)[(\bar{u}_1)_0-u_{1,+}]+\chi_R(x_1)\partial_{x_1}(\bar{u}_1)_0.
\end{align*}
Let
\begin{align}\label{eq:omegaR}
\omega_R:=||(\bar{\rho}_0, \bar{u}_0)-(\tilde{\rho}, \tilde{u})||_{H^5(\{|x_1| \geq \frac{R}{5}\})}.
\end{align}
Then, $\omega_R \to 0$ as $R \to \infty$ by \eqref{eq:initialbds}. This immediately gives 
\begin{align*}
|\partial_{x_1}(\bar{u}_{R,1})_0| \leq C \omega_R \to 0,
\end{align*}
giving \eqref{eq:reflectedslopebd} for $R$ sufficiently large.
\par Then, the existence of a planar strong solution to \eqref{eq:2DNS_T_R} with initial data $((\bar{\rho}_R)_0, (\bar{u}_{R})_{0})$ follows from the following proposition.
\begin{proposition}\label{prop:1dexist+bds}
Let $(\rho_0,v_0, w_0)$ be the suitable initial data in the sense of \Cref{def:suitableinitial}.  Let $T > 0$ be fixed. Then, there exists a planar strong solution $(\bar\rho_R,\bar v_R, \bar w_R)(t,x_1)$ to \eqref{eq:2DNS_T_R} with initial data $((\bar \rho_R)_0, (\bar v_R)_0, (\bar{w}_R)_0)$, where $\bar{v}_R=\bar u_R+\nu \nabla \ln(\bar \rho_R)$, and $\bar{w}_R=\nu \nabla \ln(\bar \rho_R)$. Further, there exists a constant $C=C(T,\nu,\delta_3, c_0) > 0$ such that the following holds:
\begin{align*}
&\inf_{(t,x_1) \in [0,T] \times \T_R}\bar\rho_R(t,x_1)\geq C^{-1}, \\[1mm]
&\sup_{t \in [0,T]}\mathcal{E}_{\frac{1}{2},0}(\bar{\rho}_R, \bar{v}_R, \bar{w}_R|\tilde{\rho}_R,\tilde{v}_R,\tilde{w}_R)(t)+||(\bar\rho_R,\bar u_{R})-(\tilde \rho_R, \tilde u_R)||_{L^\infty([0,T] \times \T_R)} \\
&+||(\bar\rho_R,\bar u_{R})-(\tilde \rho_R, \tilde u_R)||_{L^\infty([0,T];H^6(\T_R))}+||\bar u_{R}-\tilde u_R||_{L^2([0,T];H^7(\T_R))} \leq C.
\end{align*}
The constant $C(T,\nu,\delta_3, c_0)$ does not depend on $R$.
\end{proposition}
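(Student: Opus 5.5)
The plan is to reduce the planar problem to a 1D system on the circle $\R/3R\mathbb{Z}$ and follow the classical 1D theory with degenerate viscosity, tracking constants so that nothing depends on the period $3R$. For planar data the system \eqref{eq:2DNS_T_R} becomes the 1D Navier-Stokes system for $(\bar\rho_R,\bar u_{R,1})$ with viscosity $2\nu\rho$, together with the transport-diffusion equation $(\bar\rho_R\bar u_{R,2})_t+(\bar\rho_R\bar u_{R,1}\bar u_{R,2})_{x_1}=(\nu\bar\rho_R(\bar u_{R,2})_{x_1})_{x_1}$. This last equation is linear in $\bar u_{R,2}$ once $(\bar\rho_R,\bar u_{R,1})$ is known.

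Local existence of a strong solution in $H^6$ near the periodized reference $(\tilde\rho_R,\tilde u_R)$ is standard, using the lower bound $(\bar\rho_R)_0\ge c_0/2$, valid for $R$ large by \eqref{eq:piecewiserep}, and the bound $\|((\bar\rho_R)_0,(\bar u_R)_0)-(\tilde\rho_R,\tilde u_R)\|_{H^6(\T_R)}\le C\delta_3$. Global existence on $[0,T]$ then follows from $R$-independent a priori estimates, which I would obtain in the following order.

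\textbf{(i) Relative energy and BD entropy.} Compute the relative energy and BD-entropy with respect to $(\tilde\rho_R,\tilde u_R)$. Since $\tilde\rho_R,\tilde u_R$ are constant except on sets where their derivatives are $O(1)$ on a set of bounded measure, or $O(1/R)$ on a set of measure $O(R)$, the source terms are controlled by $C(1+\mathcal{E})$. Gronwall then gives the $\mathcal{E}_{\frac12,0}$ bound stated in \Cref{prop:1dexist+bds}, with $C$ independent of $R$; the relevant integrals of $|\partial_{x_1}\tilde u_R|^2$ are $O(1/R)$ or $O(1)$.

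\textbf{(ii) Density bounds.} Derive upper and lower bounds on $\bar\rho_R$. The BD entropy gives $\nabla\sqrt{\bar\rho_R}-\nabla\sqrt{\tilde\rho_R}\in L^2$, and with the relative energy this yields a local $L^\infty$ bound on $\sqrt{\bar\rho_R}$ by 1D Sobolev embedding on unit intervals, which are independent of $R$. The lower bound is the main obstacle. I would pass to the effective velocity $\bar v_{R,1}=\bar u_{R,1}+2\nu(\ln\bar\rho_R)_{x_1}$, or equivalently work in Lagrangian mass coordinates, where the specific volume $\tau=1/\bar\rho_R$ satisfies a parabolic equation with the nonlinear damping coming from $p$. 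The initial slope condition \eqref{eq:reflectedslopebd}, $|\partial_{x_1}(\bar u_{R,1})_0|\le\frac1{2\nu}(\bar\rho_R)_0^{\gamma-1}$, is exactly what allows a maximum-principle argument on the quantity $\nu\partial_{x_1}\bar u_{R,1}$ (the effective flux $2\nu\rho\partial_{x_1}u-p$ divided by $\rho$). That argument shows $\bar\rho_R$ cannot decay faster than $e^{-Ct}$ along particle paths, as in the Mellet–Vasseur-type argument, giving $\inf\bar\rho_R\ge C^{-1}$ on $[0,T]$. Because the maximum principle is local, no $R$-dependence enters. If this fails, my fallback is the Kazhikhov-style representation of $\tau$ through the integrated momentum, localized with a cutoff on unit cells.

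\textbf{(iii) Higher regularity.} With the density bounded above and below, the momentum equation for $\bar u_{R,1}$ is uniformly parabolic. I would perform energy estimates on $\partial_{x_1}^k(\bar\rho_R-\tilde\rho_R,\bar u_R-\tilde u_R)$ for $k\le6$ in the standard hyperbolic–parabolic manner, using the BD structure to gain one derivative on $\rho$. This gives the $L^\infty H^6$ and $L^2H^7$ bounds, with the $L^\infty$ bound following by Sobolev embedding, and all constants depend only on $(T,\nu,\delta_3,c_0)$. The $\bar u_{R,2}$ equation is handled last as a linear parabolic equation with smooth coefficients.

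Once the $R$-independent bounds of (i)–(iii) are in hand, the continuation argument extends the local strong solution to all of $[0,T]$, which completes the proof of \Cref{prop:1dexist+bds}.
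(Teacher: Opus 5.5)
Your proposal follows essentially the same route as the paper's proof. Both use $R$-uniform relative energy and BD-entropy estimates against the periodized reference. Both get the density lower bound from the maximum principle for the active potential $w=2\nu\bar\rho_R\partial_{x_1}\bar u_{R,1}-p(\bar\rho_R)$, which the paper imports from \cite{MR4049919}. The slope condition gives $w\le 0$ at $t=0$, and this sign propagates because the source term of the $w$-equation at $w=0$ equals $\frac{\gamma-2}{2\nu}\bar\rho_R^{2\gamma-1}\le 0$; this is where $\gamma\le 2$ is used, which you did not state. Both finish with higher-order energy estimates and a linear parabolic treatment of $\bar u_{R,2}$.

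The differences are minor. Your upper bound via $\sqrt{\bar\rho_R}\in H^1$ on unit intervals does not need the lower bound, unlike the paper's argument with $\bar\rho_R^{(\gamma-1)/2}$. You gain the extra derivative on $\bar\rho_R$ through the effective velocity $\bar u_{R,1}+2\nu\partial_{x_1}\ln\bar\rho_R$, whereas the paper runs simultaneous estimates on $\partial_{x_1}^k\bar\rho_R$ and $\partial_{x_1}^{k-1}w$.
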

\Cref{prop:1dexist+bds} is proven in \Cref{sec:appendix1dexistence}. Note that similar analyses on $\T$ and $\R$ have been performed in \cite{MR4049919}, \cite{MR4113341}. However, in our situation there are two notable differences. Firstly, the equation for $\bar{u}_{R,2}$ is present. Secondly, we are interested in obtaining Sobolev estimates uniform in $R$, so we carefully do the estimates to make sure they are independent of $R$.

\subsubsection{Periodic 2D weak solution}\label{sec:new3}
Now, we appropriately define the periodic initial data for the 2D weak solution. Let $m_0:=\rho_0u_0$. Then, define
\begin{align}\label{eq:reflectdef1}
(\rho_R)_0(x_1,x_2)&=\tilde{\rho}_R(x_1)+\chi_R(x_1)(\rho_0(x_1,x_2)-\tilde{\rho}(x_1))+\chi_R(R-x_1)(\rho_0(R-x_1,x_2)-\tilde{\rho}(R-x_1)), \\[1mm]
(m_{R,1})_{0}(x_1,x_2)&=\tilde{\rho}_R(x_1)\tilde{u}_{R,1}(x_1)+\chi_R(x_1)((m_1)_0(x_1,x_2)-\tilde{\rho}(x_1)\tilde{u}_1(x_1)) \\
&-\chi_R(R-x_1)((m_1)_0(R-x_1)-\tilde{\rho}(x_1)\tilde{u}_1(R-x_1)), \\[1mm]
(m_{R,2})_{0}(x_1)&=\tilde{\rho}_R(x_1)\tilde{u}_{R,2}(x_1)+\chi_R(x_1)((m_2)_0(x_1,x_2)-\tilde{\rho}(x_1)\tilde{u}_2(x_1)) \\
&+\chi_R(R-x_1)((m_2)_0(R-x_1)-\tilde{\rho}(R-x_1)\tilde{u}_2(R-x_1)).
\end{align}
Similarly to \eqref{eq:piecewiserep}, the periodized initial data are either equal to the (reflected) original initial data, the periodized reference fields, or a convex combination of the original data and the original reference fields.
\par Extend $((\rho_R)_0,(m_R)_0)$  to $\T_R$ periodically in $x_1$. Further, we may define $(v_R)_0$ and $(w_R)_0$ on $\T_R$ via the formulas 
\begin{align}\label{eq:reflectdef5}
\sqrt{(\rho_R)_0}(w_R)_0&=2\nu \nabla \sqrt{(\rho_R)_0}, \\
\sqrt{(\rho_R)_0}(v_R)_0&=\frac{(m_R)_0}{\sqrt{(\rho_R)_0}}+\sqrt{(\rho_R)_0}(w_R)_0.
\end{align}
The following lemma shows that $((\rho_R)_0, (v_R)_0, (w_R)_0)$ is valid initial data to give rise to a $2D$ weak solution.
\begin{lemma}\label{lem:newlem1}
Let $(\rho_R)_0, (v_R)_0, (w_R)_0)$ be the initial data defined in \eqref{eq:reflectdef1}, \eqref{eq:reflectdef5}. Then
\begin{align}\label{eq:reflectedinitproperty}
\mathcal{E}_{\frac{1}{2},0}((\rho_R)_0, (v_R)_0, (w_R)_0|\tilde \rho_R, \tilde v_R, \tilde w_R) \leq C \delta_1+C(\tau_R+\frac{1}{R}),
\end{align}
and
\begin{align}\label{eq:reflectedinitproperty2}
\mathcal{E}_{\frac{1}{2},0}((\rho_R)_0, (v_R)_0, (w_R)_0|(\bar \rho_R)_0, (\bar v_R)_0, (\bar w_R)_0) \leq C \delta_2+C(\tau_R+\omega_R+\frac{1}{R}),
\end{align}
where
\begin{align*}
\tau_R:=\int_{\{|x_1|> \frac{R}{4}\} \times \T}\rho_0 \big(\frac{|v_0-\tilde{v}|^2+|w_0-\tilde{w}|^2}{2}\big)+F(\rho_0|\tilde{\rho})dx \to 0,
\end{align*}
and $\omega_R$ is defined in \eqref{eq:omegaR}
\end{lemma}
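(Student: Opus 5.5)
We prove both estimates by splitting the period cell $[-R,2R]\times\T$ according to the piecewise representation of the periodized data (the analogue of \eqref{eq:piecewiserep} for $((\rho_R)_0,(m_R)_0)$), and estimating the integrand of $\mathcal{E}_{\frac12,0}$ separately on each piece. The five regions are as follows. (i) $|x_1|\le R/4$, where the data coincide with $(\rho_0,v_0,w_0)$ and the reference with $(\tilde\rho,\tilde v,\tilde w)$, resp.\ $(\bar\rho_0,\bar v_0,\bar w_0)$. (ii) $|x_1-R|\le R/4$, the reflected copy. (iii) The two transition layers $R/4\le|x_1|\le R/3$. (iv) Their reflected counterparts. (v) The remaining set, where the data equal $(\tilde\rho_R,\tilde u_R)$.

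Region (v) contributes nothing to \eqref{eq:reflectedinitproperty}. For \eqref{eq:reflectedinitproperty2} on region (v), $(\bar\rho_R)_0=\tilde\rho_R$ and $(\bar u_R)_0=\tilde u_R$, so the contribution again vanishes.

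On region (i), the contribution to \eqref{eq:reflectedinitproperty} is at most $\delta_1$. Region (ii) is handled by the change of variables $x_1\mapsto R-x_1$. This map sends $(\rho,u_1,u_2)\mapsto(\rho,-u_1,u_2)$ and $\partial_{x_1}\ln\rho\mapsto-\partial_{x_1}\ln\rho$. Hence $v-V$ and $w-W$ are transformed by the same sign flip of the first component, which preserves their norms, and the contribution is again at most $\delta_1$. For \eqref{eq:reflectedinitproperty2} the same argument gives $\delta_2$ on (i) and (ii), because there $(\bar\rho_R)_0=\bar\rho_0$ exactly.

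The main work is on the layers (iii), (iv), where $(\rho_R)_0=\chi_R\rho_0+(1-\chi_R)\tilde\rho$ and $(m_R)_0=\chi_R m_0+(1-\chi_R)\tilde\rho\tilde u$. Here $\tilde\rho,\tilde u$ are constant, equal to $\rho_\pm,u_\pm$, so $\tilde w=0$.

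\textbf{Pressure term.} Since $F(\cdot|\rho_\pm)$ is convex, $F((\rho_R)_0|\rho_\pm)\le\chi_R F(\rho_0|\rho_\pm)$.

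\textbf{Kinetic term.} Write $\rho_R u_R-\rho_R\tilde u=\chi_R\rho_0(u_0-\tilde u)$. Then
\[
\rho_R|u_R-\tilde u|^2=\frac{\chi_R^2\rho_0^2|u_0-\tilde u|^2}{\rho_R}\le \chi_R\rho_0|u_0-\tilde u|^2,
\]
using $\rho_R\ge\chi_R\rho_0$.

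\textbf{Gradient term.} Compute $2\nu\nabla\sqrt{\rho_R}=\nu\nabla\rho_R/\sqrt{\rho_R}$ with $\nabla\rho_R=\chi_R\nabla\rho_0+\chi_R'(\rho_0-\rho_\pm)/R$. The first piece gives $\chi_R^2|\nabla\rho_0|^2/\rho_R\le\chi_R\rho_0|w_0|^2$. For the second piece we use the property $|\chi'|^2\le C\chi(1-\chi)$ from \eqref{eq:chiprops}:
\[
\frac{|\chi_R'|^2|\rho_0-\rho_\pm|^2}{R^2\rho_R}\le \frac{C}{R^2}\,\frac{\chi_R(1-\chi_R)|\rho_0-\rho_\pm|^2}{\chi_R\rho_0+(1-\chi_R)\rho_\pm}\le\frac{C}{R^2}\,|\rho_0-\rho_\pm|^2\min\Big(\frac{1}{\rho_0},\frac{1}{\rho_\pm}\Big).
\]
This is bounded by $\frac{C}{R^2}\big(F(\rho_0|\rho_\pm)+|\rho_0-\rho_\pm|\big)$; the linear growth for large $\rho_0$ is still controlled by $F(\rho_0|\rho_\pm)$ since $\gamma>1$. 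Integrating over a layer of width $\sim R$ gives at most $C\tau_R+C/R$.

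Collecting these, the contribution of the layers to \eqref{eq:reflectedinitproperty} is $\le C\tau_R+C/R$.

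For \eqref{eq:reflectedinitproperty2} on the layers, we compare with $(\bar\rho_R)_0$ rather than $\rho_\pm$. We insert $\tilde\rho$ via the triangle inequality: $|v-\bar V|^2\le 2|v-\tilde v|^2+2|\tilde v-\bar V|^2$, and similarly for $w$. For the density we use $F(\rho|\bar r)\le C\big(F(\rho|\tilde\rho)+|\rho-\tilde\rho|\,|\bar r-\tilde\rho|+|\bar r-\tilde\rho|^2\big)$, which holds since $\bar r,\tilde\rho$ are bounded above and away from $0$. The terms involving $|\bar r-\tilde\rho|$ and $|\tilde v-\bar V|$ are controlled pointwise by $\omega_R$ through Sobolev embedding, together with the $O(1/R)$ derivative of $\chi_R$.

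The step I expect to be the main obstacle is weighting these $\omega_R$-terms correctly. They are multiplied by $\rho$ and integrated over a set of measure $\sim R$, so a naive bound is $R\,\omega_R^2$. To avoid this, use that $(\bar\rho_R)_0-\tilde\rho=\chi_R(\bar\rho_0-\tilde\rho)$ is small in $L^2$ over the layer (it is bounded by $\omega_R$ in $H^5$, not only in $L^\infty$). Combine this with $\int\rho\le C\big(\int F(\rho|\tilde\rho)+R\big)$ via Cauchy–Schwarz in the form $\int\rho\,|g|^2\le\|g\|_\infty\big(\int\rho^2\big)^{1/2}\|g\|_{L^2}$. Here $\rho\in L^2$ locally, via $\gamma\le2$ and the $\rho-\tilde\rho$ control. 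If this still leaves a factor of $R$, the fallback is to absorb it by noting $\omega_R$ decays faster on $\{|x_1|\ge R/5\}$, or simply to state the bound as $C(\tau_R+\omega_R+1/R)$ with $\omega_R$ replaced by its $L^2$-weighted version.
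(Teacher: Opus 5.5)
Your argument for \eqref{eq:reflectedinitproperty} is correct and is essentially the paper's; the paper bundles your three layer bounds into one joint-convexity inequality and handles the $\nabla\chi_R$ term with the same property $|\chi'|^2\le C\chi(1-\chi)$. Your treatment of \eqref{eq:reflectedinitproperty2} on regions (i), (ii) and (v) is also correct.

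The gap is exactly the step you flag: \eqref{eq:reflectedinitproperty2} on the layers. Your proposed fix does not close it. The bound $\int\rho|g|^2\le\|g\|_{L^\infty}\|\rho\|_{L^2}\|g\|_{L^2}$ still contains the factor you want to remove. On a layer, $\rho=(\rho_R)_0$ sits on the constant background $\rho_\pm$ over a set of measure $\sim R$, so $\|\rho\|_{L^2(\text{layer})}\gtrsim\sqrt{R}$ and you only get $C\sqrt{R}\,\omega_R^2$. Also, $\gamma\le 2$ points the wrong way for putting $\rho$ in $L^2$: for $\gamma<2$, $F(\rho|\rho_\pm)\sim\rho^\gamma$ does not control $\rho^2$.

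The fallbacks do not help either. $\omega_R\to0$ with no rate: a tail $\bar u_0-u_+\sim(1+|x_1|)^{-s}$ with $s\in(\frac12,\frac34)$ makes $\sqrt{R}\,\omega_R^2\to\infty$. Replacing $\omega_R$ by a weighted quantity changes the statement.

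The same defect affects your density cross term $|\rho-\tilde\rho|\,|\bar r-\tilde\rho|$ if $|\bar r-\tilde\rho|$ is used only pointwise. Then you face $\omega_R\int_{\text{layer}}|\rho-\rho_\pm|\,dx$, which is not bounded uniformly in $R$: in the quadratic regime $F\sim|\rho-\rho_\pm|^2$ one only has $\int_{\text{layer}}\sqrt{F}\le C\sqrt{R\,\tau_R}$.

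The missing idea is to split off the constant background before using any $L^\infty$ bound. Pair constants with $L^2$ norms of the $\omega_R$-quantities, and pair only the entropy part with their $L^\infty$ norms. Set $g:=\big((\bar v_R)_0-u_\pm,(\bar w_R)_0\big)$; it is bounded in $L^2\cap L^\infty$ of the layer by $C\omega_R$, including the $\chi_R'/R$ terms. The pointwise bound \eqref{eq:controlbyrelent}, $\rho\le C(1+F(\rho|\rho_\pm))$, then gives
\begin{align*}
\int_{\text{layer}}(\rho_R)_0|g|^2dx\le C\|g\|_{L^2}^2+C\|g\|_{L^\infty}^2\int_{\text{layer}}F((\rho_R)_0|\rho_\pm)\,dx\le C(\omega_R^2+\omega_R^2\tau_R).
\end{align*}
For the density cross term, use \eqref{eq:ptwiserelentest}, $|\rho-\rho_\pm|\le C(\sqrt{F}+F^{1/\gamma})$. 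Apply Cauchy--Schwarz against $\|\bar\rho_0-\rho_\pm\|_{L^2}$ and H\"older against $\|\bar\rho_0-\rho_\pm\|_{L^{\gamma/(\gamma-1)}}$; both norms are at most $C\omega_R$ since $\frac{\gamma}{\gamma-1}\ge2$. This gives $C(\omega_R\sqrt{\tau_R}+\omega_R\tau_R^{1/\gamma})$.

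This is what the paper does in \eqref{eq:midstep3} and \eqref{eq:midstep6}. Combined with Young's inequality and $\omega_R\le C$, it yields exactly $C\delta_2+C(\tau_R+\omega_R+\frac1R)$.
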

\begin{proof}
We first prove \eqref{eq:reflectedinitproperty}. We have
\begin{align*}
\mathcal{E}_{\frac{1}{2},0}((\rho_R)_0, (v_R)_0, (w_R)_0|\tilde \rho_R, \tilde v_R, \tilde w_R)&=\int_{\{|x_1| \leq \frac{R}{4}\}}(\rho_R)_0\big(\frac{|(v_R)_0-\tilde{v_R}|^2}{2}+\frac{|(w_R)_0-\tilde{w_R}|^2}{2}\big)+F((\rho_R)_0|\tilde{\rho}_R)dx \\
&+\int_{\{|x_1-R| \leq \frac{R}{4}\}}(\rho_R)_0\big(\frac{|(v_R)_0-\tilde{v_R}|^2}{2}+\frac{|(w_R)_0-\tilde{w_R}|^2}{2}\big)+F((\rho_R)_0|\tilde{\rho}_R)dx \\
&+\int_{\{\frac{R}{3} \leq x_1 \leq \frac{2R}{3}\}}(\rho_R)_0\big(\frac{|(v_R)_0-\tilde{v_R}|^2}{2}+\frac{|(w_R)_0-\tilde{w_R}|^2}{2}\big)+F((\rho_R)_0|\tilde{\rho}_R)dx \\
&+\int_{\{\frac{4R}{3} \leq x_1 \leq \frac{-R}{3}\}}(\rho_R)_0\big(\frac{|(v_R)_0-\tilde{v_R}|^2}{2}+\frac{|(w_R)_0-\tilde{w_R}|^2}{2}\big)+F((\rho_R)_0|\tilde{\rho}_R)dx \\
&+\int_{\{0 < \chi_R(x_1) < 1\}}(\rho_R)_0\big(\frac{|(v_R)_0-\tilde{v_R}|^2}{2}+\frac{|(w_R)_0-\tilde{w_R}|^2}{2}\big)+F((\rho_R)_0|\tilde{\rho}_R)dx \\
&+\int_{\{0 < \chi_R(R-x_1) < 1\}}(\rho_R)_0\big(\frac{|(v_R)_0-\tilde{v_R}|^2}{2}+\frac{|(w_R)_0-\tilde{w_R}|^2}{2}\big)+F((\rho_R)_0|\tilde{\rho}_R)dx.
\end{align*}
The first and second terms are bounded by $\delta_1$, while the third and fourth terms are zero. So it remains to control the last two. For simplicity, we will consider only $[\frac{R}{4}, \frac{R}{3}]$, as the other regions are identical. Using the identities \eqref{eq:reflectdef5} and the fact that $\tilde{\rho}_R, \tilde{u}_R$ is constant on this interval, we have
\begin{align*}
(\rho_R)_0\big(\frac{|(v_R)_0-\tilde{v_R}|^2}{2}+\frac{|(w_R)_0-\tilde{w_R}|^2}{2}\big)+F((\rho_R)_0|\tilde{\rho}_R)=\frac{|(m_R)_0+\nu \nabla (\rho_R)_0-(\rho_R)_0u_\pm|^2+\nu^2|\nabla (\rho_R)_0|^2}{2(\rho_R)_0}+F((\rho_R)_0|\rho_{\pm}).
\end{align*}
Further note that on this interval, we have representations
\begin{align*}
(\rho_R)_0&=\chi_R \rho_0+(1-\chi_R)\rho_{+}, \\
\nabla (\rho_R)_0&=\chi_R \nabla \rho_0+(\rho_0-\rho_+)\nabla \chi_R, \\
(m_R)_0&=\chi_R m_0+(1-\chi_R)\rho_{+}u_{+}.
\end{align*}
So, noting that the function $(\rho, m, \nabla \rho) \mapsto \frac{|m+\nu \nabla \rho-\rho u_{+}|^2+\nu^2|\nabla \rho|^2}{2\rho}+F(\rho|\rho_{+})$ is convex in $(\rho, m, \nabla \rho)$, we obtain via the convexity inequality
\begin{align}\label{eq:midstep5}
&\int_{[\frac{R}{4},\frac{R}{3}]}\frac{|(m_R)_0+\nu \nabla (\rho_R)_0-(\rho_R)_0u_+|^2+\nu^2|\nabla (\rho_R)_0|^2}{2(\rho_R)_0}+F((\rho_R)_0|\rho_{+}) \\
&\leq \int_{[\frac{R}{4},\frac{R}{3}]}\frac{|m_0+\nu \nabla \rho_0-\rho_0u_+|^2+\nu^2|\nabla \rho_0|^2}{2\rho_0}+F(\rho_0|\rho_{+})+C\int_{[\frac{R}{4}, \frac{R}{3}]}\frac{(\rho_0-\rho_+)^2|\nabla \chi_R|^2}{\chi_R\rho_0+(1-\chi_R)\rho_+}dx \\
&\leq \tau_R+C\int_{[\frac{R}{4}, \frac{R}{3}]}\frac{(\rho_0-\rho_+)^2|\nabla \chi_R|^2}{\chi_R\rho_0+(1-\chi_R)\rho_+}dx.
\end{align}
To control the second term, using \eqref{eq:chiprops} and the identity
\begin{align*}
\frac{\chi_R(1-\chi_R)(\rho_0-\rho_+)^2}{\chi_R \rho_0+(1-\chi_R)\rho_{+}}\leq (1-\chi_R)\rho_0+\chi_R \rho_+ \leq \rho_0+\rho_+,
\end{align*}
we obtain 
\begin{align*}
\int_{[\frac{R}{4}, \frac{R}{3}]}\frac{(\rho_0-\rho_+)^2|\nabla \chi_R|^2}{\chi_R\rho_0+(1-\chi_R)\rho_+}dx &\leq \frac{C}{R^2}\int_{[\frac{R}{4}, \frac{R}{3}]}(\rho_0+\rho_+)dx \\
&\leq \frac{C}{R^2}\int_{[\frac{R}{4},\frac{R}{3}]}(1+F(\rho_0|\rho_+)+\rho_+)dx \leq C\frac{(1+\delta_1)}{R},
\end{align*}
where we have used \eqref{eq:controlbyrelent} in the second inequality. This proves \eqref{eq:reflectedinitproperty}.
\par To prove \eqref{eq:reflectedinitproperty2}, it again suffices to consider only the intervals where $\chi_R \not \in \{0,1\}$. We again restrict to the interval $[\frac{R}{4}, \frac{R}{3}]$ for simplicity. Here, we need to control
\begin{align}\label{eq:midstep}
\int_{[\frac{R}{4},\frac{R}{3}]}(\rho_R)_0\big(\frac{|(v_R)_0-(\bar v_R)_0|^2}{2}+\frac{|(w_R)_0-(\bar w_R)_0|^2}{2}\big)+F((\rho_R)_0|(\bar \rho_R)_0)dx.
\end{align}
For the first term in \eqref{eq:midstep}, we have
\begin{align}\label{eq:midstep2}
\int_{[\frac{R}{4},\frac{R}{3}]}(\rho_R)_0\big(\frac{|(v_R)_0-(\bar v_R)_0|^2}{2}+\frac{|(w_R)_0-(\bar w_R)_0|^2}{2}\big)&\leq \int_{[\frac{R}{4},\frac{R}{3}]}(\rho_R)_0\big(|(v_R)_0-u_+|^2+|(w_R)_0|^2\big) \\
&+\int_{[\frac{R}{4},\frac{R}{3}]}(\rho_R)_0\big(|(\bar v_R)_0-u_+|^2+|(\bar w_R)_0|^2\big).
\end{align}
The first term on the right-hand side of \eqref{eq:midstep2} is controlled by $\tau_R+\frac{C}{R}$ as above. For the second term, using \eqref{eq:controlbyrelent}, we have
\begin{align}\label{eq:midstep3}
\int_{[\frac{R}{4},\frac{R}{3}]}(\rho_R)_0\big(|(\bar v_R)_0-u_+|^2+|(\bar w_R)_0|^2\big) &\leq C\int_{[\frac{R}{4},\frac{R}{3}]}(1+F((\rho_R)_0|\rho_+))\big(|(\bar v_R)_0-u_+|^2+|(\bar w_R)_0|^2\big) \\
&\leq C(\omega_R+\omega_R^2\int_{[\frac{R}{4},\frac{R}{3}]}F((\rho_R)_0|\rho_+)) \\
& \leq C(\omega_R+\omega_R^2\tau_R).
\end{align}
So, we obtain 
\begin{align}\label{eq:midstep4}
\int_{[\frac{R}{4},\frac{R}{3}]}(\rho_R)_0\big(\frac{|(v_R)_0-(\bar v_R)_0|^2}{2}+\frac{|(w_R)_0-(\bar w_R)_0|^2}{2}\big) \leq C(\tau_R+\omega_R+\omega_R^2\tau_R+\frac{1}{R}).
\end{align}
For the second term in \eqref{eq:midstep}, we compute
\begin{align*}
\int_{[\frac{R}{4},\frac{R}{3}]}F((\rho_R)_0|(\bar \rho_R)_0)&=\int_{[\frac{R}{4},\frac{R}{3}]}\big(F((\rho_R)_0|\rho_+)+(F'(\rho_+)-F'((\bar \rho_R)_0)((\rho_R)_0-\rho_+)+F(\rho_+|(\bar \rho_R)_0)\big) \\
&=\sum_{j=1}^3R_j.
\end{align*}
We have computed already in \eqref{eq:midstep5} that $R_1 \leq \tau_R$. For $R_2$, we have
\begin{align}\label{eq:midstep6}
\int_{[\frac{R}{4},\frac{R}{3}]}(F'(\rho_+)-F'((\bar \rho_R)_0)((\rho_R)_0-\rho_+) &\leq C\int_{[\frac{R}{4},\frac{R}{3}]}(\rho_+-(\bar \rho_R)_0)(\sqrt{F((\rho_R)_0|\rho_+)}+F((\rho_R)_0|\rho_+)^{\frac{1}{\gamma}}) \\
&\leq C(\omega_R\sqrt{\tau_R}+\omega_R\tau_R^{\frac{1}{\gamma}}).
\end{align}
For $R_3$ we have
\begin{align}\label{eq:midstep7}
\int_{[\frac{R}{4},\frac{R}{3}]}F(\rho_+|(\bar \rho_R)_0) \leq C\omega_R^2.
\end{align}
So, combining \eqref{eq:midstep5}, \eqref{eq:midstep6} and \eqref{eq:midstep7}, we have
\begin{align}\label{eq:midstep8}
\int_{[\frac{R}{4},\frac{R}{3}]}F((\rho_R)_0|(\bar \rho_R)_0) \leq C(\tau_R+\omega_R+\omega_R^2).
\end{align}
Combined with \eqref{eq:midstep4} and plugging into \eqref{eq:midstep}, this gives
\begin{align*}
\int_{[\frac{R}{4},\frac{R}{3}]}(\rho_R)_0\big(\frac{|(v_R)_0-(\bar v_R)_0|^2}{2}+\frac{|(w_R)_0-(\bar w_R)_0|^2}{2}\big)+F((\rho_R)_0|(\bar \rho_R)_0)dx \leq C(\tau_R+\omega_R+\frac{1}{R}).
\end{align*}
This completes the proof of \eqref{eq:reflectedinitproperty2}.
\end{proof}
Then, by \eqref{eq:reflectedinitproperty} and \Cref{prop:1/2entropyrenormalizedexistenceT_R}, there exists a $\frac{1}{2}$-entropic renormalized weak solution $(\rho_R, v_R, w_R)$ to $\eqref{eq:2DNS_T_R}$ with initial data $((\rho_R)_0, (v_R)_0, (w_R)_0)$.

\subsubsection{Uniform-in-$R$ bounds on 2D weak solution}\label{estimate-R}
Fix $T>0$. By uniform-in-$R$ bounds on the planar strong solution in \Cref{prop:1dexist+bds} and the uniform-in-$R$ relative $\frac{1}{2}$-entropy estimate in \Cref{prop:1/2entropyrenormalizedexistenceT_R}, we have the following uniform-in-$R$ bounds on 2D weak solution $(\rho_R, v_R, w_R)$:
\begin{equation}\label{eq:uniform-R}
\begin{array}{ll}
\di \sup_{t\in[0,T]}\mathcal{E}_{\frac{1}{2},0}((\rho_R,v_R,w_R)|(\bar\rho_R,\bar v_R, \bar w_R))(t)\\
\di\quad  \di +\nu \int_0^T\int_{\T_R}|\mathcal{T}(u_R)-\sqrt{\rho_R}\nabla \bar u_R|^2dxdt \\[3mm]
\di\quad  +\nu \int_0^T\int_{\T_R}|\frac{2}{\sqrt{\gamma}}\nabla \rho_R^{\frac{\gamma}{2}}-\sqrt{\rho_Rp'(\rho_R)}\nabla \ln(\bar \rho_R)|^2dxdt \leq C,
\end{array}
\end{equation}
which implies that 
\begin{align}\label{eq:uniforminRestimates}
&||\sqrt{\rho_R}(u_R-\bar u_R)||_{L^\infty(0,T; L^2([-R,R/2]\times \mathbb{T}))} \leq C;\nonumber\\[3mm]
& ||p(\rho_R|\bar \rho_R)||_{L^\infty(0,T; L^1([-R,R/2]\times \mathbb{T}))} \leq C;\nonumber\\[3mm]
&||\sqrt{\rho_R}\ (\frac{w_R}{\nu}-\nabla \ln\bar \rho_R)||_{L^\infty(0,T; L^2([-R,R/2]\times \mathbb{T}))} \leq C;
\\[3mm] 
&||\frac{2}{\sqrt{\gamma}} \nabla \rho_R^{\frac{\gamma}{2}}-\sqrt{\rho_Rp'(\rho_R)}\nabla \ln(\bar \rho_R)||_{L^2([0,T]\times [-R,R/2]\times \mathbb{T})} \leq C;\\[3mm]
&||\mathcal{T}(u_R)-\sqrt{\rho_R}\nabla \bar u_R||_{L^2([0,T]\times [-R,R/2]\times \mathbb{T})} \leq C;
\end{align}
By \Cref{prop:1dexist+bds}, noting that $\forall (t,x_1)\in [0,T]\times [-R,R/2]\times \mathbb{T},$
$$
C^{-1} \leq \bar \rho_R(t,x_1)\leq C,
$$
we can do the estimate as
\begin{equation}{\label{I}}
\begin{array}{ll}
\di \int_{-R}^{R/2}\int_{\T}(\sqrt{\rho_R}-\sqrt{\bar \rho_R})^2 dx\\[4mm]
\di =\int_{-R}^{R/2}\int_{\T}(\sqrt{\rho_R}-\sqrt{\bar \rho_R})^2\big(\mathbf{1}_{\{\frac 12 C^{-1}(T, \nu)\leq \rho_R\leq \frac 32 C(T, \nu)\}}  \\[2mm]
\di \qquad \qquad \qquad \qquad \qquad \qquad  +\mathbf{1}_{\{0\leq \rho_R< \frac 12 C^{-1}(T, \nu)\}} +\mathbf{1}_{\{\rho_R>\frac 32 C(T, \nu)\}}\big)dx\\[2mm]
\di :=I_1+I_2+I_3.
\end{array}
\end{equation}
For $I_1$, we have
\begin{equation}\label{I1}
\begin{array}{ll}
\di I_1=\int_{-R}^{R/2}\int_{\T}(\sqrt{\rho_R}-\sqrt{\bar \rho_R})^2\mathbf{1}_{\{\frac 12 C^{-1}(T, \nu)\leq \rho_R\leq \frac 32 C(T, \nu)\}}dx\\[3mm]
\quad \di \leq C \int_{-R}^{R/2}\int_{\T}(\rho_R-\bar \rho_R)^2\mathbf{1}_{\{\frac 12 C^{-1}(T, \nu)\leq \rho_R\leq \frac 32 C(T, \nu)\}}dx\\[3mm]
\di \quad \leq C \int_{-R}^{R/2}\int_{\T}  p(\rho_R|\bar \rho_R) dx<\infty.
\end{array}
\end{equation}
Then 
\begin{equation}\label{I2}
\begin{array}{ll}
\di I_2=\int_{-R}^{R/2}\int_{\T}(\sqrt{\rho_R}-\sqrt{\bar \rho_R})^2\mathbf{1}_{\{0\leq \rho_R< \frac 12 C^{-1}(T, \nu)\}}dx \\[3mm]
\quad \di \leq C \int_{-R}^{R/2}\int_{\T}p(\rho_R|\bar \rho_R)\mathbf{1}_{\{0\leq \rho_R< \frac 12 C^{-1}(T, \nu)\}}dx<\infty,
\end{array}
\end{equation}
and
\begin{equation}\label{I3}
\begin{array}{ll}
\di I_3=\int_{-R}^{R/2}\int_{\T}(\sqrt{\rho_R}-\sqrt{\bar \rho_R})^2\mathbf{1}_{\{\rho_R>\frac 32 C(T, \nu)\}}dx\\[3mm]
\quad \di \leq C \int_{-R}^{R/2}\int_{\T}p(\rho_R|\bar \rho_R)\mathbf{1}_{\{\rho_R>\frac 32 C(T, \nu)\}}dx<\infty.
\end{array}
\end{equation}
Substituting \eqref{I1}-\eqref{I3} into \eqref{I} yields the following uniform-in-$R$ estimate
\begin{equation}\label{eq:L2}
||\sqrt{\rho_R}-\sqrt{\bar \rho_R}||_{L^\infty(0,T; L^2([-R,R/2]\times \mathbb{T}))} \leq C.
\end{equation}
Similarly, the following uniform-in-$R$ estimate holds true.
\begin{equation}\label{eq:L2-g}
||\rho_R^{\frac \gamma 2}-\bar \rho_R^{\frac \gamma 2}||_{L^\infty(0,T; L^2([-R,R/2]\times \mathbb{T}))} \leq C.
\end{equation}
On the other hand, \eqref{eq:L2-g} implies that
$$
\sup_{t\in[0,T]}\int_{-R}^{R/2}\int_{\T}(\rho_R+\rho_R^\gamma)|\nabla \ln \bar \rho_R|^2dx \leq C,
$$
which combined with \eqref{eq:uniforminRestimates} and \eqref{eq:vandwinitdatachannel} gives
\begin{equation}\label{eq:der-L2}
\sup_{t\in[0,T]}\int_{-R}^{R/2}\int_{\T} |\nabla (\sqrt{\rho_R})|^2dx \leq C,
\end{equation}
and
\begin{equation}\label{eq:der-L2-g}
\int_0^T\int_{\T_R} |\nabla (\rho_R^{\frac \gamma 2})|^2dx \leq C.
\end{equation}
Therefore, \eqref{eq:L2}, \eqref{eq:der-L2} and \eqref{eq:L2-g}, \eqref{eq:der-L2-g} imply the following uniform-in-$R$ estimates
\begin{equation}\label{H1}
\begin{array}{ll}
 \di   ||\sqrt{\rho_R}-\sqrt{\bar \rho_R}||_{L^\infty(0,T; H^1([-R,R/2]\times \mathbb{T}))} \leq C,\\[3mm]
\di ||\rho_R^{\frac \gamma 2}-\bar \rho_R^{\frac \gamma 2}||_{L^2(0,T; H^1([-R,R/2]\times \mathbb{T}))} \leq C. 
\end{array}
\end{equation}

\subsubsection{Passing into the limit $R \to \infty$}\label{sec:Rlimit}

$\forall M>0,$ set $\Omega_M:=[-M, M]\times \mathbb{T}$. If $R$ is sufficiently large, then $\Omega_M\subset\subset [-R,R/2]\times \T$.

Based on the uniform-in-$R$ estimates in Subsection \ref{estimate-R}, we have the following weak/strong convergence.

\underline{Step 1}. Convergence of $\rho_R$.

\

First, 
$$
\begin{array}{ll}
\rho_Ru_R&\di =\sqrt{\rho_R}(u_R-\bar u_R)\sqrt{\rho_R}+\rho_R\bar u_R\\[2mm]
&\di =\sqrt{\rho_R}(u_R-\bar u_R)(\sqrt{\rho_R}-\sqrt{\bar\rho_R})+\sqrt{\bar\rho_R}\sqrt{\rho_R}(u_R-\bar u_R)\\[2mm]
&\di \quad +(\sqrt{\rho_R}-\sqrt{\bar\rho_R})^2\bar u_R+2\sqrt{\bar\rho_R}\bar u_R(\sqrt{\rho_R}-\sqrt{\bar\rho_R})+\bar \rho_R\bar u_R,
\end{array}
$$
is uniformly bounded in $L^\infty(0,T; L^{p}(\Omega_M))$ for any $1\leq p<2$.
Then by the continuity equation, we have 
$$
\partial_t \rho_R\ {\rm is}\ {\rm uniformly}\ {\rm bounded}\ {\rm in}\ 
L^\infty(0,T; W^{-1, p}(\Omega_M)),
$$
for any $1\leq p<2$.
On the other hand, 
$$\rho_R=\big((\sqrt{\rho_R}-\sqrt{\bar\rho_R})+\sqrt{\bar\rho_R}\big)^2=(\sqrt{\rho_R}-\sqrt{\bar\rho_R})^2+2\sqrt{\bar\rho_R}(\sqrt{\rho_R}-\sqrt{\bar\rho_R})+\bar \rho_R,$$
is uniformly bounded in $L^\infty(0,T; L^{p}(\Omega_M))$ for any $1\leq p<\infty$.
And
$$
\nabla \rho_R=2 \sqrt{\rho_R} \nabla \sqrt{\rho_R}=2(\sqrt{\rho_R}-\sqrt{\bar\rho_R})\nabla \sqrt{\rho_R} +2\sqrt{\bar \rho_R} \nabla \sqrt{\rho_R},
$$
is uniformly bounded in $L^\infty(0,T; L^{p}(\Omega_M))$ for any $1\leq p<2$.
Thus, $\rho_R$ is uniformly bounded in $L^\infty(0,T; W^{1,p}(\Omega_M))$ for any $1\leq p<2$.

By Aubin-Lions Lemma, $\rho_R$ is precompact in $C([0,T];L^{q}(\Omega_M))$ for any $2\leq q<\infty$. Up to a subsequence, 
\begin{equation}\label{eq:rho-con}
    \rho_R\rightarrow \rho \ {\rm in }\ C([0,T];L^{q}(\Omega_M)),
\end{equation}
as $R\rightarrow +\infty$ for any $2\leq q<\infty$.

\underline{Step 2}. Convergence of pressure $p_R$.

\

By Step 1, up to a subsequence, $\rho_R^\gamma-\bar \rho_R^\gamma\rightarrow \rho^\gamma-\bar \rho^\gamma,$ a. e. in $[0,T]\times \Omega_M.$
By \eqref{H1}, we have
$$
\rho_R^{\frac\gamma2}-\bar \rho_R^{\frac\gamma2},
$$ 
is uniformly bounded in $L^2(0,T; H^1(\Omega_M))$.
Therefore,
$$
\rho_R^\gamma-\bar \rho_R^\gamma,
$$ 
is uniformly bounded in $L^1(0,T; L^q(\Omega_M))\cap L^\infty(0,T; L^1(\Omega_M))$ and then is uniformly bounded in $L^r([0,T]\times \Omega_M)$ for any $r\in [1,2)$. Therefore, Egorov Theorem implies that 
$$
\rho_R^\gamma-\bar \rho_R^\gamma\rightarrow \rho^\gamma-\bar\rho^\gamma, \quad {\rm strongly}\ {\rm in}\ L^1([0,T]\times \Omega_M).
$$

\underline{Step 3}. Convergence of the momentum.

\

Since
\begin{equation}
\begin{array}{ll}
    \displaystyle \nabla(\rho_R u_R)\\[2mm]
   \displaystyle  =\big((\sqrt{\rho_R}-\sqrt{\bar\rho_R})+\sqrt{\bar \rho_R}\big)\mathcal{T}(u_R)+2\big(\sqrt{\rho_R}(u_R-\bar u_R)+\sqrt{\rho_R}\bar u_R\big)\nabla\sqrt{\rho_R},
    \end{array}
\end{equation}
is uniformly bounded in $L^2(0,T; L^1(\Omega_M))$, and by Step 1, $\rho_Ru_R$ is uniformly bounded in $L^\infty(0,T; L^{p}(\Omega_M))$ for any $1\leq p<2$, we have that $\rho_Ru_R$ is uniformly bounded in $L^2(0,T; W^{1,1}(\Omega_M))$. On the other hand, by the momentum  equation $\eqref{eq:2DNS_T_R}_2$, it holds that
$$
\partial_t(\rho_Ru_R),
$$
is uniformly bounded in $L^2(0,T; H^{-N}(\Omega_M))$ for sufficiently large $N>1$.

Therefore, by Aubin-Lions Lemma, the momentum $\rho_R u_R$ is pre-compact in $L^2(0, T; L^q(\Omega_M))$ for any $q\in[1,2)$. Up to a subsequence,
$$
\rho_Ru_R\rightarrow m,
$$
strongly in $L^2(0, T; L^q(\Omega_M))$ for any $q\in[1,2)$.

\

\underline{Step 4}. Convergence of $\mathcal{T}(u_R)$.

\

By \eqref{eq:uniforminRestimates}, we have that  both $\mathcal{T}(u_R)$ and $\mathcal{S}(u_R)$ is uniformly bounded in $L^2([0,T]\times \Omega_M)$, and then 
\begin{equation}\label{eq:diffusionconv}
\mathcal{T}(u_R) \rightharpoonup \ \mathcal{T},\qquad 
\mathcal{S}(u_R) \rightharpoonup \mathcal{S},
\end{equation}
for some tensors $\mathcal{T}, \mathcal{S} \in L^2([0,T]\times \Omega_M)$.

\

\underline{Step 5}. Convergence of the renormalized quantity.

\

Let $H\in W^{2, \infty}(\mathbb{R^d})$ and the constant $\alpha\in (0, \frac {\gamma q}2)$ with $q\in[1, 4)$. Denote the non-vacuum region $Q_{T, M}:=\{(t,x)\in [0, T]\times \Omega_M| \rho(t, x)>0\}$ and the vacuum region $Q_{T, M}^c:=\{(t,x)\in [0, T]\times \Omega_M| \rho(t, x)=0\}=[0, T]\times \Omega_M \backslash Q_{T, M}.$ Define the velocity
$$
u(t,x):=\left\{
\begin{array}{ll}
\di \frac{m(t,x)}{\rho(t,x)},&\displaystyle (t,x )\in Q_{T, M},\\[3mm]
0,&\displaystyle (t,x )\in Q_{T, M}^c.
\end{array}
\right.
$$
Then the momentum $m(t,x)=\rho(t,x) u(t,x)$ for all $(t,x)\in [0, T]\times \Omega_M$.
Now we want to obtain the convergence of the renormalized quantity $\rho_RH(u_R)$.

$\forall (t,x)\in Q_{T, M},$ we have
$\rho_R(t,x)>0$, otherwise, if $\rho_R(t,x)=0$, then $\rho(t,x)=0$. Since $\rho_R\rightarrow \rho$ and $\rho_R u_R\rightarrow \rho u$ a. e. in $[0, T]\times \Omega_M$, then $\forall (t,x)\in Q_{T, M},$
\begin{equation}
   u_R(t,x)=\frac{\rho_R(t,x) u_R(t,x)}{\rho_R(t,x)}\rightarrow \frac{\rho (t,x)u(t,x)}{\rho(t,x)}=u(t,x),
\end{equation}
a. e. in $Q_{T, M}$. Therefore, 
\begin{equation}
  (\rho_R(t,x))^\alpha H(u_R)(t,x)\rightarrow  (\rho(t,x))^\alpha H(u)(t,x),
\end{equation}
a. e. in $Q_{T, M}$.

On the other hand, $\forall (t,x)\in Q^c_{T, M},$ we have
\begin{equation}
|(\rho_R(t,x))^\alpha H(u_R)(t,x)|\leq C(\rho_R(t,x))^\alpha\rightarrow C(\rho(t,x))^\alpha=0.
\end{equation}
Thus, $\forall (t,x)\in Q^c_{T, M},$ 
$$
(\rho_R(t,x))^\alpha H(u_R)(t,x)\rightarrow 0= (\rho(t,x))^\alpha H(u)(t,x).
$$
Consequently, 
$$
(\rho_R(t,x))^\alpha H(u_R)(t,x)\rightarrow (\rho(t,x))^\alpha H(u)(t,x).
$$
a. e. in $[0, T]\times \Omega_M$. By Step 2, 
$$
\rho_R^{\frac\gamma2}-\bar \rho_R^{\frac\gamma2},
$$
is uniformly bounded in $L^\infty(0,T; L^2(\Omega_M))\cap L^2(0,T; H^1(\Omega_M))$, and therefore in $L^q([0, T]\times \Omega_M)$ for any $1\leq q<4$.

Since
\begin{equation}
\begin{array}{ll}
\displaystyle|(\rho_R(t,x))^\alpha H(u_R)(t,x)|^p \leq C(\rho_R(t,x))^{\alpha p}&= C\big((\rho_R(t,x))^{\alpha p}-(\bar\rho_R(t,x))^{\alpha p}\big) \\
&+C(\bar\rho_R(t,x))^{\alpha p},
\end{array}
\end{equation}
if $1\leq \alpha p<\frac{\gamma q}{2}$ for any $1\leq q<4$, then by Egorov Theorem
\begin{equation}\label{eq:H-con}
(\rho_R)^\alpha H(u_R) \rightarrow \rho^\alpha H(u),
\end{equation}
strongly in $L^p([0,T]\times \Omega_M)$ for any $1\leq p<\frac{\gamma q}{2\alpha}$ with $1\leq q<4$.

\par Now, we are ready to pass into the limit $R \to \infty$. Recall that for any $\phi \in W^{2,\infty}(\R^2)$ compactly supported, there exist two measures $(\Xi_{\phi,R}, \tilde{\Xi}_{\phi,R})$ such that $(\rho_R, v_R, w_R)$ verifies 
\begin{align}\label{eq:Rrenormalizedeqn}
&\int_0^\infty \int_{\T_R}(\rho_R \psi_t+\rho_R u_R \cdot \nabla \psi) dx dt =0, \\
&\int_0^\infty \int_{\T_R}\Big[\rho_R \phi(u_R) \psi_t+\big(\rho_R \phi(u_R) u_R-(2\nu \sqrt{\rho_R}\mathcal{S}(u_R)\phi'(u_R)\big)\cdot \nabla \psi\\
&\qquad\qquad\quad  (-2\rho_R^{\frac{\gamma}{2}}\nabla (\rho_R)^{\frac{\gamma}{2}})\phi'(u_R)\psi\Big]dx dt=\left\langle \Xi_{\phi,R}, \psi\right\rangle,
\end{align}
where $\mathcal{S}(u_R)$ is the symmetric part of $\mathcal{T}(u_R)$ which verifies
\begin{equation}\label{eq:Rrenormalizedeqndiffusion}
\nu \sqrt{\rho_R}\phi^\prime(u_R)[\mathcal{T}(u_R)]_{jk}:=\nu\partial_j\big(\rho_R \phi^\prime(u_R) u_{R,k}\big)-2\nu\sqrt{\rho}u_{R,k}\phi^\prime(u_R)\partial_j\sqrt{\rho_R}+\widetilde\Xi _{\phi,R}.
\end{equation}
In fact, we have more information on these measures. By \cite[Remark 2.1]{MR3490496}, it holds
\begin{align}
\tilde{\Xi}_{\phi,R}=-\phi''(u_R)\mathcal{T}(u_R)\sqrt{\rho_R}u_R,
\end{align}
and by \Cref{prop:1/2entropyrenormalizedexistenceT_R}, we have a bound
\begin{align*}
||\Xi_{\phi,R}||_{\mathcal{M}([0,T] \times \Omega_M)} \leq C||\phi''||_{L^\infty},
\end{align*}
where the bound is uniform in $R$ thanks to the uniform bounds on $||\rho_R||_{L^\infty(0,T;L^1(\Omega_M))}$ and $||\nabla \overline{u}_R||_{L^\infty}$. Similarly, thanks to \eqref{eq:uniforminRestimates}, we obtain
\begin{align*}
||\tilde{\Xi}_{\phi,R}||_{\mathcal{M}([0,T] \times \Omega_M)} \leq C||\phi''||_{L^\infty},
\end{align*}
Then by Banach–Alaoglu theorem, there exist two Radon measures $\Xi_{\phi}, \widetilde \Xi_{\phi} \in \mathcal{M}([0,T] \times \Omega_M)$ such that
\begin{equation}
\Xi_{\phi,R} \rightharpoonup  \Xi_{\phi}, \qquad   \widetilde \Xi_{\phi,R} \rightharpoonup   \widetilde \Xi_{\phi}  
\end{equation}
weakly-$*$ in $\mathcal{M}([0,T] \times \Omega_M)$.

For the renormalized momentum, it holds that
\begin{equation}\label{renorm-m}
\rho_R \phi(u_R)=\sqrt{\rho_R}\phi(u_R)\sqrt{\rho_R}.
\end{equation}
By \eqref{eq:H-con}, we have
\begin{equation}\label{eq:renorm-con}
   \sqrt{\rho_R}\phi(u_R)\rightarrow \sqrt{\rho}\phi(u),
\end{equation}
strongly in $L^p([0,T]\times \Omega_M)$  for any $1\leq p<\gamma q$ with $1\leq q<4$, which together with \eqref{eq:rho-con} in Step 1 implies that
the renormalized momentum in \eqref{renorm-m} converges strongly to $\rho \phi(u)$ in $L^2([0,T]\times \Omega_M)$.

For the renormalized convection term, it holds that
\begin{equation}\label{eq:renorm-ad}
\rho_R u_R\phi(u_R)=\sqrt{\rho_R}u_R \sqrt{\rho_R}\phi(u_R).
\end{equation}
By \eqref{eq:uniforminRestimates}, we have
\begin{equation}\label{m-con}
   \sqrt{\rho_R}u_R \rightharpoonup \sqrt{\rho}u,
\end{equation}
weakly in $L^2([0,T]\times \Omega_M)$, which together with \eqref{eq:renorm-con} implies that
the renormalized convection term in \eqref{eq:renorm-ad} converges to $\rho  u\phi(u)$ in $\mathcal{D}^\prime([0,T]\times \Omega_M)$.

For the renormalized diffusion, \eqref{eq:diffusionconv} along with \eqref{eq:H-con} implies that 
\begin{align*}
\sqrt{\rho_R}\mathcal{S}(u_R)\phi'(u_R) \to \sqrt{\rho}\mathcal{S}\phi'(u),
\end{align*}
in $\mathcal{D}'([0,T] \times \Omega_M)$.

To pass to the limit for the renormalized pressure, by \eqref{eq:H-con} it holds that
\begin{equation}
   (\rho_R)^{\frac\gamma 2}\phi^\prime(u_R)\rightarrow \rho^{\frac\gamma 2}\phi^\prime(u),
\end{equation}
strongly in $L^p([0,T]\times \Omega_M)$ for any $p\in[1,4)$. 

By \eqref{eq:uniforminRestimates}, 
\begin{equation}
\nabla (\rho_R)^{\frac\gamma 2}\rightharpoonup \nabla \rho^{\frac\gamma 2}
\end{equation}
weakly in $L^2([0,T]\times \Omega_M)$.

Then it holds that 
\begin{equation}
2\rho_R^{\frac{\gamma}{2}}\nabla (\rho_R)^{\frac{\gamma}{2}}\phi'(u_R)  \rightarrow  2\rho^{\frac{\gamma}{2}}\nabla (\rho)^{\frac{\gamma}{2}}\phi'(u)
\end{equation}
in $\mathcal{D}^\prime([0,T]\times \Omega_M)$.
Taking $R\rightarrow+\infty$ in \eqref{eq:Rrenormalizedeqn}, we have that for any $\phi \in W^{2,\infty}(\R^2)$ compactly supported, there exist measures $(\Xi_{\phi}, \tilde{\Xi}_{\phi})$ verifying 
\begin{align*}
||\Xi_{\phi}, \tilde{\Xi}_{\phi}||_{\mathcal{M}([0,T] \times \Omega_M)} \leq C||\phi''||_{L^\infty},
\end{align*}
such that $(\rho, v, w)$ verifies 
\begin{align}\label{eq:Mrenormalizedeqn}
&\int_0^T \int_{\Omega_M}(\rho \psi_t+\rho u \cdot \nabla \psi) dx dt =0, \\
&\int_0^T \int_{\Omega_M}\Big[\rho \phi(u) \psi_t+\big(\rho \phi(u) u-(2\nu \sqrt{\rho}\mathcal{S}(u)\phi'(u)\big)\cdot \nabla \psi\\
&\qquad\qquad\quad  (-2\rho^{\frac{\gamma}{2}}\nabla (\rho)^{\frac{\gamma}{2}})\phi'(u)\psi\Big]dx dt=\left\langle \Xi_{\phi,R}, \psi\right\rangle,
\end{align}
for any test function $\psi \in C_c^\infty([0,T] \times \Omega_M)$.
Similarly, we may pass into the limit $R \to \infty$ in \eqref{eq:Rrenormalizedeqndiffusion} to obtain
\begin{equation}\label{eq:Mrenormalizedeqndiffusion}
\nu \sqrt{\rho}\phi^\prime(u)[\mathcal{T}]_{jk}:=\nu\partial_j\big(\rho \phi^\prime(u) u_{k}\big)-2\nu\sqrt{\rho}u_{k}\phi^\prime(u)\partial_j\sqrt{\rho}+\widetilde\Xi _{\phi},
\end{equation}
in $\mathcal{D}'([0,T] \times \Omega_M)$. Using the standard argument taking a sequence of $\phi_n$ such that $\phi_n(y)$ converges to $y_i$ for $i=1,2$ (see \cite[Section 4]{MR3801754}) we obtain that 
that for any $\phi \in W^{2,\infty}(\R^2)$ compactly supported, there exist measures $(\Xi_{\phi}, \tilde{\Xi}_{\phi})$ verifying 
\begin{align*}
||\Xi_{\phi}, \tilde{\Xi}_{\phi}||_{\mathcal{M}([0,T] \times \Omega_M)} \leq C||\phi''||_{L^\infty},
\end{align*}
such that $(\rho, v, w)$ verifies 
\begin{align}\label{eq:Meqn}
&\int_0^T\int_{\Omega_M}(\rho \psi_t+\rho u \cdot \nabla \psi) dx dt =0, \\
&\int_0^T \int_{\Omega_M}\Big[\rho u \psi_t+\big(\rho u \times u-(2\nu \sqrt{\rho} \mathcal{S}\big)\cdot \nabla \psi\\
&\qquad\qquad\quad  (-2\rho^{\frac{\gamma}{2}}\nabla (\rho)^{\frac{\gamma}{2}}) \psi\Big]dx dt=0,
\end{align}
for any test function $\psi \in C_c^\infty([0,T] \times \Omega_M)$.
and
\begin{equation}\label{eq:Meqndiffusion}
\sqrt{\rho}\mathcal{T}\nabla(\rho u)-2\sqrt{\rho} u \otimes \nabla \sqrt{\rho}, 
\end{equation}
in $\mathcal{D}'([0,T] \times \Omega_M)$. Therefore, $\mathcal{T}=\mathcal{T}(u)$, and so $\mathcal{S}=\mathcal{S}(u)$. 

\par Therefore, we proved that $\forall M>1,$ the limit $(\rho, u)$ is still a  weak solution to the $2$-D compressible Navier-Stokes equations in $[0,T]\times \Omega_M$. By the standard diagonal arguments, taking $T,M \to \infty$, we can obtain that the limit $(\rho, u)$ is a weak solution to \eqref{eq:2DNS}, i.e. it verifies \eqref{eq:uequationonchannel} and \eqref{eq:uequationinitdata}.
\par Further, by \eqref{eq:uniforminRestimates}, $\sqrt{\rho_R}w_R$ and $\sqrt{\rho_R}v_R$ are uniformly bounded in $L^2([0,T] \times \Omega_M)$, so they converge up to a subsequence to $\sqrt{\rho}w$ and $\sqrt{\rho}v$. Taking a diagonal sequence again as $T,M \to \infty$, we obtain $\sqrt{\rho} w$, $\sqrt{\rho} v$ which verify \eqref{eq:vandwrelationsonchannel}. The equation \eqref{eq:vandweqnintegratedchannel} and \eqref{eq:vandwinitdatachannel} follow directly from \eqref{eq:uequationonchannel}, \eqref{eq:uequationinitdata}, and \eqref{eq:vandwinitdatachannel}.

\par To obtain the relative entropy estimate \eqref{eq:weakstrongestimatechannel}, we also need to pass into the limit $R \to \infty$ for the planar strong solutions $(\bar \rho_R, \bar v_R, \bar w_R)$. By \Cref{prop:1dexist+bds}, we have uniform-in-$R$ bounds
\begin{align}\label{eq:higherorder}
&\inf_{(t,x) \in [0,T] \times \Omega_M}\bar \rho_R(t,x) \geq C^{-1}, \indent ||\bar \rho_R||_{L^\infty([0,T];H^6(\Omega_M))} \leq C, \\
&||\bar u_R||_{L^\infty([0,T];H^6(\Omega_M))}+|| \bar{u}_R||_{L^2([0,T];H^7(\Omega_M))} \leq C.
\end{align}
Using the equation \eqref{eq:2DNS} to control the time derivatives, this implies the following bounds
\begin{align}\label{eq:convtot}
||\bar \rho_R||_{H^3([0,T] \times \Omega_M)} \leq C, \\
||\bar u_R||_{H^3([0,T] \times \Omega_M)} \leq C, \\
||\nabla \bar u_R||_{H^3([0,T] \times \Omega_M)} \leq C.
\end{align}
Indeed, let us show just one step of the argument. The spatial derivatives of $\bar{\rho}_R$ are controlled already, so we need only to control the derivatives in time. For the first, we have
\begin{align*}
(\bar{\rho}_{R})_t=-(\bar \rho_R \bar u_{R,1})_{x_1} \in L^2([0,T] \times \Omega_M),
\end{align*}
uniformly in $R$ by \eqref{eq:higherorder}. For the second time derivative, we differentiate the continuity equation and use the continuity and momentum equations to replace the time derivatives with spatial ones:
\begin{align}\label{eq:rhott}
 (\bar{\rho_R})_{tt}=((\bar \rho_R \bar u_{R,1})_{x_1}\bar \rho_R)_{x_1}-(\bar{u}_{R,1})_t(\bar{\rho}_{R})_{x_1}-(\bar{u}_{R,1})_{tx_{1}}\bar \rho_R.
\end{align}
Using the momentum equation to obtain
\begin{align*}
(\bar{u}_{R,1})_t=\bar \rho_R^{-1}\Big((2\nu \bar \rho_R(\bar{u}_{R,1})_{x_1})_{x_1}-(\bar \rho_R \bar{u}_{R,1}^2+p(\bar \rho_R))_{x_1}+(\bar \rho_R (\bar{u}_{R,1})_{x_1})_{x_1}\bar u_{R,1}\Big),
\end{align*}
plugging this into \eqref{eq:rhott}, and using \eqref{eq:higherorder}, we obtain
\begin{align*}
(\bar{\rho}_{R})_{tt} \in L^2([0,T] \times \Omega_M),
\end{align*}
uniformly in $R$. By the same process with more time derivatives and doing similarly with $\bar u_R$, we may have \eqref{eq:convtot}. So, by passing to a subsequence, we obtain the convergence
\begin{align}\label{eq:localconvergence}
    \bar \rho_R \to \bar \rho \text{ in }H^2([0,T] \times \Omega_M), \\
    \bar u_R \to \bar u \text{ in }H^2([0,T] \times \Omega_M), \\
    \nabla \bar u_R \to \nabla \bar u \text{ in }H^1([0,T] \times \Omega_M).
\end{align}
 This allows also to obtain at the limit
\begin{align*}
 \overline{v}_R \to \overline{v}=\overline{u}+\nu\nabla \ln(\overline{\rho})  \text{ in }H^1([0,T] \times \Omega_M), \\
 \overline{w}_R \to \overline{w}=\nu\nabla \ln(\overline{\rho})  \text{ in }H^1([0,T] \times \Omega_M). \\
\end{align*}
By a diagonal argument as $T,M \to \infty$ we obtain a planar strong solution $(\bar \rho,\bar u)$ to \eqref{eq:2DNS} verifying 
\begin{align}\label{eq:strongsolbds}
\sup_{t \in [0,T]}\mathcal{E}_{\frac{1}{2}}(\bar \rho, \bar v, \bar w|\tilde{\rho}, \tilde{v}, \tilde{w})+||(\bar \rho, \bar u)-(\tilde{\rho}, \tilde{u})||_{H^3([0,T] \times \R)} \leq C, 
\end{align}
for all $T>0$. In fact, this is the unique planar strong solution with initial data $(\bar \rho^0, \bar u^0)$ (uniqueness may be proven by a relative entropy method, see e.g. a small modification of \cite[Proposition 5.1]{MR2368905}). Further, it may be proven that
\begin{lemma}\label{lem:appclem}
Let $(\bar \rho_R, \bar u_R)$ be the planar strong solutions to \eqref{eq:2DNS_T_R} with initial data given by \eqref{eq:reflectdef2} constructed in \Cref{prop:1dexist+bds}. Let $(\bar{\rho}, \bar{u})$ be the unique planar strong solution to \eqref{eq:2DNS} with initial data $(\bar \rho^0, \bar u^0)$. Then we have the convergence
\begin{align*}
&||\nabla \bar v_R||_{L^\infty([0,T] \times \T_R)} \to ||\nabla \bar v||_{L^\infty([0,T] \times \R \times \T)}, \\
&||\nabla \bar w_R||_{L^\infty([0,T] \times \T_R)} \to ||\nabla \bar w||_{L^\infty([0,T] \times \R \times \T)}, \\
&||\Delta \ln(\bar \rho_R)||_{L^\infty([0,T] \times \T_R)} \to ||\Delta \ln(\bar \rho)||_{L^\infty([0,T] \times \R \times \T)}, \\
&||\nabla \ln(\bar \rho_R)||_{L^\infty([0,T] \times \T_R)} \to ||\nabla \ln(\bar \rho)||_{L^\infty([0,T] \times \R \times \T)}.
\end{align*}
\end{lemma}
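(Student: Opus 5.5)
The approach is to split the periodic domain $\T_R=[-R,2R]_{\text{per}}\times\T$ into a central window where $(\bar\rho_R,\bar u_R)$ is close to $(\bar\rho,\bar u)$, a reflected window where it is close to the reflected copy of $(\bar\rho,\bar u)$, and transition regions where it is close to constant states. All the norms in question depend only on $x_1$, so everything reduces to one-dimensional statements.

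\textbf{Step 1 (lower bound).} The local convergence \eqref{eq:localconvergence} holds on $[0,T]\times\Omega_M$ for every fixed $M$. I would upgrade it to local uniform convergence of $\partial_{x_1}\bar v_R$, $\partial_{x_1}\bar w_R$, $\partial_{x_1x_1}\ln\bar\rho_R$ and $\partial_{x_1}\ln\bar\rho_R$. This uses the uniform $H^6$ bounds of \Cref{prop:1dexist+bds}, together with the time-derivative bounds \eqref{eq:convtot} and their higher analogues, to get uniform Hölder bounds in $(t,x_1)$, and then Arzelà–Ascoli. It gives $\liminf_R \|\cdot\|_{L^\infty([0,T]\times\T_R)}\ge \|\cdot\|_{L^\infty([0,T]\times[-M,M])}$ for each $M$, and hence the $\liminf$ inequality with the full-line norm.

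\textbf{Step 2 (reflection symmetry).} By construction the data \eqref{eq:reflectdef2} are invariant under $x_1\mapsto R-x_1$, with $\bar u_{R,1}$ changing sign. As remarked before \eqref{eq:reflectdef4}, the planar system is invariant under $(\rho,u_1,u_2)(x_1)\mapsto(\rho,-u_1,u_2)(R-x_1)$. Uniqueness of the strong planar solution on $\T_R$ then forces the solution to keep this symmetry for all $t$. The quantities $\partial_{x_1}\bar v_{R}$, $\partial_{x_1}\bar w_R$, $\partial^2\ln\bar\rho_R$ and $|\partial\ln\bar\rho_R|$ are invariant in absolute value under this map; for instance $\partial_{x_1}u_1$ is unchanged by the simultaneous reflection and sign flip. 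So the supremum near $x_1=R$ equals the supremum near $x_1=0$, and it suffices to control the half period $[-R/2,R/2]$, together with the remaining transition zones by the same reasoning.

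\textbf{Step 3 (upper bound away from the window).} I must show that $\sup_{t\le T}\sup_{M\le |x_1|\le R/2}$ of each quantity tends to the corresponding full-line tail quantity, which goes to $0$ as $M\to\infty$. The plan is a localized energy estimate for $(\bar\rho_R-\tilde\rho_R,\bar u_R-\tilde u_R)$ and its derivatives up to order $6$, weighted by a cutoff supported in $\{|x_1|\ge M\}$ (mod the reflection). The initial weighted norm is at most $C\omega_M$ from \eqref{eq:omegaR}, plus the contribution of the transition region in $\tilde u_{R,1}$, whose derivatives are $O(1/R)$. The commutator terms with the cutoff are bounded by $C/M$ times the global uniform norms of \Cref{prop:1dexist+bds}. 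Gronwall on $[0,T]$, with constants from \Cref{prop:1dexist+bds} that do not depend on $R$, then gives a tail $H^6$ norm of order $C(\omega_{M/2}+1/M+1/R)$. Sobolev embedding turns this into smallness of the relevant derivatives in $L^\infty$. Since $\tilde\rho,\tilde u$ are constant for $|x_1|\ge1$ (apart from the $O(1/R)$ slope), the quantities $\partial\ln\bar\rho_R$, $\partial^2\ln\bar\rho_R$, $\partial\bar v_R$ and $\partial\bar w_R$ are all small there. The same tail estimate applies to $(\bar\rho,\bar u)$ on $\R$. Taking $R\to\infty$ and then $M\to\infty$ gives the $\limsup$ inequality: either the supremum of $(\bar\rho,\bar u)$ is attained in a compact window, where Step 1 applies, or both suprema are small. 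Note also that $\Delta\ln\bar\rho=\partial_{x_1x_1}\ln\bar\rho$ for planar functions.

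\textbf{Main obstacle.} The hard step is Step 3. The nonlinear weighted energy estimates must be carried out uniformly in $R$ up to high order, and the transition zone, where $\tilde u_{R,1}$ changes sign, has to be handled carefully. There the forcing generated by the reference state has derivatives only of order $1/R$ over a length $R$, so the $L^2$ error is $O(R^{-1/2})$. I would first try to reuse the estimates of \Cref{sec:appendix1dexistence} verbatim with a weight, treating the reference-state forcing as a source term.
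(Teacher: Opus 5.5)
Step 2 is false as stated, although the rest of your plan survives without it. On $\R/3R\mathbb{Z}$ the map $x_1\mapsto R-x_1$ fixes $x_1=\frac R2$ and $x_1=2R\equiv -R$. An anti-symmetric $u_1$ would therefore have to vanish at $x_1=-R$. But there $\chi_R(x_1)=\chi_R(R-x_1)=0$, so $(\bar u_{R,1})_0(-R)=\tilde u_{R,1}(-R)=u_{1,-}$, which is nonzero in general. Concretely, the reflection sends the second transition zone $[\frac{4R}{3},\frac{5R}{3}]$ of \eqref{eq:reflectdef4} onto $[-\frac{2R}{3},-\frac R3]$, where $\tilde u_{R,1}\equiv u_{1,-}$ is constant. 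So the data \eqref{eq:reflectdef2}, and hence $(\bar\rho_R,\bar u_R)$, are not globally symmetric, and you cannot transfer the supremum near $x_1=R$ to the one near $x_1=0$. The paper uses the invariance of the equation only locally.

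The repair uses your own Step 1. The function $\check U_R(t,x_1):=(\bar\rho_R,-\bar u_{R,1},\bar u_{R,2})(t,R-x_1)$ solves the same periodic system and obeys the same uniform bounds. By \eqref{eq:piecewiserep} its initial data equal $(\bar\rho_0,\bar u_0)$ on $|x_1|\le\frac R4$. Hence it also converges locally uniformly, with the needed derivatives, to $(\bar\rho,\bar u)$, and the four quantities in the lemma are invariant in absolute value under this change. Your weighted tail estimate must then use a cutoff vanishing near both $x_1=0$ and $x_1=R$ and covering both transition zones. That estimate never used the symmetry, so it goes through.

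With this fix, your argument is a genuinely different route from the paper's. The paper never localizes the nonlinear estimates. It builds $(\rho^{app}_R,u^{app}_R)$ from the full-line solution by exactly the cutoff/reflection recipe used for the data. This is where the anti-symmetry of the equation enters: near $x_1=R$ the reflected full-line solution is an exact solution. The paper then notes that $(\rho^{app}_R,u^{app}_R)$ coincides with $((\bar\rho_R)_0,(\bar u_R)_0)$ at $t=0$ and that its residual is $O(s_R+R^{-1/2})$ in $L^\infty_tH^3(\T_R)$. A single $H^3$ energy estimate on the difference, whose coefficients are already bounded in $W^{4,\infty}$, gives $\|(\bar\rho_R,\bar u_R)-(\rho^{app}_R,u^{app}_R)\|_{L^\infty_tH^3(\T_R)}\to0$. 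The sup norms of the explicit approximate solution are then read off region by region.

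That route is quantitative and needs neither compactness nor cutoff commutators. It also imports tail smallness from the full-line solution ($s_R\to0$) rather than proving it for each periodic solution. Your route avoids constructing and estimating an approximate solution. The price is weighted nonlinear energy estimates, uniform in $R$, in which the hyperbolic $\rho$-equation must be closed against the weighted viscous dissipation. You also need Arzel\`a--Ascoli plus uniqueness of the full-line solution to identify the local limits. An $H^3$-level tail estimate already suffices, since $H^3(\R)\subset W^{2,\infty}(\R)$.
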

A proof of \Cref{lem:appclem} is given in \Cref{sec:normsapp}.

By convexity, strong convergence of $(\bar{\rho}_R, \bar{v}_R, \bar{w}_R)$, \eqref{eq:reflectedinitproperty2}, and \Cref{lem:appclem}, we may pass into the limit $R \to \infty$ in \eqref{eq:weakstrongestimateonTR} to obtain
\begin{align*}
&\sup_{t \in [0,T]}\int_{\Omega_M}\rho\big(\frac{|v-\bar v|^2+|w-\bar w|^2}{2}\big)+F(\rho|\bar \rho)dx \leq \\
&Ce^{\big(||\nabla \overline{v}||_{L^\infty([0,T] \times \R)}+||\nabla \overline{w}||_{L^\infty([0,T] \times \R)}+||\Delta \ln(\overline{\rho})||_{L^\infty([0,T] \times \R)}+||\nabla \ln(\overline{\rho})||_{L^\infty([0,T] \times \R)}^2\big)t} \\
& \times \mathcal{E}_{\frac{1}{2}}(\rho_0, v_0, w_0|\bar{\rho}_0, \bar{v}_0, \bar{w}_0) \Big).
\end{align*}
Passing to the limit $M \to \infty$, we obtain exactly \eqref{eq:weakstrongestimatechannel}.
\par Finally we aim to prove that the limit $(\rho, v,w)$ has the right far-fields $(\rho_\pm, u_\pm)$ in the channel domain $\mathbb{R}\times\mathbb{T}$, i.e. we want to prove that \eqref{eq:correctff} holds. Note that we may write 
\begin{align*}
\mathcal{E}_{\frac{1}{2}}(\rho,v,w|\tilde{\rho},\tilde{v},\tilde{w})(t) &\leq C\Big( \mathcal{E}_{\frac{1}{2}}(\rho,v,w|\bar{\rho},\bar{v},\bar{w})(t)+\mathcal{E}_{\frac{1}{2}}(\bar \rho,\bar v,\bar w|\tilde{\rho},\tilde{v},\tilde{w})(t)\Big) \\
&+\int_{\R \times \T}(\rho-\bar \rho)\big(\frac{|\bar v-\tilde{v}|^2+|\bar w-\tilde{w}|^2}{2}\big)dx(t)-\int_{\R \times \T}(F'(\tilde{\rho})-F'(\bar \rho))(\rho-\bar \rho)dx(t).
\end{align*}
Note that the first two terms are controlled by \eqref{eq:weakstrongestimatechannel} and \eqref{eq:strongsolbds}. It remains to control the latter two. For the first, noting that $\bar \rho \geq c > 0$, and using \eqref{eq:strongsolbds}, we use Young's inequality
\begin{align*}
&\int_{\R \times \T}(\rho-\bar \rho)\big(\frac{|\bar v-\tilde{v}|^2+|\bar w-\tilde{w}|^2}{2}\big)dx(t) \\
&\leq C\Big( \mathcal{E}_{\frac{1}{2}}(\rho, v, w|\bar \rho, \bar v, \bar w)(t)+||(\bar v-\tilde{v}, \bar w-\tilde{w})||_{L^\infty([0,T];L^4(\R \times \T))}^4\Big) < \infty.
\end{align*}
For the second, using the fact that $\tilde{\rho}, \bar \rho$ are both bounded from above and below away from zero, and using \Cref{lem:relquantityests}
\begin{align*}
\int_{\R \times \T}(F'(\tilde{\rho})-F'(\bar \rho))(\rho-\bar \rho)dx(t) &\leq C\Big(||\tilde{\rho}-\bar{\rho}||_2\mathcal{E}_{\frac{1}{2}}(\rho, v, w|\bar{\rho},\bar{v},\bar{w})(t)^{\frac{1}{2}}\\
&+||\tilde{\rho}-\bar{\rho}||_{\frac{\gamma}{\gamma-1}}\mathcal{E}_{\frac{1}{2}}(\rho, v, w|\bar{\rho},\bar{v},\bar{w})(t)^{\frac{1}{\gamma}}\Big).
\end{align*}
This completes the proof of \Cref{thm:intro1}.

\section{Vanishing viscosity limit of 2D compressible Navier-Stokes equations}\label{sec:vvlimit}
\renewcommand{\theequation}{\arabic{section}.\arabic{equation}}
\setcounter{equation}{0}
The rest of the paper is devoted to proving \Cref{thm:intro2}. Let $(\rho^\nu_0, v^\nu_0, w^\nu_0)$ be sequence of initial data for \eqref{eq:2DNS} as in \Cref{thm:intro2}. Let $(\rho^\nu, v^\nu, w^\nu)$ be a sequence of weak solutions to \eqref{eq:2DNS} with initial data $(\rho^\nu_0, v^\nu_0, w^\nu_0)$ as given by \Cref{thm:intro1}. Further, let $(\bar{\rho}^\nu, \bar{v}^\nu, \bar{w}^\nu)$ be the unique planar solutions to \eqref{eq:2DNS} with initial data $(\overline{\rho}_0^\nu, \overline{v}_0^\nu, \overline{w}_0^\nu)$ given by \Cref{prop:1dexist+bds}. 
\par We have already proven the required estimate on the relative entropy between the $2D$ weak solution $(\rho^\nu, v^\nu, w^\nu)$ and the planar strong solution  $(\bar{\rho}^\nu, \bar{v}^\nu, \bar{w}^\nu)$. The next step is to obtain an estimate on the relative entropy between the planar strong solution $(\bar{\rho}^\nu, \bar{v}^\nu, \bar{w}^\nu)$ and the inviscid planar shock $(\hat{\rho}, \hat{u})$. For this part, we first estimate the distance between the planar strong solution and the viscous shock associated to $(\hat{\rho}, \hat{u})$. We use the $a$-contraction estimate proven in \cite[Theorem 1.2]{KV-Inven}. However, the $a$-contraction estimate is given in the Lagrangian coordinates, so we must first consider the planar strong solution in the Lagrangian coordinates. 
\par Without loss of generality, let us assume that $(\hat{\rho}, \hat{u})$ is a $1-$shock, so that $\rho_- < \rho_+$.

\subsection{1-D uniform-in-$\nu$ estimates: from Lagrangian to Eulerian}
Given the planar strong solution $(\bar{\rho}^\nu, \bar{v}^\nu, \bar{w}^\nu)$ to \eqref{eq:2DNS}, consider the Euler-Lagrange change of variables in the first coordinate
\begin{align}\label{eq:ELchangeofcoords}
 y_1(t,x_1)= \int_{(0,0)}^{(t,x_1)}\overline{\rho}^\nu dx_1- \overline{\rho}^\nu \overline{u}^\nu dt.
\end{align}
Under this change of coordinates, we obtain a strong solution $(\bar{s}^\nu(t,y_1), \bar{u}^\nu(t,y_1))$ to the equation
\begin{equation}\label{1DNS_Lag}
\left\{
\begin{array}{ll}
    (\bar s^\nu)_t -(\bar u^\nu_1)_{y_1}=0, \qquad (t,y_1,x_2) \in \R^+ \times \mathbb{R} \times \T, \\[2mm]
     (\bar u^\nu_{1})_t+p^L(\bar s^\nu)_{y_1}=(2 \nu \frac{(\bar u^\nu_1)_{y_1}}{(\bar s^\nu)^2})_{y_1}, \\[2mm]
      (\bar u^\nu_2)_{t}=(\nu \frac{(\bar u^\nu_2)_{y_1}}{(\bar s^\nu)^2})_{y_1} ,\\[2mm]
    % (\bar\rho_R(0,x_1), (\bar\rho_R \bar u_R)(0,x_1))=(\bar\rho^0_R, \bar m^0_R)(x_1),
\end{array}
\right.
\end{equation}
where $\overline{s}^\nu(t,y_1)=\frac{1}{\overline{\rho}^\nu(t,x_1)}$ and $p^L(s)=s^{-\gamma}$. Note that the third equation for $\bar u_2^\nu$ is decoupled with the first two equations for $(\bar s^\nu, \bar u_1^\nu)$. 
\par Also corresponding to the inviscid Eulerian shock $(\hat{\rho}, \hat{u})$ is an inviscid Lagrangian shock $(\hat{s}, \hat{u})(y_1-\sigma't)$ given by the equation
\begin{align}\label{eq:inviscidlagrangianshock}
(\hat{s}, \hat{u})(y_1)=\begin{cases}
    (s_-, u_{1,-}, 0) & y_1 < 0, \\
    (s_+, u_{1,+}, 0) & y_1 > 0,
\end{cases}
\end{align}
where
\begin{align*}
s_-:=\frac{1}{\rho_-}, \\
s_+:=\frac{1}{\rho_+},
\end{align*}
and$(\hat{s}, \hat{u})(y_1-\sigma't)$ is a solution to \eqref{1DNS_Lag} with $\nu=0$ for an appropriate speed $\sigma' \in \R$ given by the Rankine-Hugoniot conditions. %The speed $\sigma'$ will not be important for us, so we will neglect to define it.
\par Finally, associated to $\eqref{eq:inviscidlagrangianshock}$ are the corresponding planar viscous profiles $(\hat{s}^\nu, \hat{u}^\nu)(y_1-\sigma't)$ which verify, for the same speed $\sigma'$ (see e.g. \cite{MR2785875}),
\begin{align}\label{eq:viscouslagrangianshock}
-\sigma'(\hat{s}^\nu)'-(\hat{u}_1)'&=0, \\
-\sigma'(\hat{u}_1^\nu)'+p^L(\hat{s}^\nu)'&=2\nu(\frac{1}{(\hat{s}^\nu)^2}(\hat{u}_1^\nu)')', \\
\hat{u}_2^\nu&=0, \\
\lim_{\xi \to \infty}(\hat{s}^\nu, \hat{u}^\nu)(\xi)&=(\frac{1}{\rho_{\pm}}, u_{1,\pm}, 0).
\end{align}
The $a$-contraction estimate in \cite{KV-Inven} allows to estimate the difference in an appropriate relative functional $E(\cdot, \cdot)$, uniformly in $\nu$, between the solution $(\overline{s}^\nu, \overline{u}_1^\nu)$ and the viscous profile $(\hat{s}^\nu, \hat{u}_1^\nu)$ up to suitable time-dependent shift $X^\nu(t)$. Here, the relative entropy in the Lagrangian frame is
\begin{align*}
E_\nu((s_1,u_1)|(s_2,v_2)):=\frac{1}{2}\big(u_1+2\nu(s_1^{-1})_{y_1}-u_2-2\nu(s_2^{-1})_{y_1})^2+Q(s_1|s_2),
\end{align*}
where 
\begin{align*}
Q(s):=\frac{s^{-\gamma+1}}{\gamma-1}, \ s > 0,
\end{align*}
and $Q(s_1|s_2)$ is the relative quantity (recall \eqref{eq:relativequantity}). 
\par First, we need to verify that the initial values $(\bar{s}^\nu_0, \bar{u}^\nu_0)$ are suitable to apply the $a$-contraction estimate. This is content of the following lemma.
\begin{lemma}\label{lem:initialdataapprox}
Let $(\overline{\rho}_0^\nu, \overline{v}_0^\nu, \overline{w}_0^\nu)$ be initial data as in \Cref{thm:intro2}, giving rise to planar strong solutions $(\bar{\rho}^\nu, \bar{v}^\nu, \bar{w}^\nu)$ to \eqref{eq:2DNS}. Let $(\bar{s}^\nu_0, \bar{u}^\nu_0)$ be the corresponding initial data for the strong solution to \eqref{1DNS_Lag}, given by the formula
\begin{align*}
(\bar{s}^\nu_0(y_1), \bar{u}^\nu_0(y_1))=(\frac{1}{\bar{\rho}_0^\nu(x_1)}, \bar{u}^\nu_0(x_1)),
\end{align*}
under the change of coordinates corresponding to \eqref{eq:ELchangeofcoords} at $t=0$:
\begin{align*}
\frac{dy_1}{dx_1}(0,x_1)=\bar{\rho}_0^\nu(x_1), \ y_1(0,0)=0.
\end{align*}
Finally, let $(\hat{s}^\nu, \hat{u}^\nu)$ be the viscous profile solving \eqref{eq:viscouslagrangianshock}. Then, there exists a constant $C$ depending only on $\hat{s}$ and $\gamma$ so that (up to a subsequence)
\begin{align*}
\lim_{\nu \to 0}\int_{\R}E_\nu(\overline{s}^\nu_0,(\overline{u}^\nu_{1})_0|\hat{s}, \hat{u}^\nu_1)dy_1 \leq CE_0.
\end{align*}
\end{lemma}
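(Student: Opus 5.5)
The plan is to change variables back from Lagrangian to Eulerian coordinates at $t=0$. Then each piece of $E_\nu$ becomes either part of the Eulerian relative entropy $\eta(\bar\rho_0^\nu,\bar u_0^\nu|\hat\rho,\hat u)$, or a term that vanishes by the hypotheses \eqref{eq:nearplanar}, or a profile-convergence error. Since $dy_1=\bar\rho^\nu_0\,dx_1$ and $y_1(0,0)=0$, the point $x_1=0$ is sent to $y_1=0$. Moreover $\bar\rho_0^\nu$ is monotone increasing across the origin in the sense that $y_1\gtrless 0 \iff x_1\gtrless 0$. Hence $\hat s(y_1)=1/\hat\rho(x_1)$ and $\hat u(y_1)=\hat u(x_1)$ pointwise. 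The jump of $\hat s$ is at the origin, so $(\hat s^{-1})_{y_1}=0$ a.e., and I interpret that term in $E_\nu$ as zero away from the single point $y_1=0$.

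\textbf{Step 1 (potential part).} I will use the exact algebraic identity $\rho\,Q(1/\rho\,|\,1/r)=F(\rho|r)$. To check it, write $Q'(1/r)=-r^\gamma$ and expand both sides; each equals $F(\rho)-\frac{\gamma}{\gamma-1}r^{\gamma-1}\rho+r^\gamma$. This gives
$$\int_\R Q(\bar s_0^\nu|\hat s)\,dy_1=\int_\R F(\bar\rho_0^\nu|\hat\rho)\,dx_1 .$$

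\textbf{Step 2 (velocity part).} Since $(s^{-1})_{y_1}=\rho_{x_1}/\rho=(\ln\rho)_{x_1}$, the kinetic integrand becomes, after multiplying by $\rho$ for the change of variables,
$$\bar\rho_0^\nu\big|(\bar u^\nu_1)_0-\hat u^\nu_1+2\nu(\ln\bar\rho^\nu_0)_{x_1}\big|^2\le 3\bar\rho_0^\nu|(\bar u^\nu_1)_0-\hat u_1|^2+3\bar\rho_0^\nu|\hat u_1-\hat u_1^\nu|^2+12\nu^2\bar\rho^\nu_0|(\ln\bar\rho_0^\nu)_{x_1}|^2 .$$
I treat the three terms as follows.
\begin{itemize}
\item The first term is bounded by $6\int\eta(\bar\rho^\nu_0,\bar u^\nu_0|\hat\rho,\hat u)\,dx_1$, since $\eta$ contains the full $|u-\hat u|^2$.
\item The third term tends to $0$ by the second hypothesis in \eqref{eq:nearplanar}.
\item For the second term I return to $y_1$, where it equals $3\int|\hat u_1-\hat u_1^\nu|^2dy_1$. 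The viscous profile has the scaling $\hat u^\nu_1(\xi)=\hat u^{1}_1(\xi/\nu)$ for the normalized profile. The profile converges exponentially to its end states (Lax shock, $\varepsilon$ small), so this integral equals $3\nu\int|\hat u_1-\hat u_1^1|^2d\xi=O(\nu)\to0$.
\end{itemize}

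Summing the three contributions and using the first hypothesis in \eqref{eq:nearplanar} gives
$$\limsup_{\nu\to0}\int E_\nu\,dy_1\le C\limsup_{\nu\to0}\int\eta\,dx_1=CE_0 .$$
Passing to a subsequence realizing the limsup yields the stated limit.

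The main obstacle I anticipate is the careful justification of the profile term and of the treatment of the step $\hat s$. For the profile I need the exponential decay and the $\nu$-scaling of the Lagrangian viscous shock, which I take from the standard theory cited for \eqref{eq:viscouslagrangianshock}. For the step I must ensure the shock location is not displaced by the $\nu$-dependent change of variables; the normalization $y_1(0,0)=0$ together with positivity of $\bar\rho^\nu_0$ guarantees this. The constant $C$ depends only on these structural constants, in particular on $\hat s$ and $\gamma$.
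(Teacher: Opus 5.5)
Your computation is correct for the literal reading of the display. Its ingredients are the ones the paper uses:
\begin{itemize}
\item the change of variables $dy_1=\bar\rho_0^\nu\,dx_1$, where your exact identity $\rho\,Q(1/\rho\,|\,1/r)=F(\rho|r)$ is sharper than the paper's factor $\frac{1}{\gamma-1}$;
\item replacing $\hat u_1$ by $\hat u_1^\nu$ at cost $O(\nu)$;
\item disposing of $2\nu(\ln\bar\rho_0^\nu)_{x_1}$ via the second condition in \eqref{eq:nearplanar}.
\end{itemize}

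The gap is that you prove the inequality against the wrong reference state. The $\hat s$ in the second slot of $E_\nu$ is a misprint for the viscous profile $\hat s^\nu$. Three things show this:
\begin{itemize}
\item With the step, $E_\nu$ contains the square of $2\nu(\hat s^{-1})_{y_1}=2\nu(\rho_+-\rho_-)\delta_0$, which is meaningless. Your decision to set it to zero a.e.\ is the symptom.
\item The paper's proof concludes with $Q(\bar s_0^\nu|\hat s^\nu)$ and the term $2\nu(1/\hat s^\nu)_{y_1}$.
\item The lemma is used to bound the right-hand side of \eqref{KV-est_0}. There the $a$-contraction of \cite{KV-Inven} needs the initial relative entropy with respect to $(\hat s^\nu,\hat u_1^\nu)$, and a bound on $\int Q(\bar s_0^\nu|\hat s)\,dy_1$ does not provide it.
\end{itemize}

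The missing step is the passage from $Q(\bar s_0^\nu|\hat s)$ to $Q(\bar s_0^\nu|\hat s^\nu)$. Start from the three-point identity
\[
Q(\bar s|\hat s)-Q(\bar s|\hat s^\nu)=Q(\hat s^\nu|\hat s)+\big(Q'(\hat s^\nu)-Q'(\hat s)\big)(\bar s-\hat s^\nu).
\]
Everything not involving $\bar s_0^\nu$ is $O(\nu)$ by the scaling $\hat s^\nu(\xi)=\hat s^1(\xi/\nu)$.

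The cross term $\int\big(Q'(\hat s^\nu)-Q'(\hat s)\big)(\bar s_0^\nu-\hat s)\,dy_1$ is not $o(1)$ in general, and has no favorable sign. The reasons are that $\bar s_0^\nu$ has no $\nu$-uniform upper bound, and $Q(\bar s_0^\nu|\hat s)$ is only bounded in $L^1$. So it can concentrate in the $O(\nu)$ layer where $Q'(\hat s^\nu)\neq Q'(\hat s)$.

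To handle it, use $|\bar s-\hat s|\le C\big(\sqrt{Q(\bar s|\hat s)}+Q(\bar s|\hat s)\big)$, which follows from the linear growth of $Q(\cdot|\hat s)$ at infinity. This bounds the cross term by $C\sqrt{\nu E_0}+C\|Q'(\hat s^\nu)-Q'(\hat s)\|_{L^\infty}E_0$.

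That is why the conclusion is $CE_0$ with $C$ depending on $\hat s$ and $\gamma$. In your argument the constant is purely numerical, which is another sign that this step was bypassed.

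Finally, you also need to control $4\nu^2\int|(1/\hat s^\nu)_{y_1}|^2\,dy_1$. This term is $O(\nu)$ by the same scaling of the profile.
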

\begin{proof}
Thanks to \eqref{eq:nearplanar}, making the change of variables \eqref{eq:ELchangeofcoords} gives, up to a subsequence
\begin{align}\label{eq:lem4step1}
\lim_{\nu \to 0}\int_{\R}\frac{|(\overline{u}^\nu_{1})_0-\hat{u}_1|^2}{2}+Q(\bar{s}_0^\nu|\hat{s}) \leq \frac{1}{\gamma-1}E_0.
\end{align}
First, we replace the inviscid shock $(\hat{s}, \hat{u})$ by the viscous profile in the above estimate. We have
\begin{align*}
Q(\bar{s}_0^\nu|\hat{s})-Q(\bar{s}_0^\nu|\hat{s}^\nu)=Q(\hat{s}^\nu|\hat{s})+(Q'(\hat{s}^\nu)-Q'(\hat{s}))[(\bar{s}^\nu_0-\hat{s})+(\hat{s}-\hat{s}^\nu)].
\end{align*}
Then, we compute
\begin{align*}
\lim_{\nu \to 0}\int_{\R}|Q(\bar{s}_0^\nu|\hat{s})-Q(\bar{s}_0^\nu|\hat{s}^\nu)|dy_1&\leq \lim_{\nu \to 0}\int_{\R}|Q(\hat{s}^\nu|\hat{s})+(Q'(\hat{s}^\nu)-Q'(\hat{s}))(\hat{s}-\hat{s}^\nu)|dy_1 \\
&+\lim_{\nu \to 0}\int_{\R}|(Q'(\hat{s}^\nu)-Q'(\hat{s}))(\bar{s}^\nu_0-\hat{s})|dy_1.
\end{align*}
The first term is equal to $0$ thanks to the fact that $||\hat{s}^\nu-\hat{s}||_{L^p(\R)}=\nu ||\hat{s}^1-\hat{s}||_{L^p(\R)}$, for any $1 \leq p < \infty$ and the fact that $\hat{s}^\nu, \hat{s}$ are bounded above and below away from $0$ uniformly in $\nu$. For the second term, we have
\begin{align*}
\lim_{\nu \to 0}\int_{\R}(Q'(\hat{s}^\nu)-Q'(\hat{s}))(\bar{s}^\nu_0-\hat{s})dy_1=&\lim_{\nu \to 0}\int_{\R}(Q'(\hat{s}^\nu)-Q'(\hat{s}))(\bar{s}^\nu_0-\hat{s})\mathbbm{1}_{\{\bar{s}^\nu_0 \leq \hat{s} \}}dy_1 \\
&+\lim_{\nu \to 0}\int_{\R}(Q'(\hat{s}^\nu)-Q'(\hat{s}))(\bar{s}^\nu_0-\hat{s})\mathbbm{1}_{\{\bar{s}^\nu_0 > \hat{s} \}}dy_1.
\end{align*}
The first term above equals zero again for the same reasons as above. For the second term, using the fact that 
\begin{align*}
\lim_{\bar{s}_0^\nu \to \infty}\frac{(\bar{s}^\nu_0-\hat{s})}{Q(\bar{s}_0^\nu|\hat{s})}=\frac{1}{-Q'(\hat{s})},
\end{align*}
we have 
\begin{align*}
\int_{\R}|(Q'(\hat{s}^\nu)-Q'(\hat{s}))(\bar{s}^\nu_0-\hat{s})\mathbbm{1}_{\{\bar{s}^\nu_0 > \hat{s} \}}|dy_1 &\leq C||Q'(\hat{s}^\nu)-Q'(\hat{s})||_{L^\infty(\R)}||Q(\bar{s}_0^\nu|\hat{s})||_{L^1(\R)} \\
& \leq CE_0.
\end{align*}
Combining all the above contributions gives (up to a subsequence)
\begin{align}\label{eq:replaceQ}
\lim_{\nu \to 0}\int_{\R}|Q(\bar{s}_0^\nu|\hat{s})-Q(\bar{s}_0^\nu|\hat{s}^\nu)|dy_1 \leq CE_0.
\end{align}
By a similar analysis, we have
\begin{align}\label{eq:replaceu}
\lim_{\nu \to 0}\int_{\R}[\frac{|(\overline{u}^\nu_{1})_0-\hat{u}_1^\nu|^2}{2}-\frac{|(\overline{u}^\nu_{1})_0-\hat{u}_1|^2}{2}]dy_1=0.
\end{align}
Plugging \eqref{eq:replaceQ} and \eqref{eq:replaceu} into \eqref{eq:lem4step1} gives
\begin{align}\label{eq:lem4step2}
\lim_{\nu \to 0}\int_{\R}\frac{|(\overline{u}^\nu_{1})_0-\hat{u}_1^\nu|^2}{2}+Q(\bar{s}_0^\nu|\hat{s}^\nu)dy_1 \leq CE_0.
\end{align}
Finally, we have
\begin{align*}\label{eq:lem4finalstep}
\int_{\R}\frac{|(\overline{u}^\nu_{1})_0+2\nu (\frac{1}{\bar{s}^\nu_0})_{y_1}-\hat{u}_1^\nu-2\nu (\frac{1}{\hat{s}^\nu})_{y_1}|^2}{2}dy_1 &\leq \int_{\R}|(\overline{u}^\nu_{1})_0-\hat{u}_1^\nu|^2dy_1 \\
&+2\int_{\R}|\nu (\frac{1}{\bar{s}^\nu_0})_{y_1}-\nu(\frac{1}{\hat{s}^\nu})_{y_1})|^2dy_1.
\end{align*}
Note that the first term is controlled already, while the second converges to $0$ as $\nu \to 0$ thanks to \eqref{eq:nearplanar}. Thus, up to a subsequence we have
\begin{align*}
\lim_{\nu \to 0}\int_{\R}\frac{|(\overline{u}^\nu_{1})_0+2\nu (\frac{1}{\bar{s}^\nu_0})_{y_1}-\hat{u}_1^\nu-2\nu (\frac{1}{\hat{s}^\nu})_{y_1}|^2}{2}+Q(\bar{s}_0^\nu|\hat{s}^\nu)dy_1 \leq CE_0,
\end{align*}
as desired.
\end{proof}

Now, we may apply the $a$-contraction estimate. Let $\eps_0$ in \Cref{thm:intro2} be small enough so that $|p^L(\frac{1}{\rho_-})-p^L(\frac{1}{\rho_+})|$ is sufficiently small for \cite[Theorem 1.2]{KV-Inven} to be applied. Then, applying  \cite[Theorem 1.2]{KV-Inven} to the functions
\begin{align*}
&\underline{s}(t,y_1)=\bar{s}^\nu(\nu t, \nu y_1), \ \underline{u}(t,y_1)=\bar{u}^\nu(\nu t, \nu y_1), \ \underline{\hat{s}}(t,y_1)=\hat{s}^\nu(\nu y_1), \\
& \underline{\hat{u}}(t,y_1)=\hat{u}^\nu(\nu y_1),
\end{align*}
and changing variables $t \mapsto \frac{t}{\nu}, y_1 \mapsto \frac{y_1}{\nu}$ as in \cite[page 120]{KV-Inven}, we obtain 
\begin{equation}\label{KV-est_0}
\begin{array}{ll}
\di \sup_{t\in[0,T]}\int_{\mathbb{R}} \Big[|\bar h_1-\hat{h}_1^{\nu, X_1^\nu(\cdot)}|^2+Q(\bar s^\nu| \hat{s}^{\nu, X_1^\nu(\cdot)})\Big](t,y_1)d y_1 \leq C\int_{\R}E_\nu(\overline{s}^\nu_0,(\overline{u}^\nu_{1})_0|\hat{s}^\nu, \hat{u}^\nu_1)dy_1,
\end{array}
\end{equation}
for some suitable shift function $X^\nu_1(t)$, where
$$\bar h_1^\nu:=\bar u_1^\nu+2\nu ((\bar{s}^\nu)^{-1})_{y_1},\qquad \hat{h}^\nu_1:=\hat{u}_1^\nu+2\nu ((\hat{s}^\nu)^{-1})_{y_1},
$$
and we have used the notation
\begin{align}\label{eq:shiftednotation}
f(t,y_1)^{X^\nu_1(t)}:=f(t,y_1-X^\nu_1(t)).
\end{align}
By \Cref{lem:initialdataapprox}, this implies that there exists $\nu^*$ such that for any $0 < \nu < \nu^*$
\begin{equation}\label{KV-est}
\begin{array}{ll}
\di \sup_{t\in[0,T]}\int_{\mathbb{R}} \Big[|\bar h_1^\nu-\hat{h}_1^{\nu, X_1^\nu(\cdot)}|^2+Q(\bar s^\nu| \hat{s}^{\nu, X_1^\nu(\cdot)})\Big](t,y_1)d y_1 \leq CE_0.
\end{array}
\end{equation}
Further, multiplying the third equation in \eqref{1DNS_Lag} by $\bar{u}^\nu_2$ and integrating in $y_1$ and $t$, we obtain
\begin{equation}\label{u2-est}
\begin{array}{ll}
\di \sup_{t\in[0,T]}\int_{\mathbb{R}} |\bar u^\nu_2|^2(t,y_1)dy_1 +\nu \int_0^{+\infty}\int_{\mathbb{R}}\frac{|(\bar u^\nu_2)_{y_1}|^2}{ (\bar s^\nu)^{2}} dy_1dt
&\di \leq C\int_\R|(\bar{u}^\nu_2)_0|^2dy_1 \\
&\di =C\int_\R\bar{\rho}^\nu_0|(\bar{u}^\nu_2)_0|^2dx_1 \\
&\leq CE_0,
\end{array}
\end{equation}
where the last inequality follows from \eqref{eq:nearplanar}.
Summing \eqref{KV-est} and \eqref{u2-est} gives
\begin{align}\label{eq:KV+u2-est}
\sup_{t\in[0,T]}\int_{\mathbb{R}} \Big[|\bar h_1^\nu-\hat{h}_1^{\nu, X_1^\nu(\cdot)}|^2+ |\bar u^\nu_2|^2+Q(\bar s^\nu| \hat{s}^{\nu, X_1^\nu(\cdot)})\Big](t,y_1)d y_1 \leq CE_0.
\end{align}
\par Now, we would like to replace the viscous profile in \eqref{eq:KV+u2-est} with the inviscid Lagrangian shock $(\hat{s}, \hat{u})$. This is needed to apply the Euler-Lagrange change-of-variables to attain an analogous estimate in the Eulerian frame. Note that we cannot apply the change of variables to \eqref{eq:KV+u2-est} directly because under the change of variables associated to $(\bar{s}^\nu, \bar{u}^\nu)$, the image of the viscous Lagrangian shock $(\hat{s}^\nu, \hat{u}^\nu)$ has no interpretation. The change-of-variables only behaves nicely for the piecewise constant inviscid shock $(\hat{s}, \hat{u})$. 

\par By the properties of the viscous shock, it holds that
\begin{equation}\label{ss}
\begin{array}{ll}
\di \int_{\mathbb{R}} |\hat{h_1^\nu}^{X_1^\nu(\cdot)}-\hat{u_{1}}^{X_1^\nu(\cdot)}|^2(t,y_1)d y_1 \\[3mm]
\di \leq 2\int_{\mathbb{R}} |\hat{u_1}^{\nu, X_1^\nu(\cdot)}-\hat{u_{1}}^{X_1^\nu(\cdot)}|^2(t,y_1)d y_1 +4\nu^2\int_{\mathbb{R}} |(\frac{1}{\hat{s}^{\nu, X_1^\nu(\cdot)}})_{y_1}|^2(t,y_1)d y_1\\[3mm]
\di \leq C\nu,
\end{array}
\end{equation}
and 
\begin{equation}\label{ss-1}
\begin{array}{ll}
\di \int_{\mathbb{R}} Q(\hat{s}^{\nu, X_1^\nu(\cdot)}| \hat{s}^{X_1^\nu(\cdot)})(t,y_1)d y_1 \\[3mm]
\di \leq C\int_{\mathbb{R}} |\hat{s}^{\nu, X_1^\nu(\cdot)}-\hat{s}^{X_1^\nu(\cdot)}|^2(t,y_1)d y_1  \leq C\nu.
\end{array}
\end{equation}
By \eqref{KV-est} and \eqref{ss}, we have
\begin{equation}\label{KV-est-2}
\begin{array}{ll}
\di \sup_{t \in [0,T]}\di \int_{\mathbb{R}}|\bar h_1^\nu-\hat{h}_1^{X_1^\nu(\cdot)}|^2(t,y_1)dy_1 \leq C\Big(E_0+\nu\Big).
\end{array}
\end{equation}
Since
\begin{equation}\label{KV-est-21}
\begin{array}{ll}
\di Q(\bar s^\nu| \hat{s}^{X_1^\nu(t)})\\[3mm]
\di =Q(\bar s^\nu| \hat{s}^{\nu,X_1^\nu(t)})+Q( \hat{s}^{\nu,X_1^\nu(t)}|\hat{s}^{X_1^\nu(t)})+\Big(Q^\prime(\hat{s}^{\nu,X_1^\nu(t)})-Q^\prime(\hat{s}^{X_1^\nu(t)})\Big)(\bar s^\nu- \hat{s}^{\nu,X_1^\nu(t)}).
\end{array}
\end{equation}
While
\begin{equation}\label{KV-est-22}
\begin{array}{ll}
\di \int_{\mathbb{R}}
\Big(Q^\prime(\hat{s}^{\nu,X_1^\nu(\cdot)})-Q^\prime(\hat{s}^{X_1^\nu(\cdot)})\Big)(\bar s^\nu- \hat{s}^{\nu,X_1^\nu(t\cdot})(t,y_1)d y_1\\[3mm]
\di =\int_{\mathbb{R}}
\Big(Q^\prime(\hat{s}^{\nu,X_1^\nu(\cdot)})-Q^\prime(\hat{s}^{X_1^\nu(\cdot)})\Big)(\bar s^\nu- \hat{s}^{\nu,X_1^\nu(\cdot)})\Big[\mathbbm{1}_{\{0\leq \bar s^\nu\leq \frac{s_+}{2}\}}\\[3mm]
\di \hspace{6cm}+\mathbbm{1}_{\{\frac{s_+}{2}\leq \bar s^\nu\leq 2s_-\}}+\mathbbm{1}_{\{\bar s^\nu\geq 2s_-\}}\Big](t,y_1)d y_1\\[3mm]
\di =I_1+I_2+I_3.
\end{array}
\end{equation}
Since 
$$
\lim_{\bar s^\nu\rightarrow0}\frac{|\bar s^\nu- \hat{s}^{\nu,X_1^\nu(t)}|}{Q(\bar s^\nu| \hat{s}^{\nu,X_1^\nu(t)})}=0,
$$
we have
\begin{equation}\label{KV-est-23}
\begin{array}{ll}
\di |I_1|\leq C\|Q^\prime(\hat{s}^{\nu,X_1^\nu(t)})-Q^\prime(\hat{s}^{X_1^\nu(t)})\|_{L^\infty(\mathbb{R})}\|(\bar s^\nu- \hat{s}^{\nu,X_1^\nu(t)})\mathbbm{1}_{\{0\leq \bar s^\nu\leq \frac{s_+}{2}\}}\|_{L^1(\mathbb{R})}\\[3mm]
\di \qquad \leq C\|Q(\bar s^\nu| \hat{s}^{\nu,X_1^\nu(t)})\|_{L^1(\mathbb{R})}.
\end{array}
\end{equation}
\begin{equation}\label{KV-est-24}
\begin{array}{ll}
\di |I_2|\leq C\|Q^\prime(\hat{s}^{\nu,X_1^\nu(t)})-Q^\prime(\hat{s}^{X_1^\nu(t)})\|_{L^2(\mathbb{R})}\|(\bar s^\nu- \hat{s}^{\nu,X_1^\nu(t)})\mathbbm{1}_{\{\frac{s_+}{2}\leq \bar s^\nu\leq 2s_-\}}\|_{L^2(\mathbb{R})}\\[3mm]
\di \qquad \leq C\|\hat{s}^{\nu,X_1^\nu(t)}-\hat{s}^{X_1^\nu(t)}\|_{L^2(\mathbb{R})}\|Q(\bar s^\nu| \hat{s}^{\nu,X_1^\nu(t)})\|_{L^1(\mathbb{R})}^{\frac12}\\[3mm]
\di \qquad \leq C\nu^{\frac12}\|Q(\bar s^\nu| \hat{s}^{\nu,X_1^\nu(t)})\|_{L^1(\mathbb{R})}^{\frac12}.
\end{array}
\end{equation}
Since 
$$
\lim_{\bar s^\nu\rightarrow+\infty}\frac{|\bar s^\nu- \hat{s}^{\nu,X_1^\nu(t)}|}{Q(\bar s^\nu| \hat{s}^{\nu,X_1^\nu(t)})}=-\frac{1}{Q^\prime(\hat{s}^{\nu,X_1^\nu(t)})},
$$
which  is positive and bounded below and above, then it holds that
\begin{equation}\label{KV-est-25}
\begin{array}{ll}
\di |I_3|\leq C\|Q^\prime(\hat{s}^{\nu,X_1^\nu(t)})-Q^\prime(\hat{s}^{X_1^\nu(t)})\|_{L^\infty(\mathbb{R})}\|(\bar s^\nu- \hat{s}^{\nu,X_1^\nu(t)})\mathbbm{1}_{\{\bar s^\nu\geq 2s_-\}}\|_{L^1(\mathbb{R})}\\[3mm]
\di \qquad \leq C\|Q(\bar s^\nu| \hat{s}^{\nu,X_1^\nu(t)})\|_{L^1(\mathbb{R})}.
\end{array}
\end{equation}
By \eqref{KV-est}, \eqref{ss-1}, \eqref{KV-est-21}-\eqref{KV-est-25}, we have
\begin{equation}\label{KV-est-26}
\begin{array}{ll}
\di \int_{\mathbb{R}} Q(\bar s^\nu| \hat{s}^{X_1^\nu(\cdot)})(t,y_1)d y_1\\[3mm]
\di =\int_{\mathbb{R}} \Big[Q(\bar s^\nu| \hat{s}^{\nu,X_1^\nu(\cdot)})+Q( \hat{s}^{\nu,X_1^\nu(\cdot)})+\Big(Q^\prime(\hat{s}^{\nu,X_1^\nu(\cdot)})-Q^\prime(\hat{s}^{X_1^\nu(\cdot)})\Big)(\bar s^\nu- \hat{s}^{\nu,X_1^\nu(\cdot)})\Big](t,y_1)d y_1\\[3mm]
\leq C\Big(E_0+\nu\Big).
\end{array}
\end{equation}
Collecting \eqref{KV-est}, \eqref{u2-est},  \eqref{KV-est-2} and \eqref{KV-est-26}, we have
\begin{equation}\label{Lag-est}
\begin{array}{ll}
\di \sup_{t\in[0,T]}\int_{\mathbb{R}} \Big[|\bar h_1^\nu-\hat{u_1}^{X_1^\nu(\cdot)}|^2+|\bar u_2^\nu|^2+Q(\bar s^\nu| \hat{s}^{X_1^\nu(\cdot)})\Big](t,y_1)d y_1 \leq C\Big(E_0+\nu\Big).
\end{array}
\end{equation}

Turning back to Eulerian coordinate transformation (recall \eqref{eq:ELchangeofcoords})
\begin{equation}\label{E-L}
x_1(t, y_1):=\oint_{(0,0)}^{(t, y_1)} \bar s^\nu dy_1+\bar u_1^\nu dt, 
\end{equation}
we have
\begin{equation}\label{Euler-est}
\begin{array}{ll}
\di \sup_{t\in[0,T]}\int_{\mathbb{R}} \Big[\bar \rho^\nu|\bar v_1^\nu+\bar{w}_1^\nu-\hat{u}_1^{X_E^\nu(\cdot)}|^2+\bar \rho^\nu|\bar u_2^\nu|^2+F(\bar \rho^\nu| \hat{\rho}^{X_E^\nu(\cdot)})\Big](t,x_1)d x_1 \leq C\Big(E_0+\nu\Big),
\end{array}
\end{equation}
where
\begin{align}\label{h1-E}
\bar v_1^\nu&:=\bar u_1^\nu+\nu \frac{\bar \rho^\nu_{x_1}}{\bar\rho^\nu}, \\
\bar w_1^\nu&:=\nu \frac{\bar \rho^\nu_{x_1}}{\bar\rho^\nu},
\end{align}
and $X_E^\nu(t)$ is the image of $X^\nu_1(t)$ under the change of coordinates \eqref{E-L}.

\subsection{Estimates on the shift $X^\nu_E(t)$}
Now, we want to show that as $\nu\rightarrow 0$,
\begin{equation}\label{shift-E}
|X_E^\nu(t)-\sigma t| \leq \frac{C}{|\rho_+-\rho_-|^2} \Big(\big(E_0+\nu\big)^{\frac12}+(E_0+\nu)\Big), \quad \forall t\in [0, T],
\end{equation}
where $\sigma=\frac{\rho_-u_{1,-}-\rho_+u_{1,+}}{\rho_--\rho_+}$ is the shock speed in Eulerian coordinates determined by Rankine-Hugoniot condition \eqref{eq:rhcond}. Recall that $(\bar{\rho}^\nu, \bar{u}^\nu)$ is a solution to the system
\begin{equation}\label{1DNS_E}
\left\{
\begin{array}{ll}
    (\bar \rho^\nu)_t +(\bar\rho^\nu \bar u_{1}^\nu)_{x_1}=0, \qquad\quad (t,x_1) \in \R^+ \times \mathbb{R}, \\[2mm]
    (\bar\rho^\nu \bar u_{1}^\nu)_{t}+\big(\bar \rho^\nu (\bar u_1^\nu)^2+\bar p(\bar \rho^\nu) \big)_{x_1}=(2 \nu\bar\rho^\nu (\bar u_1^\nu)_{x_1})_{x_1}, \\[2mm]
      (\bar\rho^\nu \bar u_2^\nu)_{t}+\big(\bar \rho^\nu \bar u_1^\nu \bar u_2^\nu \big)_{x_1}=(\nu \bar\rho^\nu (\bar u_2^\nu)_{x_1})_{x_1} .\\[2mm]
    % (\bar\rho_R(0,x_1), (\bar\rho_R \bar u_R)(0,x_1))=(\bar\rho^0_R, \bar m^0_R)(x_1),
\end{array}
\right.
\end{equation}
To prove \eqref{shift-E}, we use the mass equation $\eqref{1DNS_E}_1$. Define the test function $\varphi_M(x_1)\in W^{2, \infty}(\mathbb{R})$ with the support $\text{supp}( \varphi_M)=[-2M, 2M]$ and $M>0$ being a sufficiently large constant to be determined
\begin{equation}\label{test-func}
\varphi_M(x_1):=\left\{
\begin{array}{ll}
 \di 0,    & \di x_1\in (-\infty, -2M], \\[2mm]
\di -2(\frac{x_1}{M}+1)^3-3(\frac{x_1}{M}+1)^2+1, & \di x_1\in [-2M, -M], \\[2mm]
\di 1,  & \di x_1\in [-M, M], \\[2mm]
\di 2(\frac{x_1}{M}-1)^3-3(\frac{x_1}{M}-1)^2+1,   & \di x_1\in [M, 2M], \\[2mm]
\di 0   & \di x_1\in [2M, +\infty),
\end{array}
\right.
\end{equation}
and we can compute that 
\begin{equation}\label{test-func-1}
\varphi^\prime_M(x_1)=\frac{-6(x_1+M)(x_1+2M)}{M^3} \mathbbm{1}_{[-2M, -M]}+\frac{6(x_1-M)(x_1-2M)}{M^3}\mathbbm{1}_{[M, 2M]}\in W^{1,\infty}(\mathbb{R}),
\end{equation}
and
\begin{equation}\label{test-func-2}
\varphi^{\prime\prime}_M(x_1)=\frac{-6(2x_1+3M)}{M^3} \mathbbm{1}_{[-2M, -M]}+\frac{6(2x_1-3M)}{M^3}\mathbbm{1}_{[M, 2M]}\in L^\infty(\mathbb{R}).
\end{equation}
By the mass equation  $\eqref{1DNS_E}_1$ and the relation \eqref{h1-E}, we have
\begin{equation}\label{mass-new}
 (\bar \rho^\nu)_t +(\bar\rho^\nu \bar h_{1}^\nu)_{x_1}=2\nu \bar\rho^\nu_{x_1x_1}.
\end{equation}
Multiplying  equation \eqref{mass-new} by the test function $\varphi_M(x_1)$ gives that
\begin{equation}\label{mass-new-test}
 (\bar{\rho}^\nu \varphi_M)_t +(\bar\rho\bar{h}^\nu_{1}\varphi_M)_{x_1}=2\nu (\bar{\rho}^\nu\varphi_M)_{x_1x_1}-4\nu(\bar{\rho}^\nu  \varphi^\prime_M)_{x_1}+\varphi_M^\prime \bar{\rho}^\nu\bar{h}^\nu_{1}+2\nu \varphi_M^{\prime \prime}\bar{\rho}^\nu.
\end{equation}
Integrating the equation \eqref{mass-new-test} with respect to $(\tau, x_1)$ over $[0,t]\times \mathbb{R}$ with $t\in[0,T]$ implies that
\begin{equation}\label{eq1}
\begin{array}{ll}
\di\int_{\mathbb{R}}\bar{\rho}^\nu (t, x_1)\varphi_M(x_1)dx_1- \int_{\mathbb{R}}\bar{\rho}^\nu_0(x_1)\varphi_M(x_1)dx_1+\int_0^t(\bar{\rho}^\nu\bar{h}^\nu_{1}\varphi_M)\Big|^{x_1=+\infty}_{x_1=-\infty}d\tau\\[3mm]
\di =2\nu\int_0^t(\bar{\rho}^\nu \varphi_M)_{x_1}\Big|^{x_1=+\infty}_{x_1=-\infty}d\tau-4\nu\int_0^t\bar{\rho}^\nu \varphi_M^\prime\Big|^{x_1=+\infty}_{x_1=-\infty}d\tau\\[3mm]
\di \quad +\int_0^t\int_{\mathbb{R}}\bar{\rho}^\nu\bar{h}^\nu_{1}\varphi^\prime_Mdx_1 d\tau+2\nu \int_0^t\int_{\mathbb{R}}\bar{\rho}^\nu\varphi^{\prime\prime}_Md x_1 d\tau.
\end{array}
\end{equation}
Due to the compact support of both $\varphi_M$ and $\varphi_M^\prime$, all the boundary terms in \eqref{eq1} vanish. Therefore, we can rewrite \eqref{eq1} as
\begin{equation}\label{eq2}
\begin{array}{ll}
\di\int_{\mathbb{R}}(\bar{\rho}^\nu-\hat{\rho}^{X_E^\nu(t)})\varphi_M(x_1)dx_1- \int_{\mathbb{R}}(\bar{\rho}^\nu_0-\hat{\rho})\varphi_M(x_1)dx_1+\int_{\mathbb{R}}\varphi_M(x_1)(\hat{\rho}^{X_E^\nu(t)}-\hat{\rho})dx_1\\[3mm]
\di =\int_0^t\int_{\mathbb{R}}\big(\bar{\rho}^\nu\bar{h}^\nu_{1}-\hat{\rho}^{X_E^\nu(t)} \hat{u}^{X_E^\nu(t)}_1\big)\varphi^\prime_Mdx_1 d\tau+\int_0^t\int_{\mathbb{R}}\hat{\rho}^{X_E^\nu(t)} \hat{u}^{X_E^\nu(t)}_1\varphi^\prime_Mdx_1 d\tau\\[3mm]
\di\quad  +2\nu \int_0^t\int_{\mathbb{R}}\big(\bar{\rho}^\nu-\hat{\rho}^{X_E^\nu(t)}\big)\varphi^{\prime\prime}_Md x_1 d\tau+2\nu\int_0^t\int_{\mathbb{R}}\hat{\rho}^{X_E^\nu(t)}\varphi^{\prime\prime}_Md x_1 d\tau.
\end{array}
\end{equation}
First, we a priori assume that
\begin{equation}\label{M-ass}
    |X_E^\nu(t)|\leq M, \ \  \forall t\in[0,T].
\end{equation}
Then we can compute that
\begin{equation}\label{RH1}
\begin{array}{ll}
\di    \int_{\mathbb{R}}\varphi_M(x_1)(\hat{\rho}^{X_E^\nu(t)}-\hat{\rho})dx_1 \\[3mm]
\di =\rho_-\int_{-\infty}^{X_E^\nu(t)}\varphi_M(x_1) dx_1+\rho_+\int_{X_E^\nu(t)}^{+\infty}\varphi_M(x_1) dx_1\\[4mm]
\di \quad-\rho_-\int_{-\infty}^{0}\varphi_M(x_1) dx_1-\rho_+\int_{0}^{+\infty}\varphi_M(x_1) dx_1\\[3mm]
\di = \rho_-\int_{0}^{X_E^\nu(t)}\varphi_M(x_1) dx_1-\rho_+\int_0^{X_E^\nu(t)}\varphi_M(x_1) dx_1\\[3mm]
\di =(\rho_--\rho_+)X_E^\nu(t),
\end{array}
\end{equation}
and 
\begin{equation}\label{RH2}
\begin{array}{ll}
\di    \int_0^t\int_{\mathbb{R}}\varphi^\prime_M(x_1)\hat{\rho}^{X_E^\nu(t)}\hat{u}^{X_E^\nu(t)}_1dx_1d\tau \\[3mm]
\di =-\int_0^t\int_{-2M}^{-M}\rho_-u_{1,-}\frac{6(x_1+M)(x_1+2M)}{M^3}dx_1d\tau \\[4mm]
\di \quad+\int_0^t\int_{M}^{2M}\rho_+u_{1,+}\frac{6(x_1-M)(x_1-2M)}{M^3}dx_1d\tau \\[3mm]
\di =-\rho_-u_{1,-} t\int_{-2M}^{-M}\frac{6(x_1+M)(x_1+2M)}{M^3}dx_1\\[4mm]
\di \quad +\rho_+u_{1,+} t\int_{M}^{2M}\frac{6(x_1-M)(x_1-2M)}{M^3}dx_1\\[3mm]
\di =(\rho_-u_{1,-}-\rho_+u_{1,+})t.
\end{array}
\end{equation}
Substituting \eqref{RH1} and \eqref{RH2} into \eqref{eq2} yields that
\begin{equation}\label{shift-2}
    \begin{array}{ll}
       \di   \big(X_E^\nu(t)-\sigma t\big)(\rho_--\rho_+)\\[3mm]
       \di =-\int_{\mathbb{R}}(\bar{\rho}^\nu-\hat{\rho}^{X_E^\nu(t)})\varphi_M(x_1)dx_1+\int_{\mathbb{R}}(\bar{\rho}^\nu_0-\hat{\rho})\varphi_M(x_1)dx_1\\[3mm]
        \di \quad +\int_0^t\int_{\mathbb{R}}\big(\bar{\rho}^\nu\bar{h}^\nu_{1}-\hat{\rho}^{X_E^\nu(t)} \hat{u}^{X_E^\nu(t)}_1\big)\varphi^\prime_Mdx_1 d\tau\\[3mm]
        \di \quad+2\nu \int_0^t\int_{\mathbb{R}}\big(\bar{\rho}^\nu-\hat{\rho}^{X_E^\nu(t)}\big)\varphi^{\prime\prime}_Md x_1 d\tau+2\nu\int_0^t\int_{\mathbb{R}}\hat{\rho}^{X_E^\nu(t)}\varphi^{\prime\prime}_Md x_1 d\tau\\[3mm]
        \di :=\sum_{i=1}^5 J_i.
    \end{array}
\end{equation}
We will estimate $J_i~(i=1, ..., 5)$ as follows.
First, 
\begin{equation}\label{J1}
    \begin{array}{ll}
       \di J_1=-\int_{\mathbb{R}}(\bar{\rho}^\nu-\hat{\rho}^{X_E^\nu(t)})\varphi_M(x_1)dx_1\\[4mm]
        \di\quad\  =-\int_{-2M}^{2M}(\bar{\rho}^\nu-\hat{\rho}^{X_E^\nu(t)})\varphi_M(x_1)\Big[\mathbbm{1}_{\{0\leq \bar{\rho}^\nu\leq \frac{\rho_-}{2}\}}+\mathbbm{1}_{\{\frac{\rho_-}{2}\leq \bar{\rho}^\nu\leq 2\rho_+\}}+\mathbbm{1}_{\{\bar{\rho}^\nu\geq 2\rho_+\}}\Big]dx_1\\[4mm]
        \di\quad \  :=J_{11}+J_{12}+J_{13}.
    \end{array}
\end{equation}
Since 
$$
\lim_{\bar{\rho}^\nu\rightarrow0}\frac{|\bar{\rho}^\nu- \hat{\rho}^{X_E^\nu(t)}|}{F(\bar{\rho}^\nu| \hat{\rho}^{X_E^\nu(t)})}=\frac{1}{(\hat{\rho}^{X_E^\nu(t)})^{\gamma-1}},
$$
which is positively bounded below and above, 
we have
\begin{equation}\label{J11}
\begin{array}{ll}
\di |J_{11}|\leq \|(\bar{\rho}^\nu-\hat{\rho}^{X_E^\nu(t)})\mathbbm{1}_{\{0\leq \bar{\rho}^\nu\leq \frac{\rho_-}{2}\}}\|_{L^1(\mathbb{R})}\\[3mm]
\di \qquad\  \leq C\|F(\bar{\rho}^\nu| \hat{\rho}^{X_E^\nu(t)})\|_{L^1(\mathbb{R})}.
\end{array}
\end{equation}
And
\begin{equation}\label{J12}
\begin{array}{ll}
\di |J_{12}|\leq \|(\bar{\rho}^\nu-\hat{\rho}^{X_E^\nu(t)})\mathbbm{1}_{\{\frac{\rho_-}{2}\leq \bar{\rho}^\nu\leq 2\rho_+\}}\|_{L^2(\mathbb{R})}\|\varphi_M\|_{L^2[-2M, 2M]}\\[3mm]
\di \qquad\  \leq C M^{\frac12}\|F(\bar{\rho}^\nu| \hat{\rho}^{X_E^\nu(t)})\|_{L^1(\mathbb{R})}^{\frac12}.
\end{array}
\end{equation}
Since 
$$
\lim_{\bar{\rho}^\nu\rightarrow+\infty}\frac{|\bar{\rho}^\nu- \hat{\rho}^{X_E^\nu(t)}|}{F(\bar{\rho}^\nu| \hat{\rho}^{X_E^\nu(t)})}=0,
$$
we have
\begin{equation}\label{J13}
\begin{array}{ll}
\di |J_{13}|\leq \|(\bar{\rho}^\nu-\hat{\rho}^{X_E^\nu(t)})\mathbbm{1}_{\{\bar{\rho}^\nu\geq 2\rho_+\}}\|_{L^1(\mathbb{R})}\\[3mm]
\di \qquad\  \leq C\|F(\bar{\rho}^\nu| \hat{\rho}^{X_E^\nu(t)})\|_{L^1(\mathbb{R})}.
\end{array}
\end{equation}
Combining the above estimates \eqref{J11}-\eqref{J13} implies that
\begin{equation}\label{J1-e}
|J_1|\leq C\Big(\|F(\bar{\rho}^\nu| \hat{\rho}^{X_E^\nu(t)})\|_{L^1(\mathbb{R})}+M^{\frac12}\|F(\bar{\rho}^\nu| \hat{\rho}^{X_E^\nu(t)})\|_{L^1(\mathbb{R})}^{\frac12}\Big).
\end{equation}
Similarly, 
\begin{equation}\label{J2-e}
|J_2|\leq C\Big(\|F(\bar{\rho}^\nu_0| \hat{\rho})\|_{L^1(\mathbb{R})}+M^{\frac12}\|F(\bar{\rho}^\nu_0| \hat{\rho})\|_{L^1(\mathbb{R})}^{\frac12}\Big).
\end{equation}
Now we estimate $J_3$ as follows.

\begin{equation}\label{J3}
\begin{array}{ll}
\di J_{3}=\int_0^t\int_{\mathbb{R}}\bar{\rho}^\nu\big(\bar{h}^\nu_{1}- \hat{u}^{X_E^\nu(t)}_1\big)\varphi^\prime_Mdx_1 d\tau+\int_0^t\int_{\mathbb{R}} \hat{u}^{X_E^\nu(t)}_1\big(\bar{\rho}^\nu- \hat{\rho}^{X_E^\nu(t)}\big)\varphi^\prime_Mdx_1 d\tau\\[3mm]
\di\quad\  =\int_0^t\int_{\mathbb{R}}\sqrt{\bar{\rho}^\nu}\big(\bar{h}^\nu_{1}- \hat{u}^{X_E^\nu(t)}_1\big)\big(\sqrt{\bar{\rho}^\nu}- \sqrt{\hat{\rho}^{X_E^\nu(t)}}\big)\varphi^\prime_Mdx_1 d\tau\\[3mm]
\di \qquad \ +\int_0^t\int_{\mathbb{R}}\sqrt{\bar{\rho}^\nu}\big(\bar{h}^\nu_{1}- \hat{u}^{X_E^\nu(t)}_1\big)\sqrt{\hat{\rho}^{X_E^\nu(t)}}\varphi^\prime_Mdx_1 d\tau\\[3mm]
\di \qquad \ +\int_0^t\int_{\mathbb{R}} \hat{u}^{X_E^\nu(t)}_1\big(\bar{\rho}^\nu- \hat{\rho}^{X_E^\nu(t)}\big)\varphi^\prime_Mdx_1 d\tau\\[3mm]
\quad \ :=J_{31}+J_{32}+J_{33}.
\end{array}
\end{equation}
Then we have
\begin{equation}\label{J31}
\di |J_{31}|\leq \int_0^t\|\sqrt{\bar{\rho}^\nu}\big(\bar{h}^\nu_{1}- \hat{u}^{X_E^\nu(t)}_1\big)\|_{L^2(\mathbb{R})}\|\sqrt{\bar{\rho}^\nu}- \sqrt{\hat{\rho}^{X_E^\nu(t)}}\|_{L^2(\mathbb{R})}d\tau.
\end{equation}
while
\begin{equation}\label{J31-1}
\begin{array}{ll}
\di \|\sqrt{\bar{\rho}^\nu}- \sqrt{\hat{\rho}^{X_E^\nu(t)}}\|_{L^2(\mathbb{R})}\\[3mm]
\di =\left\|\Big(\sqrt{\bar{\rho}^\nu}- \sqrt{\hat{\rho}^{X_E^\nu(t)}}\Big)\Big[\mathbbm{1}_{\{0\leq \bar{\rho}^\nu\leq \frac{\rho_-}{2}\}}+\mathbbm{1}_{\{\frac{\rho_-}{2}\leq \bar{\rho}^\nu\leq 2\rho_+\}}+\mathbbm{1}_{\{\bar{\rho}^\nu\geq 2\rho_+\}}\Big]\right\|_{L^2(\mathbb{R})}\\[3mm]
\di \leq C\|F(\bar{\rho}^\nu| \hat{\rho}^{X_E^\nu(t)})\|_{L^1(\mathbb{R})}^{\frac12},
\end{array}
\end{equation}
due to the facts that
$$
\lim_{\bar{\rho}^\nu\rightarrow0}\frac{\Big(\sqrt{\bar{\rho}^\nu}- \sqrt{\hat{\rho}^{X_E^\nu(t)}}\Big)^2}{F(\bar{\rho}^\nu| \hat{\rho}^{X_E^\nu(t)})}=\frac{1}{\big(\hat{\rho}^{X_E^\nu(t)}\big)^{\gamma-1}},
$$
and
$$
\lim_{\bar{\rho}^\nu\rightarrow+\infty}\frac{\Big(\sqrt{\bar{\rho}^\nu}- \sqrt{\hat{\rho}^{X_E^\nu(t)}}\Big)^2}{F(\bar{\rho}^\nu| \hat{\rho}^{X_E^\nu(t)})}=0.
$$
Substituting \eqref{J31-1} into \eqref{J31} yields that
\begin{equation}\label{J31-e}
\di |J_{31}|\leq C\int_0^t\|\sqrt{\bar{\rho}^\nu}\big(\bar{h}^\nu_{1}- \hat{u}^{X_E^\nu(t)}_1\big)\|_{L^2(\mathbb{R})}\|F(\bar{\rho}^\nu| \hat{\rho}^{X_E^\nu(t)})\|_{L^1(\mathbb{R})}^{\frac12}d\tau.
\end{equation}
$J_{32}$ can be estimated as
\begin{equation}\label{J32-e}
\begin{array}{ll}
\di J_{32}= -\int_0^t\int_{-2M}^{-M}\sqrt{\bar{\rho}^\nu}\big(\bar{h}^\nu_{1}- \hat{u}^{X_E^\nu(t)}_1\big)\sqrt{\hat{\rho}^{X_E^\nu(t)}}\frac{6(x_1+M)(x_1+2M)}{M^3}dx_1 d\tau\\[4mm]
\di \qquad\ \   +\int_0^t\int_{M}^{2M}\sqrt{\bar{\rho}^\nu}\big(\bar{h}^\nu_{1}- \hat{u}^{X_E^\nu(t)}_1\big)\sqrt{\hat{\rho}^{X_E^\nu(t)}}\frac{6(x_1-M)(x_1-2M)}{M^3}dx_1 d\tau\\[4mm]
\di\quad\ \  \leq \frac{CT}{M^3}\sup_{t\in[0,T]}\|\sqrt{\bar{\rho}^\nu}\big(\bar{h}^\nu_{1}- \hat{u}^{X_E^\nu(t)}_1\big)\|_{L^2(\mathbb{R})}\Big[\|(x_1+M)(x_1+2M)\|_{L^2[-2M, -M]}\\[3mm]
\di \qquad \quad +\|(x_1-M)(x_1-2M)\|_{L^2[M, 2M]}\Big]\\[3mm]
\di \quad \ \ \leq C_T M^{-\frac12} \sup_{t\in[0,T]}\|\sqrt{\bar{\rho}^\nu}\big(\bar{h}^\nu_{1}- \hat{u}^{X_E^\nu(t)}_1\big)\|_{L^2(\mathbb{R})}.
\end{array}
\end{equation}
Then 
\begin{equation}\label{J33-e}
\begin{array}{ll}
\di J_{33}=\int_0^t\int_{\mathbb{R}} \hat{u}^{X_E^\nu(t)}_1\big(\bar{\rho}^\nu- \hat{\rho}^{X_E^\nu(t)}\big)\Big[\mathbbm{1}_{\{0\leq \bar{\rho}^\nu\leq \frac{\rho_-}{2}\}}+\mathbbm{1}_{\{\frac{\rho_-}{2}\leq \bar{\rho}^\nu\leq 2\rho_+\}}+\mathbbm{1}_{\{\bar{\rho}^\nu\geq 2\rho_+\}}\Big]\varphi^\prime_Mdx_1 d\tau\\[3mm]
\di \qquad \leq C\int_0^t \|\big(\bar{\rho}^\nu- \hat{\rho}^{X_E^\nu(t)}\big)\Big[\mathbbm{1}_{\{0\leq \bar{\rho}^\nu\leq \frac{\rho_-}{2}\}}+\mathbbm{1}_{\{\bar{\rho}^\nu\geq 2\rho_+\}}\Big]\|_{L^1({\mathbb{R})}} \|\varphi_M^\prime\|_{L^\infty[-2M, 2M]} d\tau\\[3mm]
\di \qquad\ \  +C\int_0^t\|\big(\bar{\rho}^\nu- \hat{\rho}^{X_E^\nu(t)}\big)\mathbbm{1}_{\{\frac{\rho_-}{2}\leq \bar{\rho}^\nu\leq 2\rho_+\}}\|_{L^2({\mathbb{R})}} \|\varphi_M^\prime\|_{L^2[-2M, 2M]} d\tau\\[3mm]
\di \qquad \leq CTM^{-1}\sup_{t\in[0,T]}\|p(\bar{\rho}^\nu| \hat{\rho}^{X_E^\nu(t)})\|_{L^1(\mathbb{R})}+ CTM^{-\frac 12}\sup_{t\in[0,T]}\|p(\bar{\rho}^\nu| \hat{\rho}^{X_E^\nu(t)})\|_{L^1(\mathbb{R})}^{\frac12}.
\end{array}
\end{equation}
On the other hand,
\begin{equation}\label{J4-e}
\begin{array}{ll}
\di J_4=
2\nu \int_0^t\int_{\mathbb{R}}\big(\bar{\rho}^\nu-\hat{\rho}^{X_E^\nu(t)}\big)\Big[\mathbbm{1}_{\{0\leq \bar{\rho}^\nu\leq \frac{\rho_-}{2}\}}+\mathbbm{1}_{\{\frac{\rho_-}{2}\leq \bar{\rho}^\nu\leq 2\rho_+\}}+\mathbbm{1}_{\{\bar{\rho}^\nu\geq 2\rho_+\}}\Big]\varphi^{\prime\prime}_Md x_1 d\tau\\[4mm]
\di \quad\ =-2\nu\int_0^t\int_{-2M}^{-M}\big(\bar{\rho}^\nu-\hat{\rho}^{X_E^\nu(t)}\big)\Big[\mathbbm{1}_{\{0\leq \bar{\rho}^\nu\leq \frac{\rho_-}{2}\}}+\mathbbm{1}_{\{\frac{\rho_-}{2}\leq \bar{\rho}^\nu\leq 2\rho_+\}}\\[4mm]
\di\qquad \qquad +\mathbbm{1}_{\{\bar{\rho}^\nu\geq 2\rho_+\}}\Big]\frac{6(2x_1+3M)}{M^3} d x_1 d\tau+2\nu\int_0^t\int_{M}^{2M}\big(\bar{\rho}^\nu-\hat{\rho}^{X_E^\nu(t)}\big)\Big[\mathbbm{1}_{\{0\leq \bar{\rho}^\nu\leq \frac{\rho_-}{2}\}}\\[4mm]
\di \qquad\qquad +\mathbbm{1}_{\{\frac{\rho_-}{2}\leq \bar{\rho}^\nu\leq 2\rho_+\}}+\mathbbm{1}_{\{\bar{\rho}^\nu\geq 2\rho_+\}}\Big]\frac{6(2x_1-3M)}{M^3} d x_1 d\tau\\[3mm]
\di \quad \leq 2\nu\int_0^t \|\big(\bar{\rho}^\nu-\hat{\rho}^{X_E^\nu(t)}\big)\Big[\mathbbm{1}_{\{0\leq \bar{\rho}^\nu\leq \frac{\rho_-}{2}\}}+\mathbbm{1}_{\{\bar{\rho}^\nu\geq 2\rho_+\}}\Big]\|_{L^1(\mathbb{R})}\Big[\|\frac{6(2x_1-3M)}{M^3}\|_{L^\infty[-2M, -M]}\\
\di \qquad +\|\frac{6(2x_1+3M)}{M^3}\|_{L^\infty[M, 2M]}\Big]d\tau+ 2\nu\int_0^t\|\big(\bar{\rho}^\nu-\hat{\rho}^{X_E^\nu(t)}\big)\mathbbm{1}_{\{\frac{\rho_-}{2}\leq \bar{\rho}^\nu\leq 2\rho_+\}}\|_{L^2(\mathbb{R})}\cdot\\[3mm]
\di \qquad \Big[\|\frac{6(2x_1-3M)}{M^3}\|_{L^2[-2M, -M]}+\|\frac{6(2x_1+3M)}{M^3}\|_{L^2[M, 2M]}\Big]d\tau \\[3mm]
\di \quad \leq C\nu  M^{-2} T\sup_{t\in[0,T]}\|F(\bar{\rho}^\nu| \hat{\rho}^{X_E^\nu(t)})\|_{L^1(\mathbb{R})} +C\nu M^{-\frac32} T  \sup_{t\in[0,T]}\|F(\bar{\rho}^\nu| \hat{\rho}^{X_E^\nu(t)})\|_{L^1(\mathbb{R})}^{\frac12}.
\end{array}
\end{equation}
\begin{equation}\label{J5-e}
\begin{array}{ll}
\di J_5=
2\nu\int_0^t\int_{\mathbb{R}}\hat{\rho}^{X_E^\nu(t)}\varphi^{\prime\prime}_Md x_1 d\tau\\[3mm]
\di \quad =-2\nu \int_0^t\int_{-2M}^{-M} \hat{\rho}^{X_E^\nu(t)}  \frac{6(2x_1+3M)}{M^3} dx_1d\tau+2\nu \int_0^t\int_{M}^{2M} \hat{\rho}^{X_E^\nu(t)}  \frac{6(2x_1-3M)}{M^3} dx_1d\tau\\[3mm]
\di \quad\leq C\nu \int_{-2M}^{-M}  \frac{6|2x_1+3M|}{M^3} dx_1d\tau+C\nu \int_0^t\int_{M}^{2M} \frac{6|2x_1-3M|}{M^3} dx_1d\tau\\[3mm]
\di \quad\leq C\nu M^{-1}T.
\end{array}
\end{equation}
Collecting all the above estimates together, we have $\forall t\in[0,T]$
\begin{equation}\label{shift-3}
 |X_E^\nu(t)-\sigma t|\leq \frac{C_T}{|\rho_+-\rho_-|} \Big[\big(E_0+\nu\big)+\big(E_0+\nu\big)^{\frac12}M^{\frac12}+\big(E_0+\nu\big)^{\frac12}\Big]. 
\end{equation}
if we take $M>1$.
Therefore, we have 
$\forall t\in[0,T]$
\begin{equation}\label{shift4}
 |X_E^\nu(t)|\leq |\sigma|T+ \frac{C_T\big(E_0+\nu\big)^{\frac12}}{|\rho_+-\rho_-|} \Big(M^{\frac12}+1\Big). 
\end{equation}
If we choose 
\begin{equation}\label{M-l}
 M=\max\left\{1, 2|\sigma|T+ 2\frac{C_T\big(E_0+\nu\big)^{\frac12}}{|\rho_+-\rho_-|} + \frac{|C_T|^2\big(E_0+\nu\big)}{(\rho_+-\rho_-)^2}\right\},
\end{equation}
then the a priori assumption \eqref{M-ass} is valid for $\forall t\in [0,T].$
By \eqref{shift-3}, 
\begin{equation}\label{shift-5}
 |X_E^\nu(t)-\sigma t|\leq \frac{C_T}{|\rho_+-\rho_-|^2} \Big(\big(E_0+\nu\big)^{\frac12}+(E_0+\nu)\Big). 
\end{equation}

\subsection{The relative entropy between 2D weak solution and the shifted shock}
By \eqref{eq:weakstrongestimatechannel}, we have
\begin{equation}\label{eq:2D-estimatebefore}
\begin{array}{ll}
&\di \sup_{t\in[0,T]}\int_{\mathbb{R}\times \mathbb{T}}\big[\rho^\nu(v_1^\nu-\bar v^\nu_1)^2+\rho^\nu(w_1^\nu-\bar w^\nu_1)^2+\rho^\nu |v_2^\nu-\bar u_2^\nu|^2+\rho^\nu |w_2^\nu|^2 +F(\rho^\nu|\bar \rho^\nu)\big]dx_1dx_2\\[3mm]
%+\nu\int_0^T \int_{\mathbb{R}\times \mathbb{T}}\rho [\nabla(u-\bar u)]^2 dx_1dx_2 dt
&\di \leq Ce^{\big(||\partial_{x_1}\bar{v}^\nu||_{L^\infty([0,T] \times \R)}+||\partial_{x_1} \bar{w}^\nu||_{L^\infty([0,T] \times \R)}+||\partial_{x_1x_1}\ln(\bar{\rho}^\nu)||_{L^\infty([0,T] \times \R)}+||\partial_{x_1}\ln(\bar{\rho}^\nu)||_{L^\infty([0,T] \times \R)}^2\big)T} \\
& \di \times \mathcal{E}_{\frac{1}{2}}(\rho_0^\nu, v_0^\nu, w_0^\nu|\bar{\rho}^\nu_0, \bar{v}^\nu_0, \bar{w}^\nu_0).
\end{array}
\end{equation}

Further, we have the following estimate on the norms of the planar strong solutions:
\begin{proposition}\label{prop:1doptimalbdsforuse}
There exists $C>0$ and $j \in \mathbb{N}$ so that the following holds. Let $(\rho^\nu_0, v^\nu_0, w^\nu_0)$ be as in \Cref{thm:intro2}. Let $(\bar{\rho}^\nu, \bar{u}^\nu)$ be the unique planar strong solution to \eqref{eq:2DNS} with initial data $(\bar{\rho}^\nu_0, \bar{u}^\nu_0)$. Then:
\begin{align*}
&\sup_{(t,x) \in [0,T] \times \R}\frac{1}{\bar{\rho}^\nu(t,x)}+||\bar{\rho}^\nu-\tilde{\rho}||_{L^\infty([0,T];H^3(\R))} \notag \\ 
&+||(\bar{u}_1^\nu, \bar{u}_2^\nu)-(\tilde{u}_1, \tilde{u}_2)||_{L^\infty([0,T];H^2(\R))} \leq \frac{C}{\nu^j}.
\end{align*}
\end{proposition}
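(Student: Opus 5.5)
The plan is to rerun the one-dimensional regularity theory behind \Cref{prop:1dexist+bds}. This time I will track every constant's dependence on $\nu$, using only the hypotheses of \Cref{thm:intro2}: the initial bounds $\|(\bar\rho_0^\nu,\bar u_0^\nu)-(\tilde\rho,\tilde u)\|_{H^6}\le \nu^{-1}$, $\bar\rho^\nu_0\ge c_0^\nu$, and the slope condition $|\partial_{x_1}(\bar u_0^\nu)_1|\le \frac{1}{2\nu}(\bar\rho^\nu_0)^{\gamma-1}$. The goal is to show that each step loses at most a polynomial factor in $\nu^{-1}$ and never an exponential one.

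First I will take the basic energy and BD-entropy relative to $(\tilde\rho,\tilde u)$. This gives $\sup_t \mathcal{E}_{\frac12}(\bar\rho^\nu,\bar v^\nu,\bar w^\nu|\tilde\rho,\tilde v,\tilde w)\le C\nu^{-k}$, because the initial relative entropy is polynomially bounded and the errors from $\tilde\rho,\tilde u$ are $O(1)$ with $\tilde w=O(\nu)$. The decoupled equation for $\bar u_2^\nu$ is linear parabolic once $\bar\rho^\nu,\bar u_1^\nu$ are known. In Lagrangian form it is $(\bar u_2)_t=(\nu \bar s^{-2}(\bar u_2)_{y_1})_{y_1}$, so it will be handled last by energy estimates.

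Second, I will prove the pointwise bounds. The upper bound on $\bar\rho^\nu$ comes from BD entropy plus $\gamma$-energy in 1D: $\|\sqrt{\bar\rho}-\sqrt{\tilde\rho}\|_{L^\infty}\lesssim \|\cdot\|_{L^2}^{1/2}\|\partial_x\sqrt{\bar\rho}\|_{L^2}^{1/2}$, and $\nu\partial_x\sqrt{\bar\rho}$ is controlled, so the bound is polynomial in $\nu^{-1}$. For the lower bound I will use the Lagrangian effective velocity $\bar h_1=\bar u_1+2\nu(\bar s^{-1})_{y_1}$. It satisfies $(\bar h_1)_t+p^L(\bar s)_{y_1}=0$, so $\bar s$ solves $\bar s_t = (\bar h_1)_{y_1}+2\nu(\bar s^{-1}\bar s_{y_1}/\bar s)_{y_1}\ldots$; equivalently, $\ln\bar s$ satisfies a parabolic equation driven by $p^L$. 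The initial slope condition is exactly the hypothesis used in \cite{MR4049919,MR4113341} to propagate an $L^\infty$ bound on the effective-viscous-flux quantity $\nu\bar\rho(\bar u_1)_{x_1}-p(\bar\rho)$ by the maximum principle. That bound yields $1/\bar\rho^\nu\le C\nu^{-j}$ with explicit dependence; it is polynomial because the maximum principle gives a linear-in-time bound and not a Gronwall exponential.

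Third, with two-sided density bounds $\nu^{j}\lesssim\bar\rho^\nu\lesssim \nu^{-j}$ in hand, I will carry out standard $H^1$, $H^2$, $H^3$ energy estimates. At each order I test the equation for $\partial_x^k(\bar u-\tilde u)$ and for $\partial_x^k(\bar\rho-\tilde\rho)$, the latter via the effective velocity. The viscous dissipation $\nu\bar\rho|\partial_x^{k+1}\bar u|^2$ absorbs the nonlinear terms through Young's inequality, with coefficients of size $\nu^{-m}\|\cdot\|^2$.

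The main obstacle is that a naive Gronwall at each order gives $\exp(C\nu^{-m}T)$ rather than a power. To avoid this, I will make the estimates bootstrap-linear. At each order the coefficient multiplying the top-order norm must involve only lower-order quantities that are already bounded in $L^2_t$. These quantities are dissipation integrals such as $\int_0^T\nu\|\bar u_x\|_{L^\infty}^2$, which are bounded via the 1D Gagliardo--Nirenberg inequality and the previous step's dissipation, not in $L^\infty_t$. Then Gronwall yields $\exp(\int_0^T a(t)\,dt)$ with $\int a\lesssim \nu^{-m}\cdot\nu^{m}$-type cancellations, or at worst a logarithmic quantity.

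If this fails, the fallback is to work in the Lagrangian variables, where the a-contraction of \cite{KV-Inven} already gives $\nu$-uniform control of $\bar h_1$ and $\bar s$ in $L^2$. I would use that as the zeroth-order input, and the $\nu$-scaling $t\mapsto t/\nu$, $y\mapsto y/\nu$ converts the higher-order estimates into $\nu=1$ estimates on a time interval $T/\nu$. The $\nu=1$ system has time-uniform Sobolev bounds near viscous shocks, and scaling back produces exactly powers $\nu^{-k}$ for $k$ derivatives. Combined with the initial $H^6$ bound $\nu^{-1}$, this gives the claimed $C\nu^{-j}$.
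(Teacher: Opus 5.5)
Your zeroth-order step and your two-sided density bounds are fine. The lower bound is the one the paper uses: the maximum principle for the active potential from \cite{MR4049919}, under which $\bar\rho^{1-\gamma}$ grows at most linearly at rate $O(\nu^{-1})$. Your upper bound, from the BD control of $\nu\,\partial_x\sqrt{\bar\rho}$ plus 1D Gagliardo--Nirenberg, is a valid alternative to the paper's argument.

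The gap is in the higher-order step. In Eulerian variables the mass equation is pure transport, so the $H^k$ estimate for $\partial_x^k\bar\rho$ contains the commutator $-(k+\tfrac12)\int\partial_x\bar u_1\,|\partial_x^k\bar\rho|^2\,dx$. The Eulerian effective velocity $\bar u_1+2\nu\partial_x\ln\bar\rho$ is also transported by $\bar u_1$, so using it does not remove this term. The term is linear in the top-order quantity, and the density has no order-$(k+1)$ dissipation to interpolate against. You give no way to absorb it, so it must be Gronwalled with rate $\|\partial_x\bar u_1\|_{L^\infty}$. That rate is exactly the kind of lower-order $L^2_t$ quantity your ``bootstrap-linear'' scheme permits. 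But $\int_0^T\|\partial_x\bar u_1\|_{L^\infty}\,dt$ is genuinely of order $\nu^{-1}$: once the compressive data steepen into a viscous shock layer of width $\nu$, $\partial_x\bar u_1\sim|u_{1,-}-u_{1,+}|/\nu$ there. So you get $e^{C/\nu}$, and the ``$\nu^{-m}\cdot\nu^{m}$ cancellations'' you invoke do not occur.

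Your fallback does not repair this. It rests on time-uniform higher-order bounds for the rescaled $\nu=1$ problem on $[0,T/\nu]$, with polynomial dependence on the data. After $y\mapsto y/\nu$ those data have relative entropy of order $\nu^{-1}E_0$, which is not small. You neither prove nor cite such a result. The $a$-contraction of \cite{KV-Inven} only supplies a zeroth-order relative entropy bound up to a shift, not higher derivatives.

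The missing idea is the paper's: take only the density bounds $C^{-1}\nu^{j}\le\bar\rho\le C\nu^{-j}$ as input, and run plain energy estimates in Lagrangian mass coordinates. There the system is $v_t=(u_1)_y$, $(u_1)_t+p^L(v)_y=2\nu(v^{-2}(u_1)_y)_y$, $(u_2)_t=\nu(v^{-2}(u_2)_y)_y$, with no transport term.

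At orders $k=0,1,2$ you estimate $\partial_y^k u_1$, $\partial_y^k u_2$, and $\partial_y^k$ of the effective velocity. The effective velocity satisfies $\partial_t\big(u_1-\tilde u_1+2\nu(1/v)_y\big)+p^L(v)_y=0$ and yields the damping $2\nu\gamma\int v^{-\gamma-3}|\partial_y^{k+1}v|^2\,dy$. Every top-order term is absorbed by this damping or by the viscous dissipation $\nu\int v^{-2}|\partial_y^{k+1}u|^2\,dy$. Every remaining term is a product of a quantity bounded in $L^\infty_t$ at order $k-1$ with a dissipation integral bounded in $L^1_t$ at order $k-1$; for example, $\nu^{-j}\|(u_1)_y\|_{L^2}^2\|v_y\|_{L^2}^4$ at order one.

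Each order therefore closes by a plain time integration. Gronwall is used only at order zero, with a $\nu$-independent rate, and each order costs only a power of $\nu^{-1}$. Converting back to Eulerian variables with the density bounds gives the proposition.
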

A proof of \Cref{prop:1doptimalbdsforuse} is given in \Cref{sec:optimalbds}. Plugging the estimate of \Cref{prop:1doptimalbdsforuse} into \eqref{eq:2D-estimatebefore} gives 
\begin{equation}\label{2D-estimate}
\begin{array}{ll}
&\di \sup_{t\in[0,T]}\int_{\mathbb{R}\times \mathbb{T}}\big[\rho^\nu(v_1^\nu-\bar v^\nu_1)^2+\rho^\nu(w_1^\nu-\bar w^\nu_1)^2+\rho^\nu |v_2^\nu-\bar u_2^\nu|^2+\rho^\nu |w_2^\nu|^2+F(\rho^\nu|\bar \rho^\nu)\big]dx_1dx_2\\[3mm]
%+\nu\int_0^T \int_{\mathbb{R}\times \mathbb{T}}\rho [\nabla(u-\bar u)]^2 dx_1dx_2 dt
&\di \leq e^{\frac{C}{\nu^j}} \times \mathcal{E}_{\frac{1}{2}}(\rho_0^\nu, v_0^\nu, w_0^\nu|\bar{\rho}^\nu_0, \bar{v}^\nu_0, \bar{w}^\nu_0),
\end{array}
\end{equation}
for some integer $j > 0$.

By \eqref{Euler-est}, we have
\begin{equation}\label{Euler-est-1}
\begin{array}{ll}
\di \sup_{t\in[0,+\infty)}\int_{\mathbb{R}} \Big[\bar \rho^\nu|\bar v^\nu_1+\bar w^\nu_1-\hat{u}^{X_E^\nu(\cdot)}_1|^2+\bar \rho^\nu|\bar u_2^\nu|^2+F(\bar \rho^\nu| \hat{\rho}^{X_E^\nu(\cdot)})\Big](t,x_1)d x_1\leq  C\Big(E_0+\nu\Big).
\end{array}
\end{equation}
We would like to combine \eqref{2D-estimate} and \eqref{Euler-est-1} to obtain an estimate on
\begin{equation}\label{2D-e}
    \di \sup_{t\in[0,T]}\int_{\mathbb{R}\times \mathbb{T}}\big[\rho^\nu(v_1^\nu+w_1^\nu-\hat{u}^{X_E^\nu(t)}_1)^2+\rho^\nu |v_2^\nu|^2+\rho^\nu |w_2^\nu|^2 +F(\rho^\nu|\hat{\rho}^{X_E^\nu(t)})\big]dx_1dx_2.
\end{equation}
First, we prove a couple of estimates on the relative quantity $F(v|w)$ that will be crucial in the following proofs.
\begin{lemma}\label{lem:relquantityests}
For given constants $\gamma, M >1$, there exists a constant $C>0$ and $k_0 > 1$ such that the following inequalities hold.
\begin{enumerate}
    \item For any $k \geq 3M$ and $w \in (M^{-1},M)$,
    \begin{align}\label{eq:rellem1}
        \big[\max\big((k^{-1}-v)_+,(v-k)_+\big)\big]^\gamma \leq C\,F(v|w), \qquad \text{for any } v >0.
    \end{align}
    \item For any $w_1, w_2 \in (M^{-1}, M)$,
    \begin{align}\label{eq:rellem2}
        F(v|w_1) \leq C\,F(v|w_2), \qquad \text{for any } v \in (0, k_0^{-1}) \cup (k_0, \infty).
    \end{align}
    \item Further, for $w \in (M^{-1}, M)$, we have the pointwise estimate
    \begin{align}\label{eq:ptwiserelentest}
    |v-w| \leq C\big(\sqrt{F(v|w)}+F(v|w)^{\frac{1}{\gamma}}\big), \qquad \text{for any } v >0.
    \end{align}
    \item In particular, \eqref{eq:ptwiserelentest} implies
    \begin{align}\label{eq:controlbyrelent}
    v \leq C(1+F(v|w)), \qquad \text{for any } v >0.
    \end{align}
\end{enumerate}
Finally, in all of the above, the constant $C$ verifies
\begin{align}\label{eq:lemCest}
C \leq M^j, \text{for some } j \in \mathbb{N}.
\end{align}
\end{lemma}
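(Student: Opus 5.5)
The plan is to work directly with the explicit form
\[
F(v|w)=\frac{1}{\gamma-1}\big(v^\gamma-w^\gamma-\gamma w^{\gamma-1}(v-w)\big),
\]
which is nonnegative, strictly convex in $v$, vanishes only at $v=w$, and has $\partial_v^2F(v|w)=\gamma v^{\gamma-2}$. Every estimate reduces to comparing elementary functions on three regimes: $v$ small, $v$ comparable to $w$, and $v$ large. Throughout we take $w\in(M^{-1},M)$ and track powers of $M$ explicitly, so that each constant obtained has the form \eqref{eq:lemCest}.

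\textbf{Item (3).} We split into two regimes.
\begin{itemize}
\item For $v\le 2M$, Taylor's formula with integral remainder gives
\[
F(v|w)=\int_0^1(1-s)\,\gamma\,(w+s(v-w))^{\gamma-2}\,ds\,(v-w)^2 .
\]
Since $\gamma\le 2$, the factor $(\cdot)^{\gamma-2}$ is bounded below by $(3M)^{\gamma-2}$. Hence $(v-w)^2\le CM^{2-\gamma}F(v|w)$, which yields $|v-w|\le C\sqrt{F}$.
\item For $v\ge 2M$, we have $v-w\ge v/2$, and $v^\gamma$ dominates the linear term $\gamma w^{\gamma-1}v\le\gamma M^{\gamma-1}v$ once $v\ge CM$. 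This gives $F(v|w)\ge c\,v^\gamma\ge c|v-w|^\gamma$.
\end{itemize}
For the transition range between these regimes we use monotonicity of $F(\cdot|w)$ on $(w,\infty)$ together with a polynomial lower bound at $v=2M$ to glue the two estimates. Item (4) then follows immediately, because $v\le w+|v-w|\le M+C(\sqrt F+F^{1/\gamma})$ and $\sqrt F+F^{1/\gamma}\le 2(1+F)$.

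\textbf{Item (1).} We treat the two sides separately.
\begin{itemize}
\item If $v\ge k\ge 3M$, the large-$v$ regime above gives $F(v|w)\ge c\,v^\gamma\ge c\,(v-k)_+^\gamma$.
\item If $v\le k^{-1}\le (3M)^{-1}$, then $w-v\ge w-w/3\ge 2/(3M)$. Monotonicity of $F(\cdot|w)$ on $(0,w)$ gives $F(v|w)\ge F(w/3|w)\ge cM^{-\gamma}$. Meanwhile $(k^{-1}-v)_+^\gamma\le (3M)^{-\gamma}$, so the ratio is bounded by a power of $M$.
\end{itemize}

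\textbf{Item (2).} We again split by regime.
\begin{itemize}
\item Near vacuum, $F(v|w_i)\to F(0|w_i)=w_i^\gamma\in(M^{-\gamma},M^\gamma)$ as $v\to0$. Continuity and the lower bound from Item (1) give, for $v<k_0^{-1}$ with $k_0=3M$, that $F(v|w_i)$ is comparable to $1$ with constants that are powers of $M$. Hence the ratio $F(v|w_1)/F(v|w_2)$ is at most $M^j$.
\item For $v>k_0$, both $F(v|w_i)$ are comparable to $v^\gamma$ by the large-$v$ bound, from above trivially and from below as shown above, so the ratio is again bounded by $M^j$.
\end{itemize}

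The main obstacle is bookkeeping rather than analysis. We must choose the thresholds ($2M$, $3M$, and $k_0$) so that the lower bound $F\gtrsim v^\gamma$ holds uniformly in $w\in(M^{-1},M)$, and verify that every constant is polynomial in $M$. The most delicate point is the intermediate region $v\in(2M, CM)$ in Item (3), before the term $v^\gamma$ fully dominates. There we plan to use monotonicity in $v$ together with the explicit value of $F$ at $v=2M$, which is at least $c\,M^{-j}$ by convexity.
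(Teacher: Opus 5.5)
Your argument is correct for $\gamma\in(1,2]$, which is the range in which the paper uses the lemma. It is organized differently from the paper's proof, though.

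The paper proves (1) first. For $v>k$ it uses the comparison function $Z(v)=C\,F(v|w)-(v-k)^\gamma$, which satisfies $Z(k)\ge0$ and $Z'\ge0$ once $C>\frac{\gamma-1}{1-3^{1-\gamma}}$. For $v<k^{-1}$ it uses a direct lower bound on $F$. It then gets (3) by applying (1) with $k=3M$ on the tails $v<\frac{1}{6M}$ and $v>6M$. Taylor is used only on the compact range $[\frac{1}{6M},6M]$, where $F''$ is bounded below for every $\gamma>1$. You reverse the order: Taylor on all of $(0,2M]$ plus superlinear growth beyond $2M$ give (3). You then read off (1) and (2) from these regime bounds together with monotonicity of $F(\cdot|w)$ on each side of $w$.

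Each route has an advantage. Your small-$v$ step in (1), namely $F(v|w)\ge F(w/3|w)=\frac{w^\gamma}{\gamma-1}\big(3^{-\gamma}+\frac{2\gamma}{3}-1\big)$ for $v\le k^{-1}\le\frac{1}{3M}<\frac{w}{3}$, is valid for every $k\ge 3M$. The paper's bound $F(v|w)\ge M^{-\gamma}-\frac{\gamma M^{\gamma-1}}{(\gamma-1)k}$ is positive only for $k>\frac{\gamma}{\gamma-1}M^{2\gamma-1}$. At $k=3M$ it equals $M^{-\gamma}\big(1-\frac{\gamma}{3(\gamma-1)}M^{2\gamma-2}\big)$, which is negative for all $M>1$ when $\gamma\le\frac{3}{2}$. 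Yet (3) is derived from (1) precisely at $k=3M$. So on this point your monotonicity argument is the more robust one.

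On the other hand, your Taylor step bounds $(w+s(v-w))^{\gamma-2}$ from below by $(2M)^{\gamma-2}$, and this uses $\gamma\le 2$. The lemma is stated for all $\gamma>1$. For $\gamma>2$, restrict the integral to $s\in[0,\frac12]$, where $w+s(v-w)\ge\frac{w}{2}\ge\frac{1}{2M}$.

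Finally, the intermediate range $(2M,CM)$ that you flag as delicate disappears by homogeneity. Write $F(v|w)=\frac{w^\gamma}{\gamma-1}\,g(v/w)$ with $g(s)=s^\gamma-\gamma s+\gamma-1$. Since $g(s)/s^\gamma$ is increasing on $(1,\infty)$, you get $F(v|w)\ge\frac{g(2)}{2^\gamma(\gamma-1)}\,v^\gamma$ for all $v\ge 2M>2w$, with a constant independent of $M$.
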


\begin{proof}
Since $F(v) = v^\gamma/(\gamma-1)$, we have $F'(v) = \gamma v^{\gamma-1}/(\gamma-1)$, and hence
\begin{align*}
    F(v|w) = F(v) - F(w) - F'(w)(v-w) = \frac{v^\gamma - \gamma w^{\gamma-1} v + (\gamma-1)w^\gamma}{\gamma-1}.
\end{align*}
Note that \eqref{eq:rellem1} holds trivially for $v \in [k^{-1},k]$. So first assume $v < k^{-1}$. Then
\begin{align*}
    \big[\max\big((k^{-1}-v)_+,(v-k)_+\big)\big]^\gamma=(k^{-1}-v)^\gamma< k^{-\gamma}, \qquad 0 < v < k^{-1},
\end{align*}
and, using the identity above,
\begin{align*}
    F(v|w) = \frac{v^\gamma - \gamma w^{\gamma-1} v + (\gamma-1)w^\gamma}{\gamma-1} \geq M^{-\gamma} - \frac{\gamma M^{\gamma-1}}{(\gamma-1)k},
\end{align*}
for $w \in (M^{-1},M)$ and $v < k^{-1}$. Hence, taking
\begin{align*}
    C >\frac{2\big(2\gamma M^{2\gamma-1}\big)^{-\gamma}}{(\gamma-1)^2 M^{-\gamma}},
\end{align*}
we obtain \eqref{eq:rellem1} in this case. 

Next, assume $v > k$, so that
\begin{align*}
    \big[\max\big((k^{-1}-v)_+,(v-k)_+\big)\big]^\gamma = (v-k)^\gamma.
\end{align*}
Define $Z(v) = C F(v|w) - (v-k)^\gamma$. We show $Z(v) \geq 0$ on $[k,\infty)$ for $C$ chosen appropriately. At $v=k$, $Z(k) = C F(k|w) \geq 0$. Next, compute
\begin{align*}
    Z'(v) = \frac{C\gamma}{\gamma-1}\big(v^{\gamma-1} - w^{\gamma-1}\big) - \gamma (v-k)^{\gamma-1}.
\end{align*}
Using the fact that $v > k \geq 3M > 3w$,
\begin{align*}
    Z'(v) \geq \gamma v^{\gamma-1}\left(\frac{C}{\gamma-1}\big(1 - 3^{1-\gamma}\big) - 1\right).
\end{align*}
So, picking
\begin{align*}
    C > \max\left(\frac{2\big(2\gamma M^{2\gamma-1}\big)^{-\gamma}}{(\gamma-1)^2 M^{-\gamma}}, \ \frac{\gamma-1}{1-3^{1-\gamma}}\right),
\end{align*}
we obtain \eqref{eq:rellem1} in all cases.
\par Next, we prove \eqref{eq:rellem2}. We break the analysis into two regions. First, assume $v > k_0$. Then
\begin{align*}
    F(v|w) = \frac{v^\gamma}{\gamma-1}\left[1 - \gamma\left(\frac{w}{v}\right)^{\gamma-1} + (\gamma-1)\left(\frac{w}{v}\right)^\gamma\right].
\end{align*}
As $k_0 \to \infty$, the bracketed term converges to $1$ uniformly in $w \in (M^{-1},M)$. So, by taking $k_0$ sufficiently large (depending on $M$), we have
\begin{align*}
    \frac{1}{2}\cdot\frac{v^\gamma}{\gamma-1} \leq F(v|w) \leq \frac{3}{2}\cdot\frac{v^\gamma}{\gamma-1}, \qquad \text{for any } w \in (M^{-1},M),\ v > k_0.
\end{align*}
This gives \eqref{eq:rellem2} in this range.

Now assume $0 < v < k_0^{-1}$. As before,
\begin{align*}
    F(v|w) \geq M^{-\gamma} - \frac{\gamma M^{\gamma-1}}{(\gamma-1)k_0},
\end{align*}
for $w \in (M^{-1},M)$ and $v < k_0^{-1}$. So, taking $k_0$ sufficiently large, we have for all $w \in (M^{-1},M)$
\begin{align*}
    F(v|w) \geq \frac{1}{2}M^{-\gamma}.
\end{align*}
Further, noting that $F(v|w) \leq C_2$ for some constant $C_2$ (depending on $\gamma, M, k_0$) when $w \in (M^{-1},M)$ and $v < k_0^{-1}$, we have
\begin{align*}
    F(v|w_1) \leq C_2 \leq 2C_2M^\gamma\, F(v|w_2).
\end{align*}
This completes the proof of \eqref{eq:rellem2}.
\par Finally, we prove \eqref{eq:ptwiserelentest}. First, assume that $0 < v < \frac{1}{6M}$. Then, picking $C \geq 6M^2$ gives
\begin{align*}
|v-w|=w-v \leq M-v \leq \frac{C}{6M^2}(M-v)<\frac{C}{6M}\leq C\Big(\frac{1}{3M}-v\Big).
\end{align*}
Picking $k=3M$ and using \eqref{eq:rellem1}, we obtain \eqref{eq:ptwiserelentest} in this case. Next, assume that $v > 6M$. Then, for $C \geq 3$,
\begin{align*}
|v-w| \leq v+M \leq C\frac{v}{2} \leq C(v-3M).
\end{align*}
Picking $k=3M$ and using \eqref{eq:rellem1}, we obtain \eqref{eq:ptwiserelentest} in this case.
\par Finally, assume that $v \in [\frac{1}{6M}, 6M]$. Then, we have
\begin{align*}
F(v|w) \geq \inf_{z \in [\frac{1}{6M}, 6M]}\frac{F''(z)}{2}|v-w|^2.
\end{align*}
Re-arranging and taking $\di C \geq \big(\inf_{z \in [\frac{1}{6M}, 6M]}\big)^{-1}$, we obtain \eqref{eq:ptwiserelentest} in all cases. 
\par Finally, it is evident from the proof that \eqref{eq:lemCest} is verified.
\end{proof}

\par Now, let us begin the estimate of \eqref{2D-e}. First, 
\begin{equation}\label{2D-e1}
\begin{array}{ll}
    \di \rho^\nu(v_1^\nu+w_1^\nu-\hat{u}^{X_E^\nu(t)}_1)^2\leq 2\rho^\nu\Big[(v_1^\nu+w_1^\nu-\bar v^\nu_1-\bar w^\nu_1)^2+(\bar v^\nu_1+\bar w^\nu_1-\hat{u}^{X_E^\nu(t)}_1)^2\Big]\\[4mm]
    \di \leq 2\Big[\rho^\nu(v_1^\nu-\bar v^\nu_1)^2+\rho^\nu(w_1^\nu-\bar w^\nu_1)^2+\bar \rho^\nu(\bar v^\nu_1+\bar w^\nu_1-\hat{u}^{X_E^\nu(t)}_1)^2+(\rho^\nu-\bar \rho^\nu)(\bar v^\nu_1+\bar w^\nu_1-\hat{u}^{X_E^\nu(t)}_1)^2\Big].
\end{array}
\end{equation}
We need to estimate the last term in \eqref{2D-e1}. By \Cref{prop:1doptimalbdsforuse} and \Cref{lem:relquantityests}, we have
\begin{align*}
&\di \int_{\mathbb{R}\times \mathbb{T}}(\rho^\nu-\bar \rho^\nu)(\bar v^\nu_1+\bar w^\nu_1-\hat{u}^{X_E^\nu(t)}_1)^2dx_1dx_2\\[3mm]
&\leq C\nu^{-j}\int_{\mathbb{R}\times \mathbb{T}}\big(\sqrt{F(\rho^\nu|\bar \rho^\nu)}+F(\rho^\nu|\bar \rho^\nu)^\frac{1}{\gamma}\big)(\bar v^\nu_1+\bar w^\nu_1-\hat{u}^{X_E^\nu(t)}_1)^2dx_1dx_2.
\end{align*}
By Holder's inequality
\begin{align*}
&\int_{\mathbb{R}\times \mathbb{T}}\big(\sqrt{F(\rho^\nu|\bar \rho^\nu)}+F(\rho^\nu|\bar \rho^\nu)^\frac{1}{\gamma}\big)(\bar v^\nu_1+\bar w^\nu_1-\hat{u}^{X_E^\nu(t)}_1)^2dx_1dx_2 \\[3mm]
&\leq C\nu^{-j}\Big(||\sqrt{F(\rho^\nu|\bar \rho^\nu)}||_{L^2(\R \times \T)}||(\bar v^\nu_1+\bar w^\nu_1-\hat{u}^{X_E^\nu(t)}_1)^2||_{L^2(\R \times \T)}\\
&\qquad \qquad  +||F(\rho^\nu|\bar \rho^\nu)^\frac{1}{\gamma}||_{L^\gamma(\R \times \T)}||(\bar v^\nu_1+\bar w^\nu_1-\hat{u}^{X_E^\nu(t)}_1)^2||_{L^{\frac{\gamma}{\gamma-1}}(\R \times \T)}\Big).
\end{align*}
For the first term, we compute 
\begin{align*}
||\sqrt{F(\rho^\nu|\bar \rho^\nu)}||_{L^2(\R \times \T)}||(\bar v^\nu_1+\bar w^\nu_1-\hat{u}^{X_E^\nu(t)}_1)^2||_{L^2(\R \times \T)} &=||F(\rho^\nu|\bar \rho^\nu)||_{L^1(\R \times \T)}^{\frac12}||\bar v^\nu_1+\bar w^\nu_1-\hat{u}^{X_E^\nu(t)}_1||_{L^4(\R \times \T)}^2.
\end{align*}
Then, using \Cref{prop:1doptimalbdsforuse} and the fact that $\hat{u}_1^\nu$ is bounded, we obtain
$$
\begin{array}{ll}
\di \|\bar v^\nu_1+\bar w^\nu_1-\hat{u}^{X_E^\nu(t)}_1\|^2_{L^4(\mathbb{R})}\leq C\Big[\|\bar v^\nu_1+\bar w^\nu_1-\hat{u}_1^{\nu, X_E^\nu(t)}\|^2_{L^4(\mathbb{R})}+\|\hat{u}_1^{\nu, X_E^\nu(t)}-\hat{u}^{X_E^\nu(t)}_1\|^2_{L^4(\mathbb{R})}\Big]\\[3mm]
\di \qquad \leq C\nu^{-j}\Big[\|\bar v^\nu_1+\bar w^\nu_1-\hat{u}_1^{\nu, X_E^\nu(t)}\|_{L^2(\mathbb{R})}+\nu^{\frac12}\Big]\\[3mm]
\di \qquad \leq C\nu^{-j}(E_0+\nu)^{\frac12}.
\end{array}
$$
This gives 
$$
||\sqrt{F(\rho^\nu|\bar \rho^\nu)}||_{L^2(\R \times \T)}||(\bar v^\nu_1+\bar w^\nu_1-\hat{u}^{X_E^\nu(t)}_1)^2||_{L^2(\R \times \T)} \leq C\nu^{-j}(E_0+\nu)^{\frac12}||F(\rho^\nu|\bar \rho^\nu)||_{L^1(\R \times \T)}^{\frac12}.
$$
In a similar fashion, we estimate
\begin{align*}
||F(\rho^\nu|\bar \rho^\nu)^\frac{1}{\gamma}||_{L^\gamma(\R \times \T)}||(\bar v^\nu_1+\bar w^\nu_1-\hat{u}^{X_E^\nu(t)}_1)^2||_{L^{\frac{\gamma}{\gamma-1}}(\R \times \T)}&=\|F(\rho^\nu|\bar \rho^\nu)\|^{\frac1\gamma}_{L^1(\mathbb{R}\times\mathbb{T})}\|\bar v^\nu_1+\bar w^\nu_1-\hat{u}^{X_E^\nu(t)}_1\|^2_{L^{\frac{2\gamma}{\gamma-1}}(\mathbb{R})} \\
&\leq C\nu^{-j}(E_0+\nu)^{1-\frac{1}{\gamma}}\|F(\rho^\nu|\bar \rho^\nu)\|_{L^1(\R \times \T)}^{\frac1\gamma}.
\end{align*}
Altogether, we obtain
\begin{align*}
&\int_{\mathbb{R}\times \mathbb{T}}\big(\sqrt{F(\rho^\nu|\bar \rho^\nu)}+F(\rho^\nu|\bar \rho^\nu)^\frac{1}{\gamma}\big)(\bar v^\nu_1+\bar w^\nu_1-\hat{u}^{X_E^\nu(t)}_1)^2dx_1dx_2 \\[3mm]
& \leq C\nu^{-j}\max((E_0+\nu)^{\frac12},(E_0+\nu)^{1-\frac1\gamma})\max(\|F(\rho^\nu|\bar \rho^\nu)\|_{L^1(\R \times \T)}^{\frac12}, \|F(\rho^\nu|\bar \rho^\nu)\|_{L^1(\R \times \T)}^{\frac1\gamma}),
\end{align*}
which, by plugging this back into \eqref{2D-e1}, gives
\begin{equation}\label{rho-4}
\begin{array}{ll}
&\di \int_{\mathbb{R}\times \mathbb{T}}\rho^\nu |v_1^\nu+w_1^\nu-\hat{u}_1^{X^\nu_E(t)}|^2dx_1dx_2 \\[3mm]
&\di\leq C\int_{\mathbb{R}\times \mathbb{T}}\Big[\rho^\nu |v_1^\nu-\bar v_1^\nu|^2+\rho^\nu |w_1^\nu-\bar w_1^\nu|^2+\bar{\rho}^\nu
|\bar v_1^\nu+\bar w_1^\nu-\hat{u}_1^{X^\nu_E(t)}|^2\Big]dx_1dx_2 \\[3mm]
&\quad + C\nu^{-j}\max((E_0+\nu)^{\frac12},(E_0+\nu)^{1-\frac1\gamma})\max(\|F(\rho^\nu|\bar \rho^\nu)\|_{L^1(\R \times \T)}^{\frac12}, \|F(\rho^\nu|\bar \rho^\nu)\|_{L^1(\R \times \T)}^{\frac1\gamma}).
\end{array}    
\end{equation}
Similarly, we have
\begin{equation}\label{rho-5}
\begin{array}{ll}
\di \int_{\mathbb{R}\times \mathbb{T}}\rho^\nu |v_2^\nu|^2dx_1dx_2\leq C\int_{\mathbb{R}\times \mathbb{T}}\Big[\rho^\nu |v_2^\nu-\bar u_2^\nu|^2+\bar{\rho}^\nu
|\bar u_2^\nu|^2+(\rho^\nu-\bar{\rho}^\nu)|\bar u_2^\nu|^2 \Big]dx_1dx_2\\[3mm]
\di\leq C\int_{\mathbb{R}\times \mathbb{T}}\Big[\rho^\nu |v_2^\nu-\bar u_2^\nu|^2+\bar{\rho}^\nu
|\bar u_2^\nu|^2\Big]dx_1dx_2 \\[3mm]
\quad+C\nu^{-j}\max((E_0+\nu)^{\frac12},(E_0+\nu)^{1-\frac1\gamma})\max(\|F(\rho^\nu|\bar \rho^\nu)\|_{L^1(\R \times \T)}^{\frac12}, \|F(\rho^\nu|\bar \rho^\nu)\|_{L^1(\R \times \T)}^{\frac1\gamma}).
\end{array}
\end{equation}
For the term with the relative pressure, we have the relation
\begin{equation}\label{p1}
  F(\rho^\nu|\hat{\rho}^{X_E^\nu(t)})=F(\rho^\nu|\bar \rho^\nu)+F(\bar{\rho}^\nu|\hat{\rho}^{X_E^\nu(t)})+(\rho^\nu-\bar{\rho}^\nu)\big(F^\prime(\bar \rho^\nu)-F^\prime(\hat{\rho}^{X_E^\nu(t)})\big).  
\end{equation}
Again, the last term in \eqref{p1} needs to be estimated. Using \Cref{lem:relquantityests} twice, alongside the bounds in \Cref{prop:1doptimalbdsforuse} and $C^{-1} \leq |\hat{\rho}| \leq C$ gives
\begin{align*}
&\int_{\R \times \T}(\rho^\nu-\bar{\rho}^\nu)\big(F^\prime(\bar \rho^\nu)-F^\prime(\hat{\rho}^{X_E^\nu(t)})\big)dx_1dx_2 \\[3mm]
&\leq C\nu^{-j}\int_{\R \times \T}\big(\sqrt{F(\rho^\nu|\bar \rho^\nu)}+F(\rho^\nu|\bar \rho^\nu)^{\frac1\gamma}\big)\big(\sqrt{F(\bar \rho^\nu|\hat{\rho}^{X^\nu_E(t)})}+F(\bar \rho^\nu|\hat{\rho}^{X^\nu_E(t)})^{\frac1\gamma}\big)dx_1dx_2.
\end{align*}
For the first term, we estimate
\begin{align}\label{cont1}
&\int_{\R \times \T}\sqrt{F(\rho^\nu|\bar \rho^\nu)}\sqrt{F(\bar \rho^\nu|\hat{\rho}^{X^\nu_E(t)})}dx_1dx_2 \\[3mm]
&\leq ||F(\rho^\nu|\bar \rho^\nu)||_{L^1(\R \times \T)}^{\frac12}||F(\bar \rho^\nu|\hat{\rho}^{X^\nu_E(t)})||_{L^1(\R \times \T)}^{\frac12} \\[3mm]
&\leq (E_0+\nu)^{\frac12}||F(\rho^\nu|\bar \rho^\nu)||_{L^1(\R \times \T)}^{\frac12},
\end{align}
by \eqref{Euler-est-1}. For the second
\begin{align}\label{cont2}
&\int_{\R \times \T}\sqrt{F(\rho^\nu|\bar \rho^\nu)}F(\bar \rho^\nu|\hat{\rho}^{X^\nu_E(t)})^{\frac1\gamma}dx_1 dx_2 \\[3mm]
&\leq ||F(\rho^\nu|\bar \rho^\nu)||_{L^1(\R \times \T)}^{\frac12}||F(\bar \rho^\nu|\hat{\rho}^{X^\nu_E(t)})^{\frac2\gamma}||_{L^1(\R \times \T)}^{\frac12} \\[3mm]
&\leq C\nu^{-j}(E_0+\nu)^{\frac12}||F(\rho^\nu|\bar \rho^\nu)||_{L^1(\R \times \T)}^{\frac12},
\end{align}
where in the last inequality we have used the fact that $\frac2\gamma \geq 1$ for $\gamma \in (1,2]$ and $|\bar \rho^\nu|, |\hat{\rho}| \leq C\nu^{-j}$. For the third:
\begin{align}\label{cont3}
&\int_{\R \times \T}F(\rho^\nu|\bar \rho^\nu)^{\frac1\gamma}\sqrt{F(\bar \rho^\nu|\hat{\rho}^{X^\nu_E(t)})}dx_1 dx_2 \\[3mm]
&\leq ||F(\rho^\nu|\bar \rho^\nu)||_{L^1(\R \times \T)}^{\frac1\gamma}||F(\bar \rho^\nu|\hat{\rho}^{X^\nu_E(t)})^{\frac{\gamma}{2(\gamma-1)}}||_{L^1(\R \times \T)}^{1-\frac1\gamma} \\[3mm]
&\leq C\nu^{-j}(E_0+\nu)^{1-\frac1\gamma}||F(\rho^\nu|\bar \rho^\nu)||_{L^1(\R \times \T)}^{1-\frac1\gamma}, 
\end{align}
as $\frac{\gamma}{2(\gamma-1)} \geq 1$. Finally
\begin{align}\label{cont4}
&\int_{\R \times \T}F(\rho^\nu|\bar \rho^\nu)^{\frac1\gamma}F(\bar \rho^\nu|\hat{\rho}^{X^\nu_E(t)})^{\frac1\gamma}dx_1 dx_2 \\[3mm]
&\leq ||F(\rho^\nu|\bar \rho^\nu)||_{L^1(\R \times \T)}^{\frac1\gamma}||F(\bar \rho^\nu|\hat{\rho}^{X^\nu_E(t)})^{\frac{1}{\gamma-1}}||_{L^1(\R \times \T)}^{1-\frac1\gamma} \\[3mm]
&\leq C\nu^{-j}(E_0+\nu)^{1-\frac1\gamma}||F(\rho^\nu|\bar \rho^\nu)||_{L^1(\R \times \T)}^{1-\frac1\gamma}, 
\end{align}
by identical reasoning. Combining \eqref{cont1}-\eqref{cont4} and using \eqref{p1}, we obtain 
\begin{align}\label{p3}
&\int_{\mathbb{R}\times \mathbb{T}}(\rho^\nu-\bar{\rho}^\nu)\big(F'(\bar \rho^\nu)-F'(\hat{\rho}^{X_E^\nu(t)})\big)dx_1dx_2 \\[3mm]
&\leq C\nu^{-j}\max((E_0+\nu)^{\frac12},(E_0+\nu)^{1-\frac1\gamma})\max(\|F(\rho^\nu|\bar \rho^\nu)\|_{L^1(\R \times \T)}^{\frac12}, \|F(\rho^\nu|\bar \rho^\nu)\|_{L^1(\R \times \T)}^{\frac1\gamma}).
\end{align}
By  \eqref{2D-estimate}, \eqref{Euler-est-1}, \eqref{rho-4}, \eqref{rho-5}, and \eqref{p3}, we have
\begin{equation}\label{2D-est}
\begin{array}{ll}
    \di \sup_{t\in[0,T]}\int_{\mathbb{R}\times \mathbb{T}}\big[\rho^\nu(v_1^\nu+w_1^\nu-\hat{u}^{X_E^\nu(t)}_1)^2+\rho^\nu |v_2^\nu|^2 +F(\rho^\nu|\hat{\rho}^{X_E^\nu(t)})\big]dx_1dx_2\\[3mm]
    \di \leq C\sup_{t\in[0,T]}\int_{\mathbb{R}\times \mathbb{T}}\big[\rho^\nu(v_1^\nu-\bar v^\nu_1)^2+\rho^\nu |v_2^\nu-\bar u_2^\nu|^2+F(\rho^\nu|\bar \rho^\nu)\\[3mm]
    \di \qquad \qquad \qquad \quad\    +\bar \rho^\nu(\bar v^\nu_1-\hat{u}^{X_E^\nu(t)}_1)^2+\bar{\rho}^\nu
|\bar u_2^\nu|^2+p(\bar{\rho}^\nu|\hat{\rho}^{X_E^\nu(t)})\Big]dx_1dx_2\\[3mm]
\di \quad + C\nu^{-j}\max((E_0+\nu)^{\frac12},(E_0+\nu)^{1-\frac1\gamma})\max(\|F(\rho^\nu|\bar \rho^\nu)\|_{L^1(\R \times \T)}^{\frac12}, \|F(\rho^\nu|\bar \rho^\nu)\|_{L^1(\R \times \T)}^{\frac1\gamma})\\[3mm]
\di \leq C(E_0+\nu)+  C\nu^{-j}[\big(1+\max((E_0+\nu)^{\frac12},(E_0+\nu)^{1-\frac1\gamma})\big) \\[3mm]
\times \max(\sqrt{\mathcal{E}_{\frac{1}{2}}(\rho^\nu_0,v^\nu_0, w_0^\nu|\bar{\rho}^\nu_0,\bar v^\nu_0, \bar{w}^\nu_0)e^{\frac{C}{\nu^{j+1}}}}, (\mathcal{E}_{\frac{1}{2}}(\rho^\nu_0,v^\nu_0, w_0^\nu|\bar{\rho}^\nu_0,\bar v^\nu_0, \bar{w}^\nu_0)e^{\frac{C}{\nu^{j+1}}})^{\frac1\gamma}).
\end{array}
\end{equation}

\subsection{Proof of \Cref{thm:intro2}}
Using \eqref{shift-5} and  \eqref{2D-est}, we can take 2D weak inviscid limit as follows. 

\subsubsection{Convergence of $\sqrt{\rho^\nu}$} Now, by \eqref{2D-est}, there exists a constant $C>0$ and an integer $j > 0$ so that if
\begin{align*}
\mathcal{E}_{\frac{1}{2}}(\rho^\nu_0,v^\nu_0, w^\nu_0|\bar\rho^\nu_0,\bar v^\nu_0, \bar{w}^\nu_0) \leq \exp(-C\nu^{-j}),
\end{align*}
then we have 
\begin{equation}\label{2D-controlled}
\begin{array}{ll}
    \di \sup_{t\in[0,T]}\int_{\mathbb{R}\times \mathbb{T}}\big[\rho^\nu|v^\nu+w^\nu-\hat{u}^{E, X_E^\nu(t)}|^2+p(\rho^\nu|\hat{\rho}^{X_E^\nu(t)})\big]dx_1dx_2 \leq C(E_0+\nu)+E_1(\nu),
\end{array}
\end{equation}
where $E_1(\nu) \to 0$ as $\nu \to 0$. Set the function $C^*(\nu)$ in the statement of \Cref{thm:intro2} equal to $\exp(-C\nu^{-j})$ for this $C$ and $j$. 
\par Next, given the two end states $\rho_{\pm}$ of the shock $\hat{\rho}$, fix a constant $M >1$ so that
\begin{align*}
    (\min(\rho_-,\rho_+),\max(\rho_-,\rho_+)) \subset (M^{-1},M).
\end{align*}
Then, fix the constant $k_0 >1$ in \Cref{lem:relquantityests} and set
\begin{align*}
    K=\max(3M, k_0).
\end{align*}
Then, let $\overline{\psi}$ be a continuous function defined by
\begin{align*}
    \overline{\psi}(x)=\begin{cases}
        x, & \text{ if } K^{-1} \leq x \leq K, \\
        K^{-1}, & \text{ if } x < K^{-1}, \\
        K, & \text{ if } x > K.
    \end{cases}
\end{align*}
Then set
\begin{align*}
\underline{\rho}^\nu:=\overline{\psi}(\rho^\nu), \ \rho_e^\nu:=\rho^\nu-\underline{\rho}^\nu.
\end{align*}
Note that the truncation $\underline{\rho}^\nu$ will be used in the proof of \eqref{eq:finalstabilityest} later. Now, since
\begin{align*}
|\rho_e^\nu|=[\max\big((k^{-1}-\rho^\nu)_+,(\rho^\nu-k)_+\big)],
\end{align*}
and $M^{-1} < \hat{\rho}^{X^\nu_E} < M$, we use \eqref{eq:rellem1} to have
\begin{align*}
|\rho_e^\nu|^\gamma \leq CF(\rho^\nu|\hat{\rho}^{X^\nu_E}).
\end{align*}
It follows from \eqref{2D-est} that
\begin{align*}
\int_{\R \times \T}|\rho_e^\nu|^\gamma dx_1dx_2 \leq C(E_0+\nu)+E_1(\nu).
\end{align*}
Therefore, $\{\rho_e^\nu\}_{\nu > 0}$ is bounded in $L^\infty([0,T];L^\gamma(\R \times \T))$. Further, since $\{\underline{\rho}^\nu\}_{\nu > 0}$ is bounded in $L^\infty([0,T] \times \R \times \T) \subset L^\infty([0,T];L^\gamma_{\text{loc}}(\R \times \T))$, we obtain
\begin{align}\label{eq:rhonubd}
\{\sqrt{\rho^\nu}\}_{\nu > 0} \text{ is bounded in } L^\infty([0,T];L^{2\gamma}_{\text{loc}}(\R \times \T)).
\end{align}
Therefore, there exists $\sqrt{\rho_\infty}$ such that, up to a subsequence
\begin{align*}
\sqrt{\rho^\nu} \rightharpoonup \sqrt{\rho_\infty} \text{ in } L^\infty([0,T];L^{2\gamma}_{\text{loc}}(\R \times \T)).
\end{align*}
\subsubsection{Convergence of $\sqrt{\rho^\nu}(v^\nu+w^\nu)$ and $\sqrt{\rho^\nu}u^\nu$}
For convenience, let us denote by 
\begin{align}
h^\nu=v^\nu+w^\nu.
\end{align} 
Take the constant $K$ larger if needed so that 
\begin{align*}
    (\min(u_{1,-},u_{1,+}),\min(u_{1,-},u_{1,+})) \subset (\frac{-K}{2},\frac{K}{2}).
\end{align*}
Then set 
\begin{align*}
\overline{\phi}(x)=\begin{cases}
        x, & \text{ if } -K \leq x \leq K, \\
        -K, & \text{ if } x < -K, \\
        K, & \text{ if } x > K.
    \end{cases}
\end{align*}
and 
\begin{align*}
\underline{h}^\nu:=(\overline{\phi}(h_1^\nu), \overline{\phi}(h_2^\nu)), \ h_e^\nu:=h^\nu-\underline{h}^\nu.
\end{align*}
Again, the truncation $\underline{h}^\nu$ will be used in the proof of \eqref{eq:finalstabilityest} later. Now, since
\begin{align*}
|(h_{e}^\nu)_1|=\max\big((-h_1^\nu-K)_+,(h_1^\nu-K)_+\big),
\end{align*}
and $-K < \hat{u}_1^{\nu,X^\nu_E} < K$, we have
\begin{align*}
|h_e^\nu| \leq |h^\nu-\hat{u}^{\nu,X^\nu_E}|.
\end{align*}
It follows from \eqref{2D-controlled} that
\begin{align*}
\int_{\R \times \T}\rho^\nu |h_e^\nu|^2dx_1 dx_2 \leq C(E_0+\nu)+E_1(\nu).
\end{align*}
Therefore, $\{\sqrt{\rho^\nu}h_e^\nu\}_{\nu > 0}$ is bounded in $L^\infty([0,T];L^2(\R \times \T))$. Further, $\{\sqrt{\rho^\nu}\underline{h}^\nu\}_{\nu > 0}$ is bounded in $ L^\infty([0,T];L^{2\gamma}_{\text{loc}}(\R \times \T)) \hookrightarrow L^\infty([0,T];L^2_{\text{loc}}(\R \times \T))$. So, there exists $u_\infty$ so that up to a subsequence
\begin{align*}
\sqrt{\rho^\nu} h^\nu \stackrel{\ast}{\rightharpoonup} \sqrt{\rho_\infty} u_\infty \text{ in } L^\infty([0,T];L^2_{\text{loc}}(\R \times \T)).
\end{align*}
Finally, by \eqref{eq:rhonubd}, $\{\sqrt{\rho}^\nu\}_{\nu > 0}$ is bounded in $L^2_{\text{loc}}([0,T] \times \R \times \T)$. Using the identity \eqref{eq:vandwrelationsonchannel}, this implies $\sqrt{\rho^\nu}w^\nu=2\nu \nabla \sqrt{\rho^\nu}$ converges to $0$ in $H^{-1}_{\text{loc}}([0,T] \times \R \times \T)$. Using the linearity $\sqrt{\rho^\nu}u^\nu=\sqrt{\rho^\nu}v^\nu-\sqrt{\rho^\nu}w^\nu$ gives 
\begin{align*}
\sqrt{\rho^\nu} u^\nu \stackrel{\ast}{\rightharpoonup} \sqrt{\rho_\infty} u_\infty \text{ in } H^{-1}_{\text{loc}}([0,T] \times \R \times \T).
\end{align*}
This proves \eqref{eq:finalinviscidlimits}.

\subsubsection{Convergence of $X^\nu_E$ and proof of \eqref{eq:finalshiftest}}
By \eqref{shift-5}, $\{X^\nu_E\}_{\nu > 0}$ is uniformly bounded in $L^\infty([0,T])$. So, there exists $X_\infty \in L^\infty([0,T])$ so that, up to a subsequence
\begin{align*}
X^\nu_E  \stackrel{\ast}{\rightharpoonup} X_\infty \text{ in $L^\infty([0,T])$}.
\end{align*}
Further, by the weak-* lower semi-continuity of the $L^\infty$ norm, we immediately obtain \eqref{eq:finalshiftest} from \eqref{shift-5}.

\subsubsection{Proof of \eqref{eq:finalstabilityest}} 
For the truncations $\underline{\rho}^\nu, \underline{h}^\nu$, with $K$ fixed, let 
\begin{align*}
L_\nu(t):=\int_{\R \times \T}\eta(\underline{\rho}^\nu, \underline{h}^\nu|\hat{\rho}^{X^\nu_E}, \hat{u}^{X^\nu_E})dx.
\end{align*}
Using the definitions of the truncations together with \eqref{2D-controlled}, we have
\begin{align*}
||L_\nu||_{L^\infty([0,T])} \leq C(E_0+\nu)+E_1(\nu).
\end{align*}
Then, we have the following lemma.
\begin{lemma}\label{lem:shiftchange}
For the fixed constant $K>1$, let 
\begin{align*}
R_\nu(t):=\int_{\R \times \T}\eta(\underline{\rho}^\nu, \underline{h}^\nu|\hat{\rho}^{X_\infty}, \hat{u}^{X_\infty})dx.
\end{align*}
Then,
\begin{align*}
    \limsup_{\nu \to 0}||L_\nu-R_\nu||_{L^\infty([0,T])} \leq \frac{C}{|\rho_--\rho_+|^2}\Big(E_0+\sqrt{E_0}\Big).
\end{align*}
\end{lemma}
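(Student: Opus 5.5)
The plan is to compare the two integrands pointwise in $x_1$. The only difference between $L_\nu(t)$ and $R_\nu(t)$ is the position of the discontinuity of the reference shock: $X^\nu_E(t)$ in one case and $X_\infty(t)$ in the other. Since $\hat{\rho}$ and $\hat{u}$ are piecewise constant with a single jump at $x_1=0$, for fixed $t$ the shifted profiles $(\hat{\rho},\hat{u})(x_1-X^\nu_E(t))$ and $(\hat{\rho},\hat{u})(x_1-X_\infty(t))$ coincide outside the strip
\begin{align*}
S_\nu(t):=\{(x_1,x_2)\in\R\times\T:\ \min(X^\nu_E(t),X_\infty(t))\le x_1\le \max(X^\nu_E(t),X_\infty(t))\}.
\end{align*}
Hence
\begin{align*}
L_\nu(t)-R_\nu(t)=\int_{S_\nu(t)}\Big(\eta(\underline{\rho}^\nu,\underline{h}^\nu|\hat{\rho}^{X^\nu_E},\hat{u}^{X^\nu_E})-\eta(\underline{\rho}^\nu,\underline{h}^\nu|\hat{\rho}^{X_\infty},\hat{u}^{X_\infty})\Big)dx .
\end{align*}
This is exactly where the truncations are needed, since no uniform $L^\infty$ bound on $(\rho^\nu,h^\nu)$ is available.

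By construction, $\underline{\rho}^\nu\in[K^{-1},K]$ and $|\underline{h}^\nu|\le \sqrt2 K$, while the reference states satisfy $\hat{\rho}\in(M^{-1},M)\subset(K^{-1},K)$ and $|\hat{u}|\le K/2$. On the compact set $[K^{-1},K]\times[-K,K]^2\times[K^{-1},K]\times[-K,K]^2$ the function $\eta(\rho,u|r,U)=\rho|u-U|^2/2+F(\rho|r)$ is continuous, so it is bounded by a constant $C(K)$ depending only on $K$, $\gamma$ and the end states. Since $|S_\nu(t)|=|X^\nu_E(t)-X_\infty(t)|$ (the torus has unit measure), this yields
\begin{align*}
|L_\nu(t)-R_\nu(t)|\le 2C(K)\,|X^\nu_E(t)-X_\infty(t)|\quad\text{for a.e. } t\in[0,T].
\end{align*}

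It remains to bound $|X^\nu_E-X_\infty|$ uniformly in $t$. Only weak-$*$ convergence of $X^\nu_E$ is available, so I will not try to show this difference is small. Instead I split it through the linear path $\sigma t$:
\begin{align*}
|X^\nu_E(t)-X_\infty(t)|\le |X^\nu_E(t)-\sigma t|+|X_\infty(t)-\sigma t| .
\end{align*}
The first term is controlled by \eqref{shift-5}, namely $\frac{C}{|\rho_+-\rho_-|^2}\big((E_0+\nu)^{1/2}+E_0+\nu\big)$. The second term is controlled by \eqref{eq:finalshiftest}, which follows from \eqref{shift-5} by weak-$*$ lower semicontinuity of the $L^\infty$ norm, namely $\frac{C}{|\rho_+-\rho_-|^2}(E_0+\sqrt{E_0})$. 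Taking the essential supremum over $t\in[0,T]$ and then $\limsup_{\nu\to0}$ removes the $\nu$ contributions and gives the claim, with $C$ absorbing $C(K)$; this is harmless because $K$ is fixed in terms of the end states.

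The main obstacle is conceptual rather than computational. One cannot hope for $L_\nu-R_\nu\to0$, because the shifts converge only weakly-$*$; the estimate must therefore be routed through $\sigma t$, which is why the bound carries the same $E_0+\sqrt{E_0}$ size as the shift estimate. The technical care needed is that the bound on the strip uses only the truncated variables, so that vacuum regions and large velocities play no role.
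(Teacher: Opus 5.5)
Your proposal is correct and follows essentially the paper's argument. The paper bounds the integrand differences pointwise by $C|\hat{u}^{X^\nu_E}-\hat{u}^{X_\infty}|$ and $C|\hat{\rho}^{X^\nu_E}-\hat{\rho}^{X_\infty}|$, using boundedness of the truncations, and then uses that the $L^1$ norm of the difference of two shifted shocks equals the jump size times $|X^\nu_E-X_\infty|$; this is equivalent to your strip estimate. Your explicit splitting $|X^\nu_E-X_\infty|\le|X^\nu_E-\sigma t|+|X_\infty-\sigma t|$, via \eqref{shift-5} and \eqref{eq:finalshiftest}, spells out a step the paper leaves implicit when it bounds $C\|X^\nu_E-X_\infty\|_{L^\infty([0,T])}$.
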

\begin{proof}
Since $\underline{h}^\nu, \hat{u}$ are bounded, we have
\begin{align*}
\Big||\underline{h}^\nu-\hat{u}^{X^\nu_E}|^2-|\underline{h}^\nu-\hat{u}^{X_\infty}|^2\Big| \leq C|\hat{u}^{X^\nu_E}-\hat{u}^{X_\infty}|.
\end{align*}
Thus, using $||\hat{u}^{X^\nu_E}-\hat{u}^{X_\infty}||_{L^1(\R \times \T)}=|u_{1,-}-u_{1,+}||X^\nu_E-X_\infty|$, we have
\begin{align*}
\int_{\R \times \T} \frac{\underline{\rho}^\nu}{2}\Big||\underline{h}^\nu-\hat{u}^{X^\nu_E}|^2-|\underline{h}^\nu-\hat{u}^{X_\infty}|^2\Big|dx \leq C||X^\nu_E-X_\infty||_{L^\infty([0,T])} \leq \frac{C}{|\rho_--\rho_+|^2}\Big(E_0+\sqrt{E_0}\Big).
\end{align*}
For the second part, again using the fact that $\underline{\rho}^\nu$ and $\hat{\rho}$ are bounded
\begin{align*}
&|F(\underline{\rho}^\nu|\hat{\rho}^{X^\nu_E})-F(\underline{\rho}^\nu|\hat{\rho}^{X_\infty})| \\
&\leq |F(\hat{\rho}^{X^\nu_E})-F(\hat{\rho}^{X_\infty})| \\
&+|F'(\hat{\rho}^{X^\nu_E})||\hat{\rho}^{X^\nu_E}-\hat{\rho}^{X_\infty}| \\
&+(|\underline{\rho}^\nu+|\hat{\rho}^{X_\infty}|)|F'(\hat{\rho}^{X^\nu_E})-F'(\hat{\rho}^{X_\infty})| \\
&\leq C|\hat{\rho}^{X^\nu_E}-\hat{\rho}^{X_\infty}|.
\end{align*}
So, we obtain the result by the same computations as above.
\end{proof}
Now, we consider 
\begin{align*}
&\int_{\R \times \T}\eta(\rho^\nu, h^\nu|\hat{\rho}^{X_\infty}, \hat{u}^{X_\infty})dx \\
&=\int_{\rho^\nu \in [K^{-1},K] \cap h^\nu \in [-K,K]} \underline{\rho}^\nu\frac{|\underline{h}^\nu-\hat{u}^{X_\infty}|^2}{2}dx \\
&+\int_{\rho^\nu \not \in [K^{-1},K] \cup h^\nu \not \in [-K,K]} \rho^\nu\frac{|h^\nu-\hat{u}^{X_\infty}|^2}{2}dx \\
&+\int_{\rho^\nu \in [K^{-1},K]}F(\underline{\rho}^\nu|\hat{\rho}^{X_\infty})dx \\
&+\int_{\rho^\nu \not \in [K^{-1},K]}F(\rho^\nu|\hat{\rho}^{X_\infty})dx \\
&=:\sum_{i=1}^4 J_i.
\end{align*}
Note that 
\begin{align*}
J_1+J_3 \leq R_\nu=(R_\nu-L_\nu)+L_\nu \leq (R_\nu-L_\nu)+C(E_0+\nu)+E_1(\nu).
\end{align*}
For $J_2$, since $\hat{u} \in [\frac{-K}{2}, \frac{K}{2}]$, we have
\begin{align*}
|h^\nu-\hat{u}^{X_\infty}| \leq 3 |h^\nu-\hat{u}^{X^\nu_E}| \text{ when } h^\nu \not \in [-K, K].
\end{align*}
So, using \eqref{2D-controlled}
\begin{align*}
J_2 \leq C(E_0+\nu+E_1(\nu)).
\end{align*}
For $J_4$, as $\hat{\rho} \in [M^{-1},M]$, recalling that $K \geq k_0$ and using \eqref{eq:rellem2} gives
\begin{align*}
J_4 \leq C(E_0+\nu+E_1(\nu)).
\end{align*}
In total, we obtain 
\begin{align}\label{eq:2Dreadyforlimit}
\int_{\R \times \T}\Big(\rho^{\nu}\frac{|h^\nu-\hat{u}^{X_\infty}|^2}{2}+F(\rho^\nu|\hat{\rho}^{X_\infty})\Big)dx \leq |R_\nu-L_\nu|+C(E_0+\nu+E_1(\nu)).
\end{align}
It remains to take the limit $\nu \to 0$ in \eqref{eq:2Dreadyforlimit}. By the weak lower semi-continuity of the $L^2$-norm, we may take the limit $\nu \to 0$ in the first term. Further, as $x \mapsto F(x|\hat{\rho}^{X_\infty})$ is convex, the second term is weakly lower semi-continuous as well. So, we have
\begin{align*}
&\int_{\R \times \T}\Big(\rho_\infty \frac{|u_\infty-\hat{u}^{X_\infty}|^2}{2}+F(\rho_\infty|\hat{\rho}^{X_\infty})\Big)dx \\
&\leq \liminf_{\nu \to 0}\int_{\R \times \T}\Big(\rho^{\nu}\frac{|h^\nu-\hat{u}^{X_\infty}|^2}{2}+F(\rho^\nu|\hat{\rho}^{X_\infty})\Big)dx \\
& \leq \frac{C}{|\rho_--\rho_+|^2}\Big(E_0+\sqrt{E_0}\Big),
\end{align*}
using \Cref{lem:shiftchange} and $E_1(\nu) \to 0$ as $\nu \to 0$, giving exactly \eqref{eq:finalstabilityest}. This completes the proof of \Cref{thm:intro2}.

\section{AI acknowledgement Statement}
A completed previous version of this manuscript was given to GPT-6.1 Sol to check for typos and expository imprecision. Afterwards, alongside fixing some minor typos, we added the statement of \Cref{lem:appclem} and the proof (in Appendix C) to clarify the exposition and give more details.

\appendix 
\section{The a priori $\frac{1}{2}$-entropy estimate}\label{sec:halfentropyestimatedetails}
\renewcommand{\theequation}{A.\arabic{equation}}
\setcounter{equation}{0}
In this section, we show how to obtain the $\frac{1}{2}$-entropy inequality \eqref{eq:1/2entropyestimate} for smooth solutions, slightly modifying the a priori estimates proved in \cite[Appendix A]{MR3801754}. Note that existence of weak solutions to \eqref{eq:2DQNS_T_R} has been proved in \cite{MR3490496}. The solution they construct is actually a renormalized solution \cite[Theorem 2.1]{MR3801754}. Their construction can be slightly modified to also obtain at $\frac{1}{2}$-entropy solution by proving the inequality \eqref{eq:1/2entropyestimate} at the level of the Faedo-Galerkin approximation for $\eps_0, \eps_1, \eps_2 >0$ and passing it into the limit. 
\par Consider a smooth solution of \eqref{eq:2DQNS_T_R}. Then, we have the following quantities
\begin{align*}
\mathcal{E}(\rho, u)&=\int_{\T_R}\big(\rho\frac{|u|^2}{2}+F(\rho)+2\eps_2|\nabla \sqrt{\rho}|^2\big)dx, \\
\mathcal{D}_{\mathcal{E}}(u)&=2\nu \int_{\T_R}|\mathcal{S}(u)|^2dx, \\
\mathcal{E}_{BD}(\rho, u)&=\int_{\T_R}\big(\rho \frac{|u+2\nu \nabla \ln(\rho)|^2}{2}+F(\rho)+2\eps_2|\nabla \sqrt{\rho}|^2\big)dx, \\
\mathcal{D}_{BD}(\rho, u)&=\nu \int_{\T_R}\big(\frac{8}{\gamma}|\nabla \rho^{\frac{\gamma}{2}}|^2+\eps_2 \rho|\nabla^2 \ln(\rho)|^2+2|\mathcal{A}(u)|^2\big)dx.
\end{align*}
Multiplying the continuity equation \eqref{eq:2DQNS_T_R}$_1$ by $\frac{\gamma \rho^{\gamma-1}}{\gamma-1}$ and the second equation by $u$, and integrating in space the sum of these quantities gives
\begin{align}\label{eq:appendixAenergy}
\partial_t\mathcal{E}(\rho, u)+\mathcal{D}_{\mathcal{E}}(u)+\eps_0\int_{\T_R}|u|^2dx+\eps_1\int_{\T_R}\rho|u|^4dx=0.
\end{align}
The equation on $k:=u+2\nu \nabla \ln(\rho)$ is 
\begin{align}\label{eq:bdeqn}
\partial_t(\rho k)+{\rm div}(\rho u \otimes k)+\nabla \rho^\gamma-2{\rm div}(\nu \rho \mathbb{A}u+\kappa \rho \mathbb{S}_{\eps_2})=-\eps_0u-\eps_1\rho|u|^2u.
\end{align}
Multiplying the equation \eqref{eq:bdeqn} by $k$, adding the continuity equation multiplied by $\frac{\gamma \rho^{\gamma-1}}{\gamma-1}+\eps_0 \nu$, and integrating in space gives:
\begin{align*}
&\partial_t\Big(\mathcal{E}_{BD}(\rho, u)+2\eps_0 \nu\int_{\T_R}(\rho-\ln(\rho))dx\Big)+\mathcal{D}_{BD}(\rho,u)+\eps_0\int_{\T_R}|u|^2dx+\eps_1\int_{\T_R}\rho|u|^4 \\
&=\eps_1\nu\int_{\T_R}2\sqrt{\rho}|u|^2u\cdot \nabla \sqrt{\rho}dx.
\end{align*}
Then, we estimate
\begin{align*}
|\eps_1\nu\int_{\T_R}2\sqrt{\rho}|u|^2u\cdot \nabla \sqrt{\rho}dx| \leq 3\nu \eps_1\int_{\T_R}\sqrt{\rho}|u|^2\sqrt{\rho}|\nabla u|dx.
\end{align*}
Using Young's inequality gives:
\begin{align}\label{eq:appendixABDentropy}
&\partial_t\Big(\mathcal{E}_{BD}(\rho, u)+2\eps_0 \nu\int_{\T_R}(\rho-\ln(\rho))dx\Big)+\mathcal{D}_{BD}(\rho,u)+\eps_0\int_{\T_R}|u|^2dx+(\eps_1-2\zeta)\int_{\T_R}\rho|u|^4 \\
&-2\zeta \int_{\T_R}|\mathcal{T}(u)|^2dx \leq 0,
\end{align}
where $\zeta$ is defined in $\eqref{eq:zetadef}$. Finally, defining $v=u+\nu \nabla \ln(\rho)$ and $w=\nu \nabla \ln(\rho)$, we note the identity
\begin{align*}
\frac{1}{2}|k|^2+\frac{1}{2}|u|^2=|v|^2+|w|^2.
\end{align*}
So, adding $\frac{1}{2}$ times \eqref{eq:appendixAenergy} and $\frac{1}{2}$ times \eqref{eq:appendixABDentropy} and integrating in time gives exactly \eqref{eq:1/2entropyestimate}.

\section{Proof of \Cref{prop:1dexist+bds}}\label{sec:appendix1dexistence}
	\renewcommand{\theequation}{B.\arabic{equation}}
\setcounter{equation}{0}
In this section, we prove \Cref{prop:1dexist+bds}. We work with the planar strong solution to the system 
\begin{equation}\label{eq:2D1DNS_T_R}
\left\{
\begin{array}{ll}
    (\bar \rho_R)_t +(\bar \rho_R \bar u_{R,1})_{x_1}=0,  \indent (t,x_1,x_2) \in \R^+ \times [-R,2R]_{\text{periodic}} \times \T, \\[2mm]
     (\bar\rho_R \bar u_{R,1})_t+(\bar\rho_R \bar u_{R,1}^2)_{x_1}+p(\bar\rho_R)_{x_1}=(2 \nu \bar\rho_R (\bar u_{R,1})_{x_1})_{x_1}, \\[2mm]
     (\bar\rho_R \bar u_{R,2})_t+(\bar\rho_R \bar u_{R,1}\bar u_{R,2})_{x_1}=(\nu \bar\rho_R (\bar u_{R,2})_{x_1})_{x_1},\\[2mm]
     (\bar\rho_R(0,x_1), (\bar\rho_R \bar u_R)(0,x_1))=((\bar \rho_R)_0, (\bar m_R)_0)(x_1).
\end{array}
\right.
\end{equation}
The initial value $((\bar \rho_R)_0, (\bar m_R)_0)$ verifies (uniformly in $R$):
\begin{alignat}{3}\label{eq:appendixinitdatabds}
&||((\bar \rho_R)_0, (\bar u_R)_0)-(\tilde{\rho}_R, \tilde{u}_R)||_{H^6(\T_R)} \leq 2\delta_3, \\
&|\partial_{x_1}(\bar u_{R,1})_{0}| \leq \frac{1}{2\nu}((\bar \rho_R)_0)^{\gamma-1}, \\
&(\bar \rho_R)_0 \geq c_0 > 0.
\end{alignat}
The global existence and smoothness for solutions such a system on fixed domains $\T \times \T$ and $\R \times \T$ with initial data verifying these bounds is a straightforward generalization of previous works (see \cite{MR2368905}, \cite{MR4049919}, \cite{MR4113341}) and clearly holds on $[-R,2R]_{\text{periodic}} \times \T$, but we redo the estimates here for completeness to show the constant $C(T,\nu, \delta_3, c_0)$ need not depend on $R$ if the initial data verifies the bounds \eqref{eq:appendixinitdatabds} uniformly in $R$. In particular, we are careful not to use any Poincar\'e inequality on $\T_R$ or bounds involving the $L^2$-norm of $(\bar \rho_R, \bar u_R)$. For brevity, we drop the $x_2$ direction, the bar notation, and the parameter $R$ and instead work with the following system
\begin{equation}\label{eq:1DNS_T_R}
\left\{
\begin{array}{ll}
    \rho_t +(\rho u_1)_{x}=0,  \indent (t,x) \in \R^+ \times [-R,2R]_{\text{periodic}}, \\[2mm]
     (\rho u_1)_t+(\rho u_1^2)_{x}+p(\rho)_{x}=(2 \nu \rho u_{1x})_{x}, \\[2mm]
     (\rho u_2)_t+(\rho u_1u_2)_{x}=(\nu \rho u_{2x})_{x},\\[2mm]
     (\rho(0,x), (\rho u)(0,x))=(\rho_0, m_0)(x),
\end{array}
\right.
\end{equation}
where $u(t,x)=\begin{bmatrix}
u_1(t,x) \\
u_2(t,x) 
\end{bmatrix}$. Note that the equation for $u_2$ in \eqref{eq:1DNS_T_R} is decoupled from the equations for $\rho$ and $u_1$. So the system \eqref{eq:1DNS_T_R} can mostly be treated as the usual $1$-D compressible Navier-Stokes equations.
\begin{notation}
For brevity, in the section we will use the following notation.
\begin{alignat*}{6}
    &U_0=(\rho_0, u_0), \\
    &U=(\rho, u), \\
    &\Pi:=[-R,2R]_{\text{periodic}}, \\
    &\tilde{S}=(\tilde{\rho}, \tilde{u}):=(\tilde{\rho}_R, \tilde{u}_R), \\
    &L^p_tL^q_x:=L^p([0,T];L^q(\Pi)) \text{ for $1 \leq p,q \leq \infty$,} \\
    &L^p_tH^q_x:=L^p([0,T];H^q(\Pi)) \text{ for $1 \leq p,q \leq \infty$.}
\end{alignat*}
We let $j$ denote an arbitrary positive integer that may vary line by line, that depends only on $\gamma$. Finally, we let $C$ denote an arbitrary constant that may vary line by line. The constant $C$ may depend on $\delta_3, c_0, ||(\tilde{\rho}, \tilde{u})||_{W^{1,\infty}(\Pi)}, T, \& \ \nu$, but not $R$.
\end{notation}
We will consider initial data $(\rho_0, u_0)$ verifying (by \eqref{eq:appendixinitdatabds}),
\begin{align}\label{eq:uniforminR1dinitdatabds}
&||(\rho_0, u_0)-(\tilde{\rho}, \tilde{u})||_{H^6(\Pi)} \leq 2 \delta_3, \\
&|\partial_{x}(u_1)_0| \leq \frac{1}{2\nu}(\rho^0)^{\gamma-1}, \\
&\rho_0(x) \geq c_0 > 0.
\end{align}
\Cref{prop:1dexist+bds} follows immediately from the following proposition for \eqref{eq:1DNS_T_R}. 
\begin{proposition}\label{prop:1dexist+bdsappendix}
Let $(\rho_0, u_0)$ verify the bounds \eqref{eq:uniforminR1dinitdatabds}. Let $T>0$ be fixed. Then, there exists a strong solution $(\rho, u)$ to $\eqref{eq:1DNS_T_R}$, where $u=\begin{bmatrix}
u_1 \\
u_2
\end{bmatrix}$. Further, there exists a constant $C > 0$ such that
\begin{align}\label{eq:1dbds}
&\inf_{(t,x) \in [0,T] \times \Pi}\rho^{-1}(t,x)+||(\rho, u)-(\tilde{\rho}, \tilde{u})||_{L^\infty([0,T];H^6(\Pi))} \\ 
&+||u-\tilde{u}||_{L^2([0,T];H^7(\Pi))} \leq C.
\end{align}
The constant $C$ does not depend on $R$.
\end{proposition}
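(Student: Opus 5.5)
We plan to follow the classical one-dimensional strategy for degenerate-viscosity Navier--Stokes (as in \cite{MR2368905}, \cite{MR4049919}), but we will phrase every estimate relative to the reference profile $\tilde S$. We will never invoke Poincar\'e's inequality on $\Pi$ and never use $L^2$ norms of $(\rho,u)$ itself. This way all constants depend on $T,\nu,\delta_3,c_0$ and $\|\tilde S\|_{W^{k,\infty}}$, which are uniform in $R$ because $\tilde S$ is constant except on $O(1)$-length transition layers and has derivatives of size $O(1/R)$ on the cutoff zones. Local existence of a smooth solution with $\rho>0$ is standard. It therefore suffices to prove the a priori bound \eqref{eq:1dbds} on $[0,T]$ and then continue the solution. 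Since the $u_2$ equation is linear in $u_2$ once $(\rho,u_1)$ is known, we will first treat $(\rho,u_1)$ and only afterwards $u_2$.

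\textbf{Step 1 (relative energy and BD entropy).} We multiply the system by $(u-\tilde u)$ and by $F'(\rho)-F'(\tilde\rho)$. Because $\tilde S$ does not solve the equation, this produces source terms involving $\tilde u_x$ and $\tilde\rho_x$. These sources are supported where $\tilde S$ varies and are bounded by $C(1+\text{relative energy})$, using \eqref{eq:controlbyrelent}, and Gronwall's lemma closes the estimate. We will do the same for the relative BD entropy, obtained by testing the equation for $u_1+2\nu(\ln\rho)_x$ against itself minus $\tilde u_1+2\nu(\ln\tilde\rho)_x$. This gives uniform-in-$R$ bounds on $\int\rho|u-\tilde u|^2$, on $\int F(\rho|\tilde\rho)$, on $\nu^2\int|(\sqrt\rho)_x|^2$, and on the dissipations $\int\!\!\int\rho u_{1x}^2$ and $\int\!\!\int|(\rho^{\gamma/2})_x|^2$.

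\textbf{Step 2 (pointwise bounds on $\rho$; the main obstacle).} We expect this to be the hardest step. For the upper bound, we will use the 1D embedding on windows of length one: $|\sqrt\rho-\sqrt{\tilde\rho}|$ is controlled through the local $H^1$ norm obtained in Step 1, and the local $L^2$ bound on $\sqrt\rho-\sqrt{\tilde\rho}$ comes from $F(\rho|\tilde\rho)$, so no global mass is needed. The lower bound is the delicate part. We will use the effective-velocity (Haspot/Constantin--Drivas--Nguyen--Pasqualotto type) argument, for which the initial hypothesis $|\partial_x(u_1)_0|\le \frac1{2\nu}\rho_0^{\gamma-1}$ is designed. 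We set $\theta=2\nu\rho u_{1x}-p(\rho)$, or equivalently work in Lagrangian form $w=\nu\rho^{-\gamma}u_{1x}$. This quantity satisfies a parabolic equation with a favorable maximum principle, which propagates a bound of the form $|u_{1x}|\lesssim \rho^{\gamma-1}+C$. Inserting this into the continuity equation along particle paths, $\frac{d}{dt}\ln\rho=-u_{1x}$, then gives $\rho\ge C^{-1}$ on $[0,T]$. If the maximum principle turns out to need a bounded-domain argument, we will instead apply it on $\Pi$ directly, which is compact for each fixed $R$ and whose bounds are $R$-independent, since they use only sup norms.

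\textbf{Step 3 (higher regularity and $u_2$).} Once $C^{-1}\le\rho\le C$, the system is uniformly parabolic in $u$. We will differentiate $k=1,\dots,6$ times in $x$, test by $\partial_x^k(u-\tilde u)$ and $\partial_x^k(\rho-\tilde\rho)$, and exploit the BD structure, that is, the equation for $(\ln\rho)_x$ that follows from the effective velocity, to recover derivative control of $\rho$ from the parabolic gain in $u$. All commutators are handled by Gagliardo--Nirenberg inequalities on windows, together with the $L^\infty$ bounds already obtained. This yields the $L^\infty_tH^6$ bound on $(\rho,u_1)-(\tilde\rho,\tilde u_1)$ and the $L^2_tH^7$ bound on $u_1-\tilde u_1$ by induction and Gronwall, with constants that do not depend on $R$. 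Finally, $u_2$ solves a linear parabolic equation $\rho(u_{2t}+u_1u_{2x})=(\nu\rho u_{2x})_x$ with smooth coefficients bounded above and below. The same energy hierarchy applied to $u_2-\tilde u_2$ gives its bounds and completes \eqref{eq:1dbds}.
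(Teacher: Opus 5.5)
Your plan is correct and has the same architecture as the paper's proof: relative energy and relative BD entropy closed by Gronwall, two-sided pointwise bounds on $\rho$, an $H^k$ hierarchy by induction, and $u_2$ treated last as a linear parabolic problem. Several ingredients are obtained differently, and one claim needs sharpening.

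\textbf{Lower bound on $\rho$.} The paper does not reprove it. It imports existence and the explicit bound $\rho^{1-\gamma}\le\rho_0^{1-\gamma}+\frac{T}{\nu}(\gamma-1)$ from \cite[Theorem 1.6]{MR4049919}, and only checks that the bound does not depend on the period. You instead reprove it by a maximum principle for $\theta=2\nu\rho u_{1x}-p(\rho)$, which is the paper's active potential $w$ and the mechanism behind that theorem.

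Be precise about what this gives. In Lagrangian variables $\theta=2\nu v^{-2}u_{1y}-v^{-\gamma}$ solves $\theta_t=2\nu v^{-2}\theta_{yy}-\frac{v}{\nu}\big(\theta+v^{-\gamma}\big)\big(\theta+(1-\frac{\gamma}{2})v^{-\gamma}\big)$. The reaction term is favorable at a positive maximum exactly because $\gamma\le 2$. So what propagates from the hypothesis on $\partial_x(u_1)_0$ is only $\theta\le 0$, i.e.\ $u_{1x}\le\frac{1}{2\nu}\rho^{\gamma-1}$.

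No lower bound on $u_{1x}$ follows, since below $-v^{-\gamma}$ the reaction drives $\theta$ further down. Replace your $|u_{1x}|\lesssim\rho^{\gamma-1}+C$ by this one-sided bound. It is all the particle-path argument uses, and it recovers the paper's $\underline{\kappa}$. Also, $\nu\rho^{-\gamma}u_{1x}$ is not an equivalent variable; work with $\theta$ itself.

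\textbf{Upper bound and higher regularity.} For the upper bound, the paper bounds $\rho^{(\gamma-1)/2}$ in $W^{1,1}$ on windows of length $2$, which uses $\underline{\kappa}$ to control $\int\rho^{-3}\rho_x^2$. Your $H^1(\Pi)$ bound on $\sqrt\rho-\sqrt{\tilde\rho}$ is cleaner and does not depend on the lower bound.

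From the $H^2$ level on, the paper (following \cite{MR4049919,MR4113341}) estimates $\partial_x^k\rho$ jointly with $\partial_x^{k-1}w$, and recovers $\partial_x^{k+1}u_1$ from $\partial_x^kw$ and $\partial_x^k\rho$. Your direct hierarchy also closes once $C^{-1}\le\rho\le C$. The terms carrying $\partial_x^{k+1}\rho$, from the pressure and from $2\nu(\ln\rho)_xu_{1x}$, are moved onto $\partial_x^{k+1}(u_1-\tilde u_1)$ by one integration by parts. The term $\int\rho\,\partial_x^{k+1}u_1\,\partial_x^k\rho$ is absorbed by the viscous dissipation. The active-potential route is the more robust one for general $\mu(\rho)$; yours is more elementary in this Bresch--Desjardins case.

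\textbf{Cutoff zones.} Your remark that $\tilde u_{R,1}$ also varies, with slope $O(1/R)$, on cutoff zones of length $\sim R$ is correct. The paper's Step 1 glosses over this by writing the source integrals over $[-1,1]$. Those contributions remain uniformly bounded, e.g.\ $R^{-2}\int_{\mathrm{zone}}\rho\lesssim R^{-1}+R^{-2}\int_\Pi F(\rho|\tilde\rho)$.
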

Let us begin the proof. Firstly, by Theorem 1.6 in \cite{MR4049919}, there exists a global strong solution $(\rho, u_1)$ to the first two equations of \eqref{eq:1DNS_T_R}, 
with lower bound on the density (uniform in $R$)
\begin{align}\label{eq:densitylowerbd}
\underline{\kappa}:=\inf_{x \in \Pi}\rho(t,x) \geq \left([\inf_{x \in \Pi}\rho_0(x)]^{1-\gamma}+\frac{T}{\nu}(\gamma-1)\right)^{\frac{-1}{\gamma-1}}.
\end{align}
Further, notice that once $(\rho, u_1)$ have been shown to exist, we see that $u_2$ verifies the linear parabolic equation
\begin{align}\label{eq:u2eq}
(u_2)_t=((\nu \log(\rho))_x-u_1)(u_2)_x+\nu (u_2)_{xx}.
\end{align}
By standard theory for linear parabolic equations, there exists a local solution $u_2$, which easily extends to a global solution once the bound \eqref{eq:1dbds} has been established. So it suffices to prove \eqref{eq:1dbds}.
\par Define the following quantities:
\begin{align}
    \eta(U|\tilde{S}):=\frac{\rho|u_1-\tilde{u}_1|^2}{2}+p(\rho|\tilde{\rho}), \\
    \eta_{BD}(U|\tilde{S}):=\frac{\rho|u_1-\tilde{u}_1+\phi(\rho)_x|^2}{2}, 
\end{align}
where $p(\rho|\tilde{\rho})$ is the relative pressure:
\begin{align*}
p(\rho|\tilde{\rho}):=\frac{1}{\gamma-1}\rho^\gamma-\frac{1}{\gamma-1}\tilde{\rho}^\gamma-\frac{\gamma}{\gamma-1}\tilde{\rho}^{\gamma-1}(\rho-\tilde{\rho}),
\end{align*}
and $\phi$ verifies
\begin{align*}
    \phi'(\rho)=\frac{\mu(\rho)}{\rho^2}, \text{ for } \mu(\rho)=2\nu \rho.
\end{align*}
Finally, note that by \eqref{eq:uniforminR1dinitdatabds}, we have bounds (uniform in $R$)
\begin{align*}
\int_{\Pi}\eta(U_0|\tilde{S})dx \leq C, \\
\int_{\Pi}\eta_{BD}(U_0|\tilde{S})dx \leq C.
\end{align*}
The first step is to obtain the relative energy/BD-entropy inequalities.
\par \underline{\bf{Step 1: Energy and BD-entropy inequality}.}
Firstly, we compute the time evolution of the relative entropy. Following the proof of Lemma 3.1 in \cite{MR2368905}, we obtain
\begin{align*}
&\frac{d}{dt}\int_{\Pi}\eta(U|\tilde{S})dx+\int_{\Pi}\mu(\rho)|(u_1)_x|^2dx \\
&\leq -\int_{-1}^1\nabla ^2\eta(\tilde{S})A(\tilde{S})_x((\rho,u_1)-(\tilde{\rho}, \tilde{u}_1))dx+\int_{-1}^1(\tilde{u}_1)_x\mu(\rho)(u_1)_xdx\\
&\quad -\int_{-1}^1(\nabla \eta(\tilde{S}))_xA(U|\tilde{S})dx -\int_{-1}^1D( \eta(\tilde{S}))_xA(\tilde{S})dx,
\end{align*}
where 
\begin{align*}
A(U)&=\begin{bmatrix}
\rho u_1 \\
\rho u_1^2+\rho^\gamma
\end{bmatrix}, \\
A(U|\tilde{S})&=\begin{bmatrix}
0 \\ \rho(u_1-\tilde{u}_1)^2+(\gamma-1)p(\rho|\tilde{\rho}).
\end{bmatrix}.
\end{align*}
We bound each term as follows. For the first,
\begin{align*}
&-\int_{-1}^1\nabla ^2\eta(\tilde{S})A(\tilde{S})_x((\rho,u_1)-(\tilde{\rho}, \tilde{u}_1))dx \leq C\int_{-1}^1|U-\tilde{S}|dx \\
&\qquad \leq \int_{-1}^1|\rho-\tilde{\rho}|dx+\int_{-1}^1\rho|(\rho,u_1)-(\tilde{\rho}, \tilde{u}_1)|dx+\int_{-1}^1|\tilde{u}_1(\rho-\tilde{\rho})|dx \\
& \qquad \leq C\int_{-1}^1(1+p(\rho|\tilde{\rho}))dx+\big(\int_{-1}^1\rho dx\big)^{\frac{1}{2}}\big(\int_{-1}^1 \rho (u_1-\tilde{u}_1)^2dx\big)^{\frac{1}{2}} \\
&\qquad  \leq C\int_{-1}^1(1+p(\rho|\tilde{\rho}))dx+\big(\int_{-1}^1(1+p(\rho|\tilde{\rho})) dx\big)^{\frac{1}{2}}\big(\int_{-1}^1 \eta(U|\tilde{S})dx\big)^{\frac{1}{2}} \\
& \qquad \leq C+C\int_{\Pi}\eta(U|\tilde{S})dx, 
\end{align*}
where we have used \cite[(3.3)]{MR2368905} for the third and fourth inequalities. For the second, we compute
\begin{align*}
\int_{-1}^1(\tilde{u}_1)_x\mu(\rho)(u_1)_xdx &\leq \frac{1}{2}\int_{-1}^1 \mu(\rho)|(u_1)_x|^2dx+C\int_{-1}^1\rho dx \\
&\leq \frac{1}{2}\int_{-1}^1 \mu(\rho)|(u_1)_x|^2dx+C+C\int_{\Pi}\eta(U|\tilde{S})dx.
\end{align*}
For the third:
\begin{align*}
-\int_{-1}^1(D\eta(\tilde{S}))_xA(U|\tilde{S})dx &\leq C\int_{-1}^1A(U|\tilde{S})dx \\
&\leq C\int_{\Pi}\eta(U|\tilde{S})dx,
\end{align*}
as $|A(U|\tilde{S})| \leq \max(1, (\gamma-1))\eta(U|\tilde{S})$. Finally,
\begin{align*}
\int_{-1}^1\nabla ( \eta(\tilde{S}))_xA(\tilde{S})dx \leq C.
\end{align*}
Altogether, we obtain
\begin{align}\label{eq:energyinequality}
\ds{\frac{d}{dt}}\int_{\Pi}\eta(U|\tilde{S})dx+\frac{1}{2}\int_{\Pi}\mu(\rho)|(u_1)_x|^2dx & \leq C+C\int_{\Pi}\eta(U|\tilde{S})dx.
\end{align}
Next, we compute the time evolution of the relative BD entropy. By in \cite[(3.17)]{MR2368905}, we have
\begin{align*}
\ds{\frac{d}{dt}}\Big(\int_{\Pi}&\rho(u_1-\tilde{u}_1)\phi(\rho)_x+\frac{\rho|\phi(\rho)_x|^2}{2}dx\Big)+\int_{\Pi}\phi(\rho)_x(\rho^\gamma)_xdx \\
&=\int_{\Pi}\mu(\rho)((u_1)_x)^2dx-\int_{\Pi} \mu(\rho)(u_1)_x(\tilde{u}_1)_xdx-\int_{\Pi}\rho  \phi(\rho)_xu_1(\tilde{u}_1)_xdx.
\end{align*}
The first term on the right-hand side may be controlled by the dissipation in \eqref{eq:energyinequality} (up to a constant). For the second, we compute
\begin{align*}
-\int_{\Pi} \mu(\rho)(u_1)_x(\tilde{u}_1)_xdx &\leq C\int_{\Pi}\mu(\rho)(u_1)_x^2dx+C\int_{\Pi}\rho((\tilde{u}_1)_x)^2dx\\
& \leq C\int_{\Pi}\mu(\rho)(u_1)_x^2dx+C\int_{-1}^1 (1+p(\rho|\tilde{\rho})dx \\
& \leq C\int_{\Pi}\mu(\rho)(u_1)_x^2dx+C+C\int_{\Pi}\eta(U|\tilde{S})dx.
\end{align*}
For the last, we compute 
\begin{align*}
&-\int_{\Pi}\rho  \phi(\rho)_xu_1(\tilde{u}_1)_xdx \leq C\int_{-1}^1\rho|\phi(\rho)_x|^2dx+C\int_{1}^1 \rho u_1^2dx \\
& \qquad \leq C\int_{-1}^1\rho(\phi(\rho)_x-(u_1-\tilde{u}_1))^2dx+C\int_{-1}^1\rho(u_1-\tilde{u}_1)^2dx+C\int_{-1}^1\rho \tilde{u}_1^2dx \\
&\qquad \leq C+C\int_{\Pi}\eta_{BD}(U|\tilde{S})dx+C\int_{\Pi}\eta(U|\tilde{S})dx.
\end{align*}
Altogether, we obtain
\begin{align}\label{eq:BDentropyinequality}
\ds{\frac{d}{dt}}\Big(\int_{\Pi}&\rho(u_1-\tilde{u}_1)\phi(\rho)_x+\frac{\rho|\phi(\rho)_x|^2}{2}dx\Big)+\int_{\Pi}\phi(\rho)_x(\rho^\gamma)_xdx \\ 
&\leq C+C\int_{\Pi}\mu(\rho)(u_1)_x^2dx+C\int_\Pi\eta(U|\tilde{S})dx+\int_{\Pi}\eta_{BD}(U|\tilde{S})dx.
\end{align}
Adding a constant multiple of \eqref{eq:energyinequality} to \eqref{eq:BDentropyinequality} and using Gronwall, we obtain
\begin{align}\label{eq:0orderestimate}
\int_{\Pi}\eta(U|\tilde{S})dx(t)+\int_{\Pi}\eta_{BD}(U|\tilde{S})dx(t)+\int_0^t\int_{\Pi} \mu(\rho)(u_1)_x^2dxd\tau +\int_0^t\int_{\Pi}\phi(\rho)_x(\rho^\gamma)_xdxd\tau \leq C.
\end{align}
\begin{remark}
At this point, it may be obtained from the above computations that the constant $C$ in \eqref{eq:0orderestimate} is independent of $\nu$. 
\end{remark}
\underline{\textbf{Step 2: Upper and lower bounds on the density}.} In this step, we obtain upper and lower bounds on $\rho(t,x)$ uniform in $R$ (but not $T$). We note that the uniform lower bound $\underline{\kappa}$ (see \eqref{eq:densitylowerbd}) is already obtained in \cite{MR4049919} (it is not stated explicitly therein but it is easy to check that the constant does not depend on the width of the torus). So we need only obtain an upper bound. We follow the same strategy as \cite[Proposition 4.5]{MR2368905}.
\par Let $s=\frac{\gamma-1}{2}$. Then, for $K \subset \subset \Pi$
\begin{align*}
\int_K(\rho^s)_xdx&=s\int_K|\rho^{s-1}\rho_x|dx \\
& \leq \big(\int_K \rho^\gamma dx\big)^{\frac{1}{2}}\big(\int_K\frac{1}{\rho^3}\rho_x^2dx\big)^{\frac{1}{2}} \\
& \leq \frac{C}{\nu \underline{\kappa}}\big(\int_K 1+p(\rho|\tilde{\rho}) dx\big)^{\frac{1}{2}}\big(\int_K\rho(\frac{\mu(\rho)}{\rho^2})^2\rho_x^2dx\big)^{\frac{1}{2}} \\
& \leq \frac{C}{\nu \underline{\kappa}}\big(|K|+1\big)^\frac{1}{2},
\end{align*}
by \eqref{eq:0orderestimate}. Further
\begin{align*}
\int_{K}\rho^sdx \leq \int_K 1+\rho^\gamma dx \leq C(|K|+1),
\end{align*}
by \eqref{eq:0orderestimate}. It follows that $\rho^s \in L^\infty([0,T];W^{1,1}_{\text{loc}}(\Pi))$, where the $W^{1,1}(K)$ norm of $\rho^s(t,\cdot)$ depends only on $|K|$ (crucially, it does not depend on $R$). Then, fix $(t,x) \in [0,T] \times \Pi$. We wish to estimate $\rho^s(t,x)$. Let $K:=[x-1,x+1]$. Then, uniformly in $R$, by Markov's inequality we have
\begin{align*}
|\{\rho^s(t,x) \geq M, x \in K\}| \leq \frac{||\rho^s(t,\cdot)||_{L^1(K)}}{M}.
\end{align*}
Choosing $M \geq 2||\rho^s||_{L^\infty([0,T];L^1(K))}$, we obtain
\begin{align*}
|\{\rho^s(t,x) \geq M, x \in K\}| \leq \frac{1}{2}.
\end{align*}
So, there exists $y \in K$ so that $\rho^s(t,y) \leq M$. Then, 
\begin{align*}
|\rho^s(t,x)| \leq |\rho^s(t,y)|+||(\rho^s)_x||_{L^1(K)}.
\end{align*}
This implies that we have an upper bound on the density uniformly in $R$
\begin{align}\label{eq:densityupperbd}
\rho^s(t,x) \leq \frac{C}{\nu \underline{\kappa}} \leq C.
\end{align}
\begin{remark}\label{rem:densityupperbdscaling}
At this point, it may be computed that the constant C in \eqref{eq:densityupperbd} scales in $\nu$ like $\frac{C}{\nu^j}$ for some integer $j>0$.
\end{remark}
\underline{\textbf{Step 3: $H^1$-estimate}.}
In this step, we obtain the estimate
\begin{align}\label{eq:1orderestimate}
||\rho-\tilde{\rho}||_{L^\infty_tH^1_x}+||u_1-\tilde{u}_1||_{L^\infty_tH^1_x}+||u_1-\tilde{u}_1||_{L^2_tH^2_x} \leq C.
\end{align}
Firstly, as both $\rho,\tilde{\rho}$ are bounded from above and below, we compute
\begin{align*}
\int_\Pi(\rho-\tilde{\rho})^2dx &\leq C\int_\Pi p(\rho|\tilde{\rho})dx \leq C,
\end{align*}
by \eqref{eq:0orderestimate}. For the derivative estimate,
\begin{align*}
\int_\Pi \rho_x^2dx \leq C\int_\Pi \rho \phi(\rho)_x^2dx \leq C,
\end{align*}
by \eqref{eq:0orderestimate}, as $\rho$ is bounded from above. As $\text{supp}(\tilde{\rho}) \subset [-1,1]$, this grants
\begin{align}\label{eq:rhoh1}
||\rho-\tilde{\rho}||_{L^\infty_tH^1_x} \leq C.
\end{align}
For the $H^1$ bound on $u_1-\tilde{u}_1$, first note that $||u_1-\tilde{u}_1||_{L^\infty_tL^2_x} \leq C$ by \eqref{eq:0orderestimate} and \eqref{eq:densitylowerbd}. Next, we do an energy estimate on $(u_1)_x$, which verifies the equation
\begin{align*}
&(u_1)_{xt}-2\nu (u_1)_{xxx}=-\gamma(\gamma-2)\rho^{\gamma-3}\rho_x-\gamma \rho^{\gamma-2}\rho_{xx}+(\tilde{u}_1)_x(u_1)_x+\tilde{u}(u_1)_{xx} \\
&\qquad\quad  +(\nu\log(\rho)_{xx}-((u_1)_x-(\tilde{u}_1)_x))(u_1)_x+(\nu \log(\rho)_x-(u_1-\tilde{u}_1))(u_1)_{xx}.
\end{align*}
Multiplying the equation by $(u_1)_x$ and integrating by parts, we obtain
\begin{align*}
&\ds{\frac{d}{dt}}\int_\Pi \frac{1}{2}(u_1)_x^2dx+2 \nu \int_{\Pi}(u_1)_{xx}^2dx=-\int_\Pi \gamma(\gamma-2)\rho^{\gamma-3}\rho_x(u_1)_xdx \\
&\qquad -\gamma\int_\Pi\rho^{\gamma-2}\rho_{xx}(u_1)_xdx+\int_\Pi (\tilde{u}_1)_x(u_1)_x^2dx+\int_\Pi \tilde{u}_1(u_1)_{xx}(u_1)_xdx \\
&\qquad +\frac{1}{2}\int_\Pi (\nu \log(\rho)_{xx})(u_1)_x^2dx-\frac{1}{2}\int_\Pi((u_1)_x-(\tilde{u}_1)_x))(u_1)_x^2dx.
\end{align*}
For the first two terms, we compute
\begin{align*}
-\int_\Pi \gamma(\gamma-2)\rho^{\gamma-3}\rho_x(u_1)_xdx-\gamma\int_\Pi\rho^{\gamma-2}\rho_{xx}(u_1)_xdx&=\int_\Pi(-\gamma \rho^{\gamma-2}\rho_x)_x(u_1)_xdx \\
&=\int_\Pi \gamma \rho^{\gamma-2}\rho_x(u_1)_{xx}dx \\
& \leq \frac{\nu}{4}\int_{\Pi}(u_1)_{xx}^2dx+C\int_\Pi\rho_x^2dx.
\end{align*}
For the third and fourth
\begin{align*}
\int_\Pi (\tilde{u}_1)_x(u_1)_x^2dx+\int_\Pi \tilde{u}_1(u_1)_{xx}(u_1)_xdx \leq C\int_{\Pi}(u_1)_x^2dx.
\end{align*}
For the fifth
\begin{align*}
&\frac{1}{2}\int_\Pi (\nu \log(\rho)_{xx})(u_1)_x^2dx \leq C||(u_1)_{xx}||_2||(u_1)_x||_\infty||\rho_x||_2 \\
&\qquad \leq C||(u_1)_{xx}||^{\frac{3}{2}}||(u_1)_x||_2^{\frac{1}{2}}||\rho_x||_2 \leq \frac{\nu}{4}||(u_1)_{xx}||_2^2+C||(u_1)_x||_2^2||\rho_x||_2^4 \\
&\qquad \leq \frac{\nu}{4}||(u_1)_{xx}||_2^2+C||(u_1)_x||_2^2,
\end{align*}
where the second inequality follows from the Gagliardo-Nirenberg inequality (note that the constant is uniform in $R$ as derivatives have mean zero), and we note by \eqref{eq:rhoh1}, $||\rho_x||_2$ is uniformly bounded in time. Finally, for the last
\begin{align*}
-\frac{1}{2}\int_\Pi((u_1)_x-(\tilde{u}_1)_x))(u_1)_x^2dx&=\int_\Pi (u_1-\tilde{u}_1)(u_1)_{xx}(u_1)_xdx\\
&\leq ||(u_1)_{xx}||_2||(u_1)_x||_\infty||u_1-\tilde{u}_1||_2 \\
&\leq ||(u_1)_{xx}||_2^{\frac{3}{2}}||(u_1)_x||_2^{\frac{1}{2}}||u_1-\tilde{u}_1||_{2} \\
&\leq \frac{\nu}{4}||(u_1)_{xx}||_2^2+C||(u_1)_x||_2^2,
\end{align*}
by the Gagliardo-Nirenberg inequality. Applying Gronwall gives \eqref{eq:1orderestimate}.
\begin{remark}
We remark that at this point, the constant $C$ in \eqref{eq:1orderestimate} can be computed to scale in $\nu$ like $C\exp(\frac{C}{\nu^j})$ for some $j \in \N$.
\end{remark}
\underline{\textbf{Step 3: $H^2$-estimate}} In this step, we obtain the estimate
\begin{align}\label{eq:2orderestimate}
||\rho-\tilde{\rho}||_{L^\infty_tH^2_x}+||u_1-\tilde{u}_1||_{L^\infty_tH^2_x}+||u_1-\tilde{u}_1||_{L^2_tH^3_x} \leq C.
\end{align}
The strategy is to do energy estimates simultaneously on $\rho_{xx}$ and also the derivative of the \textit{active potential} $w$, which is defined as 
\begin{align}\label{eq:activepotential}
w:=-p(\rho)+\mu(\rho)(u_1)_x.
\end{align}
Note that $w_x$ verifies the equation
\begin{align}\label{eq:w_xeqn}
w_x=-p'(\rho)\rho_x+\mu'(\rho)\rho_x(u_1)_x+\mu(\rho)(u_1)_{xx}.
\end{align}
The active potential was introduced in \cite{MR4049919} and is crucially used to obtain existence results for more general relations between $\gamma$ and $\mu(\rho)$ than are treated here. Firstly, note that from \eqref{eq:1orderestimate} and the Gagliardo-Nirenberg inequality, we have bounds
\begin{align}\label{eq:0orderw}
||w||_{L^2_tL^\infty_x}+||w_x||_{L^2_tL^2_x} \leq C.
\end{align}
Differentiating \cite[(2.10)]{MR4113341} in space, multiplying by $w_x$, and integrating by parts, we obtain
\begin{align*}
\ds{\frac{d}{dt}}\int_{\Pi}\frac{w_x^2}{2}dx&=-\int_\Pi \frac{\mu(\rho)}{\rho}|w_{xx}^2|dx+\int_{\Pi}\frac{\mu(\rho)}{\rho^2}\rho_xw_xw_{xx}dx \\
&-\int_\Pi u_x\frac{w_x^2}{2}dx+\int_{\Pi}f_1(\rho)|w_x|^2dx+\int_\Pi f_1'(\rho)\rho_xww_xdx \\
&-2\int_{\Pi}f_2(\rho)w|w_x|^2dx-\int_\Pi f_2'(\rho)\rho_xw^2w_x+\int_\Pi f_3'(\rho)\rho_xw_xdx \\
&=-2\nu \int_\Pi|w_{xx}^2|dx+\sum_{k=1}^7I_k,
\end{align*}
where
\begin{align*}
f_1(\rho)&:=\rho\frac{p'(\rho)}{\mu(\rho)}-2p(\rho)\frac{\rho \mu'(\rho)+\mu(\rho)}{\mu(\rho)^2}, \\
f_2(\rho)&:=\frac{\rho \mu'(\rho)+\mu(\rho)}{\mu(\rho)^2}, \\
f_3(\rho)&:=\big(\rho \frac{p'(\rho)}{\mu(\rho)}-p(\rho)\frac{\rho\mu'(\rho)+\mu(\rho)}{\mu(\rho)^2}\big)p(\rho).
\end{align*}
We show how to bound each term. For $I_1$:
\begin{align*}
I_1 \leq C||\rho_x||_\infty ||w_x||_2||w_{xx}||_2 &\leq \frac{\nu}{2}||w_{xx}||_2^2+C||\rho_x||_\infty^2||w_x||_2^2 \\
&\leq \frac{\nu}{2}||w_{xx}||_2^2+C||\rho_x||_2||\rho_{xx}||_2||w_x||_2^2 \\
&\leq \frac{\nu}{2}||w_{xx}||_2^2+C(||\rho_x||_2^2+||\rho_{xx}||_2^2)||w_x||_2^2.
\end{align*}
Note that we do not sufficiently control $\rho_{xx}$ in $L^2_tL^2_x$. This is why we need to simultaneously do an energy estimate on $\rho_{xx}$. However, the rest of the terms may be bounded in terms of norms we already control as follows. 
\begin{align*}
I_2+I_3 &\leq C||w_x||_2^2, \\
I_4 &\leq C||\rho_x||_2(||w||_\infty^2+||w_x||_2^2), \\
\end{align*}
where we note that $||\rho_x||_2||w||_\infty^2 \in L^1([0,T])$ by \eqref{eq:1orderestimate} and \eqref{eq:0orderw}. Continuing:
\begin{align*}
I_5 &\leq C||w||_\infty ||w_x||_2^2, \\
I_6 &\leq C(||\rho_x||_2^2||w||_\infty^2+||w||_\infty^2||w_x||_2^2), \\
I_7 &\leq C(||\rho_x||_2^2+||w_x||_2^2).
\end{align*}
So, altogether, we obtain that there exists $F_1, F_2 \in L^1([0,T])$ so that
\begin{align}\label{eq:part1}
\ds{\frac{d}{dt}}\int_{\Pi}\frac{w_x^2}{2}dx&\leq -\nu\int_\Pi|w_{xx}^2|dx+F_1(t)+F_2(t)\int_{\Pi}\frac{w_x^2}{2}dx+C||\rho_{xx}||_2^2||w_x||_2^2.
\end{align}
Next, we do the energy estimate on $\rho_{xx}$. Differentiating the continuity equation in \eqref{eq:1DNS_T_R} twice in space, multiplying by $\rho_{xx}$, and integrating in space, we obtain
\begin{align*}
\ds{\frac{d}{dt}}\int_\Pi \frac{\rho_{xx}^2}{2}dx&=\int_\Pi (u_1)_x\frac{\rho_{xx}^2}{2}dx-\int_\Pi(u_1\rho_x)_{xx}-u_1\rho_{xxx})\rho_{xx}dx \\
&-\int_\Pi \rho (u_1)_{xxx}\rho_{xx}dx-\int_\Pi ((\rho (u_1)_x)_{xx}-\rho (u_1)_{xxx})\rho_{xx}dx \\
&=\sum_{k=1}^4J_k.
\end{align*}
Each term may be estimated as follows.
\begin{align*}
J_1 &\leq ||(u_1)_x||_\infty||\rho_{xx}||_2^2, \\
J_2 &=-\int_\Pi ((u_1)_{xx}\rho_x+2(u_1)_x\rho_{xx})\rho_{xx}dx \\
&\leq C(||(u_1)_{xx}||_2||\rho_{x}||_\infty||\rho_{xx}||_2+||(u_1)_x||_\infty||\rho_{xx}||_2^2) \\
&\leq C(||(u_1)_{xx}||_2^2||\rho_{x}||_2^2+||(u_1)_{xx}||_2^2||\rho_{xx}||_2^2+||(u_1)_x||_\infty||\rho_{xx}||_2^2),
\end{align*}
where we note $||(u_1)_{xx}||_2^2||\rho_x||_2^2 \in L^1([0,T])$ by \eqref{eq:1orderestimate}. For $J_3$, it will be necessary to estimate $||(u_1)_{xxx}||_2$. Similar to \cite[(A.6)]{MR4113341}, we estimate
\begin{align*}
||(u_1)_{xxx}||_2  &\leq \big(||w||_\infty||\rho_x||_2^2+||w||_\infty||\rho_{xx}||_2||\rho_x||_2 \\
&+||\rho_{xx}||_2||w||_\infty+||\rho_{x}||_2||w_x||_\infty \\
&+||w_{xx}||_2+||\rho_x||_\infty+||\rho_x||_2^2+||\rho_{xx}||_2\big).
\end{align*}
Noting $||w||_\infty \in L^2([0,T])$ by \eqref{eq:1orderestimate} and using the Gagliardo-Nirenberg inequality to control $||w_{x}||_\infty$ and $||\rho_x||_\infty$, for $J_3$ we obtain
\begin{align*}
J_3 \leq C||(u_1)_{xxx}||_2||\rho_{xx}||_2 \leq H_1(t)+H_2(t)(||\rho_{xx}||_2^2+||w_x||_2^2)+\frac{\nu}{2}||w_{xx}||_2^2,
\end{align*}
where $H_1, H_2 \in L^1([0,T])$. Finally for $J_4$
\begin{align}\label{eq:part2}
J_4&=-\int_{\Pi}(\rho_{xx}u_{1x}+2\rho_xu_{1xx})\rho_{xx}dx \\
&\leq C(\|u_{1x}\|_\infty\|\rho_{xx}\|_2^2+\|\rho_{x}\|_\infty\|u_{1xx}\|_2\|\rho_{xx}\|_2) \\
&\leq C(\|u_{1x}\|_\infty \|\rho_{xx}\|_2^2+\|u_{1xx}\|_2[\|\rho_x\|_2^4+\|\rho_{xx}\|_2^2]).
\end{align}
Summing \eqref{eq:part1} and \eqref{eq:part2}, we obtain that there exists $G_1, G_2 \in L^1([0,T])$ so that 
\begin{align*}
\ds{\frac{d}{dt}}\Big(\int_{\Pi}\frac{w_x^2}{2}dx+\int_\Pi \frac{\rho_{xx}^2}{2}dx\Big)+\nu \int_{\Pi}w_{xx}^2dx \leq G_1(t)+G_2(t)\Big(\int_{\Pi}\frac{w_x^2}{2}dx+\int_\Pi \frac{\rho_{xx}^2}{2}dx\Big).
\end{align*}
Applying Gronwall gives the estimate
\begin{align}\label{eq:2orderestimatew}
||\rho_{xx}||_{L^\infty_tL^2_x}+||w_x||_{L^\infty_tL^2_x}+||w_{xx}||_{L^2_tL^2_x} \leq C.
\end{align}
Using the equation for $w$ \eqref{eq:activepotential} and the fact that $\text{supp}(\tilde{S}) \subset [-1,1]$, this easily implies \eqref{eq:2orderestimate}.
\begin{remark}
Note that at this point, the constant $C$ in \eqref{eq:2orderestimate} scales in $\nu$ like $C\exp(\exp(\frac{C}{\nu^j}))$. 
\end{remark}
\underline{\textbf{Step 5: $H^2$-estimate for $u_2-\tilde{u}_2$}.} Next, we obtain the estimate
\begin{align}\label{eq:u2est}
||u_2-\tilde{u}_2||_{L^\infty_tH^2_x}+||u_2-\tilde{u}_2||_{L^2_tH^3_x} \leq C.
\end{align}
By \eqref{eq:u2eq},  we see that $v:=u_2-\tilde{u}_2$ verifies
\begin{align}\label{eq:veqn}
v_t&=\nu \log(\rho)_x v_x+\nu v_{xx}-u_1 v_x+\nu (\tilde{u}_2)_{xx}-u_1(\tilde{u}_2)_x+\nu \log(\rho)_x (\tilde{u}_2)_x \notag \\
&=:\nu \log(\rho)_x v_x+\nu v_{xx}-u_1 v_x+I.
\end{align}
Multiplying \eqref{eq:veqn} by $v$ and integrating gives
\begin{align*}
\ds{\frac{d}{dt}}\int_\Pi \frac{v^2}{2}dx+\nu \int_\Pi v_x^2dx=- \nu \int_\Pi \frac{\rho_x}{\rho}v_xvdx+\int_\Pi Ivdx.
\end{align*}
Using the fact that $||\frac{\rho_x}{\rho}||_{L^\infty([0,T] \times \Pi)} \leq C$ and Young's inequality gives
\begin{align}\label{eq:u20order}
\ds{\frac{d}{dt}}\int_\Pi \frac{v^2}{2}dx+\frac{\nu}{2} \int_\Pi v_x^2dx\leq C\int_\Pi v^2dx+F_1(t),
\end{align}
where $F_1(t)=\int_\Pi I^2 dx \in L^1(0,T)$. Next, we differentiate \eqref{eq:u2eq}, multiply by $(u_2)_x$, and integrate to obtain
\begin{align*}
\ds{\frac{d}{dt}} \int_\Pi \frac{(u_2)_x^2}{2}dx+\nu \int_\Pi (u_2)_{xx}^2dx=-\nu \int_\Pi \log(\rho)_x(u_2)_x(u_2)_{xx}dx-\frac{1}{2}\int_\Pi (u_1)_x(u_2)_x^2dx.
\end{align*}
Using the fact that $||\log(\rho)_x||_{L^\infty([0,T] \times \Pi)}+||(u_1)_x||_{L^\infty([0,T] \times \Pi)} \leq C$ and Young's inequality gives
\begin{align}\label{eq:u21order}
\ds{\frac{d}{dt}} \int_\Pi \frac{(u_2)_x^2}{2}dx+\frac{\nu}{2} \int_\Pi (u_2)_{xx}^2dx \leq C\int_\Pi (u_2)_x^2dx.
\end{align}
Finally, differentiating \eqref{eq:u2eq} twice in space, multiplying by $(u_2)_{xx}$, and integrating gives
\begin{align*}
\ds{\frac{d}{dt}} \int_\Pi \frac{(u_2)_{xx}^2}{2}dx+\nu \int_\Pi (u_2)_{xxx}^2dx&=\nu \int_\Pi (\log(\rho)_x (u_2)_x)_{xx}(u_{2})_{xx}dx \\
&-\int_\Pi (u_1)_{xx}(u_2)_x(u_2)_{xx}dx-2\int_\Pi (u_1)_x(u_2)_{xx}^2dx \\
&-\int_{\Pi}(u_1)_x(u_2)_{xxx}(u_2)_{xx}dx. 
\end{align*}
For the latter three terms, integration by parts and Young's inequality give
\begin{align}\label{eq:u2step1}
\ds{\frac{d}{dt}} \int_\Pi \frac{(u_2)_{xx}^2}{2}dx+\frac{\nu}{2} \int_\Pi (u_2)_{xxx}^2dx&\leq \nu \int_\Pi (\log(\rho)_x (u_2)_x)_{xx}(u_{2})_{xx}dx \notag \\
&+C\Big(\int_\Pi (u_2)_x^2dx+\int_\Pi (u_2)_{xx}^2dx\Big).
\end{align}
It remains the handle the first term. Integrating by parts and expanding we obtain
\begin{align}\label{eq:u2step2}
\nu \int_\Pi (\log(\rho)_x (u_2)_x)_{xx}(u_{2})_{xx}dx &\leq \nu \int_\Pi \frac{\rho_{xx}}{\rho}(u_{2})_x(u_2)_{xxx}dx \notag \\
&+\nu \int_\Pi \frac{\rho_x^2}{\rho^2}(u_{2})_x(u_2)_{xxx}dx \\
&+\int_\Pi \log(\rho)_xu_{2xx}u_{2xxx}dx.
\end{align}
The second term above can be handled by Young's inequality. For the first we use the Gagliardo-Nirenberg inequality to obtain
\begin{align}\label{eq:u2step3}
\nu \int_\Pi \frac{\rho_{xx}}{\rho}(u_{2})_x(u_2)_{xxx}dx &\leq C||\rho_{xx}||_2||(u_{2})_x||_\infty||(u_2)_{xxx}||_2 \notag \\
&\leq C||\rho_{xx}||_2||(u_2)_x||_2^{\frac{1}{2}}||(u_2)_{xx}||_2^{\frac{1}{2}}||(u_2)_{xxx}||_2 \notag \\
&\leq C||\rho_{xx}||_2^2 \left(||(u_2)_x||_2^2+||(u_2)_{xx}||_2^2\right)+\frac{\nu}{8}||(u_2)_{xxx}||_2^2.
\end{align}
The third is also easily handled using Young's inequality. Plugging \eqref{eq:u2step2} and \eqref{eq:u2step3} into \eqref{eq:u2step1} and using Young's inequality gives
\begin{align}\label{eq:u22order}
\ds{\frac{d}{dt}} \int_\Pi \frac{(u_2)_{xx}^2}{2}dx+\frac{\nu}{8} \int_\Pi (u_2)_{xxx}^2dx&\leq F_2(t)\left(||(u_2)_x||_2^2+||(u_2)_{xx}||_2^2\right),
\end{align}
where $F_2(t) \in L^1(0,T)$. Finally, combining \eqref{eq:u20order}, \eqref{eq:u21order} and \eqref{eq:u22order} gives
\begin{align*}
&\ds{\frac{d}{dt}} \Big(\int_\Pi \frac{(u_2-\tilde{u}_2)^2}{2}dx+\int_\Pi \frac{(u_2)_x^2}{2}dx+\int_\Pi \frac{(u_2)_{xx}^2}{2}dx \Big)+\frac{\nu}{8}\int_{\Pi}(u_2)_{xxx}^2dx \\
&\leq F_1(t)+F_2(t) \Big(\int_\Pi \frac{(u_2-\tilde{u}_2)^2}{2}dx +\int_\Pi \frac{(u_2)_x^2}{2}dx+\int_\Pi \frac{(u_2)_{xx}^2}{2}dx\Big), 
\end{align*}
where $F_1, F_2 \in L^1(0,T)$. Applying Gronwall's inequality gives \eqref{eq:u2est}. 
\begin{remark}\label{rem:triplebad}
Note that constant $C$ in \eqref{eq:u2est} can be computed to scale in $\nu$ like $C\exp(\exp(\exp(\frac{C}{\nu^j})))$. 
\end{remark}
\underline{\textbf{Step 6: Higher-order estimates via induction.}} At this point, combining \eqref{eq:2orderestimate}, \eqref{eq:2orderestimatew}, and \eqref{eq:u2est}, we have shown
\begin{align}\label{eq:k=2}
||\rho-\tilde{\rho}||_{L^\infty_tH^2_x}+||w-\tilde{w}||_{L^\infty_tH^1_x}+||w-\tilde{w}||_{L^2_tH^2_x}+||u-\tilde{u}||_{L^\infty_tH^2_x}+||u-\tilde{u}||_{L^2_tH^3_x} \leq M_2,
\end{align}
where we denote by $M_k$ a constant which, in terms of the Sobolev norms of $\rho^0-\tilde{\rho}$ and $u^0-\tilde{u}$, depends only on their $H^k$ norm (in terms of the other constants, it has the same dependencies as the constant C). In this step, we prove the following lemma:
\begin{lemma}\label{lem:sobolevinduction}
For any $k \geq 2$, there exists $M_k >0$ so that
\begin{align}\label{eq:higherorderestinduction}
||\partial_x^{k}\rho||_{L^\infty_tL^2_x}+||\partial_x^{k-1}w||_{L^\infty_tL^2_x}+||\partial_x^kw||_{L^2_tL^2_x}+||\partial_x^ku||_{L^\infty_tL^2_x}+||\partial_x^{k+1}u||_{L^2_tL^2_x} \leq M_k.
\end{align}
\end{lemma}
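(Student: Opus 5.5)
We argue by induction on $k\geq 2$. The base case $k=2$ is exactly \eqref{eq:k=2}. Assume \eqref{eq:higherorderestinduction} holds up to order $k$ with constant $M_k$. We show it at order $k+1$ by mimicking Steps 3 and 4: a simultaneous energy estimate on $\partial_x^{k}w$ and $\partial_x^{k+1}\rho$, followed by the linear parabolic estimate for $u_2$.

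Throughout, we use the inductive hypothesis together with the one-dimensional Gagliardo--Nirenberg inequality on $\Pi$. This inequality is applied only to derivatives, which have zero mean, so the constants are independent of $R$. It gives that $\partial_x^{j}\rho$ and $\partial_x^{j-1}u$ lie in $L^\infty_tL^\infty_x$ for $j\le k-1$, and that $\partial_x^{k-1}w$ and $\partial_x^{k}u$ lie in $L^2_tL^\infty_x$. The density is also bounded above and below by \eqref{eq:densitylowerbd} and \eqref{eq:densityupperbd}.

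\textbf{First energy estimate ($\partial_x^{k}w$).} We differentiate the equation for $w$ \cite[(2.10)]{MR4113341} $k$ times, multiply by $\partial_x^{k}w$ and integrate. The parabolic term yields $-2\nu\|\partial_x^{k+1}w\|_2^2$. By the Leibniz rule, every other term is a product of derivatives of $\rho$, $w$ and $u_1$, composed with smooth functions of $\rho$. After one integration by parts when necessary, each such term is bounded in one of two ways.

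1. It is absorbed by $\frac{\nu}{4}\|\partial_x^{k+1}w\|_2^2$.
2. It is bounded by $G_1(t)+G_2(t)\big(\|\partial_x^{k}w\|_2^2+\|\partial_x^{k+1}\rho\|_2^2\big)$ with $G_1,G_2\in L^1(0,T)$.

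The only genuinely top-order density contribution is $f'(\rho)\partial_x^{k+1}\rho\cdot(\ldots)$. Its cofactors are controlled in $L^\infty_x$ with $L^2_t$ integrability, by the same mechanism as $I_1$ in Step 4.

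\textbf{Second energy estimate ($\partial_x^{k+1}\rho$).} We differentiate the continuity equation $k+1$ times and test with $\partial_x^{k+1}\rho$. The transport term $u_1\partial_x^{k+2}\rho$ integrates by parts to $\frac12\int(u_1)_x|\partial_x^{k+1}\rho|^2$. The dangerous term is $\rho\,\partial_x^{k+2}u_1\,\partial_x^{k+1}\rho$. Here we rewrite $u_{1x}=(w+p(\rho))/\mu(\rho)$, as in the bound for $\|(u_1)_{xxx}\|_2$ in Step 4. This expresses $\partial_x^{k+2}u_1$ through $\partial_x^{k+1}w$, $\partial_x^{k+1}\rho$ and lower-order products, so that
\[
\|\partial_x^{k+2}u_1\|_2\le C\big(\|\partial_x^{k+1}w\|_2+(1+\|w\|_\infty)\|\partial_x^{k+1}\rho\|_2+H(t)\big),
\]
with $H\in L^2(0,T)$ determined by the induction hypothesis. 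Young's inequality then absorbs the $\partial_x^{k+1}w$ part into the dissipation.

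\textbf{Closing the estimate.} Summing the two estimates and applying Gronwall gives bounds on $\|\partial_x^{k+1}\rho\|_{L^\infty_tL^2_x}$, $\|\partial_x^{k}w\|_{L^\infty_tL^2_x}$ and $\|\partial_x^{k+1}w\|_{L^2_tL^2_x}$. The same expression of $u_{1x}$ in terms of $w$ and $\rho$ then yields the bounds on $\|\partial_x^{k+1}u_1\|_{L^\infty_tL^2_x}$ and $\|\partial_x^{k+2}u_1\|_{L^2_tL^2_x}$. For $u_2$, we differentiate \eqref{eq:u2eq} $k+1$ times and test with $\partial_x^{k+1}u_2$, exactly as in Step 5. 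The worst term is $\nu\,\partial_x^{k+2}\log\rho\,(u_2)_x$. After integrating by parts it is bounded by $\|\partial_x^{k+1}\rho\|_2\|(u_2)_x\|_\infty\|\partial_x^{k+2}u_2\|_2$, which we have just controlled. Gronwall then closes the $u_2$ estimate.

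\textbf{Main obstacle.} The main obstacle is the top-order coupling between $\partial_x^{k+1}\rho$ and $\partial_x^{k+2}u_1$ in the continuity equation. It cannot be closed by an estimate on $u_1$ alone, which is why the effective quantity $w$ must be used at every order. The delicate point is the bookkeeping: every Leibniz term must have at most one factor outside $L^\infty_{t,x}$ at top order, and the time integrability of its coefficients ($L^1$ or $L^2$ in $t$) must come from the inductive hypothesis. We also need all constants to be free of $R$, which holds because we never apply a Poincaré inequality to undifferentiated quantities, as in the earlier steps.
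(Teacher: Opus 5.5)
Your proposal is correct and follows essentially the same argument as the paper's proof. Both run an induction that simultaneously estimates the top-order density derivative and the active potential $w$ one order lower. In both, the dangerous continuity term $\rho\,\partial_x^{m+1}u_1\,\partial_x^{m}\rho$ is rewritten via $u_{1x}=(w+p(\rho))/\mu(\rho)$ and absorbed into the $w$-dissipation, after which $u_1$ is recovered from $w$ and $u_2$ from the linear parabolic equation \eqref{eq:u2eq}. The differences are cosmetic. You step from $k$ to $k+1$ instead of $k-1$ to $k$, and you use Leibniz plus Gagliardo--Nirenberg where the paper cites the Kato--Ponce commutator estimate. Your tracking of $L^1_t$/$L^2_t$ integrability of the Gronwall coefficients is, if anything, more careful than the paper's.
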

\begin{proof}
The proof is by induction. Note that \eqref{eq:higherorderestinduction} holds for $k=2$ by \eqref{eq:k=2}. So, assume that it holds for $k-1$ for $k \geq 3$. Then, differentiating the continuity equation $k$-times, multiplying by $\partial_x^k\rho$, and integrating in space gives
\begin{align*}
\frac{1}{2}\frac{d}{dt}\int_{\Pi}|\partial_x^k \rho|^2dx&=-\int_{\Pi}u_1\partial_x^{k+1}\rho\partial_x^k\rho dx-\int_{\Pi}([\partial_x^k,u_1]\partial_x \rho)\partial_x^k \rho dx \\
&-\int_{\Pi}([\partial_x^k,\rho]\partial_xu_1)\partial_x^k\rho dx-\int_{\Pi}\rho \partial_x^{k+1}u_1\partial_x^k \rho dx \\
&=:\sum_{j=1}^4I_j.
\end{align*}
The terms are controlled as follows:
\begin{align*}
I_1 \leq C||(u_1)_x||_\infty||\partial_x^k\rho||_2^2, 
\end{align*}
and using the Kato-Ponce commutator inequality
\begin{align*}
I_2 \leq C\big(||(u_1)_x||_\infty||\partial_x^k \rho||_2+||\partial_x^ku_1||_2||\rho_x||_\infty\big)||\partial_x^k\rho||_2 \leq M_{k-1}(1+||\partial_x^k\rho||_2^2),
\end{align*}
by the induction hypothesis. Next
\begin{align*}
I_3 \leq C\big(||\rho_x||_\infty||\partial_x^ku_1||_2+||\partial_x^k\rho||_2||(u_1)_x||_\infty\big)||\partial_x^k\rho||_2 \leq M_{k-1}(1+||\partial_x^k\rho||_2^2),
\end{align*}
where again we control some higher norms by $M_{k-1}$ by the induction hypothesis. Next,
\begin{align*}
I_4 \leq ||\rho||_\infty||\partial_x^{k+1}u_1||_2||\partial_x^k \rho||_2.
\end{align*}
To control the term $I_4$, we need to estimate $\partial_x^{k+1}u_1$. Using the equation for $w$ \eqref{eq:activepotential}, solving for $(u_1)_x$, and differentiating $k$ times, we have the estimate
\begin{align*}
||\partial_x^{k+1}u_1||_2 &\leq C\big(||\partial_x^k(w\rho^{-1})||_2+||\partial_x^k \rho^{\gamma-1}||_2\big) \\
&\leq C\big(||[\partial_x^k,\rho^{-1}]w||_2+||\partial_x^k \rho||_2+1+||\rho^{-1}\partial_x^kw||_2\big),
\end{align*}
where for the last term we have used that $\rho$ is bounded from above and below away from zero and the induction hypothesis. Using again the Kato-Ponce commutator inequality to control the first term, we obtain
\begin{align*}
||\partial_x^{k+1}u_1||_2 \leq C\big(||\partial_x^k\rho||_2+1+||\partial_x^kw||_2\big).
\end{align*}
So, combining all the terms, we obtain
\begin{align}\label{eq:inductionstep1}
\frac{1}{2}\frac{d}{dt}\int_{\Pi}|\partial_x^k \rho|^2dx& \leq M_{k-1}\big(||\partial_x^k\rho||_2^2+1\big)+\frac{\nu}{2}||\partial_x^kw||_2^2.
\end{align}
We need now to simultaneously do the energy estimate on $\partial_x^{k-1}w$. Differentiating \cite[(2.10)]{MR4113341} $k-1$ times in space, multiplying by $\partial_x^{k-1}w$, and integrating in space, we obtain
\begin{align*}
\frac{1}{2}\frac{d}{dt}\int_{\Pi}|\partial_x^{k-1}w|^2dx+2\nu ||\partial_x^kw||_2^2&=\int_{\Pi}\partial_x^{k-2}\big((u_1+2\nu \ln(\rho)_x)w_x\big)\partial_x^kwdx \\
&+C \int_\Pi \partial_x^k(\rho^{-1}w^2)\partial_{x}^{k-1}wdx+C\int_{\Pi}\partial_{x}^{k-1}\big(\rho^{\gamma-1}(w+\rho)\big)\partial_x^{k-1}w \\
&=:\sum_{j=1}^3I_j.
\end{align*}
For the first term, we control
\begin{align*}
I_1 &\leq \frac{\nu}{2}||\partial_x^kw||_2^2+C||\partial_x^{k-2}\big((u_1+2\nu \ln(\rho)_x)w_x\big)||_2^2 \\
&\leq \frac{\nu}{2}||\partial_x^{k}w||_2^2+C\big(||\partial_{x}^{k-1}w(u_1+2\nu \ln(\rho)_x)||_2^2+||[\partial_x^{k-2},(u_1+2\nu \ln(\rho)_x)]w_x||_2^2\big) \\
&\leq \frac{\nu}{2}||\partial_x^{k}w||_2^2+M_{k-1}||\partial_x^{k-1}w||_2^2 \\
&+C\big(||(u_1+2\nu \ln(\rho)_x)_x||_\infty^2||\partial_x^{k-2}w||_2^2+||\partial_x^{k-2}(u_1+2\nu \ln(\rho)_x)||_2^2||w_x||_\infty^2\big) \\
&\leq \frac{\nu}{2}||\partial_x^{k}w||_2^2+M_{k-1}(1+||\partial_x^{k-1}w||_2^2),
\end{align*}
by the Gagliardo-Nirenberg inequality to control the $L^\infty$ norms in the third line and the induction hypothesis.
\begin{remark}
Note that in the case $k=3$, we do not control $w_x$ in $L^\infty$. However, using the Gagliardo-Nirenberg inequality
\begin{align*}
||w_x||_\infty^2 \leq C||w_x||_2||w_{xx}||_2,
\end{align*}
and $||w_{xx}||_2 \in L^1(0,T)$ by the induction hypothesis, which is sufficient for the Gronwall inequality. So the proof proceeds with this small modification.
\end{remark}

The bounds on $I_2$ and $I_3$ are similar. So, we obtain
\begin{align}\label{eq:inductionstep2}
\frac{1}{2}\frac{d}{dt}\int_{\Pi}|\partial_x^{k-1}w|^2dx+\nu ||\partial_x^kw||_2^2 \leq M_{k-1}(||\partial_x^{k-1}w||_2^2+1).
\end{align}
Summing \eqref{eq:inductionstep1} and \eqref{eq:inductionstep2} and using Gronwall's inequality, we obtain 
\end{proof}
\begin{align}\label{eq:higherorderestnou}
||\partial_x^{k}\rho||_{L^\infty_tL^2_x}+||\partial_x^{k-1}w||_{L^\infty_tL^2_x}+||\partial_x^kw||_{L^2_tL^2_x} \leq M_k.
\end{align}
To obtain \eqref{eq:higherorderestinduction}, it remains to prove the estimates on the higher-derivatives of $u$. The estimate
\begin{align*}
||\partial_x^ku_1||_{L^\infty_tL^2_x}+||\partial_x^{k+1}u_1||_{L^2_tL^2_x} \leq M_k,
\end{align*}
is easily recovered from \eqref{eq:higherorderestnou} and the definition of $w$ \eqref{eq:activepotential}. So it remains only to do the $H^k$-estimate on $u_2$. Differentiating \eqref{eq:u2eq} $k$ times in space, multiplying by $\partial_x^ku_2$, and integrating, we obtain
\begin{align*}
\frac{1}{2}\frac{d}{dt}\int_{\Pi}|\partial_x^ku_2|^2dx+\nu ||\partial_{x}^{k+1}u_2||_2^2=\int_{\Pi}\partial_{x}^{k-1}\big((\log(\rho)_x-u_1)(u_2)_x\big)\partial_x^{k+1}u_2dx.
\end{align*}
Using Young's inequality and then \eqref{eq:higherorderestnou} and the induction hypothesis similarly to above, we obtain
\begin{align*}
\frac{1}{2}\frac{d}{dt}\int_{\Pi}|\partial_x^ku_2|^2dx+\frac{\nu}{2}||\partial_{x}^{k+1}u_2||_2^2 \leq M_k(1+||\partial_x^ku_2||_2^2).
\end{align*}
So, using Gronwall's inequality, we finish the proof of \Cref{lem:sobolevinduction}. Applying \Cref{lem:sobolevinduction} with $k=6$, this completes the proof of \Cref{prop:1dexist+bdsappendix}.

\section{Proof of \Cref{lem:appclem}}\label{sec:normsapp}
\renewcommand{\theequation}{C.\arabic{equation}}
\setcounter{equation}{0}
In this section, we present a proof of \Cref{lem:appclem}. Recall that we have strong solutions $(\bar \rho_R, \bar u_R)$ to \eqref{eq:2DNS_T_R} with initial data given by \eqref{eq:reflectdef2}. Further, owing to \Cref{def:suitableinitial}, by \cite{MR4113341} there exists a global-in-time unique smooth solution $(\bar \rho, \bar u)$ with initial data $(\bar \rho_0, \bar u_0)$ verifying for fixed $T>0$ the bounds
\begin{align}\label{eq:fulllinebds}
||\bar \rho-\tilde{\rho}, \bar u-\tilde{u}||_{C([0,T];H^6(\R))} &\leq C\delta_3, \\
0 < \underline{\kappa} \leq \bar \rho(x_1) &\leq \overline{\kappa}.
\end{align}
In \eqref{eq:localconvergence}, we have already established the local strong convergence of $(\bar \rho_R, \bar u_R)$ to $(\bar \rho, \bar u)$. Here, we would like to compare some norms of $(\bar \rho_R, \bar u_R)$ to $(\bar \rho, \bar u)$ on the full region $\T_R$. The main issue is that on the full region, the initial data of $(\bar \rho_R, \bar u_R)$ and $(\bar \rho, \bar u)$ are not the same (due to the reflection). For this reason, we will use $(\bar \rho, \bar u)$ to create an auxiliary function $(\rho^{app}_R, u^{app}_R)$ to compare to $(\bar \rho_R, \bar u_R)$. Then, we will compare $(\rho^{app}_R, u^{app}_R)$ to $(\bar \rho, \bar u)$ to obtain the result. 

\subsubsection{Definition of the auxiliary function}
Recall the function $\chi$ defined in \eqref{eq:chiprops}, \eqref{eq:chiRdef}. Also recall the definitions of the periodized reference functions $(\tilde{\rho}_R, \tilde{u}_R)$ in \eqref{eq:reflectdef3}, \eqref{eq:reflectdef4}. Then, we define the auxiliary function $(\rho^{app}_R, u^{app}_R)$ on $\T_R$:
\begin{align}\label{eq:appsolndef}
&\rho^{app}_R(t,x_1)=\tilde{\rho}_R(x_1)+\chi_R(x_1)(\bar{\rho}(t,x_1)-\tilde{\rho}(x_1))+\chi_R(R-x_1)(\bar{\rho}(t,R-x_1)-\tilde{\rho}(R-x_1)), \\
&u^{app}_{R,1}(t,x_1)=\tilde{u}_{R,1}(x_1)+\chi_R(x_1)(\bar{u}_1)(t,x_1)-\tilde{u}_1(x_1))-\chi_R(R-x_1)(\bar{u}_1(t,R-x_1)-\tilde{u}_1(R-x_1)), \\
&u^{app}_{R,2}(x_1)=\tilde{u}_{R,2}(x_1)+\chi_R(x_1)(\bar u_2(t,x_1)-\tilde{u}_2(x_1))+\chi_R(R-x_1)(\bar u_2(t,R-x_1)-\tilde{u}_2(R-x_1)). 
\end{align}
The function $(\rho^{app}_R, u^{app}_R)$ has a similar piecewise representation to \eqref{eq:piecewiserep}. Indeed, we have 
\begin{align}\label{eq:piecewiserepapprox}
&\rho^{app}_R, u^{app}_{R,1}, u^{app}_{R,2}(t,x_1)= \\
&\indent \begin{cases}
(\bar \rho, \bar u_1, \bar u_2)(t,x_1) & |x_1| \leq \frac{R}{4}, \\
(\bar \rho, -\bar u_1, \bar u_2)(t,R-x_1) & |x_1-R| \leq \frac{R}{4}, \\
\chi_R(x_1)(\bar \rho, \bar{u}_{1}, \bar{u}_{2})(t,x_1)+(1-\chi_R(x_1))(\tilde{\rho},\tilde{u}_{1}, \tilde{u}_2)(x_1) & x_1 \in [-\frac{R}{3}, -\frac{R}{4}] \cup [\frac{R}{4}, \frac{R}{3}], \\
\chi_R(R-x_1)(\bar{\rho}, -\bar{u}_{1}, \bar{u}_{2})(t,R-x_1)+(1-\chi_R(R-x_1))(\tilde{\rho},-\tilde{u}_{1}, \tilde{u}_2)(R-x_1) & x_1 \in [\frac{2R}{3}, \frac{3R}{R}] \cup [\frac{5R}{4}, \frac{4R}{3}], \\
(\tilde{\rho}_R, \tilde{u}_{R,1}, \tilde{u}_{R,2})(x_1) & \text{otherwise}.
\end{cases}
\end{align}
In other words, $(\rho^{app}_R, u^{app}_R)$ is either equal to the (reflected) full-line solution, the periodic reference function, or a convex combination of them. Due to this, and also \eqref{eq:fulllinebds}, it is clear that
\begin{align}\label{eq:approxfnisregular}
||\rho^{app}_R-\tilde{\rho}_R, u^{app}_R-\tilde{u}_R||_{C^([0,T];H^6(\T_R))} &\leq C\delta_3, \\
0 < \underline{\kappa} \leq \rho^{app}_R &\leq \overline{\kappa},
\end{align}
where we have changed the values of $\underline{\kappa}, \overline{\kappa}$ slightly if needed. 
Further, note that 
\begin{align}\label{eq:equalinitdata}
(\rho^{app}_R(0,\cdot), u^{app}_R(0,\cdot))=((\bar \rho_R)_0, (\bar u_R)_0).
\end{align}
\subsubsection{The error in the equation for the auxiliary function} Now, we would like to do an $H^3$-estimate on the difference between $(\rho^{app}_R, u^{app}_R)$ and $(\bar \rho_R, \bar u_R)$. The main difficulty is that $(\rho^{app}_R, u^{app}_R)$ is not a solution to \eqref{eq:2DNS_T_R}. In this section, we compute the error that arises from plugging $(\rho_R^{app}, u^{app}_R)$ into the equation \eqref{eq:2DNS_T_R}. For brevity, we drop the $x_2$ direction. Further, for the rest of this section, as $(\rho^{app}_R, u^{app}_R)$ and $(\bar \rho_R, \bar u_R)$ are both sufficiently regular and bounded away from vacuum, we may instead consider the equivalent system
\begin{align}\label{eq:1DNSappC}
\begin{cases}
\rho_t+u_1 \rho_x+\rho (u_1)_x=0, & (t,x) \in \R^+ \times \Pi, \\
(u_1)_t+u_1(u_1)_{x}+\gamma \rho^{\gamma-2}\rho_x-2\nu\ln(\rho)_x(u_1)_x=2\nu (u_1)_{xx}, \\
(u_2)_t+u_1(u_2)_x-\nu \ln(\rho)_x(u_2)_x=\nu (u_2)_{xx},
\end{cases}
\end{align}
where $\Pi=[-R, 2R]_{\text{periodic}}$. Note that $(\bar{\rho}_R, \bar{u}_R)$ verifies \eqref{eq:1DNSappC} exactly. The main lemma in this section will estimate the error obtained from plugging $(\rho^{app}_R, u^{app}_R)$ into \eqref{eq:1DNSappC}.
\begin{lemma}\label{lem:errorlemma}
Let $(\rho^{app}_R, u^{app}_R)$ be defined in \eqref{eq:appsolndef}. Then, $(\rho^{app}_R, u^{app}_R)$ verifies the equations
\begin{align}\label{eq:1DNSapproxerror}
\begin{cases}
\rho_t+u_1 \rho_x+\rho (u_1)_x=f_1(t,x), & (t,x) \in \R^+ \times [-R, 2R]_{\text{periodic}}, \\
(u_1)_t+u_1(u_1)_{x}+\gamma \rho^{\gamma-2}\rho_x-2\nu\ln(\rho)_x(u_1)_x-2\nu (u_1)_{xx}=f_2(t,x), \\
(u_2)_t+u_1(u_2)_x-\nu \ln(\rho)_x(u_2)_x-\nu (u_2)_{xx}=f_3(t,x),
\end{cases}
\end{align}
where $f_i$ obeys the bound for $1 \leq i \leq 3$,
\begin{align}\label{eq:errorlemmabd}
||f_i(t,x)||_{L^\infty([0,T];H^3(\Pi))} \leq C(s_R+R^{\frac{-1}{2}}),
\end{align}
where 
\begin{align}\label{eq:sobolevtailsappC}
s_R:=||\bar \rho-\tilde{\rho}, \bar u-\tilde{u}||_{C([0,T];H^5(\{|x_1 > \frac{R}{5}\}))} \to 0 \text{ as } R \to \infty. 
\end{align}
\end{lemma}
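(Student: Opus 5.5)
The plan is to verify the equations region by region, following the piecewise representation \eqref{eq:piecewiserepapprox}, and to define $f_i$ as the residual obtained by plugging $(\rho^{app}_R,u^{app}_R)$ into the left-hand sides of \eqref{eq:1DNSapproxerror}.

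\textbf{Regions where the residual vanishes.} On $\{|x_1|\le \frac R4\}$, $(\rho^{app}_R,u^{app}_R)=(\bar\rho,\bar u)$ solves \eqref{eq:1DNSappC} exactly, so $f_i=0$. On $\{|x_1-R|\le\frac R4\}$ the function is the reflection $(\bar\rho,-\bar u_1,\bar u_2)(t,R-x_1)$. By the reflection symmetry noted in \Cref{sec:new1}, this is again an exact solution. Concretely, under $x_1\mapsto R-x_1$ one has $\partial_{x_1}\mapsto-\partial_{x_1}$, and each term in \eqref{eq:1DNSappC} carries the parity that makes it change sign consistently with $u_1\mapsto -u_1$. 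On the plateau regions the function equals $(\tilde\rho_R,\tilde u_R)$. There $\tilde\rho_R,\tilde u_{R,2}$ are constant and $\tilde u_{R,1}$ is either constant or of the form $u_{1,\pm}(1\mp2\theta(3x_1/R-c))$. Hence the only residual terms are $u_1(u_1)_x-2\nu(u_1)_{xx}$ in the second equation and $\rho(u_1)_x$ in the first. These are $O(R^{-1})$ pointwise and supported on a set of length $O(R)$, so their $H^3(\Pi)$ norm is $O(R^{-1}\cdot R^{1/2})=O(R^{-1/2})$, with higher derivatives even smaller. This is the source of the $R^{-1/2}$ term.

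\textbf{Transition regions.} This is the main step. Take $x_1\in[\frac R4,\frac R3]$; the reflected regions are handled identically by the symmetry above. Here $\tilde\rho=\rho_+$ and $\tilde u=u_+$ are constant, so we may write $\rho^{app}=\rho_++\chi_R(\bar\rho-\rho_+)$ and $u^{app}=u_++\chi_R(\bar u-u_+)$. Set $a:=\bar\rho-\rho_+$ and $b:=\bar u-u_+$, which are small in $C([0,T];H^5(\{|x_1|>R/5\}))$ by $s_R$.

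Expanding each equation, $f_i$ splits into three kinds of terms:
\begin{itemize}
\item[(i)] terms with a derivative of $\chi_R$, which carry a factor $R^{-1}$ times $a$, $b$ or their derivatives;
\item[(ii)] terms $\chi_R\cdot(\text{equation for }(\bar\rho,\bar u))=0$;
\item[(iii)] commutator-type nonlinear terms such as $\chi_R(1-\chi_R)\,b\,b_x$, $\chi_R(1-\chi_R)\,a\,b_x$, and $\big(\gamma(\rho^{app})^{\gamma-2}-\chi_R\gamma\bar\rho^{\gamma-2}\big)\rho^{app}_x$.
\end{itemize}
Each term in (iii) is at least linear in $(a,b)$ and involves at least one derivative of $a$ or $b$, with smooth coefficients. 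These coefficients are well defined because $\rho^{app}$ and $\bar\rho$ lie in $[\underline\kappa,\overline\kappa]$ by \eqref{eq:approxfnisregular} and \eqref{eq:fulllinebds}.

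\textbf{Bounding in $H^3$.} To bound these terms in $H^3$, apply three derivatives; each term then involves at most four derivatives of $(a,b)$, plus derivatives of $\chi_R$ that are $O(R^{-1})$. Using the Moser/product estimate in $H^3$ together with the $H^5$ control of $(a,b)$ on the transition interval (through $s_R$, with Sobolev embedding for the $L^\infty$ factors), together with the uniform bounds \eqref{eq:fulllinebds}, all such terms are bounded by $C s_R$. One small point: the $\nu\ln(\rho)_x$ and $\nu(u_1)_{xx}$ terms need two derivatives of $(a,b)$ before differentiating, so five derivatives in total, which still fits within $H^5$.

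\textbf{Main obstacle.} The difficulty is the bookkeeping in the transition regions: we must make sure that no term survives with a zeroth-order factor that is not small. After subtracting $\chi_R$ times the exact equation, every remaining piece must carry either a factor $\chi_R'$ or $\chi_R''$ (giving $R^{-1}$), or a factor $(a,b)$ multiplied by a derivative of $(a,b)$ (giving $s_R$). I would verify this by writing $\rho^{app}=\rho_++\chi_R a$ and expanding term by term, using that the constant states are exact solutions. Summing over all regions gives \eqref{eq:errorlemmabd}, since $s_R\to0$ by the $H^6$ bound \eqref{eq:fulllinebds}.
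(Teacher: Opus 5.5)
Your proposal is correct and follows essentially the same route as the paper. Both arguments use the same region-by-region decomposition. The residual vanishes on the exact and reflected-exact regions. On the plateau regions it is $O(R^{-1})$ on a set of length $O(R)$, which gives the $R^{-1/2}$ term. On the transition regions, once the equation for $(\bar\rho,\bar u)$ is used, every remaining term carries a factor of $\bar\rho-\rho_\pm$ or $\bar u-u_\pm$ (or their derivatives) with uniformly bounded coefficients, so Leibniz and product estimates give $Cs_R$. You are more explicit than the paper in checking the reflection symmetry and the $f_1$, $f_3$ residuals, which the paper only calls ``similar''.
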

\begin{proof}
Let us show only the estimate on $f_2$, as the rest are somewhat simpler. Note that on the two regions $\{|x| \leq \frac{R}{4}\}$ and $\{|x-R| \leq \frac{R}{4}\}$, $(\rho^{app}_R, u^{app}_R)$ is an exact solution to \eqref{eq:1DNSappC}. So $f_2(t,x)=0$ there. Next, we consider the regions where $(\rho^{app}_R, u^{app}_R)=(\tilde{\rho}_R, \tilde{u}_R)$. Let us consider only $[\frac{R}{3}, \frac{2R}{3}]$ for simplicity (the other one is identical). Noting that $\rho^{app}_R$ and $u^{app}_{R,2}$ are constant on this region, we see
\begin{align*}
f_2(t,x)=\tilde{u}_{R,1}(\tilde{u}_{R,1})_x-2\nu(\tilde{u}_{R,1})_{xx}.
\end{align*}
From the definition \eqref{eq:reflectdef4}, we can directly compute that
\begin{align}\label{eq:f2computeregion2}
||\tilde{u}_{R,1}(\tilde{u}_{R,1})_x-2\nu(\tilde{u}_{R,1})_{xx}||_{H^3([\frac{R}{3}, \frac{2R}{3}])} \leq R^{\frac{-1}{2}}.
\end{align}
Finally, we consider the regions where $(\rho^{app}_R, u^{app}_R)$ is a convex combination of $(\bar \rho, \bar u)$ and $\tilde{\rho}, (\tilde{u})$. Again, let us consider only $[\frac{R}{4}, \frac{R}{3}]$ for simplicity. In this region, we have representations
\begin{align*}
\rho^{app}_R&=\rho_++\chi_R(\bar{\rho}-\rho_+), \\
u^{app}_R&=u_++\chi_R(\bar{u}-u_+).
\end{align*}
Plugging these formulas into the second equation in \eqref{eq:1DNSappC} gives 
\begin{align}\label{eq:u1erroreq}
f_2(t,x)=(u^{app}_{R,1})+&(u^{app}_{R,1})(u^{app}_{R,1})_x+\gamma(\rho^{app}_R)^{\gamma-2}(\rho^{app}_R)_x-2\ln(\rho^{app}_R)_x(u^{app}_{R,1})_x-2\nu(u^{app}_{R,1})_{xx} \\
&=\chi_R(u^{app}_{R,1}-\bar{u}_{1,+})(\bar u_1-u_+)_x+\gamma \chi_R((\rho^{app}_R)^{\gamma-2}-\bar{\rho}^{\gamma-2})\bar \rho_x+(\chi_R)_x\big(u^{app}_{R,1}(\bar{u}_1-u_{1,+}) \\
&+\gamma (\rho^{app}_R)^{\gamma-2}(\bar{\rho}-\rho_+)\big)-4\nu(\chi_R)_x(\bar u_1-u_{1,+})_x-2\nu (\chi_R)_{xx}(\bar{u}_1-u_{1,+}) \\
&+2\nu \chi_R(\ln(\bar \rho)_x-\ln(\rho^{app}_R)_x)(\bar u_1-u_{1,+})_x-2\nu(\chi_R)_x\ln(\rho^{app}_R)_x(\bar{u}_1-u_{1,+}).
\end{align}
Note that every single term on the right-hand side of \eqref{eq:u1erroreq} consists of either $(\bar{\rho}-\rho_+)$ or $(\bar{u}_1-u_{1,+})$ (or their derivatives), times a coefficient which is uniformly bounded in $W^{4,\infty}(\T_R)$ (due to \eqref{eq:fulllinebds} and \Cref{prop:1dexist+bds}). So, applying the Leibniz formula and using Young's inequality to bound the $H^3$-norm gives
\begin{align}\label{eq:f2computeregion3}
||f_2(t,x)||_{L^\infty([0,T];H^3([\frac{R}{4}, \frac{R}{3}]))} \leq Cs_R.
\end{align}
Combining \eqref{eq:f2computeregion2} and \eqref{eq:f2computeregion3} gives \eqref{eq:errorlemmabd} for $i=2$. The computations for $i=1,3$ are similar. 
\end{proof}

\subsubsection{The $H^3$-estimate between the approximate solution and the periodic solution}
Now, denote:
\begin{align*}
r=\rho^{app}_R-\bar \rho_R, \indent U=u^{app}_R-\bar u_R.
\end{align*}
Further note from \eqref{eq:equalinitdata} that 
\begin{align}\label{eq:perturb0initdata}
r(0,\cdot)=U(0,\cdot)=0.
\end{align}
Using the fact that $(\rho^{app}_R, u^{app}_R)$ solves \eqref{eq:1DNSapproxerror} and $(\bar \rho_R, \bar u_R)$ solves \eqref{eq:1DNSappC}, we derive that $(r,U)$ verifies the equations
\begin{align*}
r_t+\bar{u}_{R,1}r_x+\bar \rho_R(U_1)_x=-(\rho^{app}_R)_xU_1-(u^{app}_{R,1})_xr-f_1(t,x), \\
\end{align*}
\begin{align*}
(U_1)_t+(\bar{u}_{R,1}-2\nu\ln(\rho^{app}_R)_x)(U_1)_x+(\gamma \bar \rho^{\gamma-2}-2\nu \frac{(u^{app}_{R,1})_x}{\bar \rho_R})r_x-2\nu(U_1)_{xx}&=(u^{app}_{R,1})_xU_1-\gamma(\bar \rho^{\gamma-2}-(\rho^{app}_R)^{\gamma-2})(\rho^{app}_R)_x \\
&-2\nu \frac{(\rho^{app}_R)_x(u^{app}_{R,1})_x}{\bar \rho_R \rho^{app}_R}r-f_2(t,x), \\
\end{align*}
\begin{align}\label{eq:appCperturbationeqns}
(U_2)_t+(\bar{u}_{R,1}-\nu\ln(\rho^{app}_R)_x)(U_2)_x-\nu \frac{(u^{app}_{R,1})_x}{\bar \rho_R}r_x-\nu(U_2)_{xx}=-(u^{app}_{R,2})_x(U_1)-\nu \frac{(\rho^{app}_R)_x(u^{app}_{R,1})_x}{\bar \rho_R \rho^{app}_R}r-f_3(t,x).
\end{align}
As the energy estimate on this system is very similar to what we have already done in \Cref{sec:appendix1dexistence} (actually it is much easier, since we already have established that all of the coefficients are bounded in $W^{4,\infty}$), we will omit the details. In the end, we may establish
\begin{align*}
\frac{d}{dt}||r,U||_{H^3(\T_R)}^2 \leq C(||r,U||_{H^3(\T_R)}^2+\sum_{i=1}^3||f_i||_{H^3(\T_R)}).
\end{align*}
Using Gronwall's inequality, \eqref{eq:perturb0initdata}, and \Cref{lem:errorlemma}, we obtain
\begin{align}\label{eq:compareappandper}
||r,U||_{L^\infty([0,T];H^3(\T_R))}^2 \leq C(s_R+R^{\frac{-1}{2}})^2 \to 0 \text{ as } R \to \infty.
\end{align}

\subsubsection{Comparing the approximate solution and the full-line solution}
Define $v^{app}_R=u^{app}_R+\nu\ln(\rho^{app}_R)$ and $w^{app}_R=\nu\ln(\rho^{app}_R)$. Via \eqref{eq:compareappandper}, we have already that
\begin{align}\label{eq:compareappandper2}
&||(\bar{v}_R)_x-(v^{app}_R)_x||_{L^\infty([0,T] \times \T_R)}+||(\bar{w}_R)_x-(w^{app}_R)_x||_{L^\infty([0,T] \times \T_R)} \\
&+||\ln(\bar{\rho}_R)_{xx}-\ln(\rho^{app}_R)_{xx}||_{L^\infty([0,T] \times \T_R)}+||\ln(\bar{\rho}_R)_{x}-\ln(\rho^{app}_R)_{x}||_{L^\infty([0,T] \times \T_R)} \to 0 \text{ as }R \to \infty. 
\end{align}
It remains to compare the norms of the approximate solution to the norms of the full-line solution $(\bar \rho, \bar u)$. We consider each of the three regimes separately. Firstly, when $\{|x| \leq \frac{R}{4}\}$ and $\{|x-R| \leq \frac{R}{4}\}$, we have exactly
\begin{align*}
||(v^{app}_R)_x||_{\infty}=||(\bar{v})_x||_\infty, \\
||(w^{app}_R)_x||_{\infty}=||(\bar{w})_x||_\infty, \\
||\ln(\rho^{app}_R)_{xx}||_{\infty}=||\ln(\bar \rho)_{xx}||_\infty, \\
||\ln(\rho^{app}_R)_{x}||_{\infty}=||\ln(\bar \rho)_{x}||_\infty.
\end{align*}
On the regions where $(\rho^{app}_R, u^{app}_R)$ is equal to $(\tilde{\rho}_R, \tilde{u}_R)$, we see that
\begin{align*}
||(v^{app}_R)_x||_{\infty}=||(\tilde{u}_R)_x||_\infty \leq \frac{C}{R}, \\
||(w^{app}_R)_x||_{\infty}=0, \\
||\ln(\rho^{app}_R)_{xx}||_{\infty}=0, \\
||\ln(\rho^{app}_R)_{x}||_{\infty}=0.
\end{align*}
Finally, on the regions where $(\rho^{app}_R, u^{app}_R)$ is a convex combination of $(\bar{\rho}, \bar{u})$ and $(\tilde{\rho}, \tilde{u})$, the norms of derivatives of the approximate solution all converge to $0$ as $R \to \infty$. Indeed, on these regions all the above norms may be controlled by $s_R$ (recall \eqref{eq:sobolevtailsappC}). Combining the contributions from all three regions, we see
\begin{align}\label{eq:compareappandfull}
||(v^{app}_R)_x||_{L^\infty([0,T] \times \T_R)} \to ||\bar{v}_x||_{L^\infty([0,T] \times \R)}, \\
||(w^{app}_R)_x||_{L^\infty([0,T] \times \T_R)}\to ||\bar{w}_x||_{L^\infty([0,T] \times \R)} \\
||\ln(\rho^{app}_R)_{xx}||_{L^\infty([0,T] \times \T_R)}\to ||\ln(\bar \rho)_{xx}||_{L^\infty([0,T] \times \R)}, \\
||\ln(\rho^{app}_R)_{x}||_{L^\infty([0,T] \times \T_R)}\to ||\ln(\bar \rho)_{x}||_{L^\infty([0,T] \times \R)},
\end{align}
as $R \to \infty$. Combining \eqref{eq:compareappandper2} and \eqref{eq:compareappandfull} gives exactly \Cref{lem:appclem}.

\section{Optimal norm growth for the planar strong solution}\label{sec:optimalbds}
\renewcommand{\theequation}{D.\arabic{equation}}
\setcounter{equation}{0}
In this section, we compute how some norms of the planar strong solution on the channel scale with the viscosity. By taking the limit $R \to \infty$ in \Cref{prop:1dexist+bdsappendix}, we obtain already a poor (triple exponential) bound (see \Cref{rem:triplebad}). It is our objective to obtain better scaling with respect to the viscosity. For this reason, we will need to pass to the Lagrangian mass coordinates, which we are able to do on the full space. Note that the energy estimates are possible in the Lagrangian coordinates due to the absence of the triple product $\rho u^2$ in the Lagrangian coordinates. This term is especially harmful because we only have an estimate $||u||_{L^\infty} \leq \exp(\frac{C}{\nu^j})$ from \Cref{sec:appendix1dexistence}, compared to the bound $||\rho||_{L^\infty} \leq \frac{C}{\nu^j}$. We work with the strong solution to the equation (where we have dropped the bars for convenience)
\begin{equation}\label{eq:1DNS}
\left\{
\begin{array}{ll}
    \rho_t +(\rho u_1)_{x}=0,  \indent (t,x) \in \R^+ \times \R, \\[2mm]
     (\rho u_1)_t+(\rho u_1^2)_{x}+p(\rho)_{x}=(2 \nu \rho u_{1x})_{x}, \\[2mm]
     (\rho u_2)_t+(\rho u_1u_2)_{x}=(\nu \rho u_{2x})_{x},\\[2mm]
     (\rho(0,x), (\rho u)(0,x))=(\rho^0, m^0)(x),
\end{array}
\right.
\end{equation}
where $u(t,x)=\begin{bmatrix}
u_1(t,x) \\
u_2(t,x) 
\end{bmatrix}$
The initial value $(\rho^0, u^0)$ verifies:
\begin{align}\label{eq:1Dchannelbds}
&||(\rho^0, u^0)-(\tilde{\rho}, \tilde{u})||_{H^6(\R)} \leq \frac{1}{\nu}, \\
&|(u_{1}^0)_{x_1}| \leq \frac{1}{2\nu}(\rho^0)^{\gamma-1}, \\
&\rho^0 \geq c_0 > \frac{1}{\nu} > 0. 
\end{align}
For brevity, in the section we will use the following notation.
\begin{notation}
\begin{alignat*}{6}
    &U^0=(\rho^0, u^0), \\
    &U=(\rho, u), \\
    &\tilde{S}=(\tilde{\rho}, \tilde{u}), \\
    &L^p_tL^q_y:=L^p([0,T];L^q(\R)) \text{ for $1 \leq p,q \leq \infty$,} \\
    &L^p_tH^q_y:=L^p([0,T];H^q(\R)) \text{ for $1 \leq p,q \leq \infty$.}
\end{alignat*}
We let $j$ denote an arbitrary positive integer that may vary line by line and depends only on $\gamma$. Finally, we let $C$ denote an arbitrary constant that may vary line by line. The constant $C$ may depend on $||(\rho^0, u^0)-(\tilde{\rho}, \tilde{u})||_{H^6(\R)}, c_0, ||(\tilde{\rho}, \tilde{u})||_{W^{1,\infty}(\R)}, T, \& \ \delta_3$. Note that in this section $C$ will not depend on $\nu$, as we are careful to track the norm dependence on $\nu$.
\end{notation}
Our main proposition is the following. Note that it is exactly \Cref{prop:1doptimalbdsforuse} in the new notation.
\begin{proposition}\label{prop:1optimalbdsappendix}
Let $(\rho^0, u^0)$ verify the bounds \eqref{eq:1Dchannelbds}. Let $T>0$ be fixed. Consider the strong solution $(\rho, u)$ to $\eqref{eq:1DNS}$ with initial data $(\rho^0, u^0)$, where $u=\begin{bmatrix}
u_1 \\
u_2
\end{bmatrix}$. Then, there exists a constant $C(T,\delta_3, c_0) > 0$ and an integer $j \in \N$ such that
\begin{align}\label{eq:optimal1dbds}
&\inf_{(t,x) \in [0,T] \times \R}\rho^{-1}(t,x)+||\rho-\tilde{\rho}||_{L^\infty([0,T];H^3(\R))} \notag \\ 
&+||(u_1, u_2)-(\tilde{u}_1, \tilde{u}_2)||_{L^\infty([0,T];H^2(\R))} \leq \frac{C}{\nu^j}.
\end{align}
\end{proposition}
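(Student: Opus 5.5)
We pass to Lagrangian mass coordinates $y(t,x)=\int^{(t,x)}\rho\,dx-\rho u_1\,dt$. In the specific volume $s=1/\rho$ the system becomes \eqref{1DNS_Lag}, and we work there. The convective triple product $\rho u_1^2$ is absent in these coordinates, so no $L^\infty$ norm of $u$ (which in \Cref{sec:appendix1dexistence} grows like $\exp(C\nu^{-j})$) ever enters a Gronwall exponent. The target is to obtain, in order, bounds of the form $C\nu^{-j}$ for: the zeroth-order relative entropy, the lower and upper bounds on $s$, $\|s_y\|$, and then higher derivatives. Every Gronwall step must have a \emph{polynomially bounded} integrating factor, or a factor $e^{C}$ with $C$ independent of $\nu$; a factor of the form $e^{C\nu^{-j}}$ is not allowed. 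At the end we convert back to Eulerian coordinates. Since $\rho$ is bounded above and below by $C\nu^{\pm j}$, the Jacobian $dx/dy=s$ costs only polynomial factors in each derivative.

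\textbf{Step 1 (energy and BD entropy).} We run the relative energy and BD-entropy estimates against $(\tilde s,\tilde u)$, exactly as in Step 1 of \Cref{sec:appendix1dexistence}. As remarked there, the resulting constant is $\nu$-independent apart from the initial data. By \eqref{eq:1Dchannelbds} the initial data costs at most $C\nu^{-j}$: the $H^6$ norm is at most $\nu^{-1}$, $\rho^0\ge c_0$, and the effective velocity contains $2\nu\rho_y$. This yields the following uniform-in-time quantities, each bounded by $C\nu^{-j}$:
\[
\int Q(s|\tilde s)+|u_1-\tilde u_1|^2,\qquad \int |u_1+2\nu (s^{-1})_y-\tilde u_1|^2,\qquad \nu\int_0^T\!\!\int s^{-1}|u_{1y}|^2 .
\]
Note that the BD term controls $\nu\|(s^{-1})_y\|_{L^2}$.

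\textbf{Step 2 (pointwise bounds on $s$).} For the lower bound on $\rho$, i.e. the upper bound on $s$, we invoke \eqref{eq:densitylowerbd}. It gives $\rho\ge(c_0^{1-\gamma}+T(\gamma-1)/\nu)^{-1/(\gamma-1)}\ge c\,\nu^{1/(\gamma-1)}$, which is polynomial; this uses $|u^0_{1x}|\le\frac{1}{2\nu}(\rho^0)^{\gamma-1}$. For the upper bound on $\rho$ we repeat the Step 2 argument of \Cref{sec:appendix1dexistence}, with the $W^{1,1}_{loc}$ bound on $\rho^{(\gamma-1)/2}$ coming from Step 1. \Cref{rem:densityupperbdscaling} already says this is polynomial in $\nu$.

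\textbf{Step 3 (derivatives).} We estimate derivatives in terms of the effective velocity $h=u_1+2\nu(s^{-1})_y$, which satisfies $h_t+p^L(s)_y=0$ (the diffusion cancels), together with the mass equation. This yields a damped transport equation for $(s^{-1})_y$:
\[
2\nu\big((s^{-1})_y\big)_t+\gamma s^{-\gamma-1}\!\cdot\! s\,(s^{-1})_y=\text{terms in } h_y\text{-free form}.
\]
The damping coefficient is positive and polynomial in $\nu$. We differentiate these equations $k\le3$ times and do $L^2$ energy estimates, in the order $(s^{-1})_y$, then $u_{1y}$ via the parabolic equation, then higher derivatives. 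The nonlinear terms are handled with the Gagliardo--Nirenberg inequality on $\mathbb R$, and all coefficients are controlled by Steps 1--2. Since $(\partial_y^k s,\partial_y^{k-1}u_1)$ enter linearly, with coefficients controlled by lower-order norms already bounded polynomially, each Gronwall exponent is $\int_0^T(\text{poly}(\nu^{-1})\cdot\text{dissipative }L^2_t\text{ quantity})$. The $L^2_t$ factor is the parabolic dissipation, whose size is only $\nu^{-j}$ times $\nu$-weighted integrals.

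\textbf{Main obstacle.} This is the crux of the argument: it must be shown that these Gronwall exponents are bounded by $C$, or by $C\log\nu^{-1}$, rather than by $C\nu^{-j}$. I would first absorb them by Young's inequality into the $\nu$-weighted dissipation, so that the exponent is $\int_0^T\|u_{1y}\|_{L^\infty}\,dt$. This quantity should be bounded via the Step 1 dissipation and Gagliardo--Nirenberg by $\nu^{-j}$ inside a \emph{product} rather than an exponent. If that fails, the fallback is to use the damping in the $(s^{-1})_y$ equation to obtain $\|(s^{-1})_y\|_{L^\infty_tL^2}$ directly by Duhamel, with no Gronwall step.

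The $u_2$ component solves a linear parabolic equation whose coefficients are $u_1$ and $\nu(\ln\rho)_x$, now polynomially bounded; the same energy estimates therefore give its $H^2$ bound. Finally we change variables back to Eulerian coordinates to obtain \eqref{eq:optimal1dbds}.
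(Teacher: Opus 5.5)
Your Steps 1--2 and the final change of variables match the paper: Lagrangian mass coordinates, relative energy plus BD entropy at order zero, the lower density bound \eqref{eq:densitylowerbd}, and the upper bound of \Cref{rem:densityupperbdscaling}. The gap is Step 3, which carries the whole content of the proposition. There you name the obstacle but do not resolve it, and the resolution you sketch would fail.

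You write that, after absorption, the Gronwall exponent is $\int_0^T\|u_{1y}\|_{L^\infty}\,dt$. That exponent cannot be made $O(1)$ or $O(\log\nu^{-1})$. The proposition is used for data near the viscous shock, whose Lagrangian profile satisfies $\hat u_1^\nu(y)=\hat u_1^1(y/\nu)$, so $\|u_{1y}\|_{L^\infty}\sim\nu^{-1}$. The exponent is then of size $T/\nu$, giving exactly the factor $e^{CT/\nu}$ you must avoid. You also do not say how this quantity would move ``from the exponent into a product'', and that is precisely the missing step. The fallback does not help either. Duhamel on the damped equation for $(s^{-1})_y$, whose right-hand side is $-(u_1)_t$, only recovers the $L^\infty_tL^2_y$ bound on $(s^{-1})_y$. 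The BD entropy of Step 1 already gives that bound, and the fallback says nothing about $s_{yy}$, $s_{yyy}$ or $u_{1yy}$.

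The missing idea is that above order zero no Gronwall step is needed at all. The dissipation at order $k-1$ already bounds the order-$k$ energy in $L^1_t$ by $C\nu^{-j}$. Work level by level: sum the estimate for $\partial_y^k h$, where $h=u_1+2\nu(s^{-1})_y$ solves $h_t+p^L(s)_y=0$, with the estimates for $\partial_y^k u_1$ and $\partial_y^k u_2$ from the parabolic equations. The $h$-estimate supplies $\nu$-weighted dissipation of $\partial_y^{k+1}s$, and the parabolic estimates supply $\nu\|\partial_y^{k+1}u_i\|_{L^2}^2$. After Young's inequality, every remaining term is a product of a norm already bounded in $L^\infty_t$ by $C\nu^{-j}$ and a quantity already bounded in $L^1_t$ by $C\nu^{-j}$ at the previous level. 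It is therefore a source term and is simply integrated in time.

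At order one, for instance, Gagliardo--Nirenberg gives $4\nu\int u_{1y}s_yu_{1yy}s^{-3}\,dy\le\frac{\nu}{2}\int u_{1yy}^2s^{-2}\,dy+C\nu^{-j}\|u_{1y}\|_{L^2}^2\|s_y\|_{L^2}^4$. The pressure cross term $-2\int p^L(s)_{yy}u_{1y}\,dy$ leaves $C\nu^{-j}\|u_{1y}\|_{L^2}^2$ once the $s_{yy}$ and $s_y^2$ parts are absorbed into the BD dissipation. Here $\|u_{1y}\|_{L^2_tL^2_y}$ is the order-zero dissipation you already listed in Step 1, and $\|s_y\|_{L^\infty_tL^2_y}$ comes from the BD entropy plus the density bounds. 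At order two the same happens with $C\nu^{-j}\|u_{1yy}\|_{L^2}^2\|s_y\|_{L^\infty}^2$, using the order-one dissipation and $\|s_y\|_{L^\infty_{t,y}}\le C\nu^{-j}$.

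The only Gronwall step is at order zero, with an $O(1)$ coefficient, since $\int p^L(\tilde s)_y(u_1-\tilde u_1)\,dy\le C+\int(u_1-\tilde u_1)^2\,dy$. In your terms: the products ``$\mathrm{poly}(\nu^{-1})\times$ dissipative $L^2_t$ quantity'' must enter as sources, never as coefficients multiplying the top-order energy. Carried through $H^2$ for $u$, and $H^3$ for $s$ via $(s^{-1})_{yyy}$ inside $h_{yy}$, this yields \eqref{eq:optimal1dbds}.
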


\subsubsection{Lagrangian mass coordinates}
Our strategy to prove \Cref{prop:1optimalbdsappendix} is to do energy estimates in the Lagrangian mass coordinates. Making the change of variables $dy=\rho dx-\rho  u_1dt$, we obtain the system:
\begin{equation}\label{eq:1DNSlagrangian}
\left\{
\begin{array}{ll}
v_t-(u_1)_y=0, \\
(u_1)_t+p^{L}(v)_y=2\nu\big(\frac{(u_1)_y}{v^2}\big)_y, \\
(u_2)_t=\nu\big(\frac{(u_2)_y}{v^2}\big)_y, \\
\end{array}
\right.
\end{equation}
where $p^L(v)=v^{-\gamma}$. Now, let $\tilde{v}(y)=\frac{1}{\tilde{\rho}(x)}$. Recalling that 
\begin{align}\label{eq:vupperlowerbd}
\underline{\kappa} \leq v(t,y) \leq \overline{\kappa} \text{ for } \frac{1}{\underline{\kappa}} \leq \frac{C}{\nu^j}, \overline{\kappa} \geq C\nu^j,
\end{align} 
(see \eqref{eq:densitylowerbd} and \Cref{rem:densityupperbdscaling}), it is straightforward to obtain
\begin{align}\label{eq:lagrangianinitdatabd}
||(v^0, u^0)-(\tilde{v}, \tilde{u})||_{H^4(\R)} \leq \frac{C}{\nu^j},
\end{align}
where the reference functions are now
\begin{align*}
\tilde{v}(y)=\frac{1}{\tilde{\rho}(x(0,y))}, \\
\tilde{u}(y)=\tilde{u}(x(0,y)).
\end{align*}
\Cref{prop:1optimalbdsappendix} follows directly from the following proposition in the Lagrangian mass coordinates:
\begin{proposition}\label{prop:1doptimalbdsappendixlagrangian}
Let $(\rho^0, u^0)$ verify the bounds \eqref{eq:1Dchannelbds}. Let $T > 0$ be fixed. Let $(v, u)$ be the associated solution to \eqref{eq:1DNSlagrangian} with initial data $(v^0, u^0)$. Then, there exists a constant $C(T,\delta_3, c_0) > 0$ and an integer $j \in \N$ so that 
\begin{align}\label{eq:1dbdslagrangian}
&||v-\tilde{v}||_{L^\infty([0,T];H^3(\R))}+||(u_1, u_2)-(\tilde{u}_1, \tilde{u}_2)||_{L^\infty([0,T];H^2(\R))} \leq \frac{C}{\nu^j}.
\end{align}
\end{proposition}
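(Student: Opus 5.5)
We prove \Cref{prop:1doptimalbdsappendixlagrangian} by a hierarchy of energy estimates in the Lagrangian variables for \eqref{eq:1DNSlagrangian}, tracking powers of $\nu$ at every step. The Lagrangian formulation has no convective term $\rho u^2$, so no estimate needs $\|u\|_{L^\infty}$, which is only exponentially controlled in $\nu$. The only coefficients entering the estimates are powers of $v$ and $v^{-1}$. By \eqref{eq:vupperlowerbd} these are bounded by $C\nu^{-j}$, so every Gronwall exponent should be a polynomial in $\nu^{-1}$ times $T$. The aim is therefore to keep each Gronwall exponent $O(1)$, or to absorb it using dissipation integrals that are themselves bounded polynomially, so that the final constants stay polynomial in $\nu^{-1}$.

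\textbf{Step 1: zeroth order.} Take the relative entropy $\int \frac{|u-\tilde u|^2}{2}+Q(v|\tilde v)\,dy$ together with the BD (effective velocity) entropy $\int \frac{|u_1-\tilde u_1+2\nu(v^{-1})_y|^2}{2}\,dy$. The reference functions are smooth and constant outside a compact set, so, as in Step 1 of \Cref{sec:appendix1dexistence}, the forcing terms are bounded by $C$ plus a constant times the entropy. This gives $\nu$-independent bounds on the energies, on $\nu\int\!\!\int v^{-3}(u_1)_y^2$, on $\nu^2\|(v^{-1})_y\|_{L^\infty_tL^2_y}^2$, and on $\|u_2-\tilde u_2\|_{L^\infty_tL^2}$. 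Combined with $v\ge C\nu^{j}$ this yields $\|v_y\|_{L^\infty_tL^2}\le C\nu^{-j}$.

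\textbf{Step 2: first derivatives of $u$.} Test the $u_1$-equation with $-(u_1)_{yy}$, and the $u_2$-equation with $-(u_2)_{yy}$. The viscous term contributes $2\nu\int v^{-2}(u_1)_{yy}^2$ plus a cross term $\nu\int v^{-3}v_y(u_1)_y(u_1)_{yy}$. Bound the cross term by Gagliardo--Nirenberg, $\|(u_1)_y\|_\infty\le\|(u_1)_y\|_2^{1/2}\|(u_1)_{yy}\|_2^{1/2}$, followed by Young's inequality. This leaves a Gronwall coefficient of the form $C\nu^{-j}\|v_y\|_2^4\|(u_1)_y\|_2^2$, whose time integral is bounded by $C\nu^{-j}$ thanks to the dissipation from Step 1. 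Consequently the exponential factor is $e^{C\nu^{-j}}$.

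\textbf{Main obstacle.} The growth $e^{C\nu^{-j}}$ from Step 2 is exactly what has to be avoided. To fix this, I would work with the effective velocity $h=u_1+2\nu(v^{-1})_y$ together with the effective flux $\sigma=2\nu v^{-2}(u_1)_y-p^L(v)$. The quantity $h$ satisfies the damped transport-free equation $h_t+p^L(v)_y=0$, and $\sigma$ satisfies a parabolic equation whose coefficients are polynomial in $v^{\pm1}$. From these equations, $\|h-\tilde u_1\|_{H^1}$ and $\|\sigma\|_{H^1}$ can be estimated linearly. Linearity means the Gronwall exponent is $CT\nu^{-j}\times(\text{quantities already bounded})$, and the plan is to arrange the estimate so that this exponent is bounded by a pure constant rather than by $\nu^{-j}$; this is the delicate point. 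Failing that, I would accept a polynomial-in-$\nu^{-1}$ exponent whenever the corresponding term enters additively rather than multiplicatively. Once $\sigma_y\in L^2_tL^2$ is controlled, the identity $v_y=(h-u_1)\,v^2/(2\nu)$ recovers $v_y$ with polynomial loss. Differentiating the equations once more, for $v_{yy},v_{yyy}$ and $u_{yy}$, then involves coefficients already bounded polynomially. Each further step is Gronwall with a coefficient whose time integral is polynomially bounded, so again I would need to exploit the linear structure to keep these exponents constant.

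Finally, to pass from Lagrangian to Eulerian coordinates and obtain \Cref{prop:1optimalbdsappendix}, use the change of variables with Jacobian $\rho$. Since $\rho^{\pm1}\le C\nu^{-j}$, this costs only polynomial factors in $\nu^{-1}$.
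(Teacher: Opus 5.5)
Your proposal has a genuine gap. It never actually produces the polynomial bound, and the obstacle where it stops comes from misusing Gronwall, not from a real difficulty.

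\textbf{Where Step 2 goes wrong.} After Gagliardo--Nirenberg and Young, the leftover term in your Step 2 is $C\nu^{-j}\|v_y\|_2^4\|(u_1)_y\|_2^2$. You correctly note that its time integral is at most $C\nu^{-j}$, since $\sup_t\|v_y\|_2\le C\nu^{-j}$ and $\int_0^T\|(u_1)_y\|_2^2\,dt\le C\nu^{-j}$ by the zeroth-order dissipation. You then treat $C\nu^{-j}\|v_y\|_2^4$ as a Gronwall multiplier and conclude $e^{C\nu^{-j}}$. No Gronwall is needed: the term is an additive source already bounded in $L^1(0,T)$. Integrating the differential inequality in time gives directly $\sup_t\|(u_1)_y\|_2^2+\nu\int_0^T\!\!\int v^{-2}(u_1)_{yy}^2\le\|(u_1)_y(0)\|_2^2+C\nu^{-j}$.

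\textbf{Consequence for the rest of the proposal.} Because you identify this as the main obstacle, everything after Step 2 stays an unexecuted plan. This includes the effective flux $\sigma$, ``arrange the estimate so that this exponent is bounded by a pure constant'', and ``exploit the linear structure''. No bound on $v_{yy}$, $v_{yyy}$ or $u_{yy}$ is ever derived. Step 2 alone cannot give these either: $v$ solves $v_t=(u_1)_y$ with no diffusion. The statement asks for $v$ in $H^3$, one derivative more than $u$, and that extra derivative can only come from the BD structure. (Incidentally, your Step 1 bounds are polynomial, not $\nu$-independent, because the data in \eqref{eq:1Dchannelbds} are only $O(\nu^{-1})$-close to the reference state. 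This is harmless.)

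\textbf{What the paper does.} The paper carries out exactly the hierarchy you abandon. For $k=1,2$ it does two things:
\begin{enumerate}
\item It differentiates \eqref{eq:combine12} $k$ times and tests with $\partial_y^k\big(u_1+2\nu(1/v)_y\big)$. This gives $L^\infty_tL^2_y$ control of $\nu\,\partial_y^{k+1}(1/v)$, hence of $v$ in $H^{k+1}$. It also gives the dissipation $\nu\int v^{-\gamma-3}(\partial_y^{k+1}v)^2$, plus $\nu\int v^{-\gamma-5}v_y^4$ and similar terms.
\item It adds the momentum equations, differentiated $k$ times and tested with $\partial_y^k u$, which give the dissipation $\nu\int v^{-2}|\partial_y^{k+1}u|^2$.
\end{enumerate}
Top-order terms are absorbed into these dissipations at the cost of powers of $\nu^{-1}$, via \eqref{eq:vupperlowerbd}. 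Every remaining term is a product of an $L^\infty_t$ quantity and an $L^1_t$ dissipation quantity, both already bounded at the previous order. Examples are $\|(u_1)_y\|_2^2\|v_y\|_2^4$ at first order, and $\|v_y\|_\infty^2\|(u_1)_{yy}\|_2^2$ or $\|(u_1)_y\|_2^4\|v_{yy}\|_2^2$ at second order. One therefore simply integrates in time, as in \eqref{eq:1orderestimatelag} and \eqref{eq:2orderestimatelag}. The only Gronwall step is at zeroth order, \eqref{eq:0orderestimatelag}, with an $O(1)$ coefficient and $C\nu^{-j}$ forcing. This is why every constant stays polynomial in $\nu^{-1}$.
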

\subsubsection{$L^2$-estimate}
We start by obtaining the analog of the BD-entropy inequality in Lagrangian mass coordinates. Using $(\tilde{u}_1)_t=0$ and combining the first and second equation in \eqref{eq:1DNSlagrangian} gives
\begin{align}\label{eq:combine12}
((u_1-\tilde{u}_1)+2\nu \big(\frac{1}{v}\big)_y)_t+p^L(v)_y=0.
\end{align}
Multiplying by $(u_1-\tilde{u}_1)+2\nu\big(\frac{1}{v}\big)_y$ gives
\begin{align*}
\Big(\frac{((u_1-\tilde{u}_1)+2\nu \big(\frac{1}{v}\big)_y)^2}{2}\Big)_t+p^L(v)_y(u_1-\tilde{u}_1)+2\nu \gamma v^{-\gamma-3}v_y^2=0.
\end{align*}
Now, adding and subtracting $(\tilde{u}_1)_y$ to the first equation in \eqref{eq:1DNSlagrangian} and multiplying by $-(p^L(v)-p^L(\tilde{v}))$ gives
\begin{align}\label{eq:antipressure}
\big(-\int_{\tilde{v}}^vp^L(s)ds+p^L(\tilde{v})(v-\tilde{v})\big)_t&+(p^L(v)-p^L(\tilde{v}))(u_1-\tilde{u}_1)_y \notag \\
&+(\tilde{u}_1)_y(p^L(v)-p^L(\tilde{v})=0.
\end{align}
Adding the previous two equations and integrating in space gives
\begin{align}\label{eq:0orderBDlag}
&\partial_t \int_{\R}\Big(\frac{((u_1-\tilde{u}_1)+2\nu \big(\frac{1}{v}\big)_y)^2}{2}+\big(-\int_{\tilde{v}}^vp^L(s)ds+p^L(\tilde{v})(v-\tilde{v})\big) \Big)dy \notag \\
&+2\nu \gamma \int_{\R} v^{-\gamma-3}v_y^2dy+\int_{\R} p^L(\tilde{v})_y(u_1-\tilde{u}_1)dy+\int_{\R}(\tilde{u}_1)_y(p^L(v)-p^L(\tilde{v}))dy=0.
\end{align}
Now, adding and subtracting  $2\nu \big(\frac{(\tilde{u}_1)_y}{v^2}\big)_y$ to the second equation in \eqref{eq:1DNSlagrangian}, multiplying by $(u_1-\tilde{u_1})$, and integrating in space:
\begin{align*}
&\partial_t \int_{\R} \frac{(u_1-\tilde{u}_1)^2}{2}dy+\int_{\R}p^L(v)_y(u_1-\tilde{u_1})dy \\
&+2\nu \int_{\R}\frac{(u_1-\tilde{u}_1)_y}{v^2}dy+2\nu \int_{\R}\frac{(\tilde{u}_1)_y}{v^2}(u_1-\tilde{u}_1)_ydy=0.
\end{align*}
Adding \eqref{eq:antipressure} to the previous equation, we obtain
\begin{align}\label{eq:0orderu1lag}
&\partial_t \Big(\int_{\R} \frac{(u_1-\tilde{u}_1)^2}{2}+\big(-\int_{\tilde{v}}^vp^L(s)ds+p^L(\tilde{v})(v-\tilde{v})\big)\Big) dy+2\nu\int_{\R}\frac{(u_1-\tilde{u}_1)_y}{v^2}dy \\
&+2\nu \int_{\R}\frac{(\tilde{u}_1)_y}{v^2}(u_1-\tilde{u}_1)_ydy+\int_{\R} p^L(\tilde{v})_y(u_1-\tilde{u}_1)dy\\
&+\int_{\R}(\tilde{u}_1)_y(p^L(v)-p^L(\tilde{v}))dy=0.
\end{align}
Adding and subtracting $\nu \big(\frac{(\tilde{u}_2)_y}{v^2}\big)_y$ to the third equation in \eqref{eq:1DNSlagrangian}, multiplying by $(u_2-\tilde{u_2})$, and integrating in space:
\begin{align}\label{eq:0orderu2lag}
&\partial_t \int_{\R} \frac{(u_2-\tilde{u}_2)^2}{2}dy+\nu \int_{\R}\frac{(u_2-\tilde{u}_2)_y}{v^2}dy+\nu \int_{\R}\frac{(\tilde{u}_2)_y}{v^2}(u_2-\tilde{u}_2)_ydy=0.
\end{align}
Finally, adding \eqref{eq:0orderBDlag}, \eqref{eq:0orderu1lag}, and \eqref{eq:0orderu2lag} gives
\begin{align*}
&\partial_t \int_{\R} \Big(\frac{((u_1-\tilde{u}_1)+2\nu \big(\frac{1}{v}\big)_y)^2}{2}+2\big(-\int_{\tilde{v}}^vp^L(s)ds+p^L(\tilde{v})(v-\tilde{v})\big) \\
&+\frac{(u_1-\tilde{u}_1)^2}{2}+\frac{(u_2-\tilde{u}_2)^2}{2}\Big)dy+2\nu\int_{\R}\frac{(u_1-\tilde{u}_1)_y}{v^2}dy \\
&+\nu \int_{\R}\frac{(u_2-\tilde{u}_2)_y}{v^2}dy+2\nu \gamma \int_{\R} v^{-\gamma-3}v_y^2dy \\
&=-2\int_{\R}(\tilde{u}_1)_y(p^L(v)-p^L(\tilde{v}))dy-2\int_{\R} p^L(\tilde{v})_y(u_1-\tilde{u}_1)dy \\
&-2\nu \int_{\R}\frac{(\tilde{u}_1)_y}{v^2}(u_1-\tilde{u}_1)_ydy-\nu \int_{\R}\frac{(\tilde{u}_2)_y}{v^2}(u_2-\tilde{u}_2)_ydy \\
&=:\sum_{k=1}^4I_k. 
\end{align*}
We estimate each term as follows:
\begin{align*}
I_1 \leq (||p^L(v)||_{L^\infty([0,T] \times \R)}+||p^L(\tilde{v})||_{L^\infty([0,T] \times \R)})\int_{\R}|(\tilde{u}_1)_y|dy \leq \frac{C}{\nu^j}.
\end{align*}
For the second, using Young's inequality
\begin{align*}
I_2 \leq C+\int_{\R}(u_1-\tilde{u_1})^2dy.
\end{align*}
For the third, using Young's inequality and \eqref{eq:vupperlowerbd}
\begin{align*}
I_3 \leq \frac{C}{\nu^j}+\frac{\nu}{2}\int_{\R}\frac{(u_1-\tilde{u}_1)_y}{v^2}dy.
\end{align*}
The last term is handled identically:
\begin{align*}
I_4 \leq \frac{C}{\nu^j}+\frac{\nu}{2}\int_{\R}\frac{(u_2-\tilde{u}_2)_y}{v^2}dy.
\end{align*}
Finally, Gronwall's inequality gives the bound:
\begin{align}\label{eq:0orderestimatelag}
||u_1-\tilde{u}_1||_{L^\infty_tL^2_y}+||u_2-\tilde{u}_2||_{L^\infty_tL^2_y}+||(\frac{1}{v})_y||_{L^\infty_tL^2_y}+||(v_y, (u_1)_y, (u_2)_y)||_{L^2_tL^2_y} \leq \frac{C}{\nu^j}. 
\end{align}
Note that by \eqref{eq:vupperlowerbd} we also have a bound $||v_y||_{L^\infty_tL^2_y} \leq \frac{C}{\nu^j}$.
\subsubsection{$H^1$-estimate}
Now, we obtain the $H^1$-estimate. We start by differentiating \eqref{eq:combine12}, multiplying by $(u_1+2\nu \big(\frac{1}{v}\big)_y)_{y}$, and integrating in space to obtain
\begin{align}\label{eq:1orderBDlag}
&\partial_t \int_{\R}\Big(\frac{((u_1)+2\nu \big(\frac{1}{v}\big)_y)_y^2}{2}\Big)dy+2\nu \gamma \int_{\R}v^{-\gamma-3}v_{yy}^2dy+4 \nu \gamma (\gamma+1)\int_{\R}v^{-\gamma-5}v_y^4dy \\
&+\int_{\R}(-2\nu\gamma(\gamma+1)-4\nu \gamma)v^{-\gamma-4}v_y^2v_{yy}dy+\int_{\R}p^L(v)_{yy}(u_1)_ydy=0.
\end{align}
Now, differentiating the second equation in \eqref{eq:1DNSlagrangian} in $y$, multiplying by $(u_1)_y$, and integrating:
\begin{align}\label{eq:1orderu1lag}
\partial_t \int_{\R}\frac{(u_1)_y^2}{2}dy+\int_{\R}p^L(v)_{yy}(u_1)_ydy+2\nu \int_{\R}\frac{(u_1)_{yy}^2}{v^2}dy=4\nu \int_{\R}\frac{(u_1)_yv_y(u_1)_{yy}}{v^3}dy.
\end{align}
Doing the same for the third equation gives
\begin{align}\label{eq:1orderu2lag}
&\partial_t \int_{\R} \frac{(u_2)_y^2}{2}dy+\nu \int_{\R}\frac{(u_2)_{yy}}{v^2}dy=2\nu \int_{\R}\frac{(u_2)_yv_y(u_2)_{yy}}{v^3}dy.
\end{align}
Finally, summing \eqref{eq:1orderBDlag}, \eqref{eq:1orderu1lag}, and \eqref{eq:1orderu2lag} gives
\begin{align*}
&\partial_t\Big(\int_{\R}\Big(\frac{((u_1)+2\nu \big(\frac{1}{v}\big)_y)_y^2}{2}\Big)dy+\int_{\R}\frac{(u_1)_y^2}{2}dy+\int_{\R} \frac{(u_2)_y^2}{2}dy\Big)+2\nu \gamma \int_{\R}v^{-\gamma-3}v_{yy}^2dy \\
&+4 \nu \gamma (\gamma+1)\int_{\R}v^{-\gamma-5}v_y^4dy+2\nu \int_{\R}\frac{(u_1)_{yy}^2}{v^2}dy+\nu \int_{\R}\frac{(u_2)_{yy}}{v^2}dy \\
&=-2\int_{\R}p^L(v)_{yy}(u_1)_ydy+\int_{\R}(2\nu\gamma(\gamma+1)4\nu \gamma)v^{-\gamma-4}v_y^2v_{yy}dy \\
&+4\nu \int_{\R}\frac{(u_1)_yv_y(u_1)_{yy}}{v^3}dy+2\nu \int_{\R}\frac{(u_2)_yv_y(u_2)_{yy}}{v^3}dy=:\sum_{k=1}^4J_k.
\end{align*}
We estimate each term as follows:
\begin{align*}
J_1 &\leq \frac{C}{\nu^j}\big(\int_{\R}v_y^2(u_1)_ydy+\int_{\R}v_{yy}(u_1)_ydy\big) \\
&\leq \frac{C}{\nu^j}\int_{\R}(u_1)_y^2dy +2 \nu \gamma (\gamma+1)\int_{\R}v^{-\gamma-5}v_y^4dy+\nu\gamma \int_{\R}v^{-\gamma-3}v_{yy}^2dy+\frac{C}{\nu^j} \int_{\R}(u_1)_y^2dy,
\end{align*}
using \eqref{eq:vupperlowerbd}. For the second, using Gagliardo-Nirenberg interpolation inequality
\begin{align*}
J_2 \leq \frac{C}{\nu^j}||v_y||_\infty ||v_y||_2||v_{yy}||_2 \leq \frac{C}{\nu^j}||v_y||_2^\frac{3}{2}||v_{yy}||_2^\frac{3}{2} \leq \frac{C}{\nu^j}||v_y||_2^6+\frac{\nu \gamma}{2}\overline{\kappa}^{-\gamma-3}||v_{yy}||_2^2.
\end{align*}
For the third:
\begin{align*}
J_3 \leq \frac{C}{\nu^j}||(u_1)_{yy}||_2||(u_1)_y||_\infty||v_y||_2 &\leq \frac{C}{\nu^j}||(u_1)_y||_2^\frac{1}{2}||(u_1)_{yy}||_2^\frac{3}{2}||v_y||_2 \\
&\leq \nu \overline{\kappa}^{2}||(u_1)_{yy}||_2^2+\frac{C}{\nu^j}||(u_1)_y||_2^2||v_y||_2^4.
\end{align*}
The term $J_4$ may be handled identically. Finally, integrating in time and using \eqref{eq:0orderestimatelag} gives
\begin{align}\label{eq:1orderestimatelag}
&||u_1-\tilde{u}_1||_{L^\infty_tH^1_y}+||u_2-\tilde{u}_2||_{L^\infty_tH^1_y}+||(\frac{1}{v})_{yy}||_{L^\infty_tL^2_y} \notag \\
&+||v_{y}||_{L^4_tL^4_y}+||(v_{yy}, (u_1)_{yy}, (u_2)_{yy})||_{L^2_tL^2_y} \leq \frac{C}{\nu^j}.
\end{align}
Note that this implies $||v_y||_{L^\infty([0,T] \times \R)}+||v_{yy}||_{L^\infty_tL^2_y} \leq \frac{C}{\nu^j}$.
\subsubsection{$H^2$-estimate}
Finally, we obtain the $H^2$-estimate. Differentiating \eqref{eq:combine12} twice, multiplying by $(u_1+2\nu\big(\frac{1}{v})_y)_{yy}$, and integrating in space gives
\begin{align}\label{eq:2orderBDlag}
&\partial_t \int_{\R}\Big(\frac{((u_1)+2\nu \big(\frac{1}{v}\big)_y)_{yy}^2}{2}\Big)dy+\int_{\R}p^L(v)_{yyy}(u_1)_{yy}dy \notag \\
&+\phi_1(\nu)\int_{\R}(v_{yyy}^2+v_y^2v_{yy}^2+v_y^6)dy \leq \phi_2(\nu) \int_{\R}(v_y^3v_{yyy}+v_y^4v_{yy}+v_{yyy}v_yv_{yy})dy,
\end{align}
for some functions $\phi_1, \phi_2$ verifying
\begin{align*}
C\nu^j &\leq \phi_1(\nu), \\
\phi_2(\nu)&\leq \frac{C}{\nu^j},
\end{align*}
where we have used \eqref{eq:vupperlowerbd}. Next, differentiating the second equation in \eqref{eq:1DNSlagrangian} twice, multiplying by $(u_1)_{yy}$, and integrating:
\begin{align}\label{eq:2orderu1lag}
&\partial_t \int_{\R}\frac{(u_1)_{yy}^2}{2}dy+2\nu \int_{\R}\frac{(u_1)_{yyy}^2}{v^2}dy=-\int_{\R}p^L(v)_{yyy}(u_1)_{yy}dy \\
&+8\nu \int_{\R}\frac{(u_1)_{yy}v_y(u_1)_{yyy}}{v^3}dy+4\nu \int_{\R}\frac{(u_1)_yv_{yy}(u_1)_{yyy}}{v^3}dy \\
&-12\nu \int_{\R}\frac{(u_1)_yv_y^2(u_1)_{yyy}}{v^4}dy.
\end{align}
Doing the same for the third equation in \eqref{eq:1DNSlagrangian} gives
\begin{align}\label{eq:2orderu2lag} 
&\partial_t \int_{\R}\frac{(u_2)_{yy}^2}{2}dy+\nu \int_{\R}\frac{(u_2)_{yyy}^2}{v^2}dy=4\nu \int_{\R}\frac{(u_2)_{yy}v_y(u_2)_{yyy}}{v^3}dy\\
&+2\nu \int_{\R}\frac{(u_2)_yv_{yy}(u_2)_{yyy}}{v^3}dy -6\nu \int_{\R}\frac{(u_2)_yv_y^2(u_2)_{yyy}}{v^4}dy.
\end{align}
Combining \eqref{eq:2orderBDlag}, \eqref{eq:2orderu1lag}, and \eqref{eq:2orderu2lag} gives
\begin{align*}
&\partial_t \Big(\int_{\R}\Big(\frac{((u_1)+2\nu \big(\frac{1}{v}\big)_y)_{yy}^2}{2}\Big)dy+\int_{\R}\frac{(u_1)_{yy}^2}{2}dy+\int_{\R}\frac{(u_2)_{yy}^2}{2}dy\Big) \\
&+\phi_1(\nu)\int_{\R}(v_{yyy}^2+v_y^2v_{yy}^2+v_y^6)dy+2\nu \int_{\R}\frac{(u_1)_{yyy}^2}{v^2}dy+\nu \int_{\R}\frac{(u_2)_{yyy}^2}{v^2}dy \\
& \leq-2\int_{\R}p^L(v)_{yyy}(u_1)_{yy}dy+\phi_2(\nu) \int_{\R}(v_y^3v_{yyy}+v_y^4v_{yy}+v_{yyy}v_yv_{yy})dy \\
&+8\nu \int_{\R}\frac{(u_1)_{yy}v_y(u_1)_{yyy}}{v^3}dy+4\nu \int_{\R}\frac{(u_1)_yv_{yy}(u_1)_{yyy}}{v^3}dy \\
&-12\nu \int_{\R}\frac{(u_1)_yv_y^2(u_1)_{yyy}}{v^4}dy+4\nu \int_{\R}\frac{(u_2)_{yy}v_y(u_2)_{yyy}}{v^3}dy \\
&+2\nu \int_{\R}\frac{(u_2)_yv_{yy}(u_2)_{yyy}}{v^3}dy-6\nu \int_{\R}\frac{(u_2)_yv_y^2(u_2)_{yyy}}{v^4}dy \\
&=:\sum_{k=1}^8 I_k.
\end{align*}
We control the terms as follows:
\begin{align*}
I_1\leq \frac{C}{\nu^j}\int_{\R}(v_y^3+v_yv_{yy}+v_{yyy})(u_1)_{yy}dy &\leq \frac{\phi_1(\nu)}{8}\int_{\R}(v_y^6+v_y^2v_{yy}^2+v_{yyy}^2)dy \\
&+\frac{C}{\nu^j}\int_{\R}(u_1)_{yy}^2dy.
\end{align*}
For the second, we use Young's inequality
\begin{align*}
I_2 \leq \frac{\phi_1(\nu)}{8}\int_{\R}(v_{yyy}^2+v_y^2v_{yy}^2+v_y^6)dy+\frac{C}{\nu^j}(||v_y||_\infty^4||v_y||_2^2+||v_y||_\infty^2||v_{yy}||_2^2).
\end{align*}
For the third,
\begin{align*}
I_3 &\leq \frac{C}{\nu^j}||(u_1)_{yy}||_2||v_y||_\infty||(u_1)_{yyy}||_2 \\
&\leq \frac{\nu}{2} \overline{\kappa}^2 ||(u_1)_{yyy}||_2^2+\frac{C}{\nu^j}||(u_1)_{yy}||_2^2||v_y||_\infty^2.
\end{align*}
Next, we have
\begin{align*}
I_4 &\leq \frac{C}{\nu^j}||(u_1)_{yyy}||_2||v_{yy}||_\infty||(u_1)_y||_2 \leq \frac{C}{\nu^j}||(u_1)_{yyy}||_2||v_{yy}||_2^\frac{1}{2}||v_{yyy}||_2^\frac{1}{2}||(u_1)_y||_2 \\
&\leq \frac{\nu}{2} \overline{\kappa}^2 ||(u_1)_{yyy}||_2^2+\frac{\phi_1(\nu)}{8}||v_{yyy}||_2^2+\frac{C}{\nu^j}||v_{yy}||_2^2||(u_1)_y||_2^4.
\end{align*}
Further
\begin{align*}
I_5 \leq \frac{C}{\nu^j}||v_y||_\infty^2||(u_1)_y||_2||(u_1)_{yyy}||_2 \leq  \frac{\nu}{2} \overline{\kappa}^2 ||(u_1)_{yyy}||_2^2+\frac{C}{\nu^j}||v_y||_\infty^4||(u_1)_y||_2^2.
\end{align*}
The terms $I_6$ to $I_8$ may be handled identically. So, integrating in time and using the bounds \eqref{eq:1orderestimatelag} gives
\begin{align}\label{eq:2orderestimatelag}
&||u_1-\tilde{u}_1||_{L^\infty_tH^2_y}+||u_2-\tilde{u}_2||_{L^\infty_tH^2_y}+||(\frac{1}{v})_{yyy}||_{L^\infty_tL^2_y} \leq \frac{C}{\nu^j}.
\end{align}
Finally, using the formula $(\frac{1}{v})_{yyy}=-\frac{v_{yyy}}{v^2}+\frac{6v_{yy}v_y}{v^3}-\frac{6v_y^3}{v^4}$ and the bounds $\eqref{eq:0orderestimatelag}, \eqref{eq:1orderestimatelag}$, we obtain exactly \Cref{prop:1doptimalbdsappendixlagrangian}. Making the change of variables back to the Eulerian coordinates gives \Cref{prop:1optimalbdsappendix}.

\bibliographystyle{plain}
\bibliography{refs}
\end{document}